\documentclass[11pt, reqno]{amsart}
\usepackage[T1]{fontenc}
\usepackage[american]{babel}
\usepackage[toc,page]{appendix}
\usepackage{amssymb,amsthm,amsmath} 
\usepackage{indentfirst}
\usepackage{color}
\usepackage{graphicx}
\usepackage{pdfpages}
\usepackage{mathtools}
\usepackage{amsthm}
\usepackage{tikz}
\usetikzlibrary{matrix}
\usepackage{enumerate}
\usepackage{tikz-cd}
\usepackage[framemethod=default]{mdframed}
\usepackage[colorlinks,citecolor=blue,urlcolor=blue]{hyperref}
\usepackage{verbatim}

\usepackage[numbers, sort]{natbib} 



\usepackage{graphicx,color}
\usepackage{epstopdf}
\epstopdfsetup{update}
\usepackage[left=1.1in, right=1.1in, top=1.1in, bottom=1.1in]{geometry}
\usepackage{mathrsfs}  
\usepackage{subfig}
\usepackage{booktabs} 
\usepackage{array} 
\usepackage{paralist} 
\usepackage{verbatim} 
\usepackage{fancyvrb}
\usepackage{float}
\usepackage{caption}
\usepackage{bbm} 

\usepackage{url}
\usepackage[toc,page]{appendix}
\usepackage{color}
\allowdisplaybreaks

\usepackage[shortlabels]{enumitem}

\usepackage[colorinlistoftodos, textsize=small]{todonotes}

\newtheorem{thm}{Theorem}[section]
\newtheorem{prop}[thm]{Proposition}

\newtheorem{lemma}[thm]{Lemma}
\newtheorem{cor}[thm]{Corollary}
\newtheorem{assumption}[thm]{Assumption}

\theoremstyle{definition}
\newtheorem{note}[thm]{Note}
\newtheorem{defn}[thm]{Definition}

\AtEndEnvironment{note}{\hfill\qedsymbol}
\counterwithin{equation}{section}

\newcommand{\E}{\mathbb{E}}
\newcommand{\N}{\mathbb{N}}

\newcommand{\R}{\mathbb{R}}

\newcommand{\T}{\mathbb{T}}
\newcommand{\Z}{\mathbb{Z}}

\newcommand{\qq}{\mathcal{Q}}

\newcommand{\cB}{\mathcal{B}}
\newcommand{\cC}{\mathcal{C}}
\newcommand{\cF} {\mathcal F}
\newcommand{\cG}{\mathcal G}
\newcommand{\cH}{\mathcal H}
\newcommand{\cL}{\mathcal{L}}
\newcommand{\cM}{\mathcal{M}}
\newcommand{\cN} {\mathcal N}

\newcommand{\cP}{\mathcal{P}}

\newcommand{\cV}{\mathcal{V}}
\newcommand{\cW}{\mathcal{W}}
\newcommand{\cX}{\mathcal{X}}
\newcommand{\mP}{\mathbb{P}}
\newcommand{\mE}{\mathbb{E}}
\newcommand{\cU}{\mathcal{U}}
\newcommand{\cE}{\mathcal{E}}
\newcommand{\cZ}{\mathcal{Z}}
\newcommand{\cI}{\mathcal{I}}
\newcommand{\ka}{\kappa}

\newcommand{\e}{\varepsilon}
\newcommand{\ep}{\varepsilon}

\newcommand{\pa}{\partial}

\newcommand{\be}{\begin{equation}}
\newcommand{\ee}{\end{equation}}

\newcommand{\iin}{^{i,N}}
\newcommand{\iine}{^{i,N,\varepsilon}}
\newcommand{\jn}{^{j,N}}
\newcommand{\jne}{^{j,N,\varepsilon}}

\newcommand{\itr}{\int_{\T \times \R}}
\newcommand{\itt}{\int_{\T}}
\newcommand{\1}{\mathbbm{1}}
\newcommand{\mY}{Y}
\newcommand{\cY}{\mathcal Y}

\newcommand{\cD}{\mathcal{D}}
\newcommand{\mX}{X}

\newcommand{\Wt}{\cW_{2,t}}
\newcommand{\Ws}{\cW_{2,s}}
\newcommand{\WT}{\cW_{2,T}}
\newcommand{\paxx}{\partial_{xx}}
\newcommand{\noise}{\beta}
\newcommand{\solu}{u}
\newcommand{\solv}{v}
\newcommand{\rrangle}{\right \rangle \right \rangle}
\newcommand{\llangle}{\left \langle \left \langle}

\newcommand{\gs}{\nu}

   \DeclareFontFamily{U}{wncy}{}
   \DeclareFontShape{U}{wncy}{m}{n}{<->wncyr10}{}
    \DeclareSymbolFont{mcy}{U}{wncy}{m}{n}
    \DeclareMathSymbol{\sh}{\mathord}{mcy}{"58}

\usepackage{enumitem,amssymb}
\newlist{todolist}{itemize}{2}
\setlist[todolist]{label=$\square$}

\usepackage{pifont}

\definecolor{mygreen}{HTML}{006622}

\usepackage{etoolbox}

\makeatletter
\newcommand\@todonotes@owner{comment}
\define@key{todonotes}%
    {owner}{\def\@todonotes@owner{#1}}

\newtoggle{ownercomment}
\newtoggle{ownerminor}
\newtoggle{owneredit}

\renewcommand{\@todonotes@addElementToListOfTodos}{%
    \if@todonotes@colorinlistoftodos%
     \addtocontents{todo}
      {%
       \protect\iftoggle{owner\@todonotes@owner}
         {%
          \protect\contentsline {todo}
            {\protect\fcolorbox{\@todonotes@currentbordercolor}%
                {\@todonotes@currentbackgroundcolor}%
                {\protect\textcolor{\@todonotes@currentbackgroundcolor}{o}}%
            \ \@todonotes@caption
            }{\thepage}{\@currentHref}%
          }{}%
       }%
    \else%
      \addtocontents{todo}
      {%
       \protect\iftoggle{owner\@todonotes@owner}
         {%
          \protect\contentsline {todo}
            {\@todonotes@caption
            }{\thepage}{\@currentHref}%
          }{}%
       }%
    \fi}%

\makeatother
\usepackage{graphicx}

\begin{document}
\title[Nonlinear SPDEs via weighted interacting particle systems]{Approximation  of non-linear SPDEs with additive noise via weighted interacting particles systems: the stochastic Mckean-Vlasov equation}

\author{L. Angeli$^{(1)}$}
    \address{$^{(1)}$Mathematics Department, Heriot-Watt University and Maxwell Institute, Edinburgh, letizia.angeli92@gmail.com}

    \author{D. Crisan$^{(2)}$}
    \address{$^{(2)}$ Mathematics Department, Imperial College, London, d.crisan@imperial.ac.uk}

    \author{M. Kolodziejczyk$^{(3)}$}
    \address{$^{(3)}$Mathematics Department, Heriot-Watt University and Maxwell Institute, Edinburgh, mk2006@hw.ac.uk}

    \author{M. Ottobre$^{(4)}$}
    \address{$^{(4)}$ Corresponding Author. Mathematics Department, Heriot-Watt University and Maxwell Institute, Edinburgh, m.ottobre@hw.ac.uk}
    

\begin{abstract}
This paper is devoted to the problem of approximating non-linear Stochastic Partial Differential Equations (SPDEs) via interacting particle systems. In particular,   we consider the {\em Stochastic McKean-Vlasov equation}, which is the McKean-Vlasov (MKV) PDE (also known in other contexts as the porous media equation), perturbed by {\em additive} trace class noise. As is well-known,  the MKV PDE can be obtained as mean field limit of the empirical measure of a stochastic system of interacting particles, where particles are subject to independent sources of noise.   If the particles are  subject  to the same noise (often referred to as  {\em common noise}), as opposed to independent sources of noise,   the limit of the associated empirical measure is no longer deterministic,  and it is  described by an SPDE. The SPDE emerging in this context can be viewed as a stochastic  perturbation of the MKV PDE, where the noise structure is   {\em multiplicative} and in gradient form.  There is now a natural question, which is the one we consider and answer in this paper: can we obtain the SMKV equation, i.e. additive perturbations of the MKV PDE,  as limit of interacting particle systems? It turns out that, in order to obtain  the SMKV equation,  one needs to study {\em weighted}  empirical measures of particles, where the particles evolve according to a system of SDEs with independent noise, while the weights are time evolving and subject to common noise. 

Besides achieving the immediate goal of constructing a system of interacting particles which converges to the SMKV equation, the approach used in this work is potentially suggestive of ways to construct interacting particle systems to approximate other non-linear SPDEs, such as (variations of) Dean-Kawasaki,  Keller-Segel-Dean-Kawasaki in particular. Furthermore, dealing with additive noise structures required new ideas of proof,  and these ideas have the potential of being  easily adapted to more general sources of noise, such as  jump processes.   
The work of this manuscript therefore complements and contributes to various streams of literature, in particular: i) much attention in the community is currently devoted to obtaining SPDEs as scaling limits of appropriate dynamics; this paper contributes to a complementary stream, which is devoted to obtaining representaions of SPDE through limits of empirical measures of  interacting particle systems;  ii) since the literature on limits of weighted empirical measures is often constrained to the case of static (random or deterministic) weights, this paper contributes to further expanding this line of research to the case of time-evolving weights. 

\smallskip
{\bf Keywords.} Stochastic Partial Differential Equations, Interacting Particle Systems, Stochastic McKean-Vlasov Equation, Particle Approximations to Stochastic Partial Differential Equations. 

\smallskip
{\bf AMS Subject Classification. } 35Q83, 35Q70, 60H15, 35R60, 65C35, 82M60.  
\end{abstract}

\maketitle

\section{Introduction}
This paper is concerned with the problem of obtaining non-linear Stochastic Partial Differential equations (SPDEs) with {\em additive} noise as limits of appropriate interacting particle systems. 
More precisely, we consider  nonlinear McKean-Vlasov Partial differential Equations (PDEs)   with stochastic or deterministic forcing; that is, evolutions of the following type
\begin{equation}\label{generic PDE+forcing}
\pa_t \gs_t(x) = \paxx \gs_t(x) + \pa_x \left[ (V'(x) + (F' \ast \gs_t)(x)) \gs_t(x)\right] +  \pa_t \,\mathcal U_t
\end{equation}
where the unknown $\gs=\gs_t(x)$ is a real-valued function, \footnote{Throughout this 
		paper, for any quantity, say $Y$, that depends on time, we use interchangeably the notation $Y_t$ or
  $Y(t)$ to denote time-dependence and we use the notation $Y_{\cdot}$ or $t \mapsto Y_t$ when we want to refer to the whole path. } $\gs: \R_+ \times \mathbb T \rightarrow \R$, with $\mathbb T$ the one dimensional torus of length $2 \pi$;   $V$ and $F$  are given smooth functions,  $V, F\in C^{\infty}(\T;\R)$,  usually referred to as the {\em environmental} and {\em inter-particle} potentials, respectively (below the reason for this nomenclature);  $'$ denotes derivative with respect to the spatial variable and $\mathcal U$ is a forcing term, which could depend on time only, or on both space and time. We will consider three cases: $\mathcal U_t = \mathcal U(t,x)= q(x) Y_t$  where $q$ is a smooth function on the torus and $Y_t$ is a   $\alpha$-H\"older continuous time-dependent  function,   $\mY_{\cdot} \in C^{\alpha}([0,T];\R)$, for some $\alpha \in (0,1)$;\footnote{We recall that $C^{\alpha}([0,T];\R)$ is the space of real valued paths $f$ on the interval $[0,T]$,  $[0,T]\ni t\mapsto f(t) \in \R$,  for which there exists a constant $C>0$ such that $|f(t)-f(s)|\leq C |t-s|^{\alpha}$, for every $t,s \in [0,T]$. } $\mathcal U_t = q(x) w_t$ where $w_t$ is a one-dimensional standard Brownian motion (and then, by easy extension,  a finite sum of terms of this form); $\mathcal U$ is a (weighted) infinite sum of Brownian motions, namely  $\mathcal U = W(t,x)$, where  $W$ is  trace class Wiener noise,
\begin{equation}\label{infinite dimensional noise}
      W(t,x) := \sum \limits_{z \in \Z} \lambda_z e_z(x) w^z_t \, ,\footnote{The sum is indexed by $z\in \mathbb Z$ rather than by natural index simply because of the way we choose the basis $\{e_z\}_{z \in \mathbb Z}$ of $L^2(\T;\R)$ later in the paper, so this is a purely aesthetic choice.}
\end{equation}
 where $\{e_z\}_{z \in \Z}$ is an orthonormal basis of $L^2(\T;\R)$, $\{\lambda_z\}_{z \in \Z} \subset \R_+$ is a  (non-identically zero) square-summable sequence of non-negative real numbers  and the $w_t^z$'s are standard one-dimensional independent Brownian motions. To be more explicit, we will consider the following three evolutions: 
 \begin{equation}
\begin{dcases}\label{rough PDE}
   & \pa_t \rho_t (x)= \pa_{xx} \rho_t(x) + \pa_x \left [ (V^{'}(x)+(F^{'}*\rho_t)(x)) \rho_t(x) \right ]+q(x) \, \pa_t\mY_t, \quad  t \in (0,T),\,x \in \T, \\ 
   & \rho|_{t=0}=\rho_0(x), \qquad x \in \mathbb T \, ,
\end{dcases}
\end{equation}
 where $Y_{\cdot}$  is a $\alpha$- H\"older path, 
\begin{equation}\label{pde+bm}
    \begin{dcases}
        & \pa_t\solu_t(x)=\pa_{xx}\solu_t(x) + \pa_x \left [ (V^{'}(x)+(F^{'}*\solu_t)(x)) \solu_t(x) \right ]+q(x) \,\pa_tw_t,\quad t \in (0,T), x \in \T, \\ 
   & \solu|_{t=0}=\solu_0(x), \quad x \in \T \, ,
    \end{dcases}
\end{equation}
where $w_t$ is a one-dimensional real-valued standard Brownian motion, and finally
	\begin{align}\label{SPDEintro}
        \begin{dcases}
		& \pa_t v_t(x) = \pa_{xx}v_t(x)+\pa_x \left[ (V^{'}(x)+(F^{'}*v_t)(x)) v_t(x) \right ]+  \,  \pa_t W_t,\quad  t \in(0,T), x \in  \T  \\
		& v|_{t=0}=v_0(x), \qquad x \in \mathbb T \,, 
	  \end{dcases}
    \end{align} 
 where the trace class noise $W$ is as in \eqref{infinite dimensional noise}. In the first case,  when $\mathcal U = q(x)Y_t$, the resulting equation \eqref{rough PDE} is deterministic - it is simply a PDE with a deterministic forcing term, so the unknown $\rho=\rho(t,x): \mathbb R_+ \times \mathbb T \rightarrow \mathbb R$ is a deterministic function of time and space.   In the second and third case, \eqref{pde+bm} and \eqref{SPDEintro}, respectively,  the resulting evolutions are  SPDEs. The unknowns  $u=u(t,x): \mathbb R_+ \times \mathbb T \rightarrow \mathbb R$ and $v=v(t,x): \mathbb R_+ \times \mathbb T \rightarrow \mathbb R$ are random functions of space and time and, as customary, in the notation we omit the dependence of $u$ and $v$ on the realization $\omega$ in the underlying probability space. 

 Well-posedness and long-time behaviour of  \eqref{SPDEintro} have been investigated in \cite{angeli2023well}. The well-posedness of \eqref{SPDEintro} when $W$ is cylindrical Wiener noise is still an open problem, and this is the main reason why in this paper we consider trace class noise only. The well-posedness of  \eqref{rough PDE} and \eqref{pde+bm} can be obtained similarly to the well-posedness of \eqref{SPDEintro}, see Section \ref{section: main results}.

 The purpose of this paper is to find particle approximations to the above dynamics, i.e. to find appropriate particle systems which converge, in a sense that we will specify below,  to the solution of \eqref{generic PDE+forcing}. To explain what spurred our interest in evolutions of type \eqref{generic PDE+forcing} (which may a priori seem simpler than their counterparts with multiplicative noise), let us start by introducing the following system of $N$ interacting particles
	\be\label{initialPS}
	dX_t\iin= \left[ - V'(X_t\iin) + \frac{1}{N}\sum \limits_{j=1}^N F'(X_t\jn-X_t\iin)\right]  dt+ \sqrt{2} d\beta_t^i, 
	\ee
	where, for every $i=1\dots N$, $X_t\iin \in \T$ represents the position of the $i-$th particle at time $t>0$. The Interacting particle system (IPS) \eqref{initialPS} has been extensively studied in the literature, as a model for collective dynamics and consensus formation, see e.g. \cite{dawson1983critical, herrmann2010non, delgadino2021diffusive, carrillo2020long, bertini2014synchronization} and references therein.  The potential  $F$ describes the interaction between particles,  ({\em inter-particle potential}), while $V$, which is the same for each particle (it does not depend on $i$) is commonly used to describe environmental effects ({\em environmental potential});  the $\beta_t^i$'s are independent one-dimensional standard Brownian motions. When the number $N$ of particles in the system is large, it is natural to look for a macroscopic (coarse-grained) description of system \eqref{initialPS}. To do so, according to an established methodology in statistical mechanics, one considers the {\em empirical measure} $h_t^N$ associated to the particle system \eqref{initialPS},  $h^N_t:= \frac1 N \sum
	\delta_{X_t^{i,N}}$  , which is,  for each $t>0$,  a random probability measure. Then, in the limit  $N \rightarrow \infty$ (generally referred to as {\em mean field limit}), $h_t^N$ converges weakly in $C([0,T];\R)$ (but results on infinite time horizons do hold true in certain cases as well, see e.g. \cite{durmus2020elementary, heydecker2019pathwise, barre2021fast} and references therein) to the solution   of the following  non-linear (and  non-local) PDE  \begin{equation}\label{PDEsimpleparticlesystem}
		\pa_t h_t(x)= \sigma \pa_{xx} h_t(x) +\pa_x \left[\left(V'(x) + (F'\ast h_t)(x)\right) h_t(x)\right]\,,     
	\end{equation}
	for the unknown $h=h(t,x):\R_+ \times \T \rightarrow \R$,
	provided this is true for the corresponding initial data, i.e provided $h_0^N$ converges to 
	$h_0$, see \cite{graham1996asymptotic, lacker2018mean}. In short, we say that the PDE \eqref{PDEsimpleparticlesystem} is obtained as limit of the IPS \eqref{initialPS}, meaning that the empirical measure $h^N$ associated to \eqref{initialPS} converges to the solution $h$ of the PDE \eqref{PDEsimpleparticlesystem}.  The convergence  of $h^N$ to $h$  means that the particle system can be used as an approximation of the limiting PDE. Note that while the particle system \eqref{initialPS} is stochastic, the limit \eqref{PDEsimpleparticlesystem} is deterministic, fact  that can be viewed as a manifestation of the law of large numbers.   Another, more pathwise perspective on this is the following: instead of looking at the convergence of the empirical measure, one can show that,  as 
	$N\rightarrow \infty$, the particles  become independent ({\em propagation of chaos}) and, in the 
	limit, the motion of each of them is described by the following Stochastic Differential Equation (SDE)
	\begin{equation*}\label{non-linearSDE}
	dX_t  = - \left(V'(X_t) + \int_{\T} F'(y-X_t)h_t(dy)
	\right) dt+ \sqrt{2} d\beta_t \,,  \quad X_t \in \T\, ,
	\end{equation*}
	where $\beta_t$ is a one-dimensional standard Brownian motion and $h_t$ is the law of $X_t$ at time $t$, so that the above evolution is non-linear in the sense that the process depends on its own law, i.e.~it is {\em non-linear in the sense of McKean}. The law $h_t$ of $X_t$ can then be shown  to be  the solution to the PDE \eqref{PDEsimpleparticlesystem}.  For later purposes we note that the PDE \eqref{PDEsimpleparticlesystem} preserves mass, i.e. (the solution is positive if started from a positive initial datum) and the value of the integral $\int_{\T}h_t(x) \, dx$ is constant in time. This is compatible with the fact that during the evolution \eqref{initialPS} the number of particles is preserved.

  Equation \eqref{generic PDE+forcing} is obtained by adding a forcing term  to the PDE \eqref{PDEsimpleparticlesystem}; for this reason, when the forcing $\mathcal U$ is either of the form $q(x)w_t$ or a trace class Wiener noise,  we refer to the resulting equations, equations \eqref{pde+bm} and \eqref{SPDEintro} respectively,  (the latter being the one that initially motivated this work)    as  to {\em McKean-Vlasov SPDEs} or, more accurately, as  {\em Stochastic McKean-Vlasov equations} (SMKV).\footnote{The former name could be misleading so we clarify that the solution to \eqref{SPDEintro}, seen as a function-space-valued process, is not an infinite dimensional McKean-Vlasov SDE, as the process does not depend on its own law. The investigation of infinite-dimensional McKean-Vlasov SDEs has been recently tackled in \cite{hong2022strong, hong2021distribution}, see   \cite{angeli2023well} for a more thorough discussion on this.}   We mention in passing that  our interest in the evolution \eqref{SPDEintro} started when considering problems in infinite dimensional sampling (which are  unrelated to the work of this paper,  but we mention nonetheless for context). The flavour of this initial motivation is not dissimilar to the one in the very recent study \cite{delarue2024rearranged}, which treats related types of equations.  After the seminal paper \cite{dawson1995stochastic}, stochastically perturbed McKean-Vlasov dynamics (though not necessarily with additive noise) have also been considered in \cite{cardaliaguet2019master, carmona2018probabilistic, kolokoltsov2019mean, maillet2023note, debussche2020existence} (to mention just a few works on the matter and  without claim to completeness of references) as such equations naturally arise  in the context of mean field games or, potentially, as models in mathematical biology.


  If one is interested, as we are,  in obtaining, in the limit, a stochastic PDE, a useful framework to consider is the one of IPSs subject to  {\em common noise}, which is a relatively simple variation of the mean field paradigm  we have just summerised.  A simple instance of the common noise setting can be seen by considering the following IPS 
 \begin{equation}\label{PSwithcommonnoise}
	dX_t^{i,N}= - V'(X_t^{i,N}) dt + \frac{1}{N}\sum \limits_{j=1}^N F'(X_t^{j,N}-X_t^{i,N})  dt+ \sqrt{2} d\beta_t^i+ \sqrt{2\tilde\sigma} dw_t,  
\end{equation}
	where, crucially, the one-dimensional Brownian noise $w_t$  is {\em the same} for each particle (i.e. it is the `common noise'), and  $\tilde\sigma>0$. 
	Because the  same noise $w_t$ acts on all the particles, the limit of the particle system \eqref{PSwithcommonnoise} is no longer deterministic, and it is stochastic instead \cite{lacker2018mean, carmona2018probabilistic}. In particular, the limit  of the empirical measure  associated with \eqref{PSwithcommonnoise} is given  by the following SPDE
 \begin{align}\label{SPDEPScomnoise}
		\pa_t h_t(x)  = \pa_x \left[\left(V'(x) + (F'\ast h_t)(x)\right) h_t(x) + (\sigma+\tilde\sigma) \pa_x h_t(x)\right] - \sqrt{2 \tilde\sigma} (\pa_x h_t)d w_t \,.
	\end{align}
The limiting equation \eqref{SPDEPScomnoise} is in this case interpreted as the conditional law of the following McKean-Vlasov evolution
\begin{equation*}\label{non-linearSDEcommon noise}
	dx_t  = - \left(V'(x_t) + \int_{\T} F'(y-x_t)h_t(dy)
	\right) dt+ \sqrt{2} d\beta_t +  \sqrt{2\tilde\sigma} dw_t\,,  \quad X_t \in \T\, ;
	\end{equation*}
that is,  in the above $h_t$ is the conditional law of $x_t$,  conditional on the path $\{w_t\}_{t\geq 0}$; in short $h_t = \mathcal L (x_t \vert w)$, see \cite{lacker2018mean, carmona2018probabilistic}. In the common noise setting the common noise is treated as a given forcing or, to put it differently, as an extra, given, time-dependent drift,  hence the conditioning. From  a modelling perspective, the common noise is often use to describe `shocks' to the system, which affect each particle, see e.g. \cite{ledger2021mercy, carmona2018probabilistic}. 
 
The common noise setting will be conceptually useful to us but the dynamics \eqref{SPDEPScomnoise} is not what we wanted to obtain. 
For the purposes of this discussion, the main  difference between \eqref{SPDEintro} (or more, generally, \eqref{generic PDE+forcing}) and \eqref{SPDEPScomnoise} is that in the former equation noise acts additively on the solution (the forcing $\mathcal U_t$  does not depend on the solution  itself), while in the latter noise is multiplicative; moreover, \eqref{SPDEPScomnoise} has transport (gradient) structure, while \eqref{generic PDE+forcing} does not - fact that is the source of many complications. Notice indeed that both the drift (the PDE part) and the diffusion term  of \eqref{SPDEPScomnoise} are in gradient form.  Hence, by  integration by parts, one can formally see that 
 the gradient structure of \eqref{SPDEPScomnoise} implies that mass is preserved along the solution of \eqref{SPDEPScomnoise}; that is, if $h$ is the solution of \eqref{SPDEPScomnoise}, then
 $
\int_{\T} h_t(x) \, dx = C,   
 $ almost surely, 
 where $C$ is a constant independent of time. Notice again that also  in the IPS \eqref{PSwithcommonnoise} the number of particles is conserved (particles are neither killed nor reweighted).

However equation \eqref{generic PDE+forcing}  {\em does not} in general preserve mass  and it does not preserve 
 positivity either, so one {\em cannot expect} to obtain \eqref{generic PDE+forcing} (or any of its incarnations \eqref{rough PDE}, \eqref{pde+bm}, \eqref{SPDEintro}) as limit of 
 empirical (probability) measures, similarly to what we have described above for 
 \eqref{initialPS}/\eqref{PSwithcommonnoise} and their corresponding limit \eqref{PDEsimpleparticlesystem}/\eqref{SPDEPScomnoise} (where no killing/reweighting of 
 particles takes place). Intuitively, one expects that, in order to obtain \eqref{generic 
 PDE+forcing}, particles will need to be reweighted in some way. 

 In this paper we will indeed show that equations of the form \eqref{generic PDE+forcing} can be obtained as 
 limits of a weighted interacting particle system.  The weights we will use are not ``static weights" (i.e. fixed, independent of time); on the contrary, the evolution of the weights is coupled to the evolution of the particles and it is described by a system of stochastic differential equations.  
 In the particle-weight system that we will design the particles are driven by independent sources of noise, while the evolution of the weights is subject to common forcing (common noise in the stochastic case of \eqref{pde+bm} and \eqref{SPDEintro}). 
In Subsection \ref{subsec:heuristics and informal statement} below we 
 give a heuristic argument to explain how the particle-weight system which converges to \eqref{SPDEintro} is constructed, and provide a slightly simplified, informal version of the particle-weight system we will consider, see \eqref{informalpartsys1}-\eqref{informalpartsys2}. In Section \ref{section: main results} we present full results.

 The inspiration to construct our particle-weight system comes from \cite{kurtz1999particle, crisan2018particle} (see also references therein), which are devoted to finding particle representations of non-linear SPDEs (we will explain the difference between particle {\em representation} and particle {\em approximation} in Note \ref{Note1}).  The class of non-linear SPDEs considered in \cite{kurtz1999particle, crisan2018particle}  includes non-linearities of  Allen-Cahn or Fisher KPP  type and moreover \cite{kurtz1999particle} treats multiplicative equations only.   In particular, the class of SPDEs considered in  \cite{kurtz1999particle, crisan2018particle} does not include  equations of the type \eqref{generic PDE+forcing}. Coping with equations of type \eqref{generic PDE+forcing} requires new ideas, and fundamentally different approaches in the proofs.   We will make a more thorough 
 comparison with  \cite{kurtz1999particle, crisan2018particle}  in Note \ref{Note1} and  explain the full strategy of proof in Section \ref{strategy of proof}, here we  give  a flavour.

 Our interest was initially in studying the Stochastic McKean-Vlasov equation i.e. equation \eqref{SPDEintro}; one then quickly realises that, from the point of view of deriving particle approximations, the infinite dimensional nature of the noise in \eqref{SPDEintro} does not pose any particular difficulty, so that it is convenient ( to set up proofs, to simplify notation  and for understanding) to start by considering equation \eqref{pde+bm}, which is forced by a simple one dimensional Brownian Motion,  and deriving particle approximations to such an evolution. Our strategy 
 of proof strongly relies on fixing a realization  of the Brownian path $w_t$ driving \eqref{pde+bm} and on viewing \eqref{pde+bm} as a specific instance of \eqref{generic PDE+forcing}, when  the  forcing in \eqref{generic PDE+forcing} is given by such a realization. Because of the H\"older regularity of Brownian motion, with this approach  one is led to considering PDEs of the form \eqref{rough PDE}, where the forcing $Y_t$ has the same H\"older continuity of Brownian motion, but it is  fixed deterministically. Since $\alpha$-H\"older continuous functions can be approximated via piece-wise smooth functions,   all our results are obtained by considering a sequence $\{Y^{\kappa}_t\}_{\kappa \in \N}$ of 
 piecewise smooth,  time-dependent deterministic functions which converge (in appropriate sense, see Section 
 \ref{section: main results}) to $Y_t$.  We first prove all the statements of interest when  
 \eqref{generic PDE+forcing} is driven by a piecewise smooth forcing, i.e. $\mathcal U= q(x)Y^{\kappa}_t$ 
 and then, {\em at the appropriate point in the proof}, we take the limit in $\kappa$. Some of the technical advantags of doing all proofs by ``fixing the driving path"
 are explained as we move along the proof,  in Note \ref{note:cutoff}, Note \ref{note:commutativity limits} and Note \ref{note importante}. In few words, if the driving force is smooth then no It\^o second order correction terms arise from the chain rule. In our case these terms would be quite complicated to handle.     An alternative, intuitive way to explain why this approach can succeed comes from the observation that since the equations 
 we are studying are subject to additive noise,  their It\^o and Stratonovich interpretation coincide; hence,  the 
 reason why this approach works can then  be seen in the general framework  of Wong-Zakai approximation theory \cite{twardowska1996wong, karatzas2014brownian}.  Moreover, since the Brownian path $w_t$ in \eqref{pde+bm}  (similarly for $W$ in \eqref{SPDEintro})   will be precisely the common noise acting on the weights of our particle-weight system (see \eqref{informalpartsys2} and \eqref{weights_simple}),  this perspective is coherent with the fact that, when dealing with common noise (see comments after equation \eqref{SPDEPScomnoise}), it is customary to carry out all proofs by `conditioning on the forcing common noise'. Related to this, another, perhaps more pragmatic way of seeing the approach of fixing the noise, is the following: we will construct \eqref{rough PDE} as (weighted) marginal of a certain joint distribution; of course, there are many joint distributions which would give the same marginal. From this perspective, fixing the noise corresponds to choosing a convenient joint distribution - convenient for the purposes of simplifying the proof, see Note \ref{note:cutoff}.

 However 
 these are just  intuitive explanations. Indeed, 
 complications come from the fact that, while the equation in which we are interested is subject to additive noise, 
 other quantities of interest in the proof satisfy equations which are driven by multiplicative noise (see Section 
 \ref{section: main results}). So taking the limit of the regularization parameter at the appropriate 
 moment is crucial,  to avoid dealing with complicated second order Stratonovich correction terms.  
Finally, our proofs do not require any rough path theory; nonetheless the paper \cite{coghi2021rough} has been of great inspiration -  the methods of proof we use  here are very different, but the inspiration of trying to simplify proofs by ``fixing the driving path" has certainly come from \cite{coghi2021rough}.

 We do not claim that this is the only possible approach of proof, but we argue for its simplicity. 
 The approach of fixing the path has two perks: on the one hand it is extremely helpful in overcoming 
 a number of technical hurdles; on the other hand it allows in principle 
 to produce results for SPDEs with many types of noise - whichever noise has paths that can be 
 approximated with time-dependent paths of appropriate smoothness, as long as the corresponding 
 equation \eqref{generic PDE+forcing} is well posed. We indeed believe that the same approach would work if the forcing was fractional Brownian motion or a jump process, as e.g. in \cite{pei2020pathwise}.  

\noindent

 Our results contribute to broadening the literature on limits of  weighted empirical measures as well. The literature on this topic is large and a full review is out of the scope of this paper, so we mention only some works, with no claim to completeness of reference. Beyond the mean field regime (where all weights are fixed and equal to one) interest has been attracted by the case in which the weights are  fixed, deterministic or random, as e.g. in the case of Erd\"os graphs (see \cite{coppini2023central}, which contains a very thorough review).  Motivated by applications e.g. to mathematical biology, epidemiology or opinion formation \cite{gross2008adaptive, gross2006epidemic}, time-evolving weights (representing an evolving network of connections), possibly coupled to the particles' evolution,  have also been investigated, see \cite{nugent2023evolving, barre2021fast} and references therein.
However, aside from the  already mentioned works \cite{kurtz1999particle, crisan2018particle},  we are not aware of other references where the weights are coupled to the particles and subject to common noise.

A final note on connection with broader literature: as is well known, much  attention in the community is currently 
 devoted to  
 deriving SPDEs as {\em scaling limits} of appropriate processes -   we refer the reader to \cite{berglund2022introduction} and references therein for a nice and accessible review. In this case one seeks to obtain the limiting equation via appropriate rescaling of space and time. One can also ask the question of whether it is possible to obtain  SPDEs as limits of interacting particle systems, in the sense we have explained in this introduction. This is the question that the works \cite{kurtz1999particle, crisan2018particle}, see also  references therein,  and the 
 present paper are trying to address. 
 
  In this context we point out that the framework of this work could be used to inspire particle approximations to (versions of) the Dean-Kawasaki equation, see e.g. \cite{fehrman2021well, cornalba2019regularized, dean1996langevin}  or \cite{martini2022additive}, on the Keller-Segel-Dean-Kawasaki equation.  This will be object of future work. 
 
In Subsection \ref{subsec:heuristics and informal statement} below we give a heuristic argument for the choice of particle-weight system.  Precise statements of main results, strategy of proof and organization of the paper are described in Section \ref{section: main results}. 

 \subsection{Informal statement of main result. }\label{subsec:heuristics and informal statement} 
 We start by giving an ansatz for the particle-weight  system that converges to \eqref{SPDEintro}. We then state an informal version of the main result of this paper and  provide a heuristic calculation  to show  why the result is expected to  hold.  
 
 Consider the following system,  

\begin{align}
     dX\iine_t \, &=\, - V'(X_t\iine) dt - \frac{1}{N}\sum \limits_{j=1}^{N} A\jne_t\, F'(X_t \iine-X_t^{j,N, \ep})\,  dt+ \sqrt{2} d\beta_t^i \label{informalpartsys1}\\    
      dA\iine_t\, &=\, \frac{\sum \limits_{z \in \mathbb Z} \lambda_z e_z(X_t^{i,N,\ep}) dw_t^z}{\frac{1}{N}\sum \limits_{j=1}^N\Phi_{\e}(X\iine_t-X^{j, N, \varepsilon}_t)} \,, \label{informalpartsys2}
\end{align}
where  $i \in \{1, \dots, N\}$ and, for each $i$,  $(X\iine_t, A_t\iine) \in \T \times \R$. One should think of $X_t\iine$ as representing the position of the $i$-th particle at time $t$, and $A\iine_t$ as representing a weight. The $\beta_t^i$'s are one-dimensional Brownian motions,  independent of each other and of the one-dimensional Brownian motions $\{w_t^z\}_{z \in \mathbb Z}$; the $\lambda_z$'s and $e_z$'s are as in \eqref{infinite dimensional noise}.  The functions $\Phi_{\varepsilon}: \T \rightarrow \R$ are `mollifiers', i.e. they are a family  (indexed by $\varepsilon>0$) of smooth functions, converging to the Dirac delta at zero, strictly positive for each $\varepsilon>0$ fixed, and normalised so that $\int_{\T}\Phi_{\varepsilon}(x) dx =1$, the role of which is explained in few lines. The particle-weight system \eqref{informalpartsys1}-\eqref{informalpartsys2} will be used to obtain \eqref{SPDEintro}, see Theorem \ref{thm:mainthm} below. 

The reader who is unfamiliar with infinite dimentional noise may find it simpler to focus on equation \eqref{pde+bm} rather than on \eqref{SPDEintro}. This is not an oversimplification and in fact the core of the problem we will discuss is not related to using  infinite dimensional noise, but rather to the fact that the noise we consider 
is additive and to the specific nature of the nonlinearity we are trying to obtain.   Note indeed that equation \eqref{pde+bm}
 is substantially a specific instance of \eqref{SPDEintro} when $\lambda_0=\sqrt{2\pi}$, $e_0(x)=(2\pi)^{-1/2}$ and $\lambda_z=0$ for every $z \neq 0$. So, to obtain the SPDE \eqref{pde+bm} instead of \eqref{SPDEintro} just consider \eqref{informalpartsys1} coupled to the  weights $\tilde{A}\iine_t$, evolving according to 
\begin{equation}\label{weights_simple}
d \tilde{A}\iine_t\, =\, \frac{q(X_t^{i, N, \e})}{\frac{1}{N}\sum \limits_{j=1}^N \Phi_{\e}(X\iine_t-X^{j, N, \varepsilon}_t)} dw_t \,, 
\end{equation}
rather than to the weights \eqref{informalpartsys2}. In this simpler case  one can more directly see that, while the particles $X_t^i$ are subject to independent sources of noise (the $\beta_t^i$'s), the weights $A_t^i$ are subject to the {\em same} noise (the Brownian $w_t$ is the same for every $i$). 

From now on we go back to referring to the system \eqref{informalpartsys1}-\eqref{informalpartsys2}. Let us introduce the
weighted empirical measure $v^{N,\varepsilon}_t$ defined as 
\begin{equation}\label{eqn:weightedempiricalmeasure}
    v^{N,\varepsilon}_t(dx):=\frac{1}{N}\sum \limits_{i=1}^{N} A\iine_t \cdot \delta_{X\iine_t}(dx)\,, 
\end{equation}
where the particles $X\iine$ and the weights $A\iine$ are as in \eqref{informalpartsys1}-\eqref{informalpartsys2}.  Then the following holds. 
\begin{thm}[Informal Statement of main result] \label{thm:mainthm}
Let $v_t$ be the solution of \eqref{SPDEintro} and $v_t^{N,\ep}$ as in \eqref{eqn:weightedempiricalmeasure}. Then the weighted empirical measure   $v_t^{N,\ep}$
 converges weakly, as  $\ep\rightarrow 0$ and  $N\rightarrow \infty$, to the solution of \eqref{SPDEintro}; that is, 
\begin{equation*}
    \lim_{\varepsilon\rightarrow 0 }\lim_{N\rightarrow \infty}  \sup_{t \in [0,T]}\langle v_t^{N, \varepsilon} - v_t, f \rangle    =0\, , a.s.
\end{equation*}
for every $f \in C^{\infty}(\mathbb T)$ and where  in the above $\langle \cdot, \cdot \rangle$ denotes duality pairing between measures and functions. 
\end{thm}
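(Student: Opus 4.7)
The plan is to derive, via It\^o's formula, the weak evolution equation satisfied by $\langle v_t^{N,\varepsilon},f\rangle$ for smooth test functions $f$, and then pass to the limits $N\to\infty$ and $\varepsilon\to 0$ in that order. Following the strategy outlined in the introduction, I would first approximate the driving trace-class noise $W$ by piecewise smooth paths $\{W^\kappa\}_\kappa$, so that at the intermediate stage one is in the setting of \eqref{rough PDE} (no second-order It\^o/Stratonovich corrections appear), run the whole argument for the regularized system driven by $W^\kappa$, and only at the end pass to the limit $\kappa\to\infty$ using continuous dependence of the solution map on the driving path.

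Applying It\^o's product rule to $A_t^{i,N,\varepsilon}f(X_t^{i,N,\varepsilon})$, summing over $i$, and using that the Brownian motions $\{\beta^i\}$ are independent of the driving noises $\{w^z\}$ (so the cross-variation between $A^i$ and $f(X^i)$ vanishes), I expect to obtain
\begin{align*}
 d\langle v_t^{N,\varepsilon}, f\rangle \;=\;& \langle v_t^{N,\varepsilon},\, f''- V' f'\rangle\, dt - \langle v_t^{N,\varepsilon},\, (F'\ast v_t^{N,\varepsilon})\, f'\rangle\, dt \\
 & + \sum_{z\in\Z}\lambda_z \left(\frac{1}{N}\sum_{i=1}^N \frac{f(X_t^{i,N,\varepsilon})\,e_z(X_t^{i,N,\varepsilon})}{h_t^{N,\varepsilon}(X_t^{i,N,\varepsilon})}\right) dw_t^z + dM_t^{N,\varepsilon},
\end{align*}
where $h_t^{N,\varepsilon}(x):=\frac{1}{N}\sum_{j=1}^N \Phi_\varepsilon(x-X_t^{j,N,\varepsilon})$ is the mollified empirical density of the unweighted particle cloud, and $M^{N,\varepsilon}$ is a martingale driven by $\{\beta^i\}$ with quadratic variation of order $1/N$. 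The first two drift terms already have exactly the structure required by the weak form of \eqref{SPDEintro}; the content of the proof is then to show that, in the appropriate double limit, the parenthesis converges to the constant $\langle f,e_z\rangle$, thereby producing $\langle f, dW_t\rangle$, while $M^{N,\varepsilon}$ vanishes.

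For fixed $\varepsilon,\kappa$, the limit $N\to\infty$ would be obtained by a propagation-of-chaos-type argument. I would prove tightness of $\{v_\cdot^{N,\varepsilon}\}_N$ in $C([0,T];H^{-s}(\T))$ for $s$ sufficiently large, together with uniform moment bounds on the weights $A^{i,N,\varepsilon}$ and on the unweighted particle empirical $\mu_t^N:=\frac{1}{N}\sum_i\delta_{X_t^{i,N,\varepsilon}}$. Calling $\rho_t^\varepsilon$ the density of the limit $\bar\mu_t^\varepsilon$ of $\mu_t^N$, one has $h_t^{N,\varepsilon}\to \Phi_\varepsilon\ast\rho_t^\varepsilon$ uniformly in $x$, so the noise term converges to
\[
 \sum_{z\in\Z}\lambda_z \int \frac{f(x)\, e_z(x)\, \rho_t^\varepsilon(x)}{(\Phi_\varepsilon \ast \rho_t^\varepsilon)(x)}\, dx \, dw_t^z.
\]
Letting $\varepsilon\to 0$ afterwards, the ratio $\rho_t^\varepsilon/(\Phi_\varepsilon\ast\rho_t^\varepsilon)$ tends to $1$, so the mollifier cancels and the noise reduces to $\langle f, dW_t\rangle$; the identification of the limit is made rigorous by the well-posedness of \eqref{rough PDE} established along the lines of \cite{angeli2023well}, and the final pass $\kappa\to\infty$ follows from continuity with respect to the driving path.

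The main obstacle I anticipate is controlling the denominator $h_t^{N,\varepsilon}(X_t^{i,N,\varepsilon})$ uniformly away from zero along trajectories: without such a bound the weights $A^{i,N,\varepsilon}$ may blow up and no moment estimate survives. Establishing a strictly positive lower bound on $h^{N,\varepsilon}_t$ (possibly $\varepsilon$-dependent, but uniform in $N$ and $t\in[0,T]$ with high probability), together with uniform-in-$N$ $L^p$ bounds on the weights, will be the technical heart of the argument, and is precisely what forces the specific order of limits ($N$ before $\varepsilon$) required by the statement.
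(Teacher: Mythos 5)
Your high-level architecture matches the paper's: It\^o the scalar $A^i_t f(X^i_t)$, average, identify the weak form; approximate the driving path by piecewise $C^1$ paths $Y^\kappa$ and defer $\kappa\to\infty$ to the very end so no Stratonovich correction appears; take $N\to\infty$ first and $\varepsilon\downarrow 0$ second. Up to that skeleton and up to replacing your $H^{-s}$-tightness step by Wasserstein estimates on the \emph{joint} empirical measure $\frac1N\sum_i\delta_{(X^i,A^i)}$ (the paper's route, see Section \ref{section_limitN}), your proposal is aligned. However there are two substantive gaps.

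First, you omit the truncation $\chi_M$ on the weights in the particle drift (equation \eqref{particelle}), and hence the entire intermediate limit $M\to\infty$. This is not an expositional convenience. After $N\to\infty$ the unweighted marginal $\zeta_t^{\varepsilon}$ solves a linear parabolic PDE whose drift coefficient is $V'+\Gamma(\cdot,\mu_t^{\varepsilon})$, and your ``the ratio $\rho_t^\varepsilon/(\Phi_\varepsilon\ast\rho_t^\varepsilon)\to 1$'' step requires a strictly positive lower bound on $\zeta_t^{\varepsilon}$ that is \emph{uniform in $\varepsilon$}. The paper obtains it from a maximum-principle estimate (Theorem \ref{lower bound w}), whose exponent involves $\sup|\Gamma|$ and $\sup|\pa_x\Gamma|$. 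Without the cutoff, $|\Gamma(x,\mu)|\le |F'|_{L^\infty}\int|a|\,\mu(dxda)$, so you would need an $\varepsilon$-uniform bound on $\int|a|\,\mu_t^{\varepsilon}(dxda)$; but that first-moment bound itself depends on the density lower bound through the $1/(\Phi_\varepsilon\ast\zeta)$ factor in the weight SDE, so the argument becomes circular. The cutoff $\chi_M$ breaks the circularity by giving the $\varepsilon$-free bound $|\Gamma_M|\le M|F'|_{L^\infty}$ at the price of the additional limit $M\to\infty$, where moment control is then available via \eqref{fondamentale}. The paper's Note \ref{note uniform bound in eps and M} explicitly records that a joint $\varepsilon$-and-$M$-uniform estimate eluded them; your proposal would need to supply exactly that, and it is not clear it exists.

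Second, the ``technical heart'' you identify --- a lower bound on the particle-level mollified density $h_t^{N,\varepsilon}$ uniform in $N$ ``with high probability'' --- is a non-issue. Since $h_t^{N,\varepsilon}=\Phi_\varepsilon\ast\bigl(\frac1N\sum_j\delta_{X^j_t}\bigr)$ with $\frac1N\sum_j\delta_{X^j_t}$ a probability measure and $\Phi_\varepsilon\ge m_\varepsilon>0$ on $\T$, the deterministic bound $h_t^{N,\varepsilon}(x)\ge m_\varepsilon$ holds surely, for all $N,t,x$. That $\varepsilon$-dependent bound is all that is used for the $N\to\infty$ step (it fuels the Lipschitz estimate \eqref{ineq beta}). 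The genuine difficulty is the $\varepsilon$-\emph{uniform} bound and it lives at the PDE level for the limiting McKean--Vlasov marginal, not at the particle level --- which circles back to the missing cutoff.
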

The complete statement of this theorem and the strategy of proof are contained in Section \ref{section: main results}, see Theorem \ref{theorem infinite dimensional approximation}. To explain the presence of the mollifier $\Phi_{\ep}$ in \eqref{informalpartsys2}, note that the term at the denominator of \eqref{informalpartsys2} can be written as 
\be\label{eqn:denom}
\frac{1}{N}\sum \limits_{j=1}^N\Phi_{\e}(X\iine_t-X^{j, N, \varepsilon}_t) = (\Phi_{\varepsilon} \ast \zeta_t^{N, \varepsilon} )(X\iine)
\ee
where 
\be\label{eqn:empmeaspartsys}
\zeta_t^{N, \ep}(dx):= \frac{1}{N}\sum \limits_{i=1}^{N} \delta_{X\iine_t}(dx)\,;
\ee
that is, $\zeta_t^{N, \ep}$ is the empirical measure associated with the particle system $\{X\iine_t\}_{i =1}^N$. From the heuristic calculation that we will do in the next section (see \eqref{additive} and comments afterwards) it will become clear that, in principle, one would like to consider \eqref{informalpartsys1} coupled to the following weights
$$
dA_t^{i,N} = \frac{\sum \limits_{z \in \mathbb Z} \lambda_z e_z(X_t^{i,N}) dw_t^z}{\zeta_t^{N}(X_t^{i,N})} \, ,
$$
instead of \eqref{informalpartsys2}. However, from a technical point of view, it is not clear how to make sense of the above object, because of the presence of the empirical measure at the denominator. This is where the mollification comes into play.

 We emphasize that the SPDE \eqref{SPDEintro} is not obtained as limit of the empirical measure $\zeta^{N, \ep}$ (which is a random probability measure) associated to the particle system, but rather as limit of the weighted empirical measure $v^{N,\ep}$  \eqref{eqn:weightedempiricalmeasure} (which is a random {\em signed} measure). 
 Let us now come to provide a heuristic argument to explain why $v^{N,\ep}$  converges to the solution of the SPDE \eqref{SPDEintro}. 

\subsection{Heuristics}\label{subsec:heuristics}
In this section we take the particle-weight system \eqref{informalpartsys1}-\eqref{informalpartsys2} 
as an ansatz; given this ansatz, we provide an informal argument to show that, assuming  $v_t^{N,\e}$ converges, as $N\rightarrow \infty$ and $\e\rightarrow 0$,  to some limit $v_t$, such a limit is expected to be  the solution of the SPDE \eqref{SPDEintro}. One can also define a weighted 
empirical measure using the particle-weight system \eqref{informalpartsys1}-\eqref{weights_simple} and 
such a measure would converge to the solution of  \eqref{pde+bm} (to follow the calculation 
below in the latter simpler case just set  $\lambda_0=\sqrt{2\pi}$,  $\lambda_z=0$ for every $z\neq 0 $ and take $e_0=(2\pi)^{-1/2}$). 

By applying It\^o's formula to the process $\{A\iine_t \, f(X\iine_t)\}_{t\geq 0}$, we obtain 
\begin{align}
\label{eqn:ito1}
    d\left(A\iine_t \, f(X\iine_t)\right)\,& =\,  A\iine_t \cdot f'(X\iine_t)\,dX\iine_t\,+ \sigma\,A\iine_t\cdot f''(X\iine_t)\,dt\, \nonumber\\
    & \, + \,f(X\iine_t)\,dA\iine_t+\, f'(X\iine_t)\,dA\iine_t\,dX\iine_t \,.
    \end{align}
Formally, since the Brownian motions $\beta_t^i$'s are independent of the $w_t^z$'s, the last addend in the above vanishes. As for the first addend, note that the displacement $dX_t\iine$ in \eqref{informalpartsys1} can be rewritten, using \eqref{eqn:weightedempiricalmeasure}, as 
$$
dX\iine_t \, =\, - [V'(X_t\iine)  + (F'\ast v_t^{N, \ep})(X_t\iine)] dt+ \sqrt{2} d\beta_t^i \,.
$$   
Similarly, from \eqref{informalpartsys2} and \eqref{eqn:denom}, the displacement $dA_t\iine$ appearing in the third addend on the right hand side (RHS) of \eqref{eqn:ito1} can be rewritten as 
$$
dA_t\iine = \frac{\sum \limits_{z \in \mathbb Z} \lambda_z e_z(X_t^{i,N,\ep}) dw_t^z}{(\Phi^{\varepsilon}\ast \zeta_t^{N,\ep})(X\iine_t)}  \,.
$$
Using these observations,  we then get
    \begin{align}
    d\left(A\iine_t \, f(X\iine_t)\right)&=- A\iine_t\cdot\left[ V'(X_t\iine) + (F'\ast v^{N,\e}_t)(X\iin_t)\right]f'(X\iine_t) \,  dt \nonumber \\
    & +  \sqrt{2}\,A\iine_t f'(X\iine_t)\, d\beta_t^i
 +\sigma\,A\iine_t\cdot f''(X\iine_t)\,dt \nonumber \\
 & +\,f(X\iine_t)\cdot \frac{\sum \limits_{z \in \mathbb Z} \lambda_z e_z(X_t^{i,N,\ep}) dw_t^z}{(\Phi^{\varepsilon}\ast \zeta_t^{N,\ep})(X\iine_t)}\, . \label{brbrbr}
\end{align}
Since 
$$
\frac 1N \sum \limits_{i=1}^N A^{i, N, \e}_t f(X_t^{i, N, \e}) = \langle f, v_t^{N, \ep}\rangle ,  
$$
taking sums over $i=1,\dots,N$ on both sides of \eqref{brbrbr} and dividing by $N$, we obtain
\begin{align}
    d \langle f,\,v^{N,\e}_t\rangle &= \langle -V' \, f'-(F'\ast v^{N,\e}_t)\,f'+\sigma f'',\, v^{N,\e}_t\rangle\,dt + \frac{\sqrt{2}}{N}\sum \limits_{i=1}^{N} A\iine_t\, f'(X\iine_t)\, d\beta_t^i\nonumber\\
    &+ \left \langle f \, \frac{\sum \limits_{z \in \mathbb Z} \lambda_z e_z dw_t^z}{\Phi_\e\ast \zeta^{N,\e}_t},\, \zeta^{N,\e}_t\right\rangle\ ,\label{dvN}
\end{align}
where $v^{N,\varepsilon}$ is the weighted empirical measure \eqref{eqn:weightedempiricalmeasure}. Suppose now that, as $N\rightarrow \infty$, $\zeta_t^{N,\ep}$ and $v_t^{N,\ep}$ converge to some functions (or, more generally, measures) $\zeta_t^\ep$ and $v_t^{\ep}$, respectively.   Suppose also that there exist functions $\zeta_t$ and $v_t$ such that $\zeta_t^\ep \rightarrow \zeta_t $ and $v_t^{\ep} \rightarrow v_t$ (in an appropriate sense),   as $\ep \downarrow 0$. Then, formally passing to the limits $N\rightarrow \infty, \e \rightarrow 0$ in each of the terms in \eqref{dvN} one can see that $v_t$ will satisfy \eqref{SPDEintro}, in weak sense. Indeed, the martingale term (the second addend on the RHS of \eqref{dvN}) is a mean-zero term with quadratic variation of order $N^{-1}$ and will therefore disappear in the limit $N\rightarrow \infty$. As for the last term, which is the most delicate, we have 
\begin{align}
\left \langle f \, \frac{\sum \limits_{z \in \Z} \lambda_z e_z dw_t^z}{\Phi_\e\ast \zeta^{N,\e}_t},\, \zeta^{N,\e}_t \right \rangle & \stackrel{N\rightarrow \infty}{\longrightarrow} \left \langle f\, \frac{\sum \limits_{z \in \Z} \lambda_z e_z dw_t^z}{\Phi_\e\ast \zeta^{\ep}_t},\, \zeta^{\e}_t \right \rangle \nonumber\\
&\stackrel{\ep\rightarrow 0}{\longrightarrow} \left \langle f \, \frac{\sum \limits_{z \in \Z} \lambda_z e_z dw_t^z}{ \zeta_t},\, \zeta_t \right \rangle = \int_{\T} f(x) \pa_tW(t,x) \, dx \,,\label{additive}
\end{align}
 provided that $\zeta_t$ has strictly positive density.
 This shows that the weights $A\iine_t$ are designed in such a way that, in the limit, the dependence on the density $\zeta_t$ associated to the particle system `cancels' thanks to the simplification in the last step of the above, so that the resulting limiting noise is additive. 

Clearly, many of the technical difficulties in this paper arise from needing estimates from below, which are uniform in the appropriate parameter,  on the denominator in \eqref{informalpartsys2}. Comments on how these have been addressed can be found in  Note \ref{note:unifinepsilon} (and the associated Lemma \ref{stime}). Informally, one can see that having these lower bounds  allows one to rigorously perform the 'cancellation' in the above.  The part of the proof where this `cancellation' is made precise is in Section \ref{limit M to infty}, see Note \ref{note importante}.

Since the IPS \eqref{initialPS} converges to \eqref{PDEsimpleparticlesystem}, 
one might wonder why we are not using the particle-weight system \eqref{initialPS}-\eqref{informalpartsys2} in order to obtain \eqref{SPDEintro}, rather than the system \eqref{informalpartsys1}-\eqref{informalpartsys2} (note that in the former case the particles would be independent of the weights). As one can see by applying an analogous heuristic argument to the one shown above, if  we were to use the particle-weight system \eqref{initialPS}-\eqref{informalpartsys2}, then the corresponding weighted empirical measure $v_t^{N, \e}$ would converge to the following SPDE
\be\label{wronglimit}
\pa_t v_t = \pa_{xx}v_t+\pa_x \left[ V^{'}v_t+(F^{'}*\zeta_t) v_t \right ]+  \pa_t W_t,
\ee
where $\zeta_t$ again is the limit (assuming it exists) of the empirical measure of the particles. In this case $\zeta_t$ would solve the PDE \eqref{PDEsimpleparticlesystem}, the evolution of which is completely decoupled from the evolution \eqref{wronglimit}. 
Hence, the limiting SPDE \eqref{wronglimit} would be linear; in particular, we would not obtain the desired non-linearity. 

Finally, if the equation for the particles was given by \eqref{informalpartsys1} and the equation for the weights was not divided by $\Phi^{\ep} \ast \zeta_t^N$, i.e. if the weights were chosen to evolve according to 
$$
dA_t^{i,N} = {\sum \limits_{z \in \mathbb Z} \lambda_z e_z(X_t^{i,N}) dw_t^z}  \,, 
$$
from \eqref{dvN}-\eqref{additive} it is easy to see that the limiting equation would be 
\begin{equation}\label{eqn:starstar}
\pa_t v_t = \pa_{xx}v_t+\pa_x \left[ V^{'}v_t+(F^{'}*v_t) v_t \right ]+ \zeta_t \pa_t W_t \,. 
\end{equation}
This time however the evolution for  $\zeta_t$ and $v_t$ would be coupled,  so the noise in the above is multiplicative, which is not what we want.

 The statement in Theorem \ref{thm:mainthm} is informal. In particular,  both the statements of the theorem and  the heuristics that we have just presented contain only two limits, the many particle limit and the $\varepsilon \rightarrow 0$ limit. In reality we will need to take four limits, the two additional ones coming from the need to take a truncation  (the drift in \eqref{informalpartsys1} is unbounded in the weight variable) \footnote{The reason for taking this truncation, which a priori seems a bit excessive, considering that the drift is Lipshitz,  is discussed in Note \ref{note:cutoff}} and, more importantly, the `regularization limit', i.e. the limit coming from the fact that we will approximate Browninan paths with a sequence of piecewise smooth paths.   Precise statements of results and the organization of the paper are given in Section \ref{section: main results}.  

\begin{note}\label{Note1}\textup{ We make some comments to compare Theorem \ref{thm:mainthm} to the works \cite{kurtz1999particle, crisan2018particle}.\\
$\bullet$ In \cite{kurtz1999particle} the authors consider nonlinear SPDEs with multiplicative noise, and their representations in terms of weighted empirical measures of the form 
\begin{equation}\label{deFinetti}
\mathcal{V}_t = \lim_{n\rightarrow \infty} \frac 1n \sum \limits_{i=1}^n A^i_t \delta_{X^i_t} \,.
\end{equation}
To be more precise, while our particle-weight system \eqref{informalpartsys1}-\eqref{informalpartsys2} contains $N$ particles (and $N$ weights) and the equations for $(X^{i,N, \ep}_t, A^{i,N, \ep}_t)$ depend on $v^{N, \ep}$ (the relevant superscript here being $N$, not $\ep$), the particle-weight systems considered in \cite{kurtz1999particle} are infinite systems,  $\{X^i_t,A^i_t\}_{i \in \mathbb N}$ and the evolution of $(X^i_t,A^i_t )$ depends on the measure \eqref{deFinetti}. For this reason, the system $\{X^i_t,A^i_t\}_{i \in \mathbb N}$ considered in \cite{kurtz1999particle}
is a {\em particle representation} of the limiting SPDE, rather  than a {\em particle approximation}. Differently put, in \cite{kurtz1999particle} the measure \eqref{deFinetti} is assumed to converge, here we prove such a convergence. \\
$\bullet$ Neither in \cite{kurtz1999particle} nor in \cite{crisan2018particle} the equation for the weights is ``divided by" the empirical measure $\zeta^{N, \varepsilon}$ of the particles. This is new to this work and it is what allows us to obtain additive noise. The reweighting of the particles according to \eqref{informalpartsys2} could be intuitively interpreted as a form of importance sampling. \\
$\bullet$ In \cite{crisan2018particle} the authors 
study particle representations of SPDEs with boundary conditions. In that work the particles $\{X^i_t\}_{i \in \mathbb N}$ evolve independently of each other and of the weights, and start their evolution in 
stationarity so that, denoting by $\pi$ the stationary measure, $X^i_t$ is distributed according to 
$\pi$ for every time $t>0$ and for every $i \in \mathbb N$. The SPDE at hand in \cite{crisan2018particle} is well-posed in the 
space $L^2_{\pi}$ (the space $L^2$ weighted by the measure $\pi$, see \cite{crisan2018particle} for 
details) and the main statement, i.e. the  fact that the measure \eqref{deFinetti} is a representation 
of the solution of the SPDE at hand,  is then proved in the same space. This means that the noise 
appearing in the SPDE is not multiplied by the solution $v$ of the SPDE itself, but it is `multiplied 
by $\pi$' (as the weak formulation is in $L^2_{\pi}$). With the approach taken in 
\cite{crisan2018particle} (where, again, the equation for the weights is not `divided by $\zeta^{N,\e}$'), if 
the particles were not stationary and  independent of each other, the noise term in the limiting SPDE 
would be multiplied by the limit $\zeta$ (if it exists) of the empirical measure $\zeta_t^{N,\e}$ 
\eqref{eqn:empmeaspartsys} associated to the particles - this can be seen with a reasoning similar to the one done to obtain \eqref{eqn:starstar}. If one is 
interested in obtaining additive noise, this is not  much of a problem as long as the particles evolve 
independently of the weights. However, in our case we need the particles to depend on the weights (in 
order to obtain the correct nonlinear term in the limiting SPDE). Hence, if we retained the dependence on $\zeta$ in the noise term, we would  be obtaining 
an SPDE with multiplicative noise (as $\zeta$ would depend on $v$). For this reason we `divide by $\zeta^{N,\e}$' the equation for the weights,  as exemplified by the formal calculation  \eqref{additive}. 
}
\end{note}

\section{Main results and strategy of proof}\label{section: main results}

The main results of this paper are Theorem \ref{main_thm}, Theorem \ref{main_thm2} and Theorem \ref{theorem infinite dimensional approximation} below, providing particle approximations for the PDE \eqref{rough PDE}, and for the SPDEs \eqref{pde+bm} and \eqref{SPDEintro}, respectively. 

In this section we first comment on notion of solutions we will use for equations \eqref{rough PDE}, \eqref{pde+bm} and  \eqref{SPDEintro} and on the well-posedness of such equations. We then state Theorem \ref{main_thm}, Theorem \ref{main_thm2} and Theorem \ref{theorem infinite dimensional approximation}.  Since the proof of Theorem \ref{main_thm} is the core of this paper - and indeed, once Theorem \ref{main_thm} is proved, the other two follow easily (we explain why later in this section) - in Section \ref{strategy of proof} we present and motivate the strategy of proof of Theorem \ref{main_thm}, and make several comments on the main results of this paper and on their proof. 

We make the following standing assumption, which will hold throughout,  and we don't repeat it in every statement. 
\begin{assumption}[Standing assumption]\label{assumption 2.1}
The coefficients $V,F,q$ appearing in equations \eqref{rough PDE}-\eqref{SPDEintro} are smooth functions,  $V,F,q \in C^{\infty}(\T;\R)$.
\end{assumption}

We will use both the weak and the mild formulation of equations  \eqref{rough PDE}-\eqref{SPDEintro}.  More precisely, 
$\rho=\rho_t(x): \R_+ \times \T \rightarrow \R$ is  a  {\em mild solution}  (we should say `continuous-time $L^2(\T;\R)$-valued mild solution' but we say mild solution in short)  to \eqref{rough PDE} if for each $t\geq 0$, $\rho_t $ belongs to the space  $ L^2(\T;\R)$, the map $t \mapsto \rho_t \in L^2(\T;\R)$ is continuous and the following equality is satisfied (as an equality in $L^2$) for every $t \in [0,T]$: 
\begin{equation}\label{mild solution}
\rho_t(x)=e^{t\pa_{xx}}\rho_0(x)+\int_0^t e^{(t-s)\pa_{xx}}[(V^{'}+F^{'}*\rho_s)(x)\rho_s(x)]\,ds+\int_0^t e^{(t-s)\pa_{xx}}q(x)\,dY_s \, ,   
\end{equation}
where  $\{e^{t\pa_{xx}}\}_{t>0}$ denotes the heat semi-group on $\T$ (the definition of which is recalled in \eqref{heat semigroup}). The integral in the last addend of the above is to be intended as a Young integral (we give a recap on Young integration in Appendix \ref{young}).  

Similarly,  a $\hat{\mathcal F}_t$-adapted,  time-continuous $L^2(\T;\R)$-valued  stochastic process is a mild solution to \eqref{SPDEintro} if the equality 
\begin{equation*}
v_t(x)=e^{t\pa_{xx}}v_0(x)+\int_0^t e^{(t-s)\pa_{xx}}[(V^{'}+F^{'}*v_s)(x)v_s(x)]\,ds+\int_0^t e^{(t-s)\pa_{xx}}q(x)\,dW_s(x) \,,\,    
\end{equation*}
holds in $L^2(\T;\R)$, $\hat{\mathbb P}$-almost surely, for every $t \in [0,T]$, where  $ \left (\Omega, \hat \cF, \{\hat{\cF}_t\}_{t \geq 0}, \hat\mP \right )$ is the underlying filtered probability space, with $\hat{\cF}_t$ the filtration generated by the driving noise $W$ in \eqref{SPDEintro} (i.e. by the Brownian motions $\{w_t^i\}_{i \in \Z}$ appearing in \eqref{infinite dimensional noise}). In this instance, the integral appearing in the last addend is intended as a stochastic integral in $L^2(\T;\R)$ (see \cite[Chapter 2]{prevot2007concise} for an introduction to stochastic integration in infinite dimensions). 
The notion of mild solution for \eqref{pde+bm} is analogous so we don't repeat it. 

For any given $\rho_0 \in L^2(\T;\R)$ the mild solution  of  equations \eqref{rough PDE}-\eqref{SPDEintro} exists and it is unique, see Lemma \ref{lemma: well-posedness of rhoPDE}.  This mild solution coincides with a weak solution (see again Lemma \ref{lemma: well-posedness of rhoPDE}). We will exploit this equivalence (which is useful as a result per se' and in this paper it helps shortening some proofs), so we briefly recall that a \textit{weak solution} to \eqref{rough PDE} is a function $\rho:\R_+ \times \T \to \R$ such that $\rho_t \in L^2(\T;\R)$ for every $t \geq 0$, the map $t \to \rho_t \in L^2$ is continuous and, moreover, for any given $f \in C^{\infty}(\T;\R)$ the following equality holds for every $t \geq 0$
\begin{equation}\label{def: weak solution}
    \langle f,\rho_t \rangle=\langle f,\rho_0 \rangle + \int_0^t \langle \pa_{xx}f,\rho_s \rangle\,ds-\int_0^t \langle \pa_xf,(V^{}+F^{'}*\rho_s)\rho_s \rangle\,ds+ \langle f,q \rangle Y_t,
\end{equation} 
where here $\langle \,,\,\rangle$ denotes the scalar product in $L^2(\T;\R)$. \footnote{With abuse of notation $\langle\,,\,\rangle$ is used both for duality pairing and for $L^2$ scalar product; it should be clear from context which is the case, but we will clarify in places.}
Analogous definitions of weak solution can be stated for \eqref{pde+bm} and \eqref{SPDEintro} so we won't repeat them.

 A \textit{weak measure-valued} solution to \eqref{rough PDE} is a family $\{\rho_t\}_{t \geq 0}$ of measures satisfying \eqref{def: weak solution}, where this time in \eqref{def: weak solution} the brackets $\langle\,,\,\rangle $ are to be intended as duality pairing, and such that $\rho_{\cdot} \in C([0,T];\mathcal M(\T))$, where $\mathcal{M}(\T)$ is the space of finite signed measures on the torus endowed with the total variation norm.

We emphasize that the solutions we deal with are weak in PDE sense, but will always be strong solutions in probabilistic sense.

In what follows we will also consider three additional probability spaces $ \left (\Omega, \cF, \{\cF_t\}_{t \geq 0}, \mP \right )$, $ \left ( \Omega, \bar{\cF}, \{\bar{\cF_t}\}_{t \geq 0}, \bar{\mP} \right )$ and $ \left ({\Omega}, \tilde{\cF}, \{\tilde{\cF_t}\}_{t \geq 0}, \tilde{\mP} \right )$ where  $\{\cF_t\}_{t \geq 0}$ is the filtration generated by the Brownian motions  $\{\beta_t^i\}_{i \in \mathbb \N}$ driving the equation of the particles (see e.g. \eqref{particelle} below), $\{\bar{\cF}_t\}_{t \geq 0}$ is the filtration generated by the single Brownian motion $\{w_t\}_{t \geq 0}$ driving SPDE \eqref{pde+bm} while $\{\tilde{\cF}\}_{t \geq 0}$ is the filtration generated by $\{\cF_t\}_{t \geq 0}$, $\{\bar{\cF}_t\}_{t \geq 0}$ and by $\{\hat{\cF}_t\}_{t \geq 0}$. 
Corresponding expectations will be denoted by $\mathbb E, \bar{\mathbb E}, \tilde{\mathbb E}, \hat{\mathbb E}$, respectively.

 Let us now come to present the main results of this paper, starting from Theorem \ref{main_thm}, which contains a particle approximation result for the PDE \eqref{rough PDE}. 
As we have mentioned in the introduction, the proof of this result
is obtained by considering a sequence of smooth paths which converge to the forcing $\mY_t$ in \eqref{rough PDE}. So let $\{\mY^{\kappa}_{\cdot}\}_{\ka \in \N}$ be a  family of paths which are piecewise $C^1$ and  such that $\mY^{\kappa}_{\cdot} \xrightarrow[]{\kappa \to \infty} \mY_{\cdot}$ in $C^{\gamma}([0,T];\R)$ for some $\gamma < \alpha$.\footnote{We endow $C^{\alpha}$ with the metric $|f|_{C^{\alpha}}:=\sup_{t \in [0,T]}|f_t|+\sup \limits_{t,s \in [0,T], t \neq s} \frac{|f_t-f_s|}{|t-s|^{\alpha}}$.Without loss of generality we can assume that such an approximating family exists. Indeed, we recall that the space of  piece-wise $C^1$ functions is not dense in $C^{\alpha}(\T;\R)$. However,  if $\mY \in C^{\alpha}(\T;\R)$ then for any given $\gamma< \alpha$ there exists an approximating family of $C^1$ piece-wise functions converging to $\mY$ in $C^{\gamma}(\T;\R)$ (see e.g. \cite[Theorem 2.52 and Theorem 8.27]{weaver2018lipschitz}).
Hence, by `reducing' the H\"older exponent of the forcing term we can always assume the existence of such an approximating sequence.} 

Let us consider the following system
\begin{equation}\label{particelle}
    \begin{dcases}
        & \!\!\!\!\!\! dX_t^{i,N,\e, M,\kappa}  = - \!  \!\left( V^{'}(X_t^{i,N,\e, M,\kappa}) 
       + \frac{1}{N} \sum \limits_{j=1}^{N} \chi_M(A_t^{i,N,\e, M,\kappa}) F^{'}(X_t^{i,N,\e, M,\kappa}-X_t^{j,N,\e, M,\kappa}) \right )\!dt\! +\! \sqrt{2}d\noise_t^i\\
        & \!\!\! \!\!\!dA_t^{i,N,\e, M,\kappa}=\frac{q(X_t^{i,N,\e, M,\kappa})}{\Phi_{\e} *\zeta_t^{N,\e, M,\kappa}(X_t^{i,N,\e, M,\kappa})}d\mY_t^{\kappa}, \\
        &\\
        & \!\!\! \!\!\! X^{i,N,\e, M,\kappa}|_{t=0}=X_0^i,\,\,\, A^{i,N,\e, M,\kappa}|_{t=0}=A_0^i, 
    \end{dcases}
\end{equation}
where in the above $i \in \{1, \dots, N\}$ and 
\begin{itemize}
    \item $\{\chi_M\}_{M \in \N}$ is a family of cutoff functions, $\chi_M:\R \rightarrow \R$, defined as  
\begin{equation}\label{chiM}
\chi_M(a)=
    \begin{dcases}
         & a,\,\,\, |a|\leq M \\ \,\,
        &  0,\,\,\,\, |a| \geq 2M.
    \end{dcases}
\end{equation}
Note that for each $M \in \N$, $\chi_M$ is Lipschitz continuous, with Lipschitz constant 1 (independent of $M \in \N$).  
\item $\{ \Phi_{\e} \}_{\e >0}$ is a family of mollifiers, i.e. it is a family of functions $\Phi_{\e}:\T \rightarrow \R$ with the following properties:  $\Phi_{\e}(x) \geq c_{\e}>0$ for every $x \in \T$;   $\int_{\T}\Phi_{\e}(x)\,dx=1$ for every $\e >0$ and $\Phi_{\e}$ converges weakly, as $\e$ tends to zero,  to  the Dirac delta at zero, $\delta_0$. To fix ideas, we will consider Von-Mises distributions on the torus, with mean zero and concentration parameter $\e^{-1}$, i.e. 
\begin{equation}\label{von mises mollifier}
    \Phi_\e(x)\,:=\, \frac{1}{2\pi I_0(\e^{-1})}  e^{\e^{-1}\,\cos x},\, \qquad x \in \T,
\end{equation}
where  $I_0$ denotes the modified Bessel function of the first kind of order 0 (see also \cite{Amo74} for further details),
\begin{equation*}
    I_0(\e^{-1})\,=\, \sum \limits_{k=0}^\infty \frac{(2\e)^{-2k}}{(k!)^2}, \,\,\, \e >0.
\end{equation*}
\item $\zeta_t^{N,\e, M, \ka}$ is the empirical measure associated to the particles $X^{i, N, \e, M, \kappa}$, that is
\begin{equation}\label{empirical_zeta}
    \zeta_t^{N,\e, M, \ka}(dx):=\frac{1}{N}\sum \limits_{i=1}^N \delta_{X_t^{i,N,\e, M,\ka}}(dx),
\end{equation}
so that  $\zeta_t^{N,\e, M, \ka}$ is a random probability measure, i.e. $\zeta_t^{N,\e, M, \ka} \in \cP(\T)$ for each $N, \e, M, \ka, t$ fixed, where  $\cP(\T)$ denotes the space of probability measures on the torus $\T$.

\end{itemize}

 Let us now introduce the following weighted empirical distribution : 
\begin{equation}\label{weighted_empirical}
    \rho_t^{N,\e,M,\kappa}(dx):=\frac{1}{N}\sum \limits_{i=1}^{N} A^{i,N,\e, M,\kappa}_t\cdot  \delta_{X^{i,N,\e, M,\kappa}_t}(dx) \,.
\end{equation} 
For each $N, \e, M, \ka, t$ fixed, $\rho_t^{N,\e,M,\kappa}$ is a random signed measure, i.e. $\rho_t^{N,\e,M,\kappa} \in \cM(\T)$.  In what follows, we make the following assumptions on the initial data.
\begin{assumption} \label{assunzioni sui dati iniziali}
    \begin{enumerate}[(a)]
        \item  The initial data $(X_0^i,A_0^i)$ of the particles system \eqref{particelle} are $\cF_0$-measurable i.i.d random variables with distribution $\mu_0 \in \cP_2(\T \times \R)$ where $\cP_2(\T \times \R)$ is the space of probability measures on $\T \times \R$ with finite second moment \label{def: mu0}.
        \item  The joint distribution $\mu_0$ has a smooth density, which, with an abuse of notation, we keep denoting by $\mu_0$. \footnote{Here and throughout, with abuse of notation, we use the same letter to denote a measure and its density, when it exists.}\label{def: mu0smooth}
        \item The $\T$-marginal $\zeta_0$ of $\mu_0$, namely $\zeta_0(x):=\int \limits_{\R} \mu_0(x,a)da$,  is strictly positive i.e. $\eta:=\min \limits_{x \in \T} \zeta_0(x)>0.$ \label{def:zeta0}
        \item The function $\rho_0(x):=\int \limits_{\R}a\mu_0(x,a)da$ belongs to $L^2(\T;\R)$. \label{def:rho0}
    \end{enumerate}
\end{assumption}
Note that \ref{def:rho0} is not an extra assumption as it follows from \ref{def: mu0} and \ref{def: mu0smooth}, but we spell it out for later ease of exposition.
We are interested in proving that $\rho_t^{N,\e,M,\kappa}$ in \eqref{weighted_empirical} converges to $\rho_t$, solution to \eqref{rough PDE}, in weak sense; this is the content of Theorem \ref{main_thm} below. 

\begin{thm}\label{main_thm}
Under Assumption \ref{assunzioni sui dati iniziali} the weighted empirical measure $\rho_t^{N,\e, M,\ka}$ (defined in \eqref{weighted_empirical}) converges weakly to $\rho_t$ solution of \eqref{rough PDE}.\footnote{This result still holds if we had replaced \ref{def: mu0} in Assumption \ref{assunzioni sui dati iniziali} with the condition that for every $N \in \N$ and $i=1,\cdots,N$ the pairs $(X_0^{i},A_0^{i})$ have distribution $\mu_0^i \in \cP_2(\T \times \R)$ such that the empirical distribution $\frac{1}{N}\sum \limits_{i=1}^N \mu_0^i \xrightarrow{N \to \infty} \mu_0$ in $\cP_2(\T \times \R)$.} Namely, for any given $f\in C^{\infty}(\T;\R)$ the following holds: 
\begin{equation*}
    \lim_{\ka \to \infty} \lim_{M\to\infty} \lim_{\e\downarrow 0} \lim_{N\to\infty}\,\,\,\sup \limits_{t \in [0,T]}\,\,\, \left |\langle f,\rho_t^{N,\e,M,\kappa} \rangle \,-\, \langle f, \rho_t \rangle \right |=0,\,\,\,\,\mP-a.s.
\end{equation*}
where $\rho_t$ is the weak solution of the PDE \eqref{rough PDE} with initial datum $\rho_0$ defined by \ref{def:rho0} in Assumption \ref{assunzioni sui dati iniziali}. 
\end{thm}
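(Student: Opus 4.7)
The plan is to unwind the four limits in the order displayed in the statement, from the innermost $N\to\infty$ outwards, at each stage identifying a limiting object which serves as the starting point for the next stage. Throughout, I will work pathwise in $\omega$ (conditioning on the forcing), so that all integrals against $Y^\kappa$ are classical Riemann--Stieltjes / Young integrals.

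\textbf{Step 1: many-particle limit $N\to\infty$ with $(\varepsilon,M,\kappa)$ fixed.} Because $\chi_M$ is bounded Lipschitz with constant $1$ and $\Phi_\varepsilon$ is smooth and bounded below by $c_\varepsilon>0$, the coefficients of the particle--weight system \eqref{particelle} are (pathwise) Lipschitz in the joint empirical measure. I apply It\^o's formula to $A_t^{i,N,\varepsilon,M,\kappa}f(X_t^{i,N,\varepsilon,M,\kappa})$ for test $f\in C^\infty(\T)$, sum over $i$ and divide by $N$, obtaining an equation of the form \eqref{dvN} (with $\chi_M(A^j)$ inside the convolution and $dY^{\kappa}$ replacing the Brownian noise). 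Establishing tightness of the joint law of $(\zeta^{N,\varepsilon,M,\kappa},\rho^{N,\varepsilon,M,\kappa})$ in $C([0,T];\cP(\T)\times \cM(\T))$ via an Aldous-type criterion (exchangeability of the particles reduces this to moment bounds on a single pair $(X^1,A^1)$), I pass to the limit and identify any subsequential limit $(\zeta_t^{\varepsilon,M,\kappa},\rho_t^{\varepsilon,M,\kappa})$ as the unique weak solution of the coupled system
\begin{align*}
\partial_t\zeta_t &= \partial_{xx}\zeta_t+\partial_x\bigl[(V'+F'\ast\rho_t^{M})\zeta_t\bigr],\\
\partial_t\rho_t &= \partial_{xx}\rho_t+\partial_x\bigl[(V'+F'\ast\rho_t^{M})\rho_t\bigr]+\tfrac{q(x)\zeta_t}{\Phi_\varepsilon\ast\zeta_t}\,\partial_t Y^\kappa_t,
\end{align*}
where $\rho_t^M$ denotes the $\chi_M$-truncated version of $\rho_t$. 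The martingale term coming from $d\beta_t^i$ has quadratic variation $O(N^{-1})$ (using boundedness of $\chi_M$ and a Gronwall bound on the weights, exploiting $c_\varepsilon>0$ and smoothness of $Y^\kappa$), so it vanishes. Uniqueness of the limit comes from Lipschitz continuity of the coefficients in $(\zeta,\rho)$ with the relevant norms.

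\textbf{Step 2: mollifier limit $\varepsilon\downarrow 0$ with $(M,\kappa)$ fixed.} This is the most delicate step. The key tool is the uniform lower bound on $\Phi_\varepsilon\ast\zeta_t^{\varepsilon,M,\kappa}$ announced as \emph{Lemma} \ref{stime} (Note \ref{note:unifinepsilon}): starting from $\zeta_0\ge\eta>0$ (Assumption \ref{assunzioni sui dati iniziali}\ref{def:zeta0}) and using the parabolic structure of the equation for $\zeta^{\varepsilon,M,\kappa}$, one propagates a strictly positive lower bound independent of $\varepsilon$. Given such a bound, the denominator in the $\rho^{\varepsilon,M,\kappa}$-equation is $\varepsilon$-uniformly bounded above and continuity of convolution gives $\Phi_\varepsilon\ast\zeta^{\varepsilon,M,\kappa}\to \zeta^{M,\kappa}$ strongly enough to perform the cancellation $\zeta/(\Phi_\varepsilon\ast\zeta)\to 1$ in the forcing term. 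A compactness + uniqueness argument then yields $(\zeta_t^{\varepsilon,M,\kappa},\rho_t^{\varepsilon,M,\kappa})\to(\zeta_t^{M,\kappa},\rho_t^{M,\kappa})$ where $\rho^{M,\kappa}$ solves \eqref{rough PDE} with $Y$ replaced by $Y^\kappa$ and $F'\ast\rho$ replaced by $F'\ast\rho^M$.

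\textbf{Step 3: removing the cutoff, $M\to\infty$ with $\kappa$ fixed.} Because $Y^\kappa$ is piecewise $C^1$ and $q,(\Phi_\varepsilon\ast\zeta)^{-1}$ are bounded (uniformly in $\varepsilon$ by Step 2, hence the bound passes to the limit for $\zeta^{M,\kappa}$), the mild formulation of $\rho^{M,\kappa}$ yields an a priori $L^\infty_t L^2_x$ bound on $\rho^{M,\kappa}$ independent of $M$; equivalently, the weights $A^{M,\kappa}$ are bounded uniformly on $[0,T]$. Choosing $M$ larger than that bound makes $\chi_M(\rho^{M,\kappa})=\rho^{M,\kappa}$, so $\rho^{M,\kappa}=\rho^\kappa$ for $M$ large, where $\rho^\kappa$ is the weak solution of \eqref{rough PDE} driven by $Y^\kappa$. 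Uniqueness of such $\rho^\kappa$ (Lemma \ref{lemma: well-posedness of rhoPDE}) completes this step.

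\textbf{Step 4: rough-path limit $\kappa\to\infty$.} Using the mild formulation \eqref{mild solution}, the map $Y_\cdot\mapsto \rho_\cdot$ is continuous from $C^\gamma([0,T];\R)$ (any $\gamma\in(1/2,\alpha)$) into $C([0,T];L^2(\T))$: this is the content of the Young integral bound for $\int_0^t e^{(t-s)\partial_{xx}}q\,dY_s$, combined with a Gronwall argument on the nonlinear term $F'\ast\rho$. By assumption $Y^\kappa\to Y$ in $C^\gamma$, which gives $\rho^\kappa\to\rho$ uniformly in $t$, completing the proof.

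The main obstacle is \textbf{Step 2}: the cancellation $\zeta/(\Phi_\varepsilon\ast\zeta)\to 1$ is only meaningful if one controls $\Phi_\varepsilon\ast\zeta$ from below uniformly in $\varepsilon$. This lower bound cannot be obtained from soft arguments on $\zeta^{N,\varepsilon,M,\kappa}$ (which is a sum of Diracs) and must be extracted at the level of the McKean-Vlasov limit $\zeta^{\varepsilon,M,\kappa}$, using the regularising effect of the heat semigroup; all other steps are then comparatively routine given the pathwise viewpoint.
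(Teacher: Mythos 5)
Your Step 4 is essentially the paper's argument, and your Step 2 correctly pinpoints the uniform-in-$\varepsilon$ lower bound on $\Phi_\varepsilon\ast\zeta$ as the crucial ingredient. However, there are two genuine gaps in Steps 1--3.

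The main structural error is your claim that the limiting dynamics in Steps 1 and 2 close as a system in $(\zeta,\rho)$ with a drift $F'\ast\rho^M$, where ``$\rho^M$ is the $\chi_M$-truncated version of $\rho$''. No such object exists: the truncated drift is $\Gamma_M(x,\mu)=\int_{\T\times\R}\chi_M(a)\,F'(x-y)\,\mu(dy\,da)$, and because $\chi_M$ is nonlinear in $a$ this cannot be expressed in terms of $\rho(dx)=\int_\R a\,\mu(dx\,da)$ and $\zeta(dx)=\int_\R\mu(dx\,da)$ alone. Consequently equations \eqref{PDErhoMKE} and \eqref{PDErhoMK} are not closed in $\rho$, and the intermediate limits must be identified at the level of the full joint law $\mu_t$ (which is exactly what the paper does: it proves $\mathcal W_{2,T}(\mu^{N,\varepsilon,M,\kappa},\mu^{\varepsilon,M,\kappa})\to 0$ etc.\ and then takes the weighted $a$-marginal). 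Your tightness/Aldous route for $N\to\infty$ is a reasonable alternative to the paper's propagation-of-chaos coupling, but the martingale problem you must identify is for $\mu$, not for $(\zeta,\rho)$.

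The second gap is in Step 3. You write that an a priori $L^\infty_tL^2_x$ bound on $\rho^{M,\kappa}$ is ``equivalently'' a uniform bound on the weights $A^{M,\kappa}$, and then pick $M$ larger than that bound so $\chi_M$ is the identity. This fails for two reasons: the cutoff acts on the weights $A$, not on $\rho$, so an $L^2$ bound on $\rho$ does not control $\|A_\cdot\|_\infty$; and the weights are not uniformly bounded in any case, since $A_0$ is only assumed to have finite second moment ($\mu_0\in\mathcal P_2$), so $A_t=A_0+\int_0^t q/\zeta_s\,dY^\kappa_s$ has bounded increments but an unbounded initial condition. The paper's argument is a uniform-integrability argument: one proves $\sup_t\int_{\T\times\R}|a|^2\,\mu_t^\kappa(dx\,da)<\infty$ (Lemma \ref{stime 2}$(iii)$) and estimates the error in replacing $\chi_M(a)$ by $a$ inside $\Gamma_M$ through the tail $\sup_t\int_{\T\times\{|a|>M\}}|a|^2\,\mu_t^\kappa(dx\,da)\to 0$. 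You would need some such moment/tail control; the ``choose $M$ large enough that the cutoff is inactive'' shortcut does not work.
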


The proof of Theorem \ref{main_thm} is in four steps, corresponding to taking the four limits in the statement of the theorem.  We explain the strategy of proof in Subsection \ref{strategy of proof} below.  More explicitly, the proof of Theorem \ref{main_thm} is a consequence of Proposition \ref{prop limit rho N} (limit $N\rightarrow \infty$), Proposition \ref{prop limit rho eps} (limit $\e \rightarrow 0$), Proposition \ref{prop limit rho M} (limit $M\rightarrow \infty$) and Proposition \ref{prop limit rho kappa} (limit $\kappa \rightarrow \infty$), which are stated in Subsection \ref{strategy of proof} and proved in Section \ref{section_limitN} to Section \ref{limit kappa}, respectively. 

Since solutions to \eqref{pde+bm} are built in probabilistically strong sense, Theorem \ref{main_thm2} below is just a straightforward consequence of Theorem \ref{main_thm}. Indeed, solutions to \eqref{pde+bm} are built by simply fixing a realization $w_t$ of the driving Brownian motion; since the path of such a realization is ($\bar{\mathbb P}$-almost surely) $\alpha$- H\"older (for any $\alpha <1/2$), once the driving path is fixed, equation \eqref{pde+bm} is just a specific instance of equation \eqref{rough PDE}. 
 With this in mind, let $\bar{\omega} \in \left ( \Omega,\bar{\cF}, \{\bar {\cF_t} \}_{t \geq 0},\bar {\mP} \right )$ (by using this shorthand notation we mean that $\bar{\omega}$ belongs to $\Omega$, with $\Omega$ endowed with the $\sigma$-algebra $\bar{\cF}$ and the probability measure $\bar{\mP}$) be such that $w_t{(\bar{\omega})}$ is  a H\"older continuous realization of the noise driving \eqref{pde+bm} and then let  $\solu_t(\bar{\omega})$ be the corresponding solution. 
Let $\bar{Y}^{\kappa}_{\cdot}$ be a sequence of paths converging to $w_t(\bar{\omega})$ in $C^{\gamma}$, as $\ka \to \infty$, for some $\gamma <1/2$. 
\begin{thm}\label{main_thm2}
Under Assumption \ref{assunzioni sui dati iniziali} and with the notation introduced so far, let $\bar{\rho_t}^{N,\e, M,\ka}$ be the weighted empirical measure constructed as in  \eqref{weighted_empirical} and associated to the particle-weight system \eqref{particelle} where the  family of trajectories 
$\{{Y}^{\ka}_{\cdot}\}_{\ka \in \N}$ is replaced by the family 
 $\{\bar{Y}^{\ka}_{\cdot}\}_{\ka \in \N}$ described above. Then the following limit holds: 
\begin{equation}\label{Them24limit}
 \lim_{\ka \to \infty} \lim_{M\to\infty} \lim_{\e\downarrow 0} \lim_{N\to\infty}\,\,\,\sup \limits_{t \in [0,T]}\,\,\, \left |\langle f,\bar{\rho}_t^{N,\e,M,\kappa} \rangle \,-\, \langle f, \solu_t(\bar{\omega}) \rangle \right |=0,\,\,\,\,\mP-a.s.
\end{equation}
where $u_t$ is the weak solution to the SPDE \eqref{pde+bm} with initial datum $\rho_0$ defined in \ref{def:rho0} of Assumption \ref{assunzioni sui dati iniziali}. 
\end{thm}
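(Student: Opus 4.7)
The plan is to reduce Theorem \ref{main_thm2} directly to Theorem \ref{main_thm} by a pathwise argument, exploiting the fact that \eqref{pde+bm} is built in the probabilistically strong sense and that, once a realization of the Brownian forcing is fixed, \eqref{pde+bm} is a particular instance of \eqref{rough PDE}. Concretely, I would first choose a set $\Omega_0 \in \bar{\cF}$ with $\bar{\mP}(\Omega_0)=1$ such that for every $\bar{\omega}\in \Omega_0$ the trajectory $t\mapsto w_t(\bar{\omega})$ belongs to $C^{\alpha}([0,T];\R)$ for some fixed $\alpha\in(0,1/2)$ (which is possible by Kolmogorov's continuity theorem), and such that the approximating family $\{\bar Y^{\kappa}_\cdot\}_{\kappa\in\N}$ can be chosen, path by path, to converge to $w_{\cdot}(\bar{\omega})$ in $C^{\gamma}([0,T];\R)$ for some $\gamma<\alpha$ (this is exactly the regularity framework used in the paragraphs preceding Theorem \ref{main_thm}).

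Next, I would freeze such an $\bar{\omega}\in\Omega_0$ and regard the driving path $Y_\cdot := w_\cdot(\bar{\omega})$ as a \emph{deterministic} $\alpha$-H\"older input for \eqref{rough PDE}. Since solutions of \eqref{pde+bm} are probabilistically strong, the mild solution $u_t(\bar{\omega})$ to \eqref{pde+bm} coincides, $\bar{\mP}$-a.s., with the unique mild solution $\rho_t$ of \eqref{rough PDE} associated with this deterministic $Y_{\cdot}$; this follows from the well-posedness recalled in Section \ref{section: main results} (equivalence of weak and mild formulation for \eqref{rough PDE}, and the fact that in the additive-noise setting the stochastic integral $\int_0^t e^{(t-s)\partial_{xx}}q(x)\,dw_s$ can be realized path-by-path as a Young integral against $w_\cdot(\bar{\omega})$). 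Similarly, the particle-weight system \eqref{particelle} driven by $\bar Y^{\kappa}$ is, for each $\bar{\omega}\in\Omega_0$, exactly an instance of the particle-weight system appearing in Theorem \ref{main_thm}, with the $\beta^i$'s now independent of $\bar{\cF}$.

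At this point Theorem \ref{main_thm} applies directly, conditionally on $\bar{\omega}\in\Omega_0$: for $\mP$-almost every realization of the independent noises $\{\beta^i\}_{i\in\N}$, the weighted empirical measure $\bar{\rho}_t^{N,\e,M,\kappa}$ converges in the sense of \eqref{Them24limit} to $\rho_t = u_t(\bar{\omega})$. To conclude the joint almost sure statement, I would then apply Fubini on the product space $\left(\Omega\times\Omega,\cF\otimes\bar{\cF},\mP\otimes\bar{\mP}\right)$: the pathwise convergence holds on a set of full $\mP$-measure for each $\bar{\omega}\in\Omega_0$, so it holds on a set of full $\mP\otimes\bar{\mP}$-measure, which, projected back via $\tilde{\mP}$, gives the desired $\tilde{\mP}$-almost sure convergence.

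The main obstacle is not the convergence argument itself, which is essentially inherited from Theorem \ref{main_thm}, but the bookkeeping needed to ensure that the pathwise identification of $u_t(\bar{\omega})$ with the solution $\rho_t$ of \eqref{rough PDE} holds on a single full-measure set $\Omega_0$ that is \emph{independent} of the approximation parameters $N,\e,M,\kappa$; in particular, one must verify that the construction of the approximating piecewise-$C^1$ family $\bar Y^{\kappa}$ can be made in a measurable way in $\bar{\omega}$, so that the particle-weight system \eqref{particelle} and the resulting empirical measure $\bar{\rho}_t^{N,\e,M,\kappa}$ are well-defined random objects on $\tilde{\Omega}$. Once this measurability/Fubini step is in place, the theorem follows word for word from Theorem \ref{main_thm}, including the order of the four limits $N\to\infty$, $\e\downarrow 0$, $M\to\infty$, $\kappa\to\infty$.
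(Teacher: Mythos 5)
Your argument follows exactly the paper's intended route: fix a realization $\bar{\omega}$ of the driving Brownian motion, identify $u_t(\bar{\omega})$ pathwise with the solution $\rho_t$ of \eqref{rough PDE} driven by $Y_{\cdot}=w_{\cdot}(\bar{\omega})$ (this identification works because the noise is additive, so the stochastic convolution $\int_0^t e^{(t-s)\partial_{xx}}q\,dw_s$ is a Wiener integral against a deterministic, time-differentiable integrand and coincides with the pathwise Young integral), and then invoke Theorem~\ref{main_thm} verbatim. This is the same approach the paper takes, and it is correct.

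However, the final Fubini step in your proposal is not needed, and it actually proves a slightly different statement than the one the theorem asserts. Read the statement carefully: $\bar{\omega}$ is \emph{fixed from the outset} (chosen so that $w_{\cdot}(\bar{\omega})$ is H\"older, which holds $\bar{\mP}$-a.s.), and the conclusion \eqref{Them24limit} is an almost sure statement with respect to $\mP$ only, i.e.\ with respect to the randomness of the $\beta^i$'s. This is precisely the conclusion of Theorem~\ref{main_thm} applied to the deterministic path $Y_{\cdot}=w_{\cdot}(\bar{\omega})$. There is therefore no joint-measurability or Fubini bookkeeping to do: once $\bar{\omega}$ is frozen, the particle system, the approximants $\bar Y^{\kappa}_{\cdot}$, and the weighted empirical measure $\bar{\rho}^{N,\e,M,\kappa}_t$ are all deterministic in $\bar{\omega}$ and random only through the $\beta^i$'s, so Theorem~\ref{main_thm} applies directly with no additional argument. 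Your proposed Fubini argument would give a $\tilde{\mP}$-a.s.\ statement on the product space, which is a stronger and different assertion than the one being claimed; it is not wrong per se, but it misreads the target measure in \eqref{Them24limit} (you write ``the desired $\tilde{\mP}$-almost sure convergence'' whereas the theorem asks for $\mP$-a.s.). In short: everything before your Fubini paragraph is correct and matches the paper; the Fubini paragraph is superfluous and reflects a misreading of which probability measure the conclusion is stated in.
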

\begin{proof} with the comments made so far, the proof of Theorem \ref{main_thm2} is  a straightforward application of Theorem \ref{main_thm}.  
\end{proof}
We emphasize that the limit \eqref{Them24limit} holds $\mP$-almost surely. That is, Theorem \ref{main_thm2} is to be read as follows: we first fix a realization of the noise $w_t$; once this realization is fixed, there is still noise left in the system (more precisely, in $\bar{\rho}_t^{N, \ep, M, \ka}$), namely the noise associated to the Brownian motions $\beta_t^i$ in \eqref{particelle} and therefore the limit \eqref{Them24limit} holds almost surely with respect to this source of stochasticity. 


We explain in Note \ref{note:cutoff} why, from a technical point of view,  it is convenient to study the deterministic evolution  \eqref{rough PDE} and associated system \eqref{particelle} in order to produce results on the stochastic evolution \eqref{pde+bm} (and \eqref{SPDEintro}) and in Note \ref{note:commutativity limits} we make comments on the order in which the limits are taken. 

In  equation \eqref{rough PDE} the forcing term   $q(x)\pa_tY_t$ is the product of a smooth function of $x$, the function $q$,  and of a  $\alpha$-H\"older continuous path; it will be clear from the proof of Theorem \ref{main_thm} that the procedure used to construct the particle system converging to \eqref{rough PDE}  can be extended without any additional effort to the case in which \eqref{rough PDE} is forced by  a finite sum of such terms, see  equation \eqref{rough approx} below.   That is, let  $\{Y_t^{z}\}_{z=-m}^m $,   $m \in \N$,  be a family of $2m+1$  time-dependent paths, which are  $\alpha$-H\"older continuous and consider the equation obtained by replacing the term  $q(x)\pa_tY_t$ in \eqref{rough PDE} with the finite sum  $\sum \limits_{|z| \leq m}\lambda_ze_z(x)\pa_tY_t^z$ (where we recall the $e_z$'s are the basis of $L^2$ in \eqref{fourier}) - namely, equation \eqref{rough approx} below.  Fix $\gamma < \alpha$ and,  for every $z=-m,\cdots,m$,   let $\{Y_t^{z,\ka}\}_{\ka \in \N}$ be a  piece-wise $C^1$ approximation of $Y_t^z$ ; that is,  $Y_{\cdot}^{z,\ka} \xrightarrow[]{\ka \to \infty} Y_{\cdot}^z$ in $C^{\gamma}$.  With this premise, consider the following system of interacting particles
 \begin{equation}\label{infinite dimensional IPS}
    \begin{dcases}
      & \!\!\!\!\!\!\!\!\! \resizebox{\textwidth}{!}
     {
        $dX_t^{i,N,\e, M,\kappa,m}  = - \! \left( \!V^{'}(X_t^{i,N,\e, M,\kappa,m}) 
      \!+ \! \frac{1}{N} \!\! \sum \limits_{j=1}^{N} \chi_M(A_t^{i,N,\e, M,\kappa,m}) F^{'}(X_t^{i,N,\e, M,\kappa,m}-X_t^{j,N,\e, M,\kappa,m}) \!\!\right )\!dt \!+\!\sqrt{2}d\noise_t^i$
     } \\
        & \!\!\!\!\!\!\! dA_t^{i,N,\e, M,\kappa,m}= \sum \limits_{|z| \leq m} \lambda_z \frac{ e_z(X_t^{i,N,M,\e,m})}{\Phi_{\e} *\zeta_t^{N,M,\e,m}(X_t^{i,N,\e, M,\kappa,m})} dY_t^{z,\kappa}, \\
        & \!\!\!\!\!\!\!\! X^{i,N,\e, M,\kappa,m}|_{t=0}=X_0^i,\,\,\,\, A^{i,N,\e, M,\kappa,m}|_{t=0}=A_0^i.
    \end{dcases}  
\end{equation}
and  the corresponding weighted empirical measure:
\begin{equation}\label{weighted_empirical_infinite_dimensional}
    \rho_t^{N,\e,M,\ka,m}(dx):=\frac{1}{N}  \sum \limits_{i=1}^{N}A_t^{i,N,\e, M,\kappa,m}\delta_{X_t^{i,N,\e, M,\kappa,m}}(dx),\qquad t \in [0,T]. 
\end{equation}
Then, with the same proof of Theorem \ref{main_thm} the following proposition holds.
\begin{prop}\label{prop linear combination Intro}
Let Assumption \ref{assunzioni sui dati iniziali} hold;  let $\rho_t^{N,\e,M,\ka,m}$ be the weighted empirical measure defined in \eqref{weighted_empirical_infinite_dimensional} and  $\rho_t^{m}$ be the solution to the following equation
\begin{equation}\label{rough approx}
    \begin{dcases}
           & \pa_t\rho_t^{m}=\pa_{xx}\rho_t^{m}+\pa_x[(V^{'}+F^{'}*\rho_t^{m})\rho_t^{m}]+\sum \limits_{|z| \leq m} \lambda_z e_z \pa_t\mY_t^z,\qquad t \in (0,T]\\
            &\rho^{m}|_{t=0}=\rho_0,
    \end{dcases}
\end{equation}
where $\rho_0$ is the function defined by \ref{def:rho0} in Assumption \ref{assunzioni sui dati iniziali}. Then,  for each $m \in \N$ fixed and any given $f \in C^{\infty}(\T;\R)$,  the following limit holds 
\begin{equation*}
    \lim_{\ka \to \infty} \lim_{M\to\infty} \lim_{\e\downarrow 0} \lim_{N\to\infty} \,\,\sup \limits_{t \in [0,T]}\,\,\left | \langle f,\rho_t^{N, \e, M , \kappa, m} \rangle \,-\, \langle f, \rho_t^m \rangle \right |\,=\,0,\qquad \mP-a.s.
\end{equation*}
\end{prop}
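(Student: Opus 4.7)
The plan is to mirror, step by step, the four-limit argument used for Theorem \ref{main_thm}, observing that for each fixed $m$ the finite sum $\sum_{|z|\le m}\lambda_z e_z(\cdot)\,dY^{z,\ka}_t$ plays exactly the role that the single term $q(\cdot)\,dY^{\ka}_t$ plays in \eqref{particelle}. Concretely, I will invoke Proposition \ref{prop limit rho N}, Proposition \ref{prop limit rho eps}, Proposition \ref{prop limit rho M} and Proposition \ref{prop limit rho kappa} with the replacements $q \rightsquigarrow \lambda_z e_z$, $Y^{\ka}_\cdot \rightsquigarrow Y^{z,\ka}_\cdot$, summed over $|z|\le m$. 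Since $m$ is held fixed throughout all four limits, every estimate in the proof of Theorem \ref{main_thm} that depended on $\|q\|_{C^k(\T)}$ or on $|Y^{\ka}|_{C^\gamma}$ is replaced by its $m$-dependent counterpart $\sum_{|z|\le m}\lambda_z\|e_z\|_{C^k(\T)}$ and $\sum_{|z|\le m}\lambda_z|Y^{z,\ka}|_{C^\gamma}$, which are finite (depending on $m$) but otherwise harmless.

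First, for fixed $\e,M,\ka,m$, I would take $N\to\infty$ by applying It\^o's formula to $A_t^{i,N,\e,M,\ka,m}f(X_t^{i,N,\e,M,\ka,m})$ and averaging over $i$, exactly as in the heuristic \eqref{dvN}. The martingale term driven by the $\beta^i_t$ has quadratic variation of order $1/N$ after incorporating the cutoff $\chi_M$, hence disappears. The driving terms now read
\begin{equation*}
\sum_{|z|\le m}\lambda_z\left\langle f\,\frac{e_z}{\Phi_\e\ast\zeta_t^{N,\e,M,\ka,m}},\zeta_t^{N,\e,M,\ka,m}\right\rangle dY^{z,\ka}_t,
\end{equation*}
and since each $Y^{z,\ka}$ is piecewise $C^1$, the corresponding integral is a classical Riemann--Stieltjes (Young) integral whose stability under the tightness-and-identification argument of Proposition \ref{prop limit rho N} applies termwise. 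Tightness of $(\rho^{N,\e,M,\ka,m},\zeta^{N,\e,M,\ka,m})$ in the appropriate path space follows from the same uniform moment bounds as in the single-$z$ case, the cutoff $\chi_M$ again providing the Lipschitz control on the drift of the $X^i$'s.

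Next I would take $\e\downarrow 0$ using the uniform-in-$\e$ lower bound on $\Phi_\e\ast\zeta_t$ obtained in Lemma \ref{stime} (invoked once per $z$), giving the termwise cancellation shown in \eqref{additive}; then $M\to\infty$ by the a priori estimates on the weights, which depend linearly on $\sum_{|z|\le m}\lambda_z\|e_z\|_\infty$ and on $\sum_{|z|\le m}|Y^{z,\ka}|_{C^\gamma}$; and finally $\ka\to\infty$, using the convergence $Y^{z,\ka}_\cdot\to Y^z_\cdot$ in $C^\gamma$ and the continuity of the Young integral with respect to the driver in $C^\gamma$, applied to each of the finitely many addends in \eqref{rough approx}. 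Well-posedness of \eqref{rough approx} (needed to identify the limit uniquely) follows from Lemma \ref{lemma: well-posedness of rhoPDE} in the same way as for \eqref{rough PDE}, since the forcing is still a finite sum of terms of the form $(\text{smooth function})\cdot dY^z_t$ with $Y^z\in C^\alpha$.

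I do not expect a genuinely new obstacle beyond those of Theorem \ref{main_thm}: the only bookkeeping issue is propagating the constants through the sum over $|z|\le m$, which is finite for each $m$. The step most worth double-checking is the $\e\downarrow 0$ limit, where one must ensure the pointwise (in $z$) cancellation $e_z/(\Phi_\e\ast\zeta^\e_t)\cdot\zeta^\e_t \to e_z$ is uniform in $z$ over $|z|\le m$; since the lower bound on $\Phi_\e\ast\zeta^\e_t$ is independent of $z$ (Lemma \ref{stime}) and the sum is finite, this is immediate. The uniformity in $m$ is not required here, which is why the result is stated for each fixed $m$; obtaining a genuinely infinite-dimensional limit $m\to\infty$ is a separate matter, deferred to Theorem \ref{theorem infinite dimensional approximation}.
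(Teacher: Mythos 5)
Your proposal is correct and coincides with the paper's own (essentially one-line) proof, which observes that the four-limit argument of Theorem \ref{main_thm} goes through verbatim once $q(x)\,dY^\ka_t$ is replaced by the finite sum $\sum_{|z|\le m}\lambda_z e_z(x)\,dY^{z,\ka}_t$, with all constants now depending on $m$ but $m$ held fixed. One small descriptive slip worth noting: the $N\to\infty$ step in the paper (Proposition \ref{prop_limitN}) is carried out by a synchronous-coupling/Gronwall argument together with the strong law of large numbers, not by a tightness-and-identification argument, though this is immaterial here since that method too applies termwise for fixed $m$.
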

The proof of the above proposition can be done by following the exact same steps of the proof of Theorem \ref{main_thm}, so we don't repeat the whole proof - the main purpose of stating Proposition \ref{prop linear combination Intro} is to more clearly explain how Theorem \ref{theorem infinite dimensional approximation} below is obtained. Indeed, at this point, since the trace class noise $W$ is given by \eqref{infinite dimensional noise}, i.e. it is just a (weighted) infinite sum of Brownian motions (i.e. an infinite sum of $\alpha$-H\"older paths), to obtain an approximation result for the SPDE \eqref{SPDEintro},  as a consequence of Proposition \ref{prop linear combination Intro}, it substantially suffices to let the truncation parameter $m$ in \eqref{infinite dimensional IPS} to infinity.


More preciely,  by following the same rationale that led us to formulate Theorem \ref{main_thm2},  let $\{w_t^z\}_{z \in \mathbb Z}$ be as in \eqref{infinite dimensional noise}, so that the paths $w_t^z$'s drive the dynamics \eqref{SPDEintro}. Let 
 $\hat{\omega} \in \left ( \Omega,\hat{\cF}, \{\hat {\cF_t} \}_{t \geq 0},\hat {\mP} \right )$ 
be such that for every $z \in \Z$ the path $w_t^z  \left (\hat{\omega}\right)$ is a $\alpha$-H\"older  continuous realization of $w_t^z$ (for some $\alpha < \frac{1}{2}$). Then let $v_t(\hat{\omega})$ be the corresponding  solution to the SPDE \eqref{SPDEintro},  for a given initial datum $v_0 \in L^2(\T;\R)$.
For every given $z \in \Z$, let $\hat{Y}^{z,\kappa}_{\cdot}$ be a sequence of  piece-wise $C^1$ paths converging, as $\ka \to \infty$,  to $w_{\cdot}^z(\hat{\omega})$ (in $C^{\gamma}$for some $\gamma <\alpha$). Then the following holds.
\begin{thm}\label{theorem infinite dimensional approximation} 
Let Assumption \ref{assunzioni sui dati iniziali} hold. Suppose there exists $\delta \in \left (0,\frac{1}{2} \right )$ such that $\sum \limits_{z \in \Z} |z|^{4\delta} \lambda_z^2 < +\infty$ and let $\{\hat{\rho}_t^{N,\e,M,\ka,m}\}_{t \in [0,T]}$ denote the weighted empirical measure \eqref{weighted_empirical_infinite_dimensional} associated to system \eqref{infinite dimensional IPS} with $\hat{Y}_t^{z,\ka}$ in place of $Y_t^{z,\ka}$, for every $z \in \Z$ and $\ka \in \mathbb N$. 
Then the following limit holds: 
\begin{equation*}
\lim \limits_{m \to \infty} \lim_{\ka \to \infty} \lim_{M\to\infty} \lim_{\e\downarrow 0} \lim_{N\to\infty}\,\,\,\sup \limits_{t \in [0,T]}\,\,\, \left |\langle f,\hat{\rho_t}^{N,\e,M,\kappa,m} \rangle \,-\, \langle f, v_t(\hat{\omega}) \rangle \right |=0,\,\,\,\,\mP-a.s., 
\end{equation*}
where $v_t$ is the solution of the SPDE \eqref{SPDEintro} with initial datum $v_0$ equal to $\rho_0$ (where $\rho_0$ is as in \ref{def:rho0} of Assumption \ref{assunzioni sui dati iniziali}).  
\end{thm}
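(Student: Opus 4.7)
The five nested limits split naturally into two blocks: the four inner limits in $N$, $\e$, $M$, $\ka$ are already delivered, for any fixed truncation level $m$, by Proposition \ref{prop linear combination Intro}, while the outer limit $m \to \infty$ is a purely analytic stability statement about the SPDE \eqref{SPDEintro} under truncation of its trace-class noise. The plan is therefore to (i) pathwise-apply Proposition \ref{prop linear combination Intro} to identify the four-fold inner limit with the solution $v^m$ of a truncated version of \eqref{SPDEintro}, and then (ii) show that $v^m \to v$ in $C([0,T]; L^2(\T;\R))$ for $\hat{\mP}$-a.e.\ realization $\hat{\omega}$.

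\textbf{Step 1: the inner four limits for fixed $m$.} Fix $\hat{\omega}$ in a set of full $\hat{\mP}$-measure on which, for every $z \in \Z$, the path $w^z_{\cdot}(\hat{\omega})$ is $\alpha$-H\"older continuous and the approximations $\hat{Y}^{z,\ka}_{\cdot}$ converge to $w^z_{\cdot}(\hat{\omega})$ in $C^{\gamma}$. Applying Proposition \ref{prop linear combination Intro} with the deterministic paths $Y^{z,\ka}$ replaced by these realizations $\hat{Y}^{z,\ka}$ yields, $\mP$-a.s.,
\begin{equation*}
\lim_{\ka\to\infty}\lim_{M\to\infty}\lim_{\e\downarrow 0}\lim_{N\to\infty}\sup_{t\in[0,T]}\bigl|\langle f,\hat{\rho}_t^{N,\e,M,\ka,m}\rangle - \langle f,v_t^m(\hat{\omega})\rangle\bigr| = 0,
\end{equation*}
where $v_t^m(\hat{\omega})$ is the weak solution of \eqref{rough approx} forced by $\sum_{|z|\le m}\lambda_z e_z\,\pa_t w^z_t(\hat{\omega})$. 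Because the noise is additive and the forcing is H\"older in time, this weak solution coincides, pathwise in $\hat{\omega}$, with the mild solution of the truncated SPDE obtained from \eqref{SPDEintro} by replacing $W$ with $W^m:=\sum_{|z|\le m}\lambda_z e_z w^z$; the identification is via the equivalence between weak and mild formulations recalled in the opening of Section \ref{section: main results} and the fact that Young and It\^o integration of deterministic smooth integrands against additive noise agree.

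\textbf{Step 2: passing $m \to \infty$.} Setting $z_t^m := v_t - v_t^m$ and subtracting the mild formulations of $v$ and $v^m$ gives
\begin{equation*}
z_t^m = \int_0^t e^{(t-s)\paxx}\pa_x\!\bigl[(V'+F'\ast v_s)z_s^m + (F'\ast z_s^m)v_s^m\bigr]ds + \int_0^t e^{(t-s)\paxx}\,dW^{>m}_s,
\end{equation*}
where $W^{>m} := \sum_{|z|>m}\lambda_z e_z w^z$. A standard maximal inequality for stochastic convolutions (Da Prato--Kwapie\'n--Zabczyk) yields
\begin{equation*}
\hat{\E}\sup_{t\in[0,T]}\Bigl\|\int_0^t e^{(t-s)\paxx}\,dW^{>m}_s\Bigr\|_{L^2}^2 \ls \sum_{|z|>m}\lambda_z^2 \xrightarrow[m\to\infty]{} 0,
\end{equation*}
and the weighted summability $\sum_{z}|z|^{4\delta}\lambda_z^2 < \infty$ upgrades this convergence to an appropriate fractional Sobolev space $H^{2\delta}$, which is exactly what is required by the already established well-posedness theory \cite{angeli2023well} and provides uniform-in-$m$ a priori bounds on $\|v^m\|_{C([0,T];L^2)}$. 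Combining these with the smoothness of $V,F$ and Gronwall's inequality, one obtains $\sup_{t\le T}\|z_t^m\|_{L^2}^2 \le C(\hat{\omega})\,\tau_m$ with $\tau_m := \sum_{|z|>m}\lambda_z^2 \to 0$. A Borel--Cantelli argument along a subsequence $m_k$ for which $\tau_{m_k}$ is summable, combined with monotonicity of the tail in $m$, upgrades convergence in probability to $\hat{\mP}$-a.s.\ uniform convergence. Concatenating with Step 1 closes the proof.

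\textbf{Main obstacle.} The chief difficulty is extracting $\hat{\mP}$-almost sure, rather than merely in-probability, convergence $v^m \to v$ uniformly in $t$, while simultaneously controlling the non-local nonlinear term $\pa_x[(F'\ast z^m)v^m]$ in a norm where the stochastic convolution tail is small; it is here that the strengthened summability hypothesis $\sum_z |z|^{4\delta}\lambda_z^2 < \infty$ does the work, by lifting the noise tail to a Sobolev space in which the convolution-type nonlinearity is well-behaved and Gronwall can be closed.
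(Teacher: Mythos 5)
Your proposal follows the paper's route: pathwise-apply Proposition \ref{prop linear combination Intro} for each fixed truncation level $m$, then take $m \to \infty$ by comparing the mild formulations of the truncated solution $v^m$ and $v$. The paper's execution of the second step differs from yours in a way worth flagging. The paper keeps the $m\to\infty$ limit entirely deterministic: it proves (Proposition \ref{prop limit m to infty}) that if a family of H\"older paths $\{Y^z\}_{z\in\Z}$ satisfies the tail condition \eqref{vincolo remainder} and makes the deterministic convolution $\cY$ land in $C([0,T];H^1(\T;\R))$, then $\rho^m\to\rho^\infty$ in $C([0,T];L^2(\T;\R))$; and only at the very end verifies, via \eqref{remainder goes to zero BM} and Lemma \ref{lemma differentiability stochastic convolution}, that $\hat\mP$-a.e.\ Brownian realization fits this deterministic framework. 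You instead estimate the tail convolution $\int_0^t e^{(t-s)\paxx}\,dW^{>m}_s$ in $L^2(\Omega)$ and then pass to an a.s.\ statement; this works, but the write-up is imprecise in two places. First, the intermediate bound $\sup_{t\le T}\|z^m_t\|_{L^2}^2\le C(\hat\omega)\tau_m$ with deterministic $\tau_m=\sum_{|z|>m}\lambda_z^2$ does not follow from the $L^2(\Omega)$ maximal inequality alone; the Gronwall estimate in fact needs the pathwise tail $\sup_{t\in[0,T]}\bigl\|\int_0^t e^{(t-s)\paxx}\,dW^{>m}_s\bigr\|_{L^2}$ to converge to zero $\hat\mP$-a.s., and your Borel--Cantelli/monotonicity step should be invoked \emph{before} (to obtain that) rather than after Gronwall. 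Moreover, since this tail quantity is nonincreasing in $m$, no Borel--Cantelli or subsequence extraction is needed at all: monotone a.s.\ convergence plus dominated convergence does it, which is exactly how the paper proves \eqref{remainder goes to zero BM}. Second, the hypothesis $\sum_z|z|^{4\delta}\lambda_z^2<\infty$ places the stochastic convolution $\qq^{(W)}_\cdot$ in $C([0,T];H^1(\T;\R))$, not $H^{2\delta}$; it is this $H^1$-regularity, combined with the uniform-in-$m$ bound on $\|\rho^m\|_{C([0,T];L^2)}$ of Lemma \ref{8.6}, that lets the Gronwall argument absorb the $\pa_x$-induced $(t-s)^{-1/2}$ singularity in the Duhamel term. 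These are reparable, and the overall structure of your argument matches the paper's.
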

The proof of Theorem \ref{theorem infinite dimensional approximation} can be found in Section \ref{infinite noise bm}. The assumption on the decay of the eigenvalues $\lambda_z$ appears in the above theorem to guarantee well-posedness of \eqref{SPDEintro} (and, as a consequence, it is used when taking the limit in $m$, detail in Section \ref{infinite noise bm}).
\subsection{Strategy of proof of Theorem \ref{main_thm}}\label{strategy of proof}
From this moment onward, unless otherwise stated we make Assumption \ref{assunzioni sui dati iniziali} a standing assumption so we don't repeat it in every statement.

As already mentioned, the proof of Theorem \ref{main_thm} is divided into four steps, corresponding to taking the limits $\lim \limits_{N \to \infty}$, $\lim \limits_{\e \downarrow 0}$, $\lim \limits_{M \to \infty}$ and $\lim \limits_{\ka \to \infty}$ (in this order). Let us explain how each of the steps is dealt with.
\\
$\bullet $ {\bf Letting the number of particles $N$ to infinity.} This is the most standard step. 
 We start by studying the convergence of the pair $\left (X_t^{i,N,\e,M,\ka}, A_t^{i,N,\e,M,\ka} \right )$ solving 
\eqref{particelle},  as $N \to \infty$. More precisely,  in Section \ref{section_limitN} (see Proposition \ref{prop_limitN}),  we prove that  the empirical measure  $\mu_t^{N,\e, M,\ka}$ of the pair $\left ( X_t^{i,N,\e,M,\ka}, A_t^{i,N,\e,M,\ka} \right )$, namely,   
\begin{equation}\label{empirical_m}
    \mu_t^{N,\e,M,\kappa}(dxda):=\frac{1}{N}\sum \limits_{i=1}^{N} \delta_{(X^{i,N,\e, M,\kappa}_t,A^{i,N,\e, M,\kappa}_t)}(dxda) \, ,
\end{equation}
converges to the probability measure $\mu_t^{\e,M,\ka}$ on $\T \times \R$ which is the law  of the process $\left (X_t^{\e,M, \ka}, A_t^{\e,M, \ka} \right )$, solution of the following McKean-Vlasov SDE :
\begin{equation}\label{sistemaMKE}
    \begin{dcases}
    & X_t^{\e,M,\ka}\,=\, X_0\,-\,\int_0^t \left(V'(X_s^{\e,M,\ka})+\Gamma_M(X_s^{\e,M,\ka},\mu_s^{\e,M,\ka})\right)\,ds\,+\,\sqrt{2}\,\beta_t,  \\
    & A_t^{\e,M,\ka}\,=\, A_0\,+\,\int_0^t \frac{q(X_s^{\e,M,\ka})}{\Phi_{\e}*\zeta_s^{\e,M,\ka}(X_s^{\e,M,\ka})}\,d\mY^{\kappa}_s,
    \end{dcases}
\end{equation}
where in the above $\beta_t$ is a standard Brownian motion on the torus, $\mu_t^{\e,M, \ka}:=\cL\left (X_t^{\e,M, \ka}, A_t^{\e,M, \ka} \right )$  (here and throughout $\cL$ denotes the law of a stochastic process),  $\Gamma_M$ 
is defined as 
\begin{align}\label{def_GammaM}
    \Gamma_M(x,\mu)\,:=\, \itr \chi_M(a)\cdot F'(x-y)\,\mu(dyda),\quad \mbox{for every }\,x \in \T,\,\mu \in \cP_2(\T \times \R)
\end{align}
and $\zeta_t^{\e,M,\ka}$ is the $\mathbb{T}$-marginal of the law $\mu_t^{\e,M,\ka}$ of the pair $(X_t^{\e,M, \ka}, A_t^{\e,M, \ka})$, namely,
\begin{equation}\label{zetaMKE}
    \zeta_t^{\e,M,\ka}(dx)=\int_{\R} \mu_t^{\e,M,\ka}(dxda),\qquad t \in [0,T].
\end{equation}
In view of this result, the marginal $\zeta_t^{\e,M,\ka}$ can be interpreted as the limit (in  weak sense) as $N \to \infty$ of the empirical measure $\{ \zeta_t^{N,\e,M,\ka} \}_{N \in \N}$ in \eqref{empirical_zeta} associated  with the particles $X_t^{i, N, \e, M. \ka}$ (we will not use this fact, but it is still useful to point it out for general understanding).
Moreover, in the limit $N\rightarrow \infty$, for each $i \in \N$, the pair $\left (X_t^{i,N,\e,M,\ka}, A_t^{i,N,\e,M,\ka} \right )$ converges to the solution $(X_t^{\e,M, \ka}, A_t^{\e,M, \ka})$ of \eqref{sistemaMKE}, see again Proposition \ref{prop_limitN} . 

Using e.g. the methods of \cite{coghi2019stochastic}, one can show (see Proposition \ref{prop:law satisfies PDE} at the end of this section) that the limit measure $\mu_t^{\e,M, \ka}$  is the unique weak measure-valued solution of the following  PDE
\begin{equation}\label{PDEmuMKE}
    \begin{dcases}
&\pa_t\mu_t^{\e,M,\ka}=\paxx\mu_t^{\e,M,\ka}+\pa_x \left[ (V^{'}+\Gamma_M(x,\mu_t^{\e,M,\ka})\mu_t^{\e,M,\ka}\right ]-\frac{q(x)}{\Phi_{\e}*\zeta_t^{\e,M,\ka}}\pa_a\mu_t^{\e,M,\ka}\pa_tY_t^{\ka}\\
        & \mu^{\e,M,\ka}|_{t=0}=\mu_0.
    \end{dcases}
\end{equation}
Note that the above equation is a closed equation for $\mu_t^{\ep,M,\ka}$: while $\zeta^{\ep,M,\ka}_t$  appears in the equation, $\zeta_t^{\ep,M,\ka}$ depends on $\mu_t^{\ep,M,\ka}$ only (see \eqref{zetaMKE}). We also note that, in establishing well-posedness of \eqref{PDEmuMKE},  it is clearly crucial that $\Phi_{\ep}$ is bounded below for each $\ep$ fixed, see proof of Proposition \ref{prop:law satisfies PDE}.
  
Since $\mu_t^{\e, M, \ka}$ solves \eqref{PDEmuMKE},  the marginal $\zeta_t^{\e,M,\ka}$ solves the following (linear) PDE 
\begin{equation}\label{PDEzetaMKE}
    \begin{dcases}
        & \pa_t\zeta_t^{\e,M,\ka}=\pa_{xx}\zeta_t^{\e,M,\ka}+\pa_x \left [ \left (V^{'}+\Gamma_M(x,\mu_t^{\e,M,\ka})\right) \zeta_t^{\e,M,\ka}\right ] \\
        & \zeta^{\e,M,\ka}|_{t=0}=\zeta_0.
    \end{dcases}
\end{equation}
This can be formally seen either by integrating  \eqref{PDEmuMKE} with respect to the variable $a \in \R$ or using It\^o's formula; indeed, since
$\zeta_t^{\e,M,\ka}$ is the marginal on $\T$ of $\mu_t^{\e, M,\ka}=\cL(X_t^{\e, M,\ka},A_t^{\e, M,\ka})$,   $\zeta_t^{\e,M,\ka}$ satisfies 
\begin{equation*}\label{zetaMKexpectation}
 \langle f,\zeta_t^{\e, M,\ka} \rangle=\mE \left( f(X_t^{\e, M,\ka})\right),\qquad\text{for all }f \in C^{\infty}(\T;\R)\,.
\end{equation*}
Hence, using It\^o's formula, we have 
\begin{align*}
    & df(X_t^{\e,M,\ka})=f^{'}(X_t^{\e,M,\ka})dX_t^{\e,M,\ka}+f^{''}(X_t^{\e,M,\ka})dt\\
    & =-f^{'}(X_t^{\e,M,\ka})\left [ V^{'}(X_t^{\e,M,\ka})+\Gamma_M(X_t^{\e,M,\ka},\mu_t^{\e,M,\ka})\right ]dt+\sqrt{2}f^{'}(X_t^{\e,M,\ka})d\noise_t+f^{''}(X_t^{\e,M,\ka})dt \, 
\end{align*}
which, after taking  expectation,  formally gives the  PDE \eqref{PDEzetaMKE}.  Let us also note that, since \eqref{PDEzetaMKE} is linear, from classical results in PDE theory, see e.g. \cite[Chapter 7]{evans2022partial} or \cite{friedman2008partial}, the solution to \eqref{PDEzetaMKE} is always a smooth function (since all the coefficients are smooth and the initial datum $\zeta_0$ is chosen to be smooth as well).  In the proofs of Section \ref{section_limitN} we will not use explicitly the fact that $\zeta_t^{\e,M, \ka}$ solves \eqref{PDEzetaMKE}. However it is important to note it here as a motivation for the way we proceed in the next steps of the proof. 

While the above facts are instrumental to the proof, we are not interested in the limit of $\mu_t^{N,\e, M,\ka}$ but in the limit of $\rho_t^{N,\e, M,\ka}$, defined in \eqref{weighted_empirical}. Since  $\mu_t^{N,\e, M,\ka}$ converges to $\mu_t^{\e, M,\ka}$, upon defining the measure $\rho_t^{\e,M,\ka}$ as follows 
\begin{equation}\label{rhoMKE}
\rho_t^{\e,M,\kappa}(dx) := \int_{\mathbb R} a \mu_t^{\e,M,\kappa}(dxda),
\end{equation}
(note that for each $t>0$, $\rho_t^{\e,M,\ka}$ is a signed measure on the torus $\T$), we expect $\{\rho_t^{N,\e, M,\ka}\}_{N \in \N}$ to converge to $\rho_t^{\e,M,\kappa}$. One can at least formally obtain the equation solved by $\rho_t^{\e,M,\kappa}$ by multiplying equation \eqref{PDEmuMKE} by $a \in \R$ and integrating with respect to $a \in \R$. This is the intuition behind Proposition \ref{prop limit rho N} below,  the proof of which can be found in Section \ref{section_limitN}.
\begin{prop}\label{prop limit rho N}
Fix $M,\kappa \in \N$, $\e >0$ and let $\{\rho_t^{N,\e,M,\kappa} \}_{N \in \N}$ and $\rho_t^{\e,M,\kappa}$ be as in \eqref{weighted_empirical} and \eqref{rhoMKE}, respectively. 
Then the following limit holds:
\begin{equation*}
    \lim_{N\to\infty} \,\,\,\sup \limits_{t \in [0,T]}\,\,\, \left |\langle f,\rho_t^{N,\e,M,\kappa} \rangle \,-\, \langle f, \rho_t^{\e,M,\ka} \rangle \right |=0 \\,\,\,\,\,\mP-a.s.
    \end{equation*}
for any given $f\in C^\infty(\T;\R)$. Furthermore, $\rho_t^{\e,M,\ka}$ is a weak measure-valued (and, a posteriori, classical solution) to the following PDE
\begin{equation}\label{PDErhoMKE}
    \begin{dcases}
        &\pa_t\rho_t^{\e,M,\ka}=\paxx\rho_t^{\e,M,\ka}+\pa_x \left[ (V^{'}+\Gamma_M(x,\mu_t^{\e,M,\ka})\rho_t^{\e,M,\ka}\right ]+\frac{q(x)\zeta_t^{\e,M,\ka}(x)}{\Phi_{\e}*\zeta_t^{\e,M,\ka}(x)}\pa_tY_t^{\ka}\\
        & \rho^{\e,M,\ka}|_{t=0}=\rho_0,
    \end{dcases}
\end{equation}
where $\zeta_t^{\e,M,\ka}$ is defined in \eqref{zetaMKE}.
\end{prop}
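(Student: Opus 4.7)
My plan is to pass to the limit $N\to\infty$ by coupling the interacting particle system \eqref{particelle} with an iid family of solutions of the McKean--Vlasov SDE \eqref{sistemaMKE}, and then to identify the limit via It\^o's formula applied to the MKV process. The decisive preliminary observation is that the weights $A_t^{i,N,\e,M,\ka}$ are \emph{uniformly bounded}: since $\Phi_\e\geq c_\e>0$ on $\T$ and $\zeta_t^{N,\e,M,\ka}$ is a probability measure, $\Phi_\e\ast\zeta_t^{N,\e,M,\ka}\geq c_\e$; since $Y^\ka$ is piecewise $C^1$ and hence of finite total variation on $[0,T]$, integrating the SDE for the weight yields
\begin{equation*}
|A_t^{i,N,\e,M,\ka}|\;\leq\;|A_0^i|+\frac{\|q\|_\infty}{c_\e}\int_0^T|\dot Y_s^\ka|\,ds,
\end{equation*}
uniformly in $i,N,t$. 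Combined with $\E[|A_0|^2]<\infty$ (Assumption~\ref{assunzioni sui dati iniziali}), this gives uniform $L^p$ control on $A_t^{i,N,\e,M,\ka}$ for every $p\geq 1$, and turns the unbounded observable $(x,a)\mapsto af(x)$ into an effectively bounded and Lipschitz test function when integrated against $\mu_t^{N,\e,M,\ka}$.

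\textbf{Convergence.} Introduce the iid McKean--Vlasov copies $\{(X_t^{i,\e,M,\ka},A_t^{i,\e,M,\ka})\}_{i\in\N}$ driven by the same Brownian motions $\beta_t^i$ and starting from the same initial data $(X_0^i,A_0^i)$. Decompose
\begin{align*}
\langle f,\rho_t^{N,\e,M,\ka}-\rho_t^{\e,M,\ka}\rangle
&=\frac 1N\sum_{i=1}^N\bigl[A_t^{i,N,\e,M,\ka}f(X_t^{i,N,\e,M,\ka})-A_t^{i,\e,M,\ka}f(X_t^{i,\e,M,\ka})\bigr]\\
&\quad+\frac 1N\sum_{i=1}^N\bigl[A_t^{i,\e,M,\ka}f(X_t^{i,\e,M,\ka})-\E\bigl(A_t^{\e,M,\ka}f(X_t^{\e,M,\ka})\bigr)\bigr].
\end{align*}
The first sum vanishes uniformly in $t$, almost surely, by the pathwise coupling estimate provided by Proposition~\ref{prop_limitN} and by the Lipschitz continuity of $(x,a)\mapsto af(x)$ on the bounded set on which the particles and their MKV counterparts live. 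The second sum is the empirical average of iid uniformly bounded continuous $C([0,T];\R)$-valued random variables, hence tends to zero uniformly in $t$, almost surely, by the strong law of large numbers for Banach-space-valued random variables.

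\textbf{Identification of the PDE.} Apply It\^o's formula to $A_t^{\e,M,\ka}f(X_t^{\e,M,\ka})$ using \eqref{sistemaMKE}. Because $Y^\ka$ is continuous and of bounded variation, the integral against $dY_s^\ka$ is a genuine Riemann--Stieltjes integral, the cross-bracket with $d\beta_s$ vanishes, and no quadratic correction appears. Taking expectations kills the $d\beta_s$ martingale; using that $\zeta_t^{\e,M,\ka}$ is the $\T$-marginal of $\mu_t^{\e,M,\ka}$ and rewriting the first-order term as a pairing against $\rho_s^{\e,M,\ka}$ yields
\begin{equation*}
\langle f,\rho_t^{\e,M,\ka}\rangle=\langle f,\rho_0\rangle+\int_0^t\Bigl[\langle f'',\rho_s^{\e,M,\ka}\rangle-\langle f'(V'+\Gamma_M(\cdot,\mu_s^{\e,M,\ka})),\rho_s^{\e,M,\ka}\rangle\Bigr]ds+\int_0^t\Bigl\langle \frac{q\,\zeta_s^{\e,M,\ka}}{\Phi_\e\ast\zeta_s^{\e,M,\ka}},f\Bigr\rangle dY_s^\ka,
\end{equation*}
which is the weak formulation of \eqref{PDErhoMKE}. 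Standard linear parabolic theory applied to \eqref{PDEzetaMKE} (smooth coefficients and smooth, strictly positive initial datum) shows that $\zeta_t^{\e,M,\ka}$ is smooth and bounded below on $[0,T]\times\T$; hence $\Phi_\e\ast\zeta_t^{\e,M,\ka}$ is smooth and bounded below, and uniqueness for the resulting linear equation with smooth coefficients promotes $\rho_t^{\e,M,\ka}$ to a classical solution.

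\textbf{Main obstacle.} I expect the most delicate point to be upgrading pointwise LLN to the \emph{uniform-in-$t$, almost sure} statement required by the proposition. The natural route is to establish tightness of the processes $t\mapsto A_t^{i,\e,M,\ka}f(X_t^{i,\e,M,\ka})$ in $C([0,T];\R)$ via Kolmogorov-type estimates (available because, under the weight bound, the SDEs have Lipschitz coefficients) and then invoke the LLN in a Banach space. The identification step is by contrast clean, precisely because at this stage $Y^\ka$ is deterministic and of finite variation, so no Stratonovich correction intervenes; this is consistent with the general motivation for working with smoothed driving paths spelled out in the introduction.
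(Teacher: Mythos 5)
Your proof is correct and, in the convergence step, takes a route that is genuinely different in presentation (and arguably more explicit) than the paper's. The paper deduces the statement from Proposition~\ref{prop_limitN} and the abstract characterization of $\mathcal{W}_2$-convergence in Proposition~\ref{weak formulation on TR} applied with $\mathcal{X}=C_T$: the test function $\Psi(x,a)=af(x)$ has at most linear (hence quadratic) growth in $a$, so Wasserstein-2 convergence of the pathwise empirical measures propagates to convergence of the dual pairings. You instead decompose the error into a coupling term (controlled by the pathwise estimate behind Proposition~\ref{prop_limitN}) and an empirical-fluctuation term (controlled by the strong law of large numbers for $C([0,T];\mathbb{R})$-valued i.i.d.\ variables). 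Your version spells out explicitly why the statement holds \emph{uniformly in $t$ almost surely}, which the paper handles implicitly through $\mathcal{W}_{2,T}$ together with \eqref{W2 leq W2t}. The PDE identification step is substantially the same as the paper's: the paper uses the weak formulation of \eqref{PDEmuMKE} (itself established via It\^o's formula in Proposition~\ref{prop:law satisfies PDE}) and tests it against $af(x)$; you apply It\^o's formula directly to $A_t^{\e,M,\ka}f(X_t^{\e,M,\ka})$ and take expectations.

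Two small points need patching. First, the weights are not \emph{uniformly} bounded: the a.s.\ bound you write is $|A_t^{i,N,\e,M,\ka}|\leq |A_0^i| + \|q\|_\infty c_\e^{-1}\int_0^T|\dot Y_s^\ka|\,ds$, and since $A_0^i$ is merely $L^2$ under Assumption~\ref{assunzioni sui dati iniziali}, there is no uniform $L^p$ control for all $p$ — only $L^2$ — and the observable $af(x)$ is not effectively bounded nor globally Lipschitz on the support. This does not break your argument: in the coupling term, split $|A^{i,N}f(X^{i,N}) - A^{i}f(X^{i})| \leq \|f\|_\infty |A^{i,N}-A^i| + (|A_0^i|+C_\e)\|f'\|_\infty|X^{i,N}-X^i|$; the prefactor $(|A_0^i|+C_\e)$ averages (over $i$) to a finite limit by the ordinary LLN, while $\sup_t|X_t^{i,N}-X_t^i|$ is controlled uniformly in $i$ by the $\mathcal{W}_{2,T}$ convergence from Proposition~\ref{prop_limitN}. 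Second, for the Banach-space SLLN you do not need tightness or Kolmogorov estimates — the random element $t\mapsto A_t^{i,\e,M,\ka}f(X_t^{i,\e,M,\ka})$ already lives in $C([0,T];\mathbb{R})$ a.s.\ with finite first moment in the sup norm (precisely because of the weight bound), so Mourier's SLLN in the separable Banach space $C([0,T];\mathbb{R})$ applies directly and the ``main obstacle'' you flag is not actually an obstacle.
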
 

\begin{note}\label{note:cutoff} Let us make some comments to motivate our strategy of proof. 

\noindent
$\bullet$ Introducing the cutoff $\chi_{M}$ is strictly speaking not needed for the proof of the limit $N\rightarrow \infty$ (but it does make the proof of this step easier, see the calculations around \eqref{eqn:cutoff}, which would be otherwise slightly more complicated). The place where we need it is in the proof of the limit $\ep\rightarrow 0$: by looking at \eqref{PDErhoMKE} it is clear that, when letting $\ep\rightarrow 0$,  one way to proceed is to obtain lower bounds which are uniform in $\ep$ for the quantity $\Phi_{\ep} \ast\zeta_t^{\ep,M,\kappa}$. The presence of the cutoff makes it easier to obtain such lower bounds. For more detail see Note \ref{note uniform bound in eps and M}.

\noindent
$\bullet$ If the  piece-wise $C^1$ forcing $\{Y_t^{\ka}\}_{t \in [0,T]}$ in \eqref{particelle} was replaced 
by a standard Brownian motion $\{w_t\}_{t \geq 0}$ then in the equation for $\mu_t^{\e,M,\ka}$ an additional second order term would arise; that is, the empirical measure \eqref{empirical_m} would converge to a measure, say $\mu_t^{\e,M}$, the evolution of which would not be described by \eqref{PDEmuMKE}, but rather by the following SPDE 
\begin{align}
        \pa_t\mu_t^{\e,M}  & = \paxx \mu_t^{\e,M} +\frac{1}{2} \frac{q^2(x)}{(\Phi_{\e}*\zeta_t^{\e,M})^2}\pa_{aa}\mu_t^{\e,M} \label{PDE1with additional second order term}\\
         & +\pa_x \left[ (V^{'}+
        \Gamma_M(x,\mu_t^{\e,M})\mu_t^{\e,M}\right ]-\frac{q(x)}{\Phi_{\e}*\zeta_t^{\e,M}}\pa_a\mu_t^{\e,M}\pa_t w_t  \, ,\nonumber 
\end{align}
see \cite{coghi2019stochastic}. Throughout the paper we will often need to estimate moments of the form $\itr |a| \mu(dx, da)$, with $\mu$ being either $\mu^{\ep, M, \kappa}$ or $\mu^{M, \kappa}$ (the latter defined by \eqref{PDEmuMK}). If the second order term (the second addend on the right hand side of the above) appears, estimating this quantity becomes more involved. This problem does not seem to be ameliorated by working directly with the McKean-Vlasov processes \eqref{sistemaMKE}/\eqref{sistemaMK} rather than with the equation for the corresponding joint densities. Another option would be to work directly with the equation for $\rho_t^{\e,M,\ka}$ or $\rho_t^{M,\ka}$, see \eqref{PDErhoMKE} and \eqref{PDErhoMK}, as these are additive equations (as opposed to the corresponding equations for the joint density, which are multiplicative). However, it seems to us that since $\Gamma_M$ depends on $\mu$, we would still end up needing estimates on moments of the joint density $\mu_t^{\e,M,\ka}$ or $\mu_t^{M,\ka}$. 

\noindent
$\bullet$ It is crucial to notice that, whether we fix the noise (and its sequence of approximants $Y^{\ka}$) and hence end up considering the joint distribution $\mu_t^{\ep, M, \ka}$ in \eqref{PDEmuMKE} or we keep the original Brownian motion and hence study the joint distribution $\mu^{\ep, M}$ in \eqref{PDE1with additional second order term}, the corresponding weighted marginal that we would obtain is the same. That is, if we set $\rho_t^{\ep, M} = \int_{\R} a \, \mu_t^{\ep, M}(da dx)$, then the equation satisfied by  $\rho_t^{\ep, M}$ (obtained by multiplying equation \eqref{PDE1with additional second order term} by $a \in \R$ and then integrating with respect to  $a \in \R$), is the same as equation \eqref{PDErhoMKE} satisfied by $\rho_t^{\ep, M, \ka}$. This is also true for the equations satisfied by all the joint distributions that will appear when we consider subsequent limits (namely when we consider  $\mu_t^{M, \ka}$, $\mu_t^{\ka}$ satisfying \eqref{PDEmuMK}, \eqref{PDEmuK}, respectively) and their weighted marginals ($\rho_t^{M, \ka}$, $\rho_t^{\ka}$ satisfying \eqref{PDErhoMK}, \eqref{PDErhoK}, respectively).  Therefore we can interpret the idea of fixing the noise as follows: we are interested in the limiting behaviour of a measure, $\rho^{N, \ep, M, \ka}$, which we can be constructed  as the (weighted) marginal of many potential probability distributions; what we do by fixing the noise is to realise $\rho^{N, \ep, M, \ka}_t$ and its limit $\rho_t$  (as well as its `intermediate limits' $\rho_t^{M, \ka}$ and $\rho_t^{\ka}$) as (weighted) marginals of a specific joint distribution, which we choose in a way to simplify the proof.  
\end{note}
$\bullet$ {\bf Limit $\e \downarrow 0$.}
 Let us begin with noting that since the mollifiers $\{\Phi_{\e}\}_{\e>0}$ converge to the Dirac delta as $\e \downarrow 0$, if we assume  that $\mu_t^{\e,M,\ka}$ converges to some limit $\mu_t^{M,\ka}$ and hence 
 $\zeta_t^{\e,M,\ka}$ converges to a function, say $\zeta_t^{M,\ka}$, then we would obtain $\Phi_{\e}*\zeta_t^{\e,M,\ka} \stackrel{\e \rightarrow 0}{\longrightarrow} \zeta_t^{M,\ka}$. So, intuitively,  in the limit $\e \downarrow 0$, we expect  the stochastic process $(X_t^{\e,M,\ka},A_t^{\e,M,\ka})$ in \eqref{sistemaMKE} to converge to $(X_t^{M,\ka},A_t^{M,\ka})$,  solution to the following system:  
\begin{equation}\label{sistemaMK}
    \begin{dcases}    
    & X_t^{M,\ka}\,=\, X_0\,-\,\int_0^t \left(V'(X_s^{M,\ka})+\Gamma_M(X_s^{M,\ka},\mu_s^{M,\ka})\right)\,ds\,+\, 
         \sqrt{2}\,\beta_t,  \\
    & A_t^{M,\ka}\,=\, A_0\,+\,\int_0^t \frac{q(X_s^{M,\ka})}{\zeta_s^{M,\ka}(X_s^{M,\ka})}\,d\mY^{\kappa}_s \, , 
    \end{dcases}
\end{equation}
where  $\mu^{M, \ka}_t= \cL(X_t^{M, \ka}, A_t^{M, \ka})$. This is the correct `guess' but, for the purposes of the proof, we need to be careful how we define $\zeta_t^{M, \ka}$. Indeed, if we were to proceed as in the previous step we would now define  $\zeta_t^{M,\ka}$ to be the $\T$-marginal of  $\mu_t^{M,\ka}$,  namely,
\begin{equation*}\label{zetaMK}
    \zeta_t^{M,\ka}(dx) :=\int_{\R} \mu_t^{M,\ka}(dxda),\,\,\,\,\,t \in [0,T] \,.
\end{equation*}
We would then observe that 
 $\mu_t^{M, \ka}$ solves the following PDE
\begin{equation}\label{PDEmuMK}
    \begin{dcases}
        & \pa_t\mu_t^{M,\ka}=\paxx\mu_t^{M,\ka}+\pa_x \left[ (V^{'}+\Gamma_M(x,\mu_t^{M,\ka})\mu_t^{M,\ka}\right ]-\frac{q(x)}{\zeta_t^{M,\ka}}\pa_a\mu_t^{M,\ka}\pa_tY_t^{\ka}\\
        & \mu^{M,\ka}|_{t=0}=\mu_0 , 
    \end{dcases}
\end{equation}
and hence  $\zeta_t^{M,\ka}$ solves the Cauchy problem 
\begin{equation}\label{zetaPDEMK}
    \begin{dcases}
        & \pa_t\zeta_t^{M,\ka}=\pa_{xx}\zeta_t^{M,\ka}+\pa_x \left [ \left (V^{'}+\Gamma_M(x,\mu_t^{M,\ka})\right) \zeta_t^{M,\ka}\right ] \\
        & \zeta^{M,\ka}|_{t=0}=\zeta_0 \, .  
    \end{dcases}
\end{equation}
Again,  the PDE satisfied by $\zeta^{M, \ka}_t$ can be formally deduced by integrating \eqref{PDEmuMK} with respect to the variable $a \in \R$. If we were to proceed in this way then equation \eqref{PDEmuMK} would have the same structure as the PDE \eqref{PDEmuMKE}; in particular, this would be a closed equation in $\mu^{M, \ka}_t$ (as before, $\zeta^{M, \ka}_t$ would depend on  $\mu_t^{M, \ka}$ only). However, with these definitions, we would not know how to study the well-posedness of equation \eqref{PDEmuMK}, because now in the denominator of the last addend of \eqref{PDEmuMK} there is no mollifier, so  we don't know whether the denominator is strictly positive. In particular, the assumptions of \cite{coghi2019stochastic}, which we use to prove well-posedness of \eqref{PDEmuMKE}, are not satisifed in this case.  To overcome this issue, and guided by the intuition that if $\zeta_t^{M, \ka}$ was defined as the marginal of the joint law $\mu_t^{M, \ka}$ then $\zeta_t^{M, \ka}$ would solve the PDE \eqref{zetaPDEMK}, what we will prove is that $\mu_t^{\e,M, \ka}$ converges, as $\e\rightarrow 0$, to $\mu_t^{M, \ka}$, which is the law of the process $(X_t^{M, \ka}, A_t^{M, \ka})$, where in equation \eqref{sistemaMK} we define $\zeta_t^{M, \ka}$ as the solution of the PDE \eqref{zetaPDEMK}, see Definition \ref{DefMK}. We prove this fact in Section \ref{section_limit_e}, see Proposition  \ref{prop_e_before_M} in particular. The advantage of defining $\zeta_t^{M,\ka}$ as solution of the PDE  \eqref{zetaPDEMK} comes from the fact that, from classical PDE theory, we easily know that the solution to \eqref{zetaPDEMK} is positive (see Lemma \ref{wellpossistemaMK}), fact which makes the study of the well-posedness of \eqref{PDEmuMK}-\eqref{zetaPDEMK}  (see Proposition \ref{prop:law satisfies PDE}) easily accessible.  



Analogously to what we did when  establishing the limit $N\rightarrow \infty$, we now define the following measure 
\begin{equation}\label{rhoMK}
\rho_t^{M,\ka}(dx):=\int_{\R} a\mu_t^{M,\ka}(dxda),\,\,\,\,t \in [0,T],    
\end{equation}
which is a signed measure on the torus and it is the object we are actually interested in.  
In   Section \ref{section_limit_e} we prove that the following holds.  
\begin{prop}\label{prop limit rho eps}
Fix  $ M , \ka \in \N $  and let $\{ \rho_t^{\e,M,\kappa} \}_{\e>0} \subset \cM(\T)$ and $\rho_t^{M,\kappa} \in \cM(\T)$ be the measures defined  in \eqref{rhoMKE}  and \eqref{rhoMK}, respectively. Then, for each  $ M , \ka \in \N $ fixed,  the following limit holds 
\begin{equation}\label{limiteeps}
    \lim_{\e\downarrow 0}\, \,\,\,\sup \limits_{t \in [0,T]}\,\,\, \left |\langle f,\rho_t^{\e,M,\kappa} \rangle\,-\, \langle f, \rho_t^{M,\ka} \rangle \right |=0,
\end{equation}
for any given $f\in C^\infty(\T;\R)$. Furthermore $\rho_t^{M,\ka}$ is a weak measure-valued (and, a posteriori, classical) solution to the PDE
\begin{equation}\label{PDErhoMK}
    \begin{dcases}
        &\pa_t\rho_t^{M,\ka}=\paxx\rho_t^{M,\ka}+\pa_x \left[ (V^{'}+\Gamma_M(x,\mu_t^{M,\ka})\rho_t^{M,\ka}\right ]+q(x)\pa_tY_t^{\ka}\\
        & \rho^{M,\ka}|_{t=0}=\rho_0 \,. 
    \end{dcases}
\end{equation}
 As a consequence,  the solution  $\rho_t^{\e, M, \ka}$  of the  PDE \eqref{PDErhoMKE} converges (in the sense \eqref{limiteeps}) to  $\rho_t^{M, \ka}$, solution of the PDE \eqref{PDErhoMK}.  
\end{prop}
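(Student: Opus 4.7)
My plan is to work at the level of the underlying McKean--Vlasov SDEs: couple \eqref{sistemaMKE} and \eqref{sistemaMK} on a common probability space, show that $(X_t^{\e,M,\ka},A_t^{\e,M,\ka})$ converges in $L^1$ to $(X_t^{M,\ka},A_t^{M,\ka})$ as $\e\downarrow 0$, and then deduce the claimed weak convergence of $\rho_t^{\e,M,\ka}$ via the representation
\begin{equation*}
\langle f,\rho_t^{\e,M,\ka}\rangle \,=\, \mE\bigl[f(X_t^{\e,M,\ka})\,A_t^{\e,M,\ka}\bigr], \qquad f\in C^\infty(\T;\R).
\end{equation*}

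\textbf{Key preliminary bounds.} The first crucial input is a uniform (in $\e,t,x$) lower bound $(\Phi_\e\ast\zeta_t^{\e,M,\ka})(x)\ge\eta^\star>0$, which I expect to follow from Lemma \ref{stime} combined with the strict positivity of $\zeta_0$ in \ref{def:zeta0}; the analogous positive lower bound for $\zeta_t^{M,\ka}$ follows from classical parabolic theory applied to the linear PDE \eqref{zetaPDEMK}. The second input is the vanishing as $\e\downarrow 0$ of
\begin{equation*}
\Lambda(\e)\,:=\,\bigl\|\Phi_\e\ast\zeta^{\e,M,\ka}-\zeta^{M,\ka}\bigr\|_{L^\infty([0,T]\times\T)},
\end{equation*}
which I plan to obtain via (i) uniform-in-$\e$ H\"older/Schauder estimates for \eqref{PDEzetaMKE} (its drift $\Gamma_M$ is uniformly bounded thanks to the cutoff $\chi_M$), giving equicontinuity of $\{\zeta^{\e,M,\ka}\}_\e$ and, through a compactness--stability argument, uniform convergence to $\zeta^{M,\ka}$; and (ii) the standard mollifier property $\|\Phi_\e\ast g-g\|_\infty\to 0$ for continuous $g$.

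\textbf{Coupling and Gronwall.} Couple \eqref{sistemaMKE} and \eqref{sistemaMK} using the same Brownian motion $\beta_t$, the same driving path $Y^\ka$ and the same initial datum $(X_0,A_0)$. Setting $\Delta X_t:=X_t^{\e,M,\ka}-X_t^{M,\ka}$, $\Delta A_t:=A_t^{\e,M,\ka}-A_t^{M,\ka}$, Lipschitzianity of $V'$ and of $\Gamma_M$ in both arguments --- the latter relying on boundedness and Lipschitz continuity of $\chi_M F'$, i.e.\ on the cutoff highlighted in Note \ref{note:cutoff} --- yields
\begin{equation*}
\mE|\Delta X_t| \,\lesssim\, \int_0^t \mE\bigl(|\Delta X_s|+|\Delta A_s|\bigr)\,ds.
\end{equation*}
For the weight, using the elementary identity $\tfrac{1}{a}-\tfrac{1}{b}=\tfrac{b-a}{ab}$ together with the lower bound $\eta^\star$, the Lipschitzianity of $x\mapsto q(x)/\zeta_t^{M,\ka}(x)$ (from smoothness and positivity of $\zeta_t^{M,\ka}$) and the finite total variation of $Y^\ka$ (for $\ka$ fixed),
\begin{equation*}
\mE|\Delta A_t| \,\lesssim\, \int_0^t \mE|\Delta X_s|\,ds \,+\, \Lambda(\e),
\end{equation*}
the implicit constants depending on $M,\ka$. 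Gronwall then gives $\sup_{t\in[0,T]}\mE(|\Delta X_t|+|\Delta A_t|)\to 0$. The weight equation also yields a uniform-in-$\e$ bound $\sup_{\e,t}|A_t^{\e,M,\ka}|\le C_{M,\ka}$ (as $q$ is bounded, $Y^\ka$ has finite total variation and the denominator is bounded below by $\eta^\star$); combined with the $L^1$-convergence of $\Delta X_t$ and $\Delta A_t$, this delivers \eqref{limiteeps}.

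\textbf{Deriving the PDE; main obstacle.} To obtain \eqref{PDErhoMK} I test the weak form of \eqref{PDEmuMK} against $a\,f(x)$ and integrate by parts in $a$; the only subtle computation concerns the forcing term, where
\begin{equation*}
-\int_\R a\,\frac{q(x)}{\zeta_t^{M,\ka}(x)}\,\pa_a\mu_t^{M,\ka}(x,a)\,da\,\pa_t Y_t^\ka
\,=\, \frac{q(x)\,\zeta_t^{M,\ka}(x)}{\zeta_t^{M,\ka}(x)}\,\pa_t Y_t^\ka
\,=\, q(x)\,\pa_t Y_t^\ka,
\end{equation*}
precisely the additive forcing appearing in \eqref{PDErhoMK}; the boundary contributions at $|a|\to\infty$ vanish thanks to the moment bounds on $A_t^{M,\ka}$ inherited from the uniform bound above. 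The remaining terms reproduce the rest of \eqref{PDErhoMK}, and standard parabolic regularity for linear equations with smooth coefficients promotes $\rho_t^{M,\ka}$ to a classical solution between consecutive non-differentiability times of $Y^\ka$. The hardest step I anticipate is securing the uniform-in-$\e$ lower bound $\eta^\star$: without it the denominator in \eqref{sistemaMKE} could degenerate in the limit, the coupling estimate above would collapse, and the very ``cancellation of $\zeta$'' that converts the multiplicative noise structure on the weights into the additive forcing in \eqref{PDErhoMK} would be inaccessible.
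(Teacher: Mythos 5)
Your coupling-plus-Gronwall skeleton, the uniform-in-$\e$ lower bound on $\Phi_\e\ast\zeta^{\e,M,\ka}$, and the derivation of \eqref{PDErhoMK} by testing against $a\,f(x)$ all match the paper's proof (carried out in Proposition \ref{prop_e_before_M}, Lemma \ref{stime} and the proof of Proposition \ref{prop limit rho N}). The place where your plan has a genuine gap is the claim that
\begin{equation*}
\Lambda(\e)=\bigl\|\Phi_\e\ast\zeta^{\e,M,\ka}-\zeta^{M,\ka}\bigr\|_{L^\infty([0,T]\times\T)}\xrightarrow[\e\downarrow 0]{}0
\end{equation*}
can be established \emph{a priori} by equicontinuity of $\{\zeta^{\e,M,\ka}\}_\e$ and a ``compactness--stability argument.'' Compactness alone only gives subsequential limits of $\zeta^{\e,M,\ka}$; identifying the limit as $\zeta^{M,\ka}$ requires passing to the limit in the drift $\Gamma_M(\cdot,\mu^{\e,M,\ka}_\cdot)$ of equation \eqref{PDEzetaMKE}, which in turn requires convergence of $\mu^{\e,M,\ka}$ to $\mu^{M,\ka}$ --- precisely what you are trying to prove. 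To close this, you would have to also prove tightness of $\{\mu^{\e,M,\ka}\}_\e$ in $C([0,T];\cP_2(\T\times\R))$, pass to the limit in the full coupled system \eqref{PDEmuMKE}--\eqref{PDEzetaMKE}, and invoke uniqueness of the limit system \eqref{PDEmuMK}--\eqref{zetaPDEMK}; none of this is spelled out, and without it the ``stability'' step is circular. Moreover, if you did carry out this coupled compactness argument, you would already have convergence $\mu^{\e,M,\ka}\to\mu^{M,\ka}$ and the Gronwall step would become redundant as a means of \emph{proving} convergence.

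The paper sidesteps this issue by a different split. Instead of treating $\Lambda(\e)$ as a self-contained vanishing input, it writes
\begin{equation*}
\bigl|\Phi_\e\ast\zeta_t^{\e,M,\ka}(X_t^{\e})-\zeta_t^{M,\ka}(X_t)\bigr|
\le \bigl|\Phi_\e\ast\zeta_t^{M,\ka}-\zeta_t^{M,\ka}\bigr|_{L^\infty}
+\bigl|\zeta_t^{\e,M,\ka}-\zeta_t^{M,\ka}\bigr|_{L^\infty}
+\text{(Lipschitz-in-}X\text{ terms)},
\end{equation*}
where the first term is a pure mollifier estimate on the \emph{fixed} function $\zeta^{M,\ka}$ (hence vanishes by uniform continuity) and the second is bounded, via the quantitative stability estimate of Lemma \ref{stime}(iii), by $\cG(M,t)\int_0^t\cW_2^2(\mu_s^{\e,M,\ka},\mu_s^{M,\ka})\,ds$ --- a term that is absorbed directly into the Gronwall iteration. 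This keeps the argument self-contained without any extraction of subsequences and is what makes the $\e\downarrow 0$ step close. You should replace the compactness--stability step with this decomposition, or alternatively supply the full tightness-and-uniqueness argument for the coupled system.
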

 
\begin{note}
    In comparing the PDE \eqref{PDErhoMKE} solved by $\rho_t^{\e, M,\ka}$ and the PDE \eqref{PDErhoMK} solved by $\rho_t^{M,\ka}$, it should be noted that   the forcing term of the former equation is not purely additive,   while the one of equation  \eqref{PDErhoMK}  is. So,  as expected from the heuristics of Section \ref{subsec:heuristics}, see \eqref{additive}, it is in taking the limit $\e \downarrow 0$ that we 'simplify' the multiplicative term in front of the forcing term and obtain a pure additive forcing.

\end{note}
$\bullet$ {\bf The limit $M\to \infty$.}
In this third step we let the cut-off parameter $M$ to infinity. Informally, since  $\Gamma_M$ (defined in \eqref{def_GammaM}) converges, as $M \to \infty$, to the function $\Gamma$ given by 
\begin{equation}\label{gamma senza M}
\Gamma(x,\mu):=\itr aF^{'}(x-y)\mu(dyda),\, \qquad \quad x \in \T,\,\,\,\mu \in \cP_2(\T \times \R),
\end{equation} 
assuming that the pair $(\mu_t^{M,\ka},\zeta_t^{M,\ka})$ converges to some limit $(\mu_t^{\ka},\zeta_t^{\ka})$,  we would intuitively expect $\Gamma_M(x,\mu_t^{M,\ka}) \xrightarrow[]{M \to \infty} \Gamma(x,\mu_t^{\ka})$. Hence, we also expect  that the  stochastic process $(X_t^{M,\ka},A_t^{M,\ka})$, solution to \eqref{sistemaMK},  converges to a stochastic process,  $(X_t^{\ka},A_t^{\ka})$,  solution to the following SDE:
\begin{equation}\label{sistemaK}
\begin{dcases}
   & X_t^{\ka}\,=\, X_0\,-\,\int_0^t \left(V'(X_s^{\ka})+\Gamma(X_s^{\ka}, \mu_s^{\ka})\right)\,ds\,+\, 
         \sqrt{2}\,\beta_t  \\
     & A_t^{\ka}\,=\, A_0\,+\,\int_0^t \frac{q(X_s^{\ka})}{\zeta_s^{\ka}(X_s^{\ka})}\,d\mY_s^{\kappa} \, ,     
   \end{dcases}
\end{equation}
where $\mu_t^{\ka}=\mathcal L (X_t^{\ka}, A_t^{\ka})$ and, similarly to \eqref{sistemaMK}, the function $\zeta_t^{\ka}$ is defined as the (unique) classical solution to the (linear) PDE
\begin{equation}\label{PDEzetaK}
    \begin{dcases}
        & \pa_t\zeta_t^{\ka}=\pa_{xx}\zeta_t^{\ka}+\pa_x \left [ (V^{'}+\Gamma(x,\mu_t^{\ka})\zeta_t^{\ka}\right ] \\
        & \zeta^{\ka}|_{t=0}=\zeta_0.
    \end{dcases}
\end{equation}
This turns out to be the correct guess and we prove this fact in Proposition \ref{thm limit M}. 
Analogously to previous steps, it can be shown (see Proposition \ref{prop:law satisfies PDE}) that if $\mu_t^{\ka}$ is the law of the  McKean-Vlasov SDE \eqref{sistemaK},  then $\mu_t^{\ka}$ is a measure-valued solution of the following equation: 
\begin{equation}\label{PDEmuK}
    \begin{dcases}
        & \pa_t\mu_t^{\ka}=\paxx\mu_t^{\ka}+\pa_x \left[ (V^{'}+\Gamma(x,\mu_t^{\ka})\mu_t^{\ka}\right ]-\frac{q(x)}{\zeta_t^{\ka}}\pa_a\mu_t^{\ka}\pa_tY_t^{\ka}\\
        & \mu^{M,\ka}|_{t=0}=\mu_0,
    \end{dcases}
\end{equation}
where, for clarity, we iterate that  $\zeta_t^{\ka}$ is the  solution to \eqref{PDEzetaK}.
As in previous sections, let us set 
\begin{equation}\label{rhoK}
    \rho_t^{\ka}(dx):=\int_{\R} a\mu_t^{\ka}(dxda) \,  
\end{equation}
and  notice that, since the drift $\Gamma_M$ has been `replaced', in the limit,  by the drift $\Gamma$, we have 
\begin{align}\label{gamma-rho}
   \Gamma(x,\mu_t^{\ka})=\int_{\mathbb T \times \R} aF^{'}(x-y)\mu_t^{\ka}(da dy)  = \int_{\T} F^{'}(x-y) \rho^{\ka}_t(dy)=F^{'}*\rho_t^{\ka}(x),\qquad x \in \T \,. 
\end{align}
Using  \eqref{gamma-rho}, one expects  $\rho_t^{\ka}$ to be the solution of equation \eqref{PDErhoK} below.  In Section \ref{limit M to infty} we then prove that the following result holds. 
\begin{prop}\label{prop limit rho M}
Let $\ka \in \N$ be fixed.  Let  $\{\rho_t^{M,\ka}\}_{M \in \N}$ be the family of measures defined in \eqref{rhoMK} and $\rho_t^{\ka}$ be as in \eqref{rhoK}. Then the following limit holds: 
\begin{equation*}\label{limit rho M}
    \lim_{M\to \infty}\, \,\,\,\sup \limits_{t \in [0,T]}\,\,\, \left |\langle f,\rho_t^{M,\kappa} \rangle \,-\, \langle f, \rho_t^{\ka} \rangle \right |=0, \,
\end{equation*}
for any given $f \in C^{\infty}(\T;\R)$. 
Furthermore, $\rho_t^{\ka}$ is a weak measure-valued (and, a posteriori, classical) solution to the following PDE:  
\begin{equation}\label{PDErhoK}
    \begin{dcases}
       &\pa_t\rho_t^{\ka}=\pa_{xx}\rho_t^{\ka}+\pa_x \left[  (V^{'}+F^{'}*\rho_t^{\ka})\rho_t^{\ka} \right ]+q(x)\pa_t\mY_t^{\ka}\\
       & \rho_t^{\ka}|_{t=0}=\rho_0,
    \end{dcases}
\end{equation} 
As a consequence, the solution $\rho_t^{M,\ka}$ of the PDE \eqref{PDErhoMK} converges, as $M\rightarrow \infty$,  to $\rho_t^{\ka}$ solution to the PDE \eqref{PDErhoK}.
\end{prop}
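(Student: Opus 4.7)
The plan is to couple $(X^{M,\ka},A^{M,\ka})$ solving \eqref{sistemaMK} and $(X^{\ka},A^{\ka})$ solving \eqref{sistemaK} on the same probability space, using the same initial data and Brownian motion $\beta$, prove $L^2$ convergence of the pair, and deduce convergence of $\rho_t^{M,\ka}$ from the identity $\langle f,\rho_t^{M,\ka}\rangle = \E[f(X_t^{M,\ka})A_t^{M,\ka}]$. First I would address well-posedness of the limit system \eqref{sistemaK}--\eqref{PDEzetaK}--\eqref{PDEmuK} by \emph{decoupling}: since by \eqref{gamma-rho} we have $\Gamma(x,\mu_t^\ka) = F'\ast\rho_t^\ka(x)$, and the equation \eqref{PDErhoK} for $\rho_t^\ka$ is a specific instance of \eqref{rough PDE}, Lemma \ref{lemma: well-posedness of rhoPDE} gives a unique $\rho_t^\ka$; then the linear parabolic PDE \eqref{PDEzetaK} admits a unique smooth solution $\zeta_t^\ka$ with $\inf_{t,x}\zeta_t^\ka(x) \geq \eta_\ka > 0$ (parabolic maximum principle, using Assumption~\ref{assunzioni sui dati iniziali}\ref{def:zeta0}); finally \eqref{sistemaK} has a unique strong solution since the $A$-equation is a well-defined Young integral. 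Multiplying \eqref{PDEmuK} by $a$ and integrating in $a$ (the $\pa_a$-term produces precisely the forcing $q(x)\pa_tY^\ka_t$ after cancellation of $1/\zeta^\ka$) verifies that $\int a\,\mu_t^\ka(dxda) = \rho_t^\ka$, so the decoupling construction is self-consistent.

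The main obstacle is the uniform-in-$M$ bounds
\begin{equation*}
    \sup_{M}\sup_{t\in[0,T]}\E\left[|A_t^{M,\ka}|^p\right] \leq C_{p,\ka}\quad(p\geq 1), \qquad \inf_{M}\inf_{(t,x)\in[0,T]\times\T}\zeta_t^{M,\ka}(x) \geq \eta_\ka' > 0,
\end{equation*}
without which the cutoff cannot be disposed of in the limit. Two observations close a Gronwall loop. First, $|\chi_M(a)|\leq|a|$ gives $\|\Gamma_M(\cdot,\mu_t^{M,\ka})\|_\infty \leq \|F'\|_\infty\,\E|A_t^{M,\ka}|$. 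Second, since $Y^\ka$ is piecewise $C^1$ there is no It\^o correction in the $A$-equation, yielding the pathwise bound
\begin{equation*}
    |A_t^{M,\ka}| \leq |A_0| + \|q\|_\infty \int_0^t \frac{|\dot Y_s^\ka|}{\min_y \zeta_s^{M,\ka}(y)}\,ds.
\end{equation*}
Heat-kernel lower bounds for the linear parabolic equation \eqref{zetaPDEMK}, whose drift is controlled by $\sup_{r\leq s}\E|A_r^{M,\ka}|$ via the first observation, provide $1/\min_y\zeta_s^{M,\ka}$ in terms of the same quantity; a Gronwall argument then closes the loop, yielding first the $p=1$ bound and from it higher moments and the uniform lower bound on $\zeta^{M,\ka}$. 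This is the $M\to\infty$ analogue of the estimates of Lemma \ref{stime}.

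With these bounds in hand, setting $\Delta_t^X = X_t^{M,\ka}-X_t^\ka$ and $\Delta_t^A = A_t^{M,\ka}-A_t^\ka$, split the drift mismatch in the $X$-equation as
\begin{equation*}
    \Gamma_M(X_t^{M,\ka},\mu_t^{M,\ka}) - \Gamma(X_t^\ka,\mu_t^\ka) = [\Gamma_M - \Gamma](X_t^{M,\ka},\mu_t^{M,\ka}) + [\Gamma(X_t^{M,\ka},\mu_t^{M,\ka}) - \Gamma(X_t^\ka,\mu_t^\ka)],
\end{equation*}
the first bracket being bounded in absolute value by $\|F'\|_\infty\,\E[|A_t^{M,\ka}|\1_{\{|A_t^{M,\ka}|\geq M\}}]\to 0$ by Chebyshev and the uniform $L^p$ bound, and the second Lipschitz in $(X,\rho)$ with constants independent of $M$. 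An analogous decomposition for $\Delta^A$ requires uniform Lipschitz control of $1/\zeta^{M,\ka}$ together with $\sup_{t,x}|\zeta_t^{M,\ka}(x) - \zeta_t^\ka(x)|\to 0$, which I would obtain from stability of the linear PDEs \eqref{zetaPDEMK}--\eqref{PDEzetaK} in their drifts, bootstrapped jointly with the estimate on $(\Delta^X,\Delta^A)$ through the identity $\Gamma(x,\mu^{M,\ka}_t) = F'\ast\rho^{M,\ka}_t(x)$. Gronwall yields $\E[\sup_{t\leq T}(|\Delta_t^X|^2 + |\Delta_t^A|^2)]\to 0$, and Cauchy--Schwarz with the uniform $L^2$ bound on $A^{M,\ka}$ gives $\sup_{t\in[0,T]}|\langle f,\rho_t^{M,\ka}\rangle - \langle f,\rho_t^\ka\rangle|\to 0$, concluding the proof; the fact that $\rho_t^\ka$ satisfies \eqref{PDErhoK} is already built into the decoupling construction.
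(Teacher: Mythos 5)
Your overall plan (couple the two McKean--Vlasov systems, estimate $\E\sup_t(|\Delta^X_t|^2+|\Delta^A_t|^2)$ by Gronwall, deduce the result for $\rho^{M,\ka}$ via $\langle f,\rho_t^{M,\ka}\rangle=\E[A_t^{M,\ka}f(X_t^{M,\ka})]$) matches the structure of the paper's proof (Proposition~\ref{thm limit M} via Lemmas~\ref{lemma first moment}, \ref{cor zeta}, \ref{stime 2}). However, your proposed mechanism for the crucial uniform-in-$M$ first-moment bound on $A^{M,\ka}$ does not close, and it misses the central idea of this step.

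You bound $\E|A^{M,\ka}_t|$ by a worst-case denominator, writing $|A_t^{M,\ka}| \leq |A_0| + \|q\|_\infty \int_0^t |\dot Y_s^\ka|/\min_y \zeta_s^{M,\ka}(y)\,ds$, and then feed a heat-kernel lower bound on $\zeta^{M,\ka}$ (whose exponent depends on $\|\Gamma_M(\cdot,\mu^{M,\ka})\|_{C^1}\lesssim \E|A^{M,\ka}|$) back into this estimate. But this gives an inequality of the shape
\begin{equation*}
\alpha(t)\;\leq\; c_0 + c_1\int_0^t \exp\!\bigl(c_2\, s\,[1+\alpha^*(s)^2]\bigr)\,ds,\qquad \alpha^*(s):=\sup_{r\le s}\E|A^{M,\ka}_r|,
\end{equation*}
which is not a Gronwall inequality: it can produce finite-time blow-up (compare $\dot\alpha = e^{\alpha^2}$), and it certainly does not yield a bound uniform in $M$ on the whole horizon $[0,T]$. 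The paper's Lemma~\ref{lemma first moment} closes this exactly where you cannot, by averaging over $\mu^{M,\ka}$ instead of taking a worst case: since $\zeta^{M,\ka}$ is the $\T$-marginal of $\mu^{M,\ka}$ (Note~\ref{note importante}, identity~\eqref{fondamentale}), one has
\begin{equation*}
\int_{\T\times\R}\frac{|q(x)|}{\zeta^{M,\ka}_s(x)}\,\mu^{M,\ka}_s(dxda)=\int_{\T}|q(x)|\,dx,
\end{equation*}
so the dangerous denominator cancels and $\int|a|\,\mu^{M,\ka}_t(dxda)\leq \int|a|\,\mu_0 + \|q\|_{L^1}\int_0^t|\dot Y^\ka_s|\,ds$ follows with no a priori control on $\zeta^{M,\ka}$ and no Gronwall at all. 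That cancellation is the key point of this step (and of the whole construction \eqref{informalpartsys2}), and your argument does not use it.

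A second, subtler issue: you split the drift mismatch as $[\Gamma_M-\Gamma](\cdot,\mu^{M,\ka})+[\Gamma(\cdot,\mu^{M,\ka})-\Gamma(\cdot,\mu^\ka)]$ and assert the second bracket is ``Lipschitz in $(X,\rho)$ with constants independent of $M$.'' But $\Gamma(x,\mu)=\int aF'(x-y)\mu(dyda)$ is \emph{not} Lipschitz in $\cW_2$ (the integrand $aF'(x-y)$ is unbounded in $a$), and bounding $F'*(\rho^{M,\ka}-\rho^\ka)$ directly is bounding exactly the object whose convergence you are trying to prove. The paper sidesteps this by decomposing the other way, $\Gamma_M(\cdot,\mu^{M,\ka})-\Gamma_M(\cdot,\mu^\ka)+\Gamma_M(\cdot,\mu^\ka)-\Gamma(\cdot,\mu^\ka)$: the first difference is handled by the $M$-uniform Lipschitz estimate \eqref{ineq gammma 2} (whose constant involves only $\int|a|\,\mu^\ka$, which is a fixed, $M$-independent quantity), and the last piece is controlled by the tail moment $\int_{\T\times\{|a|>M\}}|a|^2\mu^\ka(dxda)$ of the fixed limit measure, not of $\mu^{M,\ka}$. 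Placing the cut-off mismatch on the $M$-independent measure $\mu^\ka$ is what makes the tail vanish cleanly via \eqref{moments}.
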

Equation \eqref{PDErhoK} is a closed equation for $\rho_t^{\ka}$, as opposed to the evolutions for  $\rho_t^{\ep, M, \ka}$ and $\rho_t^{M, \ka}$, equations \eqref{PDErhoMKE} and \eqref{PDEmuMK}, respectively, which are not closed in their respective unknowns,  $\rho_t^{\ep, M, \ka}$ and $\rho_t^{M, \ka}$.

To prove Proposition \ref{prop limit rho M},  we will use  the fact (due  to \eqref{gamma-rho}) that the system \eqref{sistemaK}-\eqref{PDEzetaK} can be recast into an equivalent form, where the joint distribution $\mu_t^{\ka}$ does not directly appear. That is, the evolution \eqref{sistemaK}-\eqref{PDEzetaK} is equivalent to the following \begin{equation}\label{sistemaK-rho}
    \begin{dcases}
        & X_t^{\ka}\,=\, X_0\,-\,\int_0^t \left(V'(X_s^{\ka})+F^{'}*\rho_s^{\ka}(X_s^{\ka})\right)\,ds\,+\, 
         \sqrt{2}\,\beta_t,  \\
     & A_t^{\ka}\,=\, A_0\,+\,\int_0^t \frac{q(X_s^{\ka})}{\zeta_s^{\ka}(X_s^{\ka})}\,d\mY_s^{\kappa},\\
        & \pa_t\zeta_t^{\ka}=\pa_{xx}\zeta_t^{\ka}+\pa_x \left [ (V^{'}+F^{'}*\rho_t^{\ka})\zeta_t^{\ka}\right ], \\
        & \pa_t\rho_t^{\ka}=\pa_{xx}\rho_t^{\ka}+\pa_x \left[  (V^{'}+F^{'}*\rho_t^{\ka})\rho_t^{\ka} \right ]+q(x)\pa_t\mY_t^{\ka}, \\
        & \zeta^{\ka}|_{t=0}=\zeta_0,\,\,\rho^{\ka}|_{t=0}=   \rho_0. 
    \end{dcases}
\end{equation}
As one can see, the perk of system \eqref{sistemaK-rho} is that the evolution of the pair $\left ( X_t^{\ka}, A_t^{\ka} \right )$ is now  linear, as in the above $\rho_t^{\ka}$ is just the solution of a PDE that can be solved independently of all the other equations in the system.  Of course, this `linearity' comes at the price of having to increase dimensionality,  as system \eqref{sistemaK-rho} comes with the additional equation for $\rho_t^{\ka}$ to the set of SDEs and PDEs governing the system. The equivalence between the system \eqref{sistemaK}- \eqref{PDEzetaK} and system \eqref{sistemaK-rho} is rather straightforward, and explained in detail  in Section \ref{limit M to infty}, see Note \ref{equivalenza tra i due sistemi rhok}.

$\bullet$ {\bf Limit $\ka \to \infty$.}
We are left with showing the convergence of $\{\rho_t^{\ka}\}_{\ka \in \N}$ to $\rho_t$ solution to \eqref{rough PDE}. This is the content of the proposition below, the proof of which is contained in Section \ref{limit kappa}.
\begin{prop}\label{prop limit rho kappa}
Let  $\rho_t^{\ka}$ be the  solution to the PDE \eqref{PDErhoK}; then $\rho_t^{\ka}$  converges,  as $\ka \to \infty$,  to $\rho_t$ solution to \eqref{rough PDE}, in the sense that the following limit holds:
\begin{equation*}\label{limit rho K}
    \lim_{\ka \to \infty}\,\,\,\,\sup \limits_{t \in [0,T]}\,\,\, \left |\langle f,\rho_t^{\kappa} \rangle \,-\, \langle f , \rho_t \rangle \right |=0 \ ,
\end{equation*}
for any given $f \in C^{\infty}(\T;\R)$.    
\end{prop}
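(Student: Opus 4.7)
The natural strategy is to pass to the mild formulation and combine a Young-type integration by parts on the forcing integral with a Gronwall estimate on the nonlinear part, to obtain uniform-in-time $L^2$-convergence (which then trivially yields weak convergence against smooth test functions).

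First, I would write both solutions in mild form. By \eqref{mild solution},
\begin{equation*}
\rho_t = e^{t\paxx}\rho_0 + \int_0^t e^{(t-s)\paxx}\bigl[(V' + F'\ast\rho_s)\rho_s\bigr]\,ds + I_t, \qquad I_t := \int_0^t e^{(t-s)\paxx} q\, dY_s,
\end{equation*}
and $\rho_t^{\ka}$ satisfies the same identity with $Y_\cdot^{\ka}$ replacing $Y_\cdot$, the last integral being an ordinary Stieltjes integral since $Y^{\ka}$ is piecewise $C^1$. Performing an integration by parts (legitimate in both cases since $q\in C^{\infty}(\T;\R)$ and using $\partial_s e^{(t-s)\paxx}=-\paxx e^{(t-s)\paxx}$), one obtains
\begin{equation*}
I_t - I_t^{\ka} = q\,(Y_t - Y_t^{\ka}) - e^{t\paxx} q\,(Y_0 - Y_0^{\ka}) + \int_0^t e^{(t-s)\paxx} q''\,(Y_s - Y_s^{\ka})\,ds,
\end{equation*}
from which $\sup_{t \in [0,T]} \|I_t - I_t^{\ka}\|_{L^2(\T)} \le C(T,q)\,\|Y - Y^{\ka}\|_{C^0([0,T];\R)} \to 0$ as $\ka\to\infty$, because $Y^{\ka}\to Y$ in $C^{\gamma}$ implies uniform convergence.

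Next, setting $R_t^{\ka} := \rho_t - \rho_t^{\ka}$ and subtracting the two mild identities, I would decompose the nonlinear difference as
\begin{equation*}
(V' + F'\ast\rho_s)\rho_s - (V' + F'\ast\rho_s^{\ka})\rho_s^{\ka} = (V' + F'\ast\rho_s)\,R_s^{\ka} + (F'\ast R_s^{\ka})\,\rho_s^{\ka}.
\end{equation*}
Using the $L^2$-contractivity of $e^{t\paxx}$ on $\T$, the smoothness of $V$ and $F$, and Young's convolution inequality ($\|F'\ast h\|_{L^{\infty}}\le \|F'\|_{L^2}\|h\|_{L^2}$), this yields
\begin{equation*}
\|R_t^{\ka}\|_{L^2} \le \|I_t - I_t^{\ka}\|_{L^2} + C \int_0^t \bigl(1 + \|\rho_s\|_{L^2} + \|\rho_s^{\ka}\|_{L^2}\bigr) \|R_s^{\ka}\|_{L^2}\,ds.
\end{equation*}

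The main obstacle is then securing a uniform bound $\sup_{\ka}\sup_{t\in[0,T]}\|\rho_t^{\ka}\|_{L^2}<\infty$, which is needed to close the Gronwall estimate. The clean way to achieve this is to introduce the shifted variable $\tilde\rho_t^{\ka}:=\rho_t^{\ka}-q(x)Y_t^{\ka}$; substituting into \eqref{PDErhoK} one sees that $\tilde\rho^{\ka}$ satisfies a PDE in which the rough forcing $q(x)\partial_t Y_t^{\ka}$ is replaced by the purely continuous-in-time term $q''(x)Y_t^{\ka}$ together with quadratic expressions involving $qY^{\ka}$. Since $\{Y^{\ka}\}_{\ka}$ is uniformly bounded in $C^\gamma$ (being convergent there), standard parabolic energy estimates on $\tilde\rho^{\ka}$ (testing against $\tilde\rho^{\ka}$ and using the dissipative term $-\|\partial_x\tilde\rho^{\ka}\|_{L^2}^2$ to absorb the quadratic convolution term via Young's inequality) furnish the required bound on $\tilde\rho^{\ka}$, hence on $\rho^{\ka}$. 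The bound on $\|\rho_t\|_{L^2}$ is supplied by the well-posedness statement (Lemma \ref{lemma: well-posedness of rhoPDE}) applied to the true solution $\rho$. A Gronwall step then gives $\sup_{t\in[0,T]}\|R_t^{\ka}\|_{L^2}\to 0$, and the conclusion of the proposition follows from Cauchy--Schwarz, $\sup_{t\in[0,T]}|\langle f,R_t^{\ka}\rangle|\le \|f\|_{L^2}\sup_t\|R_t^{\ka}\|_{L^2}$, for any $f\in C^{\infty}(\T;\R)$.
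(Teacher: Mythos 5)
Your proposal follows the same overall scheme as the paper (mild formulation, decompose the nonlinear difference, Gronwall), but you treat the two key ingredients differently. For the forcing difference $\sup_{t}\|I_t-I_t^{\ka}\|_{L^2}$, the paper invokes the Young-integral bound \eqref{uniform }, which controls the difference by the $C^{\gamma}$-seminorm $|Y-Y^{\ka}|_{\frac1\gamma,T,\R}$; your integration-by-parts identity reduces it instead to $\|Y-Y^{\ka}\|_{C^0([0,T];\R)}$, which is simpler and requires less of the approximants (uniform convergence only). This works precisely because $q$ is $x$-only, so $\partial_s e^{(t-s)\paxx}q=-e^{(t-s)\paxx}q''$ is bounded, making the integrand Lipschitz in $s$ and the integration by parts licit. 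For the uniform bound $\sup_{\ka}\sup_t\|\rho_t^{\ka}\|_{L^2}<\infty$, the paper imports \eqref{uniform rho} from Lemma \ref{well-posedness PDErhoK} together with \eqref{uniform bound}; your Doss--Sussmann shift $\tilde\rho^{\ka}=\rho^{\ka}-qY^{\ka}$ gives a self-contained alternative, at the cost of a slightly heavier energy argument.

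Two points need repair. First, the displayed mild formulation \eqref{mild solution} in the paper is missing a $\pa_x$ inside the Duhamel integral (the original PDE has the drift in divergence form, and the paper's own proof of this proposition uses the heat-smoothing estimate \eqref{heat kernel estimate section 7} with its $(t-s)^{-1/2}$ factor, which is only needed when the $\pa_x$ is there). You copied that typo, and as a result wrote the Gronwall step with plain $L^2$-contractivity of $e^{t\paxx}$. Once the $\pa_x$ is reinstated, that must become $|\int_0^t e^{(t-s)\paxx}\pa_x g_s\,ds|_{L^2}\lesssim\int_0^t(t-s)^{-1/2}|g_s|_{L^2}\,ds$, and the closing step must use a generalized Gronwall inequality with integrable singular kernel (as the paper does), not a plain Gronwall. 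This is a routine fix. Second, in your energy estimate for $\tilde\rho^{\ka}$ the term $\int\tilde\rho^{\ka}\,\pa_x[(F'\ast\tilde\rho^{\ka})\tilde\rho^{\ka}]\,dx$ cannot be closed by absorbing into $-\|\pa_x\tilde\rho^{\ka}\|_{L^2}^2$ via Young's inequality alone: that produces a term of order $\|\tilde\rho^{\ka}\|_{L^2}^3$, and the resulting differential inequality can blow up in finite time. One must first establish a uniform-in-$\ka$ $L^1$ bound on $\tilde\rho^{\ka}$ (available since the shifted equation has continuous bounded forcing $q''Y^{\ka}_t$ and divergence-form drift, so the $L^1$ norm grows at most linearly), and then use $\|F''\ast\tilde\rho^{\ka}\|_{L^{\infty}}\le\|F''\|_{L^{\infty}}\|\tilde\rho^{\ka}\|_{L^1}$ to render the nonlinear term effectively quadratic in $\|\tilde\rho^{\ka}\|_{L^2}$, which does close under Gronwall. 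This is the standard two-tier $L^1$-then-$L^2$ argument the paper alludes to around Lemma \ref{well-posedness PDErhoK}; your write-up does not spell it out and as stated would not close.
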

\begin{note}\label{note:commutativity limits}
Let us  make further comments on our results and on the strategy of proof. 
    \begin{itemize}
    \item We expect all four limits i.e. $\lim \limits_{\ka \to \infty}$, $\lim \limits_{M \to \infty}$, $\lim \limits_{\e \downarrow 0}$ and $\lim \limits_{N \to \infty}$ to commute; the order in which we take them is the one we found easiest, and crucial to our proof is to take the limit $\ka \rightarrow \infty$ last. If we took it sooner then   the Stratonovich second order  correction term  in \eqref{PDE1with additional second order term} would appear, presenting the difficulties we have discussed in Note  \ref{note:cutoff}. On this point, it is important to note that while, when taking the limits in $N, \ep$ and $M$, the limit for $\rho_t^{N, \ep, M, \ka}$, $\rho_t^{\ep, M, \ka}$ and $\rho_t^{M, \ka}$ was deduced from the limit for the joint distribution $\mu_t^{N, \ep, M, \ka}$, $\mu_t^{\ep, M, \ka}$ and $\mu_t^{M, \ka}$, respectively, in the last step, when we let $\ka$ to infinity, we never study the limit for $\mu_t^{\ka}$, and instead we study directly the limit of $\rho_t^{\ka}$. This is due to two things: firstly, the equation \eqref{PDErhoK} for $\rho_t^{\ka}$ and the equation \eqref{rough PDE} for $\rho_t$ have the same structure - in particular, they are both closed equations in the respective unknowns, so that the joint distributions $\mu_t^{\ka}$ and $\mu_t$ do not appear in such equations, hence there is no need to study their convergence. Moreover, if we were to study the limit for $\mu_t^{\kappa}$, then one can see that $\mu_t^{\ka}$ would converge to $\mu_t$, solution of the following equation: 

    \begin{equation}\label{rough PDE mu}
     \pa_t\mu_t=\pa_{xx}\mu_t+\frac{1}{2}\frac{q^2(x)}{\zeta_t^2(x)}\pa_{aa}\mu_t+\pa_x[(V^{'}+\Gamma(x,\mu_t))\mu_t]-\frac{q(x)}{\zeta_t(x)}\pa_a \mu_t \pa_t\mY 
         \end{equation}
     where $\zeta_t$ solves the following PDE
     \begin{equation*}
\pa_t\zeta_t=\pa_{xx}\zeta_t+\pa_x[(V^{'}+\Gamma(x,\mu_t))\zeta_t] \,.       
\end{equation*}  
In particular, and as expected, since the equation for $\mu^{\ka}$ is multiplicative, the  second order correction term does appear in \eqref{rough PDE mu}. Studying the evolution \eqref{rough PDE mu} would also require the use of rough path theory.  
    \item Still on the order in which we take limits, as we said we take here limits in a specific order and, especially, we take them one at a time. We refer the reader to Note \ref{note uniform bound in eps and M} to explain the detail of why we do so, but, in essence, this is because we were only able to obtain estimates which are uniform in one parameter at a time. There is a question about whether these limits can be taken together. If one was to try and do so  then we believe that $\varepsilon$ should be chosen to scale with $N$. 
\end{itemize}
\end{note}

We conclude this section with the  statement of Proposition  \ref{prop:law satisfies PDE}, the proof of which is postponed to Appendix \ref{well-posedness}. 

\begin{prop}\label{prop:law satisfies PDE}
    Let $\mu_t^{\ep, M, \ka}$ ($\mu_t^{M, \ka}$, $\mu_t^{\ka}$, respectively) be defined as the law of $(X_t^{M,\ka},A_t^{M,\ka})$ $\left ((X_t^{\ka},A_t^{\ka}),\,(X_t^{\ka},A_t^{\ka}),\, \text{respectively}\right )$ solution to the system \eqref{sistemaMKE} (\eqref{sistemaMK}, \eqref{sistemaK}, respectively). Then $\mu_t^{\ep, M, \ka}$ is a weak measure valued solution to  \eqref{PDEmuMKE} (equation \eqref{PDEmuMK}, \eqref{PDEmuK}, respectively). Furthermore $\mu_t^{\ep, M, \ka}$, $\mu_t^{M, \ka}$ and $\mu_t^{\ka}$ belong to  the space $C([0,T];\cP_2(\T\times \R))$. \footnote{For clarity we recall that \eqref{sistemaMKE}  is intended as a closed equation for $\mu_t^{\ep, M, \ka}$, while \eqref{PDEmuMK} and  \eqref{PDEmuK} are not closed equations; so, when we say that $\mu_t^{M, \ka}$ is a weak measure-valued solution to  \eqref{PDEmuMK} we really mean that the pair $(\mu_t^{M, \ka}, \zeta_t^{M, \ka})$ is a solution to  \eqref{PDEmuMK}-\eqref{zetaPDEMK}, with \eqref{PDEmuMK} to be intended in weak measure-valued sense and \eqref{zetaPDEMK} in classical sense. Similarly for the pair \eqref{PDEmuK}-\eqref{PDEzetaK}. Moreover we point out that while the notion of weak measure valued solution for $\mu_t^{\ep, M, \ka}$ and $\mu_t^{M, \ka}$ can be given as at the beginning of Section \ref{section: main results} i.e. in particular in the space  $\mathcal M(\T)$, the notion of weak measure-valued solution for $\mu_t^{\ka}$ is intended in $\mathcal P_1(\T \times \R)$.}
\end{prop}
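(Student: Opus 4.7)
The plan is to proceed in parallel for the three PDEs, along the classical route for Fokker--Planck equations associated with (McKean--Vlasov) SDEs: (i) establish well-posedness of the underlying SDE (and, in the unmollified cases, of the coupled auxiliary linear PDE for $\zeta$) in a strong probabilistic sense; (ii) derive the weak form of the PDE by applying It\^o's formula to $\phi(X_t,A_t)$ for smooth test functions $\phi$ and taking expectation; (iii) deduce $\cP_2$-regularity from an almost-sure pointwise bound on $|A_t|$.

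\textbf{Well-posedness of the SDEs.} The mollified case \eqref{sistemaMKE} is standard: the cutoff $\chi_M$ makes $\Gamma_M(\cdot,\mu)$ globally Lipschitz in $\mu$ in Wasserstein-$1$ and smooth in $x$, while the uniform lower bound $\Phi_\e \geq c_\e$ forces $\Phi_\e * \zeta \geq c_\e > 0$ for any probability density $\zeta$, so the coefficient of $d\mY^{\kappa}_s$ in the $A$-equation is globally bounded. A Banach fixed-point argument in a Wasserstein metric on $C([0,T];\cP_2(\T\times\R))$, in the spirit of \cite{coghi2019stochastic}, yields existence and uniqueness. For \eqref{sistemaMK} and \eqref{sistemaK} the denominator is $\zeta_t$ defined through the linear parabolic equations \eqref{zetaPDEMK} (resp. \eqref{PDEzetaK}), and the key a priori input is a uniform lower bound on $\zeta$. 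Given any candidate $\mu \in C([0,T];\cP_2)$, the transport coefficient $V'+\Gamma_M(\cdot,\mu)$ (resp. $V'+\Gamma(\cdot,\mu)$) is smooth and bounded on $\T$, so Schauder estimates give a smooth solution of the linear PDE, and the parabolic minimum principle applied with $\zeta_0 \geq \eta > 0$ yields $\zeta_t(x) \geq \eta e^{-CT}$ for a constant $C$ depending only on the sup-norm of the drift. With this lower bound in hand, the contraction argument can be closed on the coupled pair $(\mu,\zeta)$, and the fixed-point condition is exactly the consistency requirement that the $\T$-marginal of $\cL(X_t,A_t)$ coincides with the $\zeta_t$ used in the coefficient.

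\textbf{Weak formulation via It\^o.} Fix $\phi \in C^\infty(\T \times \R)$ with sufficient decay in $a$ and apply It\^o's formula to $\phi(X_t^{\e,M,\kappa},A_t^{\e,M,\kappa})$. Because $\mY^{\kappa}$ is piecewise $C^1$, $\int_0^\cdot (\cdot)\,d\mY^{\kappa}_s$ is a pathwise Riemann--Stieltjes integral, so $\langle A\rangle_t = \langle X,A\rangle_t = 0$ and the only second-order term is $\paxx\phi\,dt$ from the diffusion of $X$; the Brownian part $\sqrt{2}\,\pa_x\phi\,d\noise_t$ is a martingale with zero mean. Taking expectation gives
\begin{align*}
\langle \phi,\mu_t^{\e,M,\kappa}\rangle - \langle \phi,\mu_0\rangle
&= \int_0^t \langle \paxx\phi - (V'+\Gamma_M(\cdot,\mu_s^{\e,M,\kappa}))\pa_x\phi,\,\mu_s^{\e,M,\kappa}\rangle\,ds \\
&\quad + \int_0^t \left\langle \frac{q(\cdot)}{\Phi_\e * \zeta_s^{\e,M,\kappa}}\,\pa_a\phi,\,\mu_s^{\e,M,\kappa}\right\rangle d\mY^{\kappa}_s.
\end{align*}
Integration by parts in the variable $a$ on the last integrand (legitimate because $q/(\Phi_\e*\zeta)$ is $a$-independent) recovers \eqref{PDEmuMKE} in weak measure-valued form. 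The identical computation, with $\Phi_\e*\zeta$ replaced by $\zeta$, gives \eqref{PDEmuMK} and \eqref{PDEmuK}.

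\textbf{$\cP_2$-regularity and main obstacle.} Since $\mY^{\kappa}$ is piecewise $C^1$ it has finite total variation $K_{\kappa}$ on $[0,T]$, and the integrand $q/(\Phi_\e*\zeta)$ (or $q/\zeta$) is uniformly bounded by a constant $C$ from the lower bounds above, so $|A_t| \leq |A_0| + C K_{\kappa}$ almost surely; combined with $\mE|A_0|^2 < \infty$ this yields $\sup_{t\in[0,T]}\mE|A_t|^2 < \infty$, placing $\mu_t$ in $\cP_2(\T\times\R)$ uniformly in $t$. Continuity of $t\mapsto \mu_t$ in $\cP_2$ then follows from the almost-sure continuity of the sample paths of $(X_t,A_t)$ via dominated convergence. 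The main technical obstacle is the well-posedness for the unmollified systems \eqref{sistemaMK} and \eqref{sistemaK}: unlike the mollified setting of \cite{coghi2019stochastic}, the denominator in the $A$-equation is not a convolution with a fixed smooth kernel but the solution-dependent density $\zeta$ itself, and the quantitative minimum principle for the linear PDE for $\zeta$ is what makes the closure of the nonlinear loop between the SDE and its auxiliary PDE possible.
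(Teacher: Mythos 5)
Your existence argument — apply It\^o to $\phi(X_t,A_t)$, use that $\mY^{\kappa}$ is piecewise $C^1$ so the $a$-variable picks up no second-order correction, take expectation, and deduce $\cP_2$-regularity from the a.s.\ bound $|A_t|\le|A_0|+C K_\kappa$ — coincides with the paper's, and your observation that the unmollified cases hinge on a quantitative parabolic minimum-principle lower bound $\zeta_t\ge\eta e^{-Ct}$ for the coupled linear PDE (which is what makes the iteration closable) is exactly the role of the paper's Theorem~\ref{lower bound w} inside Lemmas~\ref{wellpossistemaMK} and~\ref{well-posedness kappa}.

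However, your proposal only establishes that $\mu_t$ \emph{is} a weak measure-valued solution; it never addresses uniqueness of the solution to the PDE \eqref{PDEmuMKE} (resp.\ \eqref{PDEmuMK}, \eqref{PDEmuK}), which is what the bulk of the paper's own proof of this proposition is about, and which the main text relies on (Section~\ref{strategy of proof} explicitly refers to $\mu_t^{\e,M,\ka}$ as ``the unique weak measure-valued solution''). Well-posedness of the underlying SDE does not by itself preclude extraneous weak measure-valued solutions of the Fokker--Planck equation, so a separate PDE-level uniqueness argument is required. The paper supplies it by invoking the duality results of \cite{coghi2019stochastic}: for \eqref{PDEmuMKE} it applies \cite[Theorem 5.4]{coghi2019stochastic} after observing that what is actually needed is that the \emph{linearised} coefficients $\bar D,\bar B,\bar\Sigma$ satisfy \cite[Assumptions 4.1]{coghi2019stochastic}, verified via Lemma~\ref{gamma beta}; for the unmollified PDEs it first runs the iterative scheme \eqref{iterated system muMK}, proves uniqueness of each linear iterate $\mu^{n,M,\ka}$ using \cite[Theorem 4.8]{coghi2019stochastic} together with the positivity and $H^2$-bounds on $\zeta^{n,M,\ka}$ from Lemma~\ref{stime}, and then closes the contraction \eqref{contraction W2 mu^n}--\eqref{contraction zeta^n} in $\cP_2(C_T)\times C([0,T];H^1)$. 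Your proposal gestures at ``a Banach fixed-point argument in the spirit of \cite{coghi2019stochastic}'' but never identifies the linearised coefficients nor checks they meet the hypotheses that make that machinery applicable; without that, the key uniqueness conclusion remains unproved.
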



\section{Notation and Preliminaries}\label{Notation}
In this section we set the notation and state some background results needed later in the paper. Let us consider a complete and separable metric space $(\cX,d_{\cX})$ and let $\cP(\cX)$ be the space of probability measures on $\cX$. Unless otherwise stated the $\sigma$-algebra provided on $\cX$ is the Borel $\sigma$-algebra $\cB(\cX)$. Following \cite[Section 2 and Theorem 3.3]{lacker2018mean} we denote by $\cP_1(\cX)$ and $\cP_2(\cX)$ the space of probability measures $\mu$ on $\cX$ such that
\begin{equation*}
    \int_{\cX} d_{\cX}(x,x_0)\,\mu(dx)\,<\,\infty\ ,
\end{equation*}
and 
\begin{equation}\label{finite second moment}
    \int_{\cX} d_{\cX}(x,x_0)^2\,\mu(dx)\,<\,\infty\ ,
\end{equation}
respectively, for any arbitrary reference point $x_0\in\cX$.\footnote{A straightforward calculation shows that \eqref{finite second moment} does not depend on the choice of the point.} 
Given two probability measures $\mu,\nu\in \cP_1(\cX)$ (or $\cP_2(\cX)$), we introduce the space
\begin{multline*}
    \Pi_{\cX}(\mu,\nu):=\Big\{ \pi\in\cP\left( \cX \times \cX \right)\big| \pi(S\times \cX)= \mu(S),\; \pi(\cX \times S)=\nu(S)\,,\,\mathrm{for\; every\; Borel\;set\;} S\subseteq \cX \Big\}\ ,
\end{multline*}
and define the 2-Wasserstein metric on $\cP_2(\cX)$ by
\begin{equation}\label{2-Wasserstein}
    \cW_{\cX,2}^2(\mu,\nu)\,:=\,\inf_{\pi\in\Pi_{\cX}(\mu,\nu)}\int_{\cX \times \cX} d_{\cX}(x,y)^2\,\pi(dxdy).
\end{equation}
Equivalently, we can write
\begin{equation*}
    \cW_{\cX,2}^2(\mu,\nu)\,=\, \inf_{X\sim \mu,\,\overline{X}\sim \nu}\E_{\mu,\nu}\left[d_{\cX}(X,\overline{X})^2\right],
\end{equation*}
where $X$ and $\overline{X}$ are $\cX$-valued random variables with given 
distribution $\mu$ and $\nu$, respectively, and $\mE_{\mu,\nu}$ denotes the expectation with respect to the joint law of $\mu$ and $\nu$. An analogous definition for the 1-Wasserstein distance $\cW_1$ on $\cP_1(\cX)$ holds, see \cite{villani2009optimal}.
We will often use a dual formulation of the Wasserstein distance due to Kantorovich (cfr. \cite[Theorem 2.15]{lacker2018mean} and \cite[Theorem 5.10]{villani2009optimal}), namely,  \begin{align}
\cW_{\cX,1}(\mu,\nu)= \sup \left \{ \langle f,\mu\rangle-\langle f,\nu \rangle | f,g \in \text{Lip}_1(\cX;\R) \right \} \label{kantorovich}    
\end{align}
where Lip$_1(\cX;\R)$ denotes the class of real-valued Lipschitz continuous functions on $\cX$ with Lipschitz constant less than or equal to 1 while for $\cW_{\cX,2}$ we have
\begin{align}\label{dual characterization}
\cW_{\cX,2}(\mu,\nu)= \sup \left \{ \int_{\cX} f(x)\mu(dx)+\int_{\cX}  g(x) \nu(dx)  \bigg | f \in L^1(\mu),g \in L^1(\nu), f(x)+g(y) \leq d_{\cX}^2(x,y)  \right \},    \notag
\end{align}
where $L^1(\mu)$, $L^1(\nu)$ denote the space of functions on $\cX$, integrable w.r.t $\mu$ and $\nu$, respectively. Moreover, from Jensen's inequality we obtain 
\begin{equation}\label{W1 leq W2}
    \cW_{\cX,1}(\mu,\nu) \leq \cW_{\cX,2}(\mu,\nu),\qquad\text{for any $\mu,\nu \in \cP_2(\cX)$}.
\end{equation}
We will often use the following characterization for the convergence in the 2-Wasserstein distance $\cW_{\cX,2}$. In what follows $C\left( \cX;\R\right)$ denotes the space of real-valued continuous functions on $\cX$.
\begin{prop}\label{weak formulation on TR}\cite[Theorem 2.13]{lacker2018mean}
Let $(\cX,d_{\cX})$ be a complete and separable metric space and let $\{\mu^n\}_{n \in \N},\,\,\mu \subset \cP_2(\mathcal{X})$ then the following conditions are equivalent:
\begin{enumerate}[(i)]
    \item $\cW_{\cX,2}(\mu^n,\mu) \xrightarrow[n \to \infty]{}0$
    \item For any $f \in C(\cX;\R)$ for which there exists $x_0 \in \cX$ and $c>0$ such that $|f(x)| \leq c\,\,d_{\cX}(x,x_0)^2$, for every $x \in \cX$, the following holds:
    \begin{equation*}
        \left | \langle f,\mu^n \rangle-\langle f, \mu \rangle \right | \xrightarrow[]{n \to \infty} 0.
    \end{equation*}
\end{enumerate}
\end{prop}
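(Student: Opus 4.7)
The plan is to establish the two implications separately, relying on the classical fact that, on a Polish space, $W_2$-convergence is equivalent to weak convergence of measures together with convergence of second moments (this is an intermediate characterization I would set up along the way).

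For the direction (i) $\Rightarrow$ (ii), I would first show that $W_2$-convergence implies weak convergence of $\mu^n$ to $\mu$. This uses \eqref{W1 leq W2} combined with the Kantorovich duality \eqref{kantorovich}: convergence in $W_1$ gives convergence of integrals against bounded Lipschitz functions, and a standard approximation argument (e.g.\ approximating any bounded continuous function uniformly on compact sets by Lipschitz functions, combined with tightness of $\{\mu^n\}$) extends this to all bounded continuous test functions. Next, I would establish convergence of second moments $\int d_{\cX}(x,x_0)^2\,\mu^n(dx) \to \int d_{\cX}(x,x_0)^2\,\mu(dx)$. Using an optimal coupling $\pi^n \in \Pi_{\cX}(\mu^n,\mu)$ realizing $W_{\cX,2}^2(\mu^n,\mu)$, the triangle inequality in $L^2(\pi^n)$ yields
\[
\Bigl|\sqrt{\textstyle\int d_{\cX}(x,x_0)^2\,\mu^n(dx)} - \sqrt{\textstyle\int d_{\cX}(y,x_0)^2\,\mu(dy)}\Bigr| \leq \cW_{\cX,2}(\mu^n,\mu) \xrightarrow{n\to\infty} 0.
\]
Finally, given $f \in C(\cX;\R)$ with $|f(x)|\leq c\, d_{\cX}(x,x_0)^2$, I would run a uniform integrability argument: pick $R>0$ and a continuous cut-off $\chi_R$ equal to $1$ on the ball $B_R(x_0)$ and supported in $B_{2R}(x_0)$. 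Split
\[
\langle f,\mu^n\rangle - \langle f,\mu\rangle = \langle f\chi_R,\mu^n\rangle - \langle f\chi_R,\mu\rangle + \langle f(1-\chi_R),\mu^n\rangle - \langle f(1-\chi_R),\mu\rangle.
\]
The first difference tends to zero as $n\to\infty$ by weak convergence, since $f\chi_R$ is bounded and continuous. The tail terms are controlled by
\[
|\langle f(1-\chi_R),\mu^n\rangle| \leq c \int_{\{d_{\cX}(x,x_0)>R\}} d_{\cX}(x,x_0)^2\,\mu^n(dx),
\]
and the analogous bound for $\mu$. Convergence of second moments combined with Vitali's convergence theorem (or equivalently, the fact that $\{d_{\cX}(\cdot,x_0)^2\}$ is uniformly integrable with respect to $\{\mu^n\}$) makes these tails uniformly small as $R\to\infty$.

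For (ii) $\Rightarrow$ (i), I would first observe that the function $f(x)=d_{\cX}(x,x_0)^2$ trivially satisfies the growth condition, so (ii) directly yields convergence of second moments. To extract weak convergence from (ii), I would apply the hypothesis to functions of the form $f(x) = \phi(x)\,(1 + d_{\cX}(x,x_0)^2)$ for bounded continuous $\phi$, or equivalently, combine (ii) applied to $d_{\cX}(\cdot,x_0)^2$ with an approximation of any bounded continuous $\phi$ by functions having quadratic growth (the growth bound in (ii) should be interpreted as at-most-quadratic growth, including bounded functions). Once weak convergence and convergence of second moments are both in hand, I would invoke the standard characterization to conclude $\cW_{\cX,2}(\mu^n,\mu)\to 0$; the proof of this last step proceeds by considering an optimal coupling, invoking Prokhorov's theorem to extract a subsequential limit $\pi^\infty \in \Pi_{\cX}(\mu,\mu)$, and showing via the second-moment convergence that $\pi^\infty$ must be concentrated on the diagonal, forcing $W_2$-convergence along every subsequence.

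The main obstacle I anticipate is the uniform integrability step in (i) $\Rightarrow$ (ii): one must combine continuity of $f$, tightness from weak convergence, and the quadratic growth bound against convergence of second moments, ensuring that the tails are controlled \emph{uniformly in $n$}. The cleanest way to handle this is to treat $\{d_{\cX}(\cdot,x_0)^2\}_{n}$ as a uniformly integrable family (which follows from convergence of second moments on a Polish space) and then dominate $|f|$ by a multiple of this family.
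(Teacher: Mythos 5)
The paper does not actually prove this proposition — it is quoted verbatim as \cite[Theorem 2.13]{lacker2018mean} — so the relevant comparison is with the standard textbook argument (Villani's Theorem 6.9, which is what Lacker's notes reproduce). Your sketch follows exactly that argument: weak convergence plus convergence of second moments as the intermediate characterization, the triangle inequality in $L^2(\pi^n)$ for moment convergence, and a truncation/uniform-integrability argument for test functions of quadratic growth. Two remarks. First, you are right to flag that the growth condition as printed, $|f(x)|\leq c\,d_{\cX}(x,x_0)^2$, forces $f(x_0)=0$ and so excludes bounded functions; the intended (and Lacker's actual) condition is $|f(x)|\leq c\,(1+d_{\cX}(x,x_0)^2)$, and the equivalence only holds under that reading — your interpretive correction is necessary, not optional. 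Second, the closing step of (ii)$\Rightarrow$(i) is the only place where your sketch is looser than it should be: extracting a weak subsequential limit $\pi^\infty$ of the optimal couplings and asserting it is concentrated on the diagonal requires the stability of optimality under weak limits (via closedness of cyclical monotonicity), and even then, passing from weak convergence of $\pi^{n_k}$ to convergence of $\int d_{\cX}^2\,d\pi^{n_k}$ needs uniform integrability of the unbounded cost under the couplings. The cleaner and more standard route — which you already have all the ingredients for — is the Skorokhod representation: realize $X_n\sim\mu^n$, $X\sim\mu$ with $X_n\to X$ a.s., bound $d_{\cX}(X_n,X)^2\leq 2\,d_{\cX}(X_n,x_0)^2+2\,d_{\cX}(X,x_0)^2$, and conclude by Vitali using the uniform integrability you established from moment convergence. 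With that substitution the proof is complete and correct.
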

In the next sections we mainly apply the above results to $\cX=\T \times \R$ endowed with the metric
\begin{equation*}
    d_{\T \times \R}((x,a),(y,b)):=\sqrt{\min\left\{|x-y|,\,2\pi-|x-y| \right\}^2\,+\,|a-b|^2},\qquad x,y \in \T, a,b \in \R,
\end{equation*}
and to $\cX=C_t:=C \left ( [0,t];\T \times \R\right )$ the space of time-continuous $\T \times \R$-valued functions defined up to a given time $t > 0$ endowed with the supremum metric
\begin{equation*}
    |(x_{\cdot},y_{\cdot})|_t:=\sup \limits_{s \in [0,t]}d_{\T \times \R}(x_s,y_s),\qquad    
\end{equation*}
having used in the above the notation $x_{\cdot}=\{x_s\}_{s \in [0,t]}$ for $x_{\cdot} \in C_t$, and similarly for $y_{\cdot} \in C_t$. Unless otherwise stated, we denote the 2-Wasserstein metric on $\cP_2(\T \times \R)$ and $\cP_2(C_t)$ by $\cW_2$ and $\Wt$, respectively. In addition to that, the spaces $\Pi_{\T \times \R}$ and $\Pi_{C_t}$ will be denoted by $\Pi_2$ and $\Pi_t$, respectively.
We also recall the following inequality between $\cW_2$ and $\Wt$, which we will use multiple times:
\begin{equation}\label{W2 leq W2t}
    \sup \limits_{s \in [0,t]}\cW_2^2(\mu_s,\nu_s) \leq \Wt^2(\mu_{\cdot},\nu_{\cdot}), 
\end{equation}
where in the above we have used the notation $\mu_{\cdot}=\{\mu_s\}_{s \in [0,t]}$ and $\nu_{\cdot}=\{\nu_s\}_{s \in [0,t]}$ for $\mu_{\cdot},\nu_{\cdot} \in C([0,t];\cP_2(\T \times \R))$ with $t \geq 0$.
When using the dual pairing between measures and functions we will need to distinguish between the case when the measure is on $\T $ and when it is on $\T\times \R$; when  $\rho \in \cM(\T)$ and $f \in C^{\infty}(\T;\R)$ we denote dual pairing by $\langle f,\rho \rangle$,  when $\mu \in \cP(\T \times \R)$ and $\Psi \in C_c^{\infty}(\T \times \R;\R)$ we  denoted it by $\langle \langle \Psi, \mu \rangle \rangle$.\\
As customary, we let 
\begin{align*}
& H^1(\T;\R):=\left \{ f \in L^2(\T;\R)\,\,\,:\,\,\, \pa_x f \in L^2(\T;\R)\right \}
\end{align*}
where the spatial derivative $\pa_xf$ is taken in the weak sense and, for every $n >1$, 
\begin{equation}\label{sobolev usual}
    H^n(\T;\R):=\left \{ f \in L^2(\T;\R)\,\,\,:\,\,\, \pa_x f \in H^{n-1}(\T;\R)\right \}.
\end{equation}
We endow such spaces with the following norms:
\begin{align*}
    & |f|_{H^{n}(\T;\R)}^2:=\sum \limits_{i=0}^n |\pa_x^if|_{L^2(\T;\R)}^2,\,\,\,\, f \in H^n(\T;\R),\,\,\,\,n \in \N,
\end{align*}
where $\pa_x^i f$ denotes the $i$-th weak derivative of $f$.
In the proofs we use frequently the notation $x \lesssim y$ for $x, y \in \R$, which means $x \leq Cy$ for some constant $C \geq 0$, the dependence of which on certain parameters is specified if necessary. For instance, by $x \lesssim_t y$ we mean that the constant $C$ depends on $t$. \\
Lastly, let us point out the following fact. In what comes next two types of integrals appear. The first one, with respect to piece-wise $C^1$-functions $\mY^{\kappa}$ and the second one with respect to H\"older continuous paths, namely
\begin{align*}
        &\int_0^t  f_s\,d \mY_s^{\ka},\, f \in C([0,T];\R)\,\,\text{and}\,\, \int_0^t  g_s\,d \mY_s,\,\, g \in C^{\infty}([0,T];\R).
\end{align*}
The first type of integral is intended in the Riemann-Stieltjes sense, while the second type of integral is intended in the Young sense. We refer the reader to Appendix \ref{young} for a brief recap on Young integral, see \cite[Sec 1.1]{lyons2007differential} for a comprehensive introduction. 

\section{The limit $N\to \infty$}\label{section_limitN}
The main purpose of this section is to prove Proposition \ref{prop limit rho N}. The proof of Proposition \ref{prop limit rho N} will be a  straightforward consequence of Proposition \ref{prop_limitN} below.    Hence, we focus our attention on proving the latter proposition, which is concerned with studying convergence, as $N\rightarrow \infty$,  of the empirical measure 
$\mu_t^{N,\e, M, \kappa}$ defined in \eqref{empirical_m} to $\mu_t^{\e,M,\ka}$, law of \eqref{sistemaMKE}. 
\begin{defn}\label{def_MV}
Let $(X_0,A_0): \Omega \to \T \times \R $ be a $\cF_0$-measurable random variable with distribution $\mu_0$ (which we recall satisfies the standing Assumption \ref{assunzioni sui dati iniziali}).
We say that the triple $t \to (X_t^{\e,M,\ka},A_t^{\e,M,\ka},\mu_t^{\e,M,\ka})\in C([0,T];\T \times \R \times \cP_2(\T \times \R))$ is a solution to \eqref{sistemaMKE} if
\begin{itemize}
    \item[(i)] $t \to (X_t^{\e,M,\ka},A_t^{\e,M,\ka}) \in \T \times \R$ is an $\{\cF_t\}_{t \in [0,T]}$-adapted stochastic process;
    \item[(ii)] The stochastic process $(X_t^{\e,M,\ka},A_t^{\e,M,\ka})$ solves \eqref{sistemaMKE}, for every $t\in[0,T]$, $\mP$-a.s. where $\mu_t^{\e,M,\ka}=\cL\left(X_t^{\e,M,\ka},A_t^{\e,M,\ka}\right)$,  $\Gamma_M$ has been defined in \eqref{def_GammaM} and $\zeta_t^{\e,M,\ka}$ is the $\T$-marginal of $\mu_t^{\e,M,\ka}$, as defined in \eqref{zetaMKE}. 
\end{itemize}
\end{defn}
\noindent 
From Proposition \ref{prop:law satisfies PDE} we know that $\mu_t^{\e,M,\ka}$ is the unique measure-valued solution to the PDE \eqref{PDEmuMKE}.
To streamline calculations we introduce the following operator  
\begin{align}
    \Xi_{\e}(x,\mu)\,:=\, \frac{q(x)}{\itr \Phi_\e(x-y)\mu(dyda)},\qquad x \in \T,\, \mu \in \cP_2(\T \times \R). \label{def_betaMe}
\end{align}
With this notation, the dynamics of the weights $A_t^{i,N,\e,M,\ka}$ in \eqref{particelle} and of the process $A_t^{\e,M,\ka}$ in \eqref{sistemaMKE} can be rewritten as 
\begin{align}\label{weight beta NMKE}
   & dA_t^{i,N,\e,M,\ka}=\Xi_{\e}(X_t^{i,N,\e,M,\ka},\mu_t^{N,\e, M,\ka})d\mY_t^{\ka},\,\,\,i=1,\cdots,N 
   \end{align}
   and 
   \begin{align}
   & dA_t^{\e,M,\ka}=\Xi_{\e}(X_t^{\e,M,\ka},\mu_t^{\e,M,\ka})d\mY_t^{\ka},\label{weight beta MKE}
\end{align}
respectively. Indeed, from definition \eqref{empirical_zeta} of $\zeta_t^{N,\e,M,\ka}$ and since the mollifier $\Phi_{\e}$ depends only the spatial variable $x \in \T$ we obtain
\begin{align*}
 \Phi_{\e}*\zeta_t^{N,\e,M,\ka}(x) & =\int_{\T} \Phi_{\e}(x-y)\,\zeta_t^{N,\e,M,\ka}(dy)\\
& =\frac{1}{N}\sum \limits_{i=1}^N \Phi_{\e}(x-X_t^{i,N,\e,M,\ka})=\frac{1}{N}\sum \limits_{i=1}^N \1_{\R}(A_t^{i,N,\e,M,\ka})\Phi_{\e}(x-X_t^{i,N,\e,M,\ka})\\
&=\itr \Phi_{\e}(x-y)\mu_t^{N,\e,M,\ka}(dyda),
\end{align*}
which gives \eqref{weight beta NMKE}. Equation \eqref{weight beta MKE} can be obtained similarly. Using \eqref{weight beta MKE}, the evolution \eqref{sistemaMKE} can be rewritten as follows
\begin{equation}\label{sistemaMKE+beta}
    \begin{dcases}
       & X_t^{\e,M,\ka}= X_0-\int_0^t \left [V^{'}(X_s^{\e,M,\ka}), +\Gamma_M(X_s^{\e,M,\ka},\mu_s^{\e,M,\ka}) \right ]\,ds+\sqrt{2}\beta_t,\\
       & A_t^{\e,M,\ka}= A_0+\int_0^t\Xi_{\e}(X_s^{\e,M,\ka},\mu_s^{\e,M,\ka})d\mY_s^{\ka}, \\
    \end{dcases}
\end{equation}
where $\mu_t^{\e,M,\ka}=\cL(X_t^{\e,M,\ka},A_t^{\e,M,\ka})$. 
\begin{lemma}\label{well-posedness MKE}
System \eqref{sistemaMKE+beta} (or equivalently \eqref{sistemaMKE}) admits a unique solution in the sense of Definition \ref{def_MV}. Moreover, the pair $(X_t^{\e,M,\ka},A_t^{\e,M,\ka})$ belongs to the space $L^2\left (\Omega; C([0,T];\T \times \R) \right )$.
\end{lemma}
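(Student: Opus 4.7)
The plan is a classical Picard fixed-point argument in the space $C([0,T];\cP_2(\T \times \R))$ equipped with the sup-Wasserstein metric $\Wt$. Two structural features make the argument tractable once $\e,M,\ka$ are fixed: the cut-off $\chi_M$ yields $\sup_{x,\mu}|\Gamma_M(x,\mu)|\le 2M|F'|_\infty$, while the strict positivity $\Phi_\e\ge c_\e>0$ together with the fact that the $\T$-marginal $\zeta$ of any $\mu\in\cP(\T\times\R)$ is a probability measure gives $\Phi_\e\ast\zeta(x) \ge c_\e$ uniformly in $x$ and $\mu$, hence $\sup_{x,\mu}|\Xi_\e(x,\mu)|\le |q|_\infty/c_\e$. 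Combined with the smoothness of $V,F,q,\Phi_\e$, these bounds yield global Lipschitz estimates
\[
|\Gamma_M(x,\mu)-\Gamma_M(x',\nu)|+|\Xi_\e(x,\mu)-\Xi_\e(x',\nu)|\le C_{M,\e}\bigl(|x-x'|+\cW_2(\mu,\nu)\bigr),
\]
valid for all $x,x'\in\T$ and $\mu,\nu\in\cP_2(\T\times\R)$, with a constant $C_{M,\e}$ independent of the input measures.

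Given these estimates, one fixes a path $\mu_\cdot\in C([0,T];\cP_2(\T\times\R))$ and solves the ``decoupled'' SDE for $X^\mu$ with drift $-V'(x)-\Gamma_M(x,\mu_s)$, which is Lipschitz in $x$ uniformly in $s$: classical strong well-posedness theory yields a unique strong solution $X^\mu\in L^2(\Omega;C([0,T];\T))$. The weight is then defined by the Riemann-Stieltjes integral
\[
A^\mu_t=A_0+\int_0^t\Xi_\e(X^\mu_s,\mu_s)\,d\mY_s^\ka,
\]
which is well-defined because $\mY^\ka$ is piece-wise $C^1$ (hence of bounded variation on $[0,T]$) and the integrand is continuous in $s$ and bounded by $|q|_\infty/c_\e$. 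This yields the deterministic bound $|A^\mu_t-A_0|\le (|q|_\infty/c_\e)\,|\mY^\ka|_{BV([0,T])}$, which, combined with $A_0\in L^2(\mu_0)$, gives $A^\mu\in L^2(\Omega;C([0,T];\R))$. Defining $\Psi(\mu_\cdot)_t:=\cL(X^\mu_t,A^\mu_t)$, the map $\Psi$ sends $C([0,T];\cP_2(\T\times\R))$ into itself.

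For the fixed point, one couples two inputs $\mu,\nu$ via the same $(X_0,A_0)$ and Brownian motion $\beta$. The Lipschitz bounds above, combined with Doob's inequality and Gronwall, give
\[
\mE\Bigl[\sup_{s\le t}|X^\mu_s-X^\nu_s|^2\Bigr]\le C_M\int_0^t\cW_2^2(\mu_s,\nu_s)\,ds,
\]
and then, Cauchy-Schwarz against the finite measure $d|\mY^\ka|_{BV}$ in the $A$-equation yields
\[
\mE\Bigl[\sup_{s\le t}|A^\mu_s-A^\nu_s|^2\Bigr]\le C_{\e,\ka}\int_0^t\bigl(\mE|X^\mu_s-X^\nu_s|^2+\cW_2^2(\mu_s,\nu_s)\bigr)\,ds.
\]
Summing and using $\Wt^2(\Psi(\mu),\Psi(\nu))\le \mE\bigl[\sup_{s\le t}d_{\T\times\R}^2\bigl((X^\mu_s,A^\mu_s),(X^\nu_s,A^\nu_s)\bigr)\bigr]$ gives the Gronwall-type estimate $\Wt^2(\Psi(\mu),\Psi(\nu))\le C_{\e,M,\ka,T}\int_0^t\Ws^2(\mu,\nu)\,ds$, whose iterated Picard bound $(Ct)^n/n!$ produces a Cauchy sequence in $C([0,T];\cP_2(\T\times\R))$ converging to the unique solution. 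The only step requiring care is the Lipschitz bound for $\Xi_\e$ in its measure argument: writing $\Xi_\e(x,\mu)-\Xi_\e(x,\nu)=q(x)\bigl[\Phi_\e\ast\zeta_\nu-\Phi_\e\ast\zeta_\mu\bigr](x)\big/\bigl[(\Phi_\e\ast\zeta_\mu)(\Phi_\e\ast\zeta_\nu)\bigr](x)$, one controls the denominator by $c_\e^2$ and the numerator via Kantorovich duality \eqref{kantorovich} together with $\cW_1(\zeta_\mu,\zeta_\nu)\le \cW_1(\mu,\nu)\le \cW_2(\mu,\nu)$ from \eqref{W1 leq W2}.
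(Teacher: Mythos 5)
Your proposal is correct and follows essentially the same route as the paper: the paper also runs a Picard iteration in the sup-Wasserstein metric on $C([0,T];\cP_2(\T\times\R))$, with contraction obtained from the Lipschitz estimates \eqref{ineq gammma}--\eqref{ineq beta} (your bounds on $\Gamma_M$ and $\Xi_\e$ are exactly Lemma~\ref{gamma beta}) and Gronwall. The only cosmetic difference is that you phrase it via a fixed-point map $\Psi$ while the paper writes out the explicit iterates $\mu^n$; also, the reference to Doob's inequality is superfluous since the two copies share the same Brownian motion and the weight integral is Riemann--Stieltjes, so no martingale term appears in the differences.
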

We refer the reader to Appendix \ref{well-posedness} for the proof of Lemma \ref{well-posedness MKE}.
\begin{prop}[Particle approximation]\label{prop_limitN}
Let $M,\kappa \in \N$ and $\e,T>0$ be fixed. Let $(X_0,A_0)$ and $\mu_0$ as in Definition \ref{def_MV}. 
Then the empirical distribution $\mu^{N,\e,M,\ka}_t$  of the particle system \eqref{particelle} (defined in \eqref{empirical_m}) converges to $\mu_t^{\e,M,\ka}$ (law of the pair $ (X_t^{\e,M,\ka},A_t^{\e,M,\ka})$ solution to the McKean-Vlasov SDE \eqref{sistemaMKE} ) in the Wasserstein $\WT$ distance, i.e. 
\begin{equation}\label{limit enne}
    \lim \limits_{N \to +\infty} \WT(\mu_{\cdot}^{N,\e, M, \ka},\mu_{\cdot}^{\e,M,\ka})=0,\,\,\,\,\mP-a.s.
\end{equation}
As a consequence, 
$$
\sup_{t \in [0,T]} \left[\left\vert X_t^{i, N,\e, M, \ka} - X_t^{\e, M, \ka}\right\vert^2+ 
\left\vert A_t^{i, N,\e, M, \ka} - A_t^{\e, M, \ka}\right\vert^2\right] \stackrel{N\rightarrow \infty}{\longrightarrow} 0, \quad \mathbb P - a.s. 
$$
\end{prop}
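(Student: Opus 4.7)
The plan is the standard synchronous coupling argument for McKean--Vlasov propagation of chaos. For each $i = 1,\dots,N$, introduce the auxiliary process $(\bar X^{i,\e,M,\ka}_\cdot, \bar A^{i,\e,M,\ka}_\cdot)$ defined as the (pathwise unique, by Lemma \ref{well-posedness MKE}) solution of \eqref{sistemaMKE+beta} driven by the \emph{same} Brownian motion $\beta^i$ and \emph{same} initial datum $(X_0^i, A_0^i)$ as the $i$-th particle in \eqref{particelle}. By construction the pairs $\{(\bar X^{i,\e,M,\ka}_\cdot,\bar A^{i,\e,M,\ka}_\cdot)\}_{i=1}^N$ are i.i.d.\ with common path-space law $\mu_\cdot^{\e,M,\ka}$. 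Set $\bar\mu_\cdot^N := \tfrac{1}{N}\sum_{i=1}^N \delta_{(\bar X^{i,\e,M,\ka}_\cdot,\bar A^{i,\e,M,\ka}_\cdot)}$ and decompose
\[
\WT(\mu_\cdot^{N,\e,M,\ka}, \mu_\cdot^{\e,M,\ka}) \leq \WT(\mu_\cdot^{N,\e,M,\ka}, \bar\mu_\cdot^N) + \WT(\bar\mu_\cdot^N, \mu_\cdot^{\e,M,\ka}).
\]
The second term tends to zero $\mP$-a.s.\ by the classical Glivenko--Cantelli-type result for empirical measures of i.i.d.\ samples in $\cP_2(C_T)$ (see e.g.\ \cite{lacker2018mean}), using that $\mu_\cdot^{\e,M,\ka} \in \cP_2(C_T)$ (a consequence of Lemma \ref{well-posedness MKE}).

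For the first term, transport through the identity permutation yields
\[
\WT^2(\mu_\cdot^{N,\e,M,\ka}, \bar\mu_\cdot^N) \leq \frac{1}{N}\sum_{i=1}^N \sup_{t \in [0,T]}\Bigl[|X_t^{i,N,\e,M,\ka} - \bar X_t^{i,\e,M,\ka}|^2 + |A_t^{i,N,\e,M,\ka} - \bar A_t^{i,\e,M,\ka}|^2\Bigr],
\]
which I would control by Gronwall. The key Lipschitz estimates are: $\Gamma_M$ from \eqref{def_GammaM} satisfies $|\Gamma_M(x,\mu) - \Gamma_M(y,\nu)| \lesssim_M |x-y| + \cW_2(\mu,\nu)$ (since $\chi_M$ is $1$-Lipschitz and $V',F'$ are smooth on $\T$, with the $\cW_2$-control in $\mu$ following from the Kantorovich dual formulation and the fact that $(y,a) \mapsto \chi_M(a) F'(x-y)$ is bounded Lipschitz uniformly in $x$); and $\Xi_\e$ from \eqref{def_betaMe} is uniformly bounded by $\|q\|_\infty / c_\e$ and Lipschitz in both arguments, with constants depending on $\e$, thanks to $\Phi_\e \geq c_\e > 0$ and smoothness of $\Phi_\e, q$. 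Subtracting the $X$- and $A$-equations for the two systems, using these Lipschitz bounds, the fact that the Brownian increments cancel for each $i$, and that $Y^\ka$ is piecewise $C^1$ so $\int_0^\cdot \Xi_\e\,dY^\ka$ is a Riemann--Stieltjes integral controlled by $\|\dot Y^\ka\|_{L^\infty([0,T])}$ via Cauchy--Schwarz, one obtains after taking $\sup_{s\le t}$ and expectation
\[
\mathbb E\!\left[\tfrac{1}{N}\!\sum_{i=1}^N\!\sup_{s\le t}\!\bigl(|X_s^{i,N,\e,M,\ka} \!-\! \bar X_s^{i,\e,M,\ka}|^2 \!+\! |A_s^{i,N,\e,M,\ka} \!-\! \bar A_s^{i,\e,M,\ka}|^2\bigr)\right] \!\lesssim_{\e,M,\ka,T}\! \int_0^t \!\mathbb E[\Ws^2(\mu_\cdot^{N,\e,M,\ka},\mu_\cdot^{\e,M,\ka})]\,ds.
\]
Inserting the triangle inequality on the right-hand side and applying Gronwall reduces the coupling error to the empirical error $\mathbb E[\Ws^2(\bar\mu_\cdot^N,\mu_\cdot^{\e,M,\ka})]$, which is already handled. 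The $\mP$-a.s.\ convergence in \eqref{limit enne} then follows either by extracting higher moments and invoking Borel--Cantelli, or by combining $L^1$ convergence with the strong law applied to the i.i.d.\ samples. The pointwise consequence on each $(X^{i,N,\e,M,\ka},A^{i,N,\e,M,\ka})$ is immediate from the individual coupling bound combined with the symmetry of the system.

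The main technical obstacle is that the weights $A^{i,N,\e,M,\ka}$ are a priori unbounded; however, because $\Xi_\e$ is uniformly bounded (independently of $\mu$) thanks to the lower bound on $\Phi_\e$, uniform $L^p$ bounds on $\sup_t|A_t^{i,N,\e,M,\ka}|$ and $\sup_t|\bar A_t^{i,\e,M,\ka}|$ follow from the finite total variation of $Y^\ka$, which is enough both to justify the $\cP_2$-level Glivenko--Cantelli convergence of $\bar\mu_\cdot^N$ and to close the Gronwall step. I note that the piecewise $C^1$-regularity of $Y^\ka$ enters only qualitatively (through finite total variation and boundedness of $\dot Y^\ka$), so no second-order It\^o correction appears at this stage -- this is the simplification that motivates working with the smoothed forcing $Y^\ka$ here and delaying the limit $\ka \to \infty$, as anticipated in Note \ref{note:cutoff}.
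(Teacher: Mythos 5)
Your proposal follows the same synchronous-coupling strategy as the paper's proof: the same auxiliary i.i.d.\ McKean--Vlasov copies $(\bar X^{i,\e,M,\ka}, \bar A^{i,\e,M,\ka})$ driven by the particles' own Brownian motions, the same identity-permutation bound reducing $\WT(\mu^N,\bar\mu^N)$ to the mean coupling error, the same Lipschitz estimates for $\Gamma_M$ and $\Xi_\e$ (which are precisely the content of Lemma~\ref{gamma beta}), and the same triangle--Gronwall closing argument against the empirical error of the i.i.d.\ samples. The one substantive difference is that you route the Gronwall step through expectations, whereas the paper works entirely pathwise: since $Y^\ka$ is a fixed deterministic path, the drift coefficients are deterministic Lipschitz functions, and the common Brownian increments cancel exactly under the coupling, every inequality in the argument (the $\sup$-estimates, the triangle inequality, and Gronwall itself, with deterministic constants) holds $\mathbb P$-a.s., yielding $\WT^2(\mu_\cdot^{N,\e,M,\ka},\mu_\cdot^{\e,M,\ka}) \leq C(T)\,\WT^2(\nu_\cdot^N,\mu_\cdot^{\e,M,\ka})$ pathwise, after which the strong law of large numbers for the i.i.d.\ empirical measure gives the a.s.\ conclusion directly. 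This makes your final step -- upgrading $L^1$ (or higher-moment plus Borel--Cantelli) convergence to a.s.\ convergence -- unnecessary; it would close, but the pathwise route is cleaner and is one of the concrete payoffs of having fixed the driving path before taking this limit, as you correctly anticipate in your closing remark.
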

We first prove Proposition \ref{prop limit rho N} and then come back to the proof of Proposition \ref{prop_limitN}. 
\begin{proof}[Proof of Proposition \ref{prop limit rho N}]
From \eqref{limit enne} and Proposition \ref{weak formulation on TR} (the latter with $\cX = C_T$) 
we know that 
\begin{equation}\label{limite muN}
    \lim \limits_{N \to \infty} \,\, \sup \limits_{t \in [0,T]} \left | \llangle \Psi, \mu_t^{N,\e, M,\ka} \rrangle- \llangle \Psi, \mu_t^{\e,M, \ka} \rrangle \right |=0,\,\,\,\,\mP-a.s.
\end{equation}
for any given $\Psi \in C^{\infty}(\T \times \R;\R)$ which grows at most quadratically with respect to the variable $a \in \R$. Moreover, from the definition of $\mu_t^{N,\e, M,\ka}$ and $\mu_t^{\e,M,\ka}$ we have 
\begin{align*}
   & \llangle \Psi,\mu_t^{N,\e, M,\ka} \rrangle=\frac{1}{N}\sum \limits_{i=1}^N\Psi(X_t^{i,N,\e, M,\ka},A_t^{i,N,\e, M,\ka}),\\
   & \llangle \Psi,\mu_t^{\e,M,\ka} \rrangle =\itr \Psi(x,a) \mu_t^{\e,M,\ka}(dxda)=\mE(\Psi(X_t^{\e,M,\ka},A_t^{\e,M,\ka})).       
\end{align*}
In particular, if we set $\Psi(x,a)=af(x)$, $f \in C^{\infty}(\T;\R)$, from the above, \eqref{limite muN}, \eqref{weighted_empirical} and \eqref{rhoMKE} we obtain 
\begin{equation*}
    \lim_{N\to\infty} \,\,\sup \limits_{t \in [0,T]} |\langle f,\rho_t^{N,\e, M,\ka} \rangle - \langle f,\rho_t^{\e,M.\ka} \rangle |=0,\,\,\mP-a.s.
\end{equation*}
for any given $f\in C^\infty(\T;\R)$. This proves the first claim in Proposition \ref{prop limit rho N}. 
For the second claim,  by recalling that $\mu_t^{\e,M,\ka}$ is the weak solution to \eqref{PDEmuMKE} and that $\rho_t^{\e,M,\ka}$ is given by \eqref{rhoMKE} we obtain 
\begin{align*}
     \pa_t \left \langle f,\rho_t^{\e,M,\ka} \right \rangle & =\pa_t \llangle af,\mu_t^{\e,M,\ka} \rrangle = \left \langle \left \langle a \paxx f,  \mu_t^{\e,M,\ka} \rrangle\\
    & -\llangle a\pa_xf,\left [\left (V^{'}+\Gamma_M\left (x,\mu_t^{\e,M,\ka}\right )\right )\mu_t^{\e,M,\ka}\right ] \rrangle + \llangle \pa_a(af),\frac{q}{\Phi_{\e} * \zeta_t^{\e,M,\ka}} \mu_t^{\e,M,\ka} \pa_t\mY_t^{\ka} \rrangle \\
    & = \left \langle \paxx f , \rho_t^{\e,M,\ka} \right \rangle - \left \langle \pa_xf, \left [\left(V^{'}+\Gamma_M\left (x,\mu_t^{\e,M,\ka}\right )\right )\rho_t^{\e,M,\ka}\right ] \right \rangle \\
    & + \llangle f, \frac{q}{\Phi_{\e} * \zeta_t^{\e,M,\ka}}\mu_t^{\e,M,\ka} \rrangle \pa_t Y_t^{\ka} \, .\\
        \end{align*}
    Now, since $f$ depends on the variable $x \in \T$ only, we have
    \begin{align*}
    \llangle  f, \frac{q}{\Phi_{\e} * \zeta_t^{\e,M,\ka}}\mu_t^{\e,M,\ka} \rrangle \pa_t Y_t^{\ka} = \left \langle f, \frac{q\zeta_t^{\e,M,\ka}}{\Phi_{\e}*\zeta_t^{\e,M,\ka}} \right \rangle \pa_t Y_t^{\ka},
    \end{align*}
which shows that $\rho_t^{\ep,M,\ka}$ is a weak measure-valued solution  to \eqref{PDErhoMKE}. The fact that such a solution admits a density and that such a density is a weak, and then classical, solution to \eqref{PDErhoMKE} is a consequence of  classical PDE theory, see \cite[Chapter 7]{evans2022partial} or also \cite{hormander}, as equation \eqref{PDErhoMKE} is a linear parabolic PDE with a forcing which is $C^{\infty}$ in space and piece-wise $C^1$ in time. 
\end{proof}
We now state a preliminary lemma, Lemma \ref{gamma beta}, which will be used to prove Proposition \ref{prop_limitN}. The proof of Lemma \ref{gamma beta} is postponed to the end of this section.
\begin{lemma}\label{gamma beta}
Let $\Gamma_M$ and $\Xi_{\e}$ be as in \eqref{def_GammaM} and \eqref{def_betaMe}, respectively; then the following two inequalities hold for any given $x,y \in \T$, $\mu,\nu \in \cP_2(\T \times \R)$:
\begin{align}\label{ineq gammma}
    & \left|\pa_x^n\Gamma_M(x,\mu)-\pa_x^n\Gamma_M(y,\nu)\right|^2\,\leq K(n,M)(\,|x-y|^2\,+\, \cW^2_2(\mu,\,\nu)) \, \\
    & \left|\Xi_{\e}(x,\mu)-\Xi_{\e}(y,\nu)\right|^2\,\leq \Tilde{K}_{\e}( \,|x-y|^2\,+\, \cW^2_2(\mu,\,\nu)), \label{ineq beta}
\end{align}
where  the constants $K(n,M)$ and $\tilde{K}_{\e}$ are given by  
\begin{align*}
K(n,M) & :=\max \left \{M , 1 \right \} |F^{(n+1)}|_{L^{\infty}(\T;\R)},\\
 \Tilde{K}_{\e}  & := \frac{|q|_{L^{\infty}( \T;\R)}D_{\e}+|q'|_{L^{\infty}( \T;\R)}M_{\e}}{m_{\e}^2},
\end{align*}
respectively, with  $m_{\e}$, $D_{\e}$ and $M_{\e}$ defined in \eqref{lower bound phieps}, \eqref{upper bound derivative phieps} and \eqref{upper bound phieps}, respectively. 
\end{lemma}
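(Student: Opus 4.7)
The plan is to prove both inequalities by combining a standard coupling argument with an add-and-subtract manipulation. In both cases, for any coupling $\pi\in\Pi_2(\mu,\nu)$ one can realise the difference on the left-hand side as a single integral against $\pi$; bounding the integrand pointwise and then taking the infimum over $\pi$ converts spatial distance terms into $\cW_2$ contributions.

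For the first inequality, differentiating under the integral (justified by smoothness of $F$ and compactness of $\T$) gives $\pa_x^n\Gamma_M(x,\mu)=\itr \chi_M(a)\,F^{(n+1)}(x-z)\,\mu(dz\,da)$. Writing the difference against an arbitrary $\pi\in\Pi_2(\mu,\nu)$, I would use the decomposition
\begin{align*}
\chi_M(a)F^{(n+1)}(x-z)-\chi_M(b)F^{(n+1)}(y-w)&=\chi_M(a)\bigl[F^{(n+1)}(x-z)-F^{(n+1)}(y-w)\bigr]\\
&\quad+\bigl[\chi_M(a)-\chi_M(b)\bigr]F^{(n+1)}(y-w).
\end{align*}
The first summand is controlled using $|\chi_M|\lesssim M$ together with the Lipschitz norm of $F^{(n+1)}$ (bounded by a $C^\infty$-norm of $F$ on the torus), producing a factor $|x-y|+|z-w|$; the second uses the fact that $\chi_M$ is globally Lipschitz with constant $1$, producing a factor $|a-b|$. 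Squaring, applying Jensen's inequality, and then taking the infimum over $\pi$ converts $\int(|z-w|^2+|a-b|^2)\,d\pi$ into $\cW_2^2(\mu,\nu)$, yielding the announced bound with a constant of the stated form (up to how one chooses to lump bounds on $F$ and its derivatives into a single $C^\infty$-norm).

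For the second inequality, writing $A:=\int\Phi_\e(x-z)\,\mu(dz\,da)$ and $B:=\int\Phi_\e(y-w)\,\nu(dw\,db)$, the identity
\begin{equation*}
\frac{q(x)}{A}-\frac{q(y)}{B}=\frac{q(x)\,(B-A)}{AB}+\frac{q(x)-q(y)}{B}
\end{equation*}
reduces matters to estimating $B-A$. By the lower bound $\Phi_\e\geq m_\e$, both $A$ and $B$ are bounded below by $m_\e$, so the denominators are under control. For the numerator $B-A$, realising it as an integral against a coupling $\pi$ and using the mean-value estimate $|\Phi_\e(y-w)-\Phi_\e(x-z)|\leq D_\e(|x-y|+|z-w|)$ gives, after taking the infimum over $\pi$ and using $\cW_1\leq\cW_2$, the bound $|B-A|\leq D_\e(|x-y|+\cW_2(\mu,\nu))$. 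Combined with $|q(x)-q(y)|\leq |q'|_{L^\infty}|x-y|$ and the trivial estimate $1/m_\e\leq M_\e/m_\e^2$ (used to produce the $M_\e$ appearing in $\tilde K_\e$), squaring and applying $(a+b)^2\leq 2a^2+2b^2$ yields the claim.

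The only mild obstacle is the bookkeeping needed to match the precise constants $K(n,M)$ and $\tilde K_\e$ as stated; the key structural point, in both parts, is that the coupling argument cleanly separates spatial regularity of $F^{(n+1)}$ (resp. $\Phi_\e$, $q$) from the scalar regularity of $\chi_M$ (resp. the uniform positivity of the mollified density), which is exactly the mechanism that generates one $|x-y|^2$ term and one $\cW_2^2$ term on the right-hand side.
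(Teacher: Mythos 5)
Your proposal is correct and follows the same basic structure as the paper's proof: add and subtract to separate the spatial dependence from the measure dependence, then bound each piece using the Lipschitz properties of $\chi_M$, $F^{(n+1)}$, $\Phi_\e$, $q$ and the uniform lower bound on $\Phi_\e$. The difference is technical rather than conceptual. The paper first applies the triangle inequality to pass through the intermediate point $(x,\nu)$ and then invokes the Kantorovich dual characterisation \eqref{kantorovich} of $\cW_1$, treating the integrand as a Lipschitz test function to produce the Wasserstein term; you instead express the whole difference as a single integral against an arbitrary coupling $\pi\in\Pi_2(\mu,\nu)$, split the integrand pointwise, estimate, apply Jensen's inequality and take the infimum over $\pi$, so the Wasserstein distance enters through its primal (coupling) definition rather than through duality. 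Both routes are sound and produce the same type of estimate. One remark (which applies equally to the paper's own proof): what is actually established is the unsquared bound $|\pa_x^n\Gamma_M(x,\mu)-\pa_x^n\Gamma_M(y,\nu)|\le K(n,M)\,(|x-y|+\cW_2(\mu,\nu))$, and likewise for $\Xi_\e$; squaring therefore introduces a factor $2$ and replaces $K(n,M)$, $\tilde K_\e$ by their squares on the right-hand side. Your bookkeeping, like the paper's, does not reproduce the constants literally as stated in \eqref{ineq gammma}--\eqref{ineq beta}, but this is a harmless cosmetic issue with the lemma's formulation and does not affect the way the lemma is used.
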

\begin{proof}[Proof of Proposition \ref{prop_limitN}]
In what follows, to avoid notation overload and since we are considering the limit  $N \to \infty$ only, we omit the dependence on $M,\e,\ka>0$ for the quantities used in the proof. So, in this proof only, $X^{i,N}_t$ and $A^{i,N}_t$ are short notations for $X_t^{i,N, \e, M, \ka}$ and $A_t^{i,N, \e, M, \ka}$, respectively; similarly for $\mu_t^N$, which will be short for $\mu_t^{N, \e,M, \ka}$. Moreover, we do not track explicitly the dependence of the constants on $M,\e, \ka$ as this is not relevant to this proof. We denote by $C(t)$ a generic constant, depending continuously on $t$ (and increasing w.r.t time $t$), which may change from line to line. We also point out that unless otherwise stated, the inequalities (and equalities) of this proof will hold $\mP$-a.s. With this clarification,  we start by considering $N$ independent copies of the McKean-Vlasov system \eqref{sistemaMKE+beta}. Namely, for each $i \in\{1, \dots, N\}$,  let $(X_t^i, A_t^i)$ be the solution of the following problem  
\begin{align}\label{mckean-vlasov 2}
    \begin{dcases}
     &dX^i_t \,=\, - \left(V'(X^i_t) + \Gamma_M(X^i_t,\,\mu_t^i)\right)\,  dt\,+\,\sqrt{2}\,d\beta^i_t,  \\
     & dA^i_t\,=\, \Xi_{\e}(X^i_t,\,\mu_t)\,d\mY_t^{\ka} \ , \\
     & \mu_t^i=\cL(X_t^i,A_t^i), \\
     & X^i|_{t=0}=X_0^{i},\,\,A^i|_{t=0}=A_0^{i}.
\end{dcases}
\end{align}
For any given $i=1,\cdots,N$, the initial condition $(X_0^{i},A_0^{i})$ and the Wiener process $\beta_t^i$ coincide, respectively,  with the initial datum and the Wiener process, respectively,  appearing in the dynamics  \eqref{particelle}. Hence, as a consequence of Lemma \ref{well-posedness MKE} we obtain that
the pairs $(X_t^i,A_t^i)$ are i.i.d. stochastic processes with same law $\mu_t = \mu_t^i$,  $i=1,\cdots,N$. \\
Let us now begin with estimating the difference $\left|X\iin_s - X^i_s \right |^2$. 
By recalling that $X_t^{i,N}$ solves the first SDE in \eqref{particelle}, from Jensen's inequality and \eqref{ineq gammma} we obtain 
\begin{align*}
    &\sup \limits_{s \in [0,t]}\left|X\iin_s-X^i_s\right|^2 \, \\
    &\leq C(t) \left [ \,\int_0^t \sup \limits_{r \in [0,s]}|X\iin_r-X^i_r|^2 \,dr\,+\, \int_0^t\sup \limits_{r \in [0,s]} \left|\Gamma_M(X\iin_r,\mu^N_r)-\Gamma_M(X^i_r,\mu_r)\right|^2 \,ds \right ]\\
    & \leq C(t) \left [ \int_0^t \sup \limits_{r \in [0,s]}|X\iin_r-X^i_r|^2 \,dr\,+\, \int_0^t \sup \limits_{r \in [0,s]} \cW_2^2(\mu_r^N,\mu_r) \,ds \right ].
\end{align*} 
By now applying Gronwall's inequality we get 
\begin{equation}\label{kuos1}
     \sup \limits_{s \in [0,t]}\left|X\iin_s-X^i_s\right|^2 \,\leq C(t) \, \int_0^t \sup \limits_{r \in [0,s]} \cW_2^2(\mu_r^N,\mu_r) \,ds.  
\end{equation}
Similarly, by recalling that $A_t^{i,N}$ solves the second SDE in \eqref{particelle}, using \eqref{weight beta NMKE}, \eqref{ineq beta} and Jensen's inequality we get
\begin{align*}
     \sup \limits_{s \in [0,t]} \left|A\iin_s-A^i_s\right|^2 \, &\leq C(t)\, \int_0^t \sup \limits_{r \in [0,s]} \left| \Xi_{\e}(X\iin_r,\mu^N_r)-\Xi._{\e}(X^i_r,\mu_r) \right|^2\,|\pa_s\mY_s^{\ka}|^2\,ds, \\
    & \leq C(t) \left [\int_0^t \sup \limits_{r \in [0,s]}|X\iin_r-X^i_r|^2\,dr\,+\, \int_0^t \sup \limits_{r \in [0,s]} \cW_2^2(\mu_r^N,\mu_r) \,ds \right ].
\end{align*}
Hence, from the above and \eqref{kuos1}, we have 
\begin{align}\label{XNAN}
     \sup \limits_{s \in [0,t]} \left[ |X\iin_s-X^i_s|^2 +|A\iin_s-A^i_s|^2 \right]\,\leq C(t) \,\int_0^t \sup \limits_{r \in [0,s]} \cW^2_2(\mu^N_r,\mu_r)\,ds. 
\end{align}
Let us now introduce the empirical measure associated to the i.i.d. particles in \eqref{mckean-vlasov 2}, namely,
\begin{equation*}
    \nu^N_t:=\frac{1}{N}\sum \limits_{i=1}^N \delta_{(X^i_t,A^i_t)}\ ,\qquad N \in \N,\,\, t \in [0,T] \, ,
\end{equation*}
and  recall that, from the definitions of Section \ref{Notation} (specifically from the definition of 2-Wasserstein distance \eqref{2-Wasserstein} with $\cX=C_t$), we have  
\begin{equation}\label{star i dont know}
\Wt(\mu_{\cdot}^{N}, \nu_{\cdot}^{N})=\inf \limits_{\pi \in \Pi_t \left (\mu^N,\nu^N \right )} \,\,\, \int_{C_t \times C_t} |(x,a)-(y,b)|_t^2 \,\,\pi(dxdadydb) \,, 
\end{equation}
where in the above we have used the notation $\mu_{\cdot}^N:= \{\mu^N_t\}_{t \in [0,T]}$, and similarly for $\nu_{\cdot}^N$.
Hence, from \eqref{XNAN} we have 
\begin{equation}\label{XNAX2}
    \,\, \Wt^2(\mu_{\cdot}^N,\nu_{\cdot}^N)\,\leq \, \frac{1}{N}\sum \limits_{i=1}^N \left( \sup \limits_{s \in [0,t]} |X\iin_s-X^i_s|^2+|A\iin_s-A^i_s|^2\right).
\end{equation}
The above inequality can be seen to hold true by using \eqref{star i dont know} and considering the empirical measure $\pi$ on $\cP_2(C_t)$ defined as
\begin{equation*}
\pi:=\frac{1}{N}\sum \limits_{i=1}^N \delta_{\left((X\iin_s,A\iin_s)_{s \in [0,t]},(Y^i_s,A^i_s)_{s \in [0,t]}\right)}\,\in\, \Pi_t(\mu^N,\nu^N).    
\end{equation*}
From \eqref{XNAN} and \eqref{XNAX2} it then follows 
\begin{equation}\label{W2_bound}
    \,\,\Wt^2(\mu_{\cdot}^N,\nu_{\cdot}^N) \leq C(t) \int_0^t \,\,\sup \limits_{r \in [0,s]} \cW_2^2(\mu_r^N,\mu_r).
\end{equation}
Moreover, by triangular inequality and \eqref{W2_bound} we obtain 
\begin{align*}
     \,\, \Wt^2(\mu_{\cdot}^N,\mu_{\cdot}) & \leq  \left ( \Wt^2(\mu_{\cdot}^N,\nu_{\cdot}^N)+\Wt^2(\nu_{\cdot}^N,\mu_{\cdot}) \right )\\ 
     & \leq C(t) \left [ \int_0^t \,\, \sup \limits_{r \in [0,s]} \cW_2^2(\mu_r^N,\mu_r)\,ds  +\Wt^2(\nu_{\cdot}^N,\mu_{\cdot}) \right ]\\
    &\leq^{\eqref{W2 leq W2t}} C(t) \left [ \int_0^t \,\, \Ws^2(\mu_{\cdot}^N,\mu_{\cdot})\,ds+ \Wt^2(\nu_{\cdot}^N,\mu_{\cdot})\right ].
\end{align*}
Hence,  by Gronwall's inequality we obtain
\begin{equation*}
    \,\,\Wt^2(\mu_{\cdot}^N,\mu_{\cdot}) \leq C(t) \Wt^2(\nu_{\cdot}^N,\mu_{\cdot}),\qquad t \in [0,T].
\end{equation*}
Finally, since the particles-weights $(X^i_t,A^i_t)$, are i.i.d. random variables with same distribution $\mu_t=\cL(X^i_t,A^i_t)$, from the definition of empirical distribution $\nu^N$ and the strong law of large numbers (\cite[Corollary 2.14]{lacker2018mean}) for any given $t \in [0,T]$ we get 
\begin{equation*}
\,\, \Wt^2(\nu_{\cdot}^N,\mu_{\cdot}) \xrightarrow[N \to \infty]{} 0. 
\end{equation*}
This concludes the proof. 
\end{proof}

\begin{proof}[Proof of Lemma \ref{gamma beta}]
 Let us begin with proving \eqref{ineq gammma}. We first note that we can move the derivative inside the integral, so that
\begin{equation*}
    \pa_x^n \Gamma_M(x,\mu)=\itr \chi_M(a)F^{(n+1)}(x-y)\mu(dyda),\qquad \mu \in \cP_2(\T \times \R).
\end{equation*}
From the triangle inequality and the Lipschitz continuity of $\chi_M$ and $F^{(n+1)}$ (we recall that $\chi_M$ is Lipschitz continuous with Lipschitz constant 1) we obtain, for $\mu,\nu \in \cP_2(\T \times \R)$,   
 \begin{align}
      \left | \pa_x^n\Gamma_M(x,\mu) - \pa_x^n\Gamma_M(y,\nu) \right | & \leq \left | \pa_x^n\Gamma_M(x,\mu) -\pa_x^n \Gamma_M(x,\nu) \right |+\left | \pa_x^n\Gamma_M(x,\nu) - \pa_x^n\Gamma_M(y,\nu) \right | \nonumber \\
     & \leq^{\eqref{kantorovich}} |F^{(n+1)}|_{L^{\infty}(\T;\R)}\cW_1(\mu,\nu)\nonumber\\
     &+|F^{(n+1)}|_{L^{\infty}(\T;\R)}|x-y|\itr \chi_M(a)\,\mu(dzda)\nonumber\\
     & \leq^{\eqref{chiM}} |F^{(n+1)}|_{L^{\infty}(\T;\R)}\cW_1(\mu,\nu)+|F^{(n+1)}|_{L^{\infty}(\T;\R)}M|x-y|  \label{eqn:cutoff} \\
     & \leq \max\{M,1\}|F^{(n+1)}|_{L^{\infty}(\T;\R)}(|x-y|+\cW_2(\mu,\nu)).\nonumber 
 \end{align}
 As for inequality \eqref{ineq beta} we proceed similarly. Indeed,
  \begin{align*}
     & \left | \Xi_{\e}(x,\mu) - \Xi_{\e}(y,\nu) \right | \leq \left | \Xi_{\e}(x,\mu) - \Xi_{\e}(x,\nu) \right |+\left | \Xi_{\e}(x,\nu) - \Xi_{\e}(y,\nu) \right |
   \end{align*}
For the first addend, by first noting from \eqref{von mises mollifier} that 
\begin{align}\label{lower bound phieps}
& \Phi_{\e}(x) \geq \frac{1}{2\pi I_0\left(\frac{1}{\e}\right )}e^{-\frac{1}{\e}}=:m_{\e},\qquad\e>0 \\
& \left |\frac{d}{dx} \Phi_{\e}(x) \right | \leq \frac{e^{\frac{1}{\e}}}{2\pi \e I_0\left(\frac{1}{\e}\right )} =:D_{\e},\qquad\e>0 \label{upper bound derivative phieps}
\end{align}
we have
 \begin{align}\notag    
 \left | \Xi_{\e}(x,\mu) - \Xi_{\e}(x,\nu) \right | & = \left | \frac{q(x)\left ( \itr \Phi_{\e}(x-y)\mu(dyda)- \itr \Phi_{\e}(x-y)\nu(dyda)\right )}{\itr \Phi_{\e}(x-y)\mu(dyda)\itr \Phi_{\e}(x-y)\nu(dyda)} \right | \\ \label{stima primo addendo beta}
 &\leq \frac{|q|_{L^{\infty}(\T;\R)}D_{\e}}{m_{\e}^2}\cW_1(\mu,\nu) \,.
 \end{align}
 As for the second addend, from
 \begin{equation}\label{upper bound phieps}
     \left |\Phi_{\e}(x)\right| \leq \frac{e^{\frac{1}{\e}}}{2\pi I_0\left(\frac{1}{\e}\right )}=:M_{\e},\qquad \e>0,
 \end{equation}
 the lower bound \eqref{lower bound phieps} and the upper bound \eqref{upper bound derivative phieps} we have
 \begin{align} \notag
    \left | \Xi_{\e}(x,\nu) - \Xi_{\e}(y,\nu) \right | & = \left | \frac{q(x)\itr \Phi_{\e}(y-z)\mu(dzda)-q(y)\itr \Phi_{\e}(x-z)\mu(dzda)}{\itr \Phi_{\e}(x-z)\mu(dzda)\itr \Phi_{\e}(y-z)\mu(dzda)} \right | \\
    & \leq \frac{|q|_{L^{\infty}(\T;\R)} D_{\e}+ |q^{'}|_{L^{\infty}(\T;\R)} M_{\e}}{m_{\e}^2}|x-y|. \label{stima secondo addendo beta}
 \end{align}
 Combining \eqref{stima primo addendo beta} and \eqref{stima secondo addendo beta} we obtain \eqref{ineq beta}, which concludes the proof.
\end{proof}

\section{The limit $\e \downarrow 0$}\label{section_limit_e}

In this section we prove Proposition \ref{prop limit rho eps}. This section is organised similarly to the previous one, so Proposition \ref{prop limit rho eps}  will be proved as a consequence of Proposition \ref{prop_e_before_M} below. So, after some preliminary definitions, we first state Proposition \ref{prop_e_before_M} then prove Proposition \ref{prop limit rho eps}, and finally prove Proposition \ref{prop_e_before_M} itself.    
\begin{defn}\label{DefMK}
Let $(X_0,A_0,\mu_0,\zeta_0)$ be as in Definition \ref{def_MV} and Assumption \ref{assunzioni sui dati iniziali}. A strong solution to \eqref{sistemaMK} is a 4-tuple 
$t \to (X_t^{M,\ka},A_t^{M,\ka},\mu_t^{M,\ka},\zeta_t^{M,\ka})\in C \left([0,T] ; \T \!\times \!\R \!\times \!\cP_2(\T \times \R) \!\times \! C^{\infty}(\T;\R) \right )$
satisfying the conditions given below
\begin{itemize}
    \item[(i)] $(X_t^{M,\ka},A_t^{M,\ka})$ is $\{\cF_t\}_{t \in [0,T]}$-adapted;
    \item[(ii)] The stochastic process $(X_t^{M,\ka},A_t^{M,\ka})$ solves \eqref{sistemaMK}, for every $t\in[0,T]$, $\mP$-a.s, where in \eqref{sistemaMK} $\mu_t^{M,\ka}=\cL(X_t^{M,\ka},A_t^{M,\ka})$ and $\zeta_t^{M,\ka}$ is the solution to the PDE \eqref{zetaPDEMK}.  
\end{itemize}
\end{defn}

We recall (see Proposition \ref{prop:law satisfies PDE}) that $\mu_t^{M,\ka}$ is the (unique) weak measure-valued solution to the PDE \eqref{PDEmuMK}.

\begin{lemma}\label{wellpossistemaMK}
System \eqref{sistemaMK} admits a unique solution in the sense of Definition \ref{DefMK}. Furthermore, for any given $M,\ka \in \N$ the function $\zeta_t^{M, \ka}$ is strictly positive, i.e. $\zeta_t^{M,\ka}(x)>0$ for every $x \in \T$, $t \in [0,T]$ and the pair $(X_t^{M,\ka},A_t^{M,\ka})$ belongs to the space $L^2\left (\Omega; C([0,T];\T \times \R) \right )$.    
\end{lemma}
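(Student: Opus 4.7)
I would prove Lemma \ref{wellpossistemaMK} via a Picard/fixed-point argument, exploiting the key structural features: (a) the cutoff $\chi_M$ makes $\Gamma_M$ uniformly bounded and Lipschitz in $\mu$ (Lemma \ref{gamma beta}), and (b) the PDE for $\zeta_t^{M,\ka}$ is linear parabolic with smooth, bounded coefficients once $\mu_t^{M,\ka}$ is treated as given.

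\textbf{Decoupling.} Given a candidate measure path $\mu \in C([0,T];\cP_2(\T \times \R))$, first solve the linear parabolic Cauchy problem
\[
\partial_t \zeta_t = \partial_{xx}\zeta_t + \partial_x\bigl[(V'+\Gamma_M(x,\mu_t))\zeta_t\bigr],\qquad \zeta|_{t=0}=\zeta_0,
\]
to obtain a unique smooth $\zeta[\mu]$ by classical theory (e.g.\ \cite{evans2022partial}). Since $\zeta_0 \geq \eta > 0$ (Assumption \ref{assunzioni sui dati iniziali}\ref{def:zeta0}) and the drift $V'+\Gamma_M(\cdot,\mu_t)$ is uniformly bounded on $[0,T]\times \T$ (with bound depending on $M$, $\|V'\|_\infty$, $\|F'\|_\infty$ only), either a maximum-principle/Harnack argument or a Feynman-Kac representation for the adjoint equation yields a strictly positive uniform lower bound $\zeta_t^{M,\ka}(x)\geq c_T=c_T(M,\eta,V,F)>0$ on $[0,T]\times \T$. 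With $\zeta[\mu]$ in hand, solve the (now non-McKean--Vlasov) system: the SDE for $X_t$ has drift $-V'(X_t)-\Gamma_M(X_t,\mu_t)$ which is bounded and Lipschitz in $X$ (Lemma \ref{gamma beta} gives Lipschitzness in $x$ with constant $\leq K(0,M)^{1/2}$), so standard SDE theory applies on $\T$; the equation for $A_t$ is then a path-by-path Young/Riemann--Stieltjes integral against the piecewise $C^1$ path $Y^{\ka}$, with integrand $q(X_s)/\zeta_s[\mu](X_s)$ bounded by $\|q\|_\infty/c_T$. Define $\Phi(\mu)_t := \cL(X_t,A_t)$.

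\textbf{Contraction.} I would show $\Phi$ is a contraction on $(C([0,T];\cP_2(\T\times \R)),\WT)$ on a sufficiently short interval $[0,T_0]$, then iterate to cover $[0,T]$. Given two inputs $\mu^{(1)},\mu^{(2)}$, coupling the two resulting $X$-equations by the same Brownian motion yields, via Gr\"onwall and Lemma \ref{gamma beta},
\[
\E\sup_{s\leq t}|X_s^{(1)}-X_s^{(2)}|^2 \lesssim_{M,T} \int_0^t \Ws^2(\mu^{(1)}_{\cdot},\mu^{(2)}_{\cdot})\,ds.
\]
For the $A$-component the analogous estimate also requires controlling $\|\zeta[\mu^{(1)}]-\zeta[\mu^{(2)}]\|_\infty$ in terms of $\WT(\mu^{(1)},\mu^{(2)})$; this follows from a standard $L^\infty$-stability estimate for linear parabolic PDEs with respect to the drift coefficient, combined again with Lemma \ref{gamma beta} and the uniform lower bound $c_T$ (so that $|1/\zeta^{(1)}-1/\zeta^{(2)}|\leq c_T^{-2}|\zeta^{(1)}-\zeta^{(2)}|$). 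Using the total-variation bound $\int_0^T|\partial_s Y^{\ka}_s|\,ds<\infty$ for $Y^{\ka}$ piecewise $C^1$, one then obtains
\[
\E\sup_{s\leq t}|A_s^{(1)}-A_s^{(2)}|^2 \lesssim_{M,T,\ka}\int_0^t \Ws^2(\mu^{(1)}_{\cdot},\mu^{(2)}_{\cdot})\,ds.
\]
Combining and using the standard bound $\WT^2(\Phi(\mu^{(1)}),\Phi(\mu^{(2)}))\leq \E\sup_{s\leq T}[|X^{(1)}_s-X^{(2)}_s|^2+|A^{(1)}_s-A^{(2)}_s|^2]$ (via the canonical coupling) gives a contraction for $T_0$ small, and iterating yields the unique fixed point.

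\textbf{$L^2$-bound and positivity.} The fixed point $(X^{M,\ka},A^{M,\ka},\mu^{M,\ka},\zeta^{M,\ka})$ is by construction the unique solution in the sense of Definition \ref{DefMK}; strict positivity of $\zeta_t^{M,\ka}$ was established along the way via the lower bound $c_T>0$. Finally, $X_t^{M,\ka}\in\T$ is trivially in $L^2$, while the deterministic bound
\[
|A_t^{M,\ka}-A_0|\leq \frac{\|q\|_\infty}{c_T}\int_0^T|\partial_s Y^{\ka}_s|\,ds
\]
combined with $\E|A_0|^2<\infty$ (Assumption \ref{assunzioni sui dati iniziali}\ref{def: mu0}) gives $(X^{M,\ka},A^{M,\ka})\in L^2(\Omega;C([0,T];\T\times\R))$. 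The main obstacle I anticipate is the stability estimate for $\zeta$ with respect to $\mu$ in the contraction step; this requires careful use of energy/parabolic estimates for the linear PDE \eqref{zetaPDEMK} together with the Lipschitz bound from Lemma \ref{gamma beta}, and in particular the $M$-dependent constants must be tracked carefully to ensure the short-time contraction can be iterated up to $T$.
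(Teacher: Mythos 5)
Your proposal is correct and follows essentially the same route as the paper: a Picard iteration in which, given $\mu$, one first solves the linear parabolic PDE for $\zeta[\mu]$ (deducing a uniform positive lower bound, as in the paper's Lemma \ref{stime}(i)/Theorem \ref{lower bound w}), then solves the SDEs, and closes the argument via Gr\"onwall and a parabolic $L^\infty$/$H^1$ stability estimate for $\zeta$ with respect to $\mu$ (the paper's Lemma \ref{stime}(iii)). The only cosmetic difference is that you propose a short-time contraction patched up to $[0,T]$, whereas the paper obtains convergence of the whole iteration directly on $[0,T]$ from the estimate $\Wt^2(\mu^n_\cdot,\mu^{n-1}_\cdot)\leq C(t)\int_0^t\Ws^2(\mu^{n-1}_\cdot,\mu^{n-2}_\cdot)\,ds$.
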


We refer the reader to Appendix \ref{well-posedness} for the proof of the above lemma. Let us emphasize that the above well-posedness result only gives us the strict positivity of the function $\zeta_t^{M, \ka}$ for each $M, \ka \in \N$ fixed (such a positivity comes as a biproduct of the scheme of proof), but the lower bound obtained through the proof of Lemma \ref{wellpossistemaMK} is not uniform in $M,\ka \in \N$. Bounds which are uniform in $M \in \N$ will be obtained in Lemma \ref{cor zeta}.

\begin{prop}\label{prop_e_before_M}
Let $M,\ka \in \N$ be fixed and let $X_0,A_0,\mu_0$ and $\zeta_0$ be as in Definition \ref{def_MV} and Assumption \ref{assunzioni sui dati iniziali}. 
Let $ \left(X^{\e, M,\ka}_t,\,A^{\e, M,\ka}_t,\,\mu^{\e, M,\ka}_t, \zeta^{\e, M,\ka}_t\right )$ be the unique solution to the McKean-Vlasov equation \eqref{sistemaMKE} with initial condition $\left(X_0,A_0, \mu_0,\zeta_0\right)$. Let $\left(X_t^{M,\ka},\,A_t^{M,\ka},\,\mu_t^{M,\ka},\zeta_t^{M,\ka}\right)$ be the unique solution to the McKean-Vlasov equation \eqref{sistemaMK} with the same initial conditions;  then 
\begin{equation}\label{wasserstein limit eps}
    \lim \limits_{\e \downarrow 0}\,\,\WT(\mu_{\cdot}^{\e, M,\ka},\mu_{\cdot}^{M,\ka})\,=\, 0.
\end{equation}
As a consequence, 
$$
\sup_{t \in [0,T]} \left[\left\vert X_t^{\e, M, \ka} - X_t^{M, \ka}\right\vert^2+ 
\left\vert A_t^{\e, M, \ka} - A_t^{M, \ka}\right\vert^2\right] \stackrel{\e \rightarrow 0}{\longrightarrow} 0, \quad \mathbb P - a.s. 
$$
\end{prop}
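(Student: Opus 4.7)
The plan is a coupling argument in the spirit of Proposition \ref{prop_limitN}. Since both systems are built as probabilistically strong solutions, I would realize $(X^{\e,M,\ka}_\cdot,A^{\e,M,\ka}_\cdot)$ and $(X^{M,\ka}_\cdot,A^{M,\ka}_\cdot)$ on the same filtered probability space, using the same initial datum $(X_0,A_0)$, the same Brownian motion $\beta_t$ and the same driving path $\mY_t^{\ka}$. The joint law of the two paths then supplies an admissible element of $\Pi_T(\mu_\cdot^{\e,M,\ka},\mu_\cdot^{M,\ka})$, and so
\begin{equation*}
\WT^2(\mu_\cdot^{\e,M,\ka},\mu_\cdot^{M,\ka}) \;\leq\; \E\!\left[\sup_{s\in[0,T]}\!\left(|X_s^{\e,M,\ka}-X_s^{M,\ka}|^2 + |A_s^{\e,M,\ka}-A_s^{M,\ka}|^2\right)\right].
\end{equation*}
Combined with \eqref{W2 leq W2t}, this reduces the whole statement to a pathwise estimate, and the almost-sure claim in the second assertion will follow automatically once the pathwise bound is shown to be controlled by the (deterministic) Wasserstein distance plus a deterministic mollification error.

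For the $X$-difference I would subtract the two $X$-equations and apply the Lipschitz estimate \eqref{ineq gammma} of Lemma \ref{gamma beta} (whose constant $K(n,M)$ is $\e$-independent); a pathwise Gronwall then yields
\begin{equation*}
\sup_{s\leq t}|X_s^{\e,M,\ka}-X_s^{M,\ka}|^2 \;\lesssim_{t,M}\; \int_0^t \sup_{r\leq s}\cW_2^2(\mu_r^{\e,M,\ka},\mu_r^{M,\ka})\,ds.
\end{equation*}
For the $A$-difference, since $\mY^{\ka}$ is piecewise $C^1$ the integrals against $d\mY^{\ka}$ reduce to Riemann--Stieltjes integrals with integrator bounded by $|\pa_s\mY_s^{\ka}|\lesssim_{\ka}1$, so the essential task is to control
\begin{equation*}
\Xi_\e(X_s^{\e,M,\ka},\mu_s^{\e,M,\ka}) \;-\; \frac{q(X_s^{M,\ka})}{\zeta_s^{M,\ka}(X_s^{M,\ka})}.
\end{equation*}
I would telescope this difference through the intermediate quantities $q(X_s^{M,\ka})/(\Phi_\e*\zeta_s^{\e,M,\ka})(X_s^{M,\ka})$ and $q(X_s^{M,\ka})/(\Phi_\e*\zeta_s^{M,\ka})(X_s^{M,\ka})$, producing three pieces: a Lipschitz-in-$(x,\mu)$ error, a $\cW_2$-error coming from $\Phi_\e*(\zeta^{\e,M,\ka}-\zeta^{M,\ka})$ at $X^{M,\ka}$, and a pure mollification error $\Phi_\e*\zeta^{M,\ka}-\zeta^{M,\ka}$.

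The main obstacle is that the Lipschitz constant $\tilde K_\e$ in \eqref{ineq beta} blows up as $\e\downarrow 0$, so Lemma \ref{gamma beta} cannot be applied directly in the first two pieces. To bypass this I would need a lower bound on $\Phi_\e*\zeta_s^{\e,M,\ka}$ that is uniform in $\e$; I expect this to be the content of Lemma \ref{stime} referenced in Note \ref{note:unifinepsilon}, obtained from the strict positivity of $\zeta_0$ (Assumption \ref{assunzioni sui dati iniziali}\ref{def:zeta0}) combined with a parabolic comparison principle applied to the linear PDE \eqref{PDEzetaMKE}, whose drift $V'+\Gamma_M(\cdot,\mu^{\e,M,\ka})$ is bounded uniformly in $\e$ thanks to the cutoff $\chi_M$. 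Given such a bound $\Phi_\e*\zeta^{\e,M,\ka}_s\geq c(T,M)>0$, the first two pieces in the telescoped decomposition are controlled by $|X_s^{\e,M,\ka}-X_s^{M,\ka}|^2$ and $\cW_2^2(\mu^{\e,M,\ka}_s,\mu^{M,\ka}_s)$, while the third piece contributes a deterministic error
\begin{equation*}
\delta_\e \;:=\; \sup_{s\in[0,T],\,x\in\T}\left|\Phi_\e*\zeta_s^{M,\ka}(x) - \zeta_s^{M,\ka}(x)\right|,
\end{equation*}
vanishing as $\e\downarrow 0$ by smoothness of $\zeta^{M,\ka}$ (Lemma \ref{wellpossistemaMK}) and standard mollifier estimates.

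Assembling the pathwise estimates, taking expectation, and invoking the coupling inequality together with \eqref{W2 leq W2t} would then yield a Gronwall inequality of the form
\begin{equation*}
\WT^2(\mu_\cdot^{\e,M,\ka},\mu_\cdot^{M,\ka}) \;\leq\; C(T,M,\ka)\!\left(\delta_\e^2 \,+\, \int_0^T \Ws^2(\mu_\cdot^{\e,M,\ka},\mu_\cdot^{M,\ka})\,ds\right),
\end{equation*}
from which \eqref{wasserstein limit eps} follows. The pathwise a.s. statement is then immediate: the right-hand sides of the pathwise bounds on $\sup_t|X^{\e,M,\ka}_s-X^{M,\ka}_s|^2$ and $\sup_t|A^{\e,M,\ka}_s-A^{M,\ka}_s|^2$ depend only on the deterministic quantities $\cW_2(\mu^{\e,M,\ka},\mu^{M,\ka})$ and $\delta_\e$, so they vanish deterministically, and therefore $\mP$-a.s., as $\e\downarrow 0$.
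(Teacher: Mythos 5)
Your proposal is correct and follows essentially the same route as the paper: the same synchronous coupling (same Brownian motion, same initial data, same driving path $\mY^{\ka}$), the same Lipschitz-and-Gronwall estimate for the $X$-difference via \eqref{ineq gammma}, the same reliance on Lemma~\ref{stime} for the $\e$-uniform lower bound on the denominator and for the $H^2$ and $\cW_2$-stability bounds on $\zeta$, and the same conclusion via a Gronwall inequality with a vanishing deterministic mollification remainder. The only material difference is in the organization of the telescoping for the $A$-difference: you change the evaluation point inside the full ratio $q/(\Phi_\e*\zeta^\e)$, which requires a uniform-in-$\e$ bound on $|\partial_x(\Phi_\e*\zeta^\e)|_{L^\infty}$ (available through Lemma~\ref{stime}(ii) applied to $\zeta^\e$ together with $\Phi_\e*\partial_x\zeta^\e=\partial_x(\Phi_\e*\zeta^\e)$ and $\int_{\T}\Phi_\e=1$), whereas the paper first combines the two fractions and telescopes the resulting numerator in a way that only ever differentiates the $\e$-free target $\zeta^{M,\ka}$, avoiding derivative bounds on $\zeta^\e$ entirely. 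Both routes produce the same three error types and close the same Gronwall loop; you also spell out the pathwise a.s.\ consequence more explicitly than the paper does, correctly observing that the right-hand sides of the pathwise Gronwall bounds are deterministic quantities vanishing with $\e$.
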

As a consequence of the above result Proposition \ref{prop limit rho eps} readily follows.
\begin{proof}[Proof of Proposition \ref{prop limit rho eps}]  This is completely analogous to the proof of Proposition \ref{prop limit rho N}, so we only sketch it. 
From \eqref{wasserstein limit eps} and Proposition \ref{weak formulation on TR},  we obtain 
\begin{equation*}
    \lim \limits_{\e \downarrow 0} \,\, \sup \limits_{t \in [0,T]} \left | \llangle \Psi, \mu_t^{\e, M,\ka} \rrangle - \llangle \Psi, \mu_t^{M, \ka} \rrangle \right |=0,\,\,\,\,
\end{equation*}
for any given $\Psi \in C^{\infty}(\T \times \R;\R)$ which grows at most quadratically in $a \in \R$. In particular, if we set $\Psi(x,a)=af(x)$, $f \in C^{\infty}(\T;\R)$ and recalling \eqref{rhoMKE} and \eqref{rhoMK} we have 
\begin{align*}
    & \llangle af,\mu_t^{\e, M,\ka} \rrangle= \itr af(x)\,\mu_t^{\e, M,\ka} (dxda)=
    \langle f,\rho_t^{\e, M,\ka} \rangle \, \\
    & \llangle af,\mu_t^{M,\ka} \rrangle= \itr af(x)\,\mu_t^{M,\ka} (dxda)
    =\langle f,\rho_t^{M,\ka}  \rangle. 
\end{align*}
From the above we obtain \eqref{limiteeps}. The rest of the proof is analogous to the proof of Proposition \ref{prop limit rho N}. Then, from the uniqueness of the classical solution of \eqref{PDErhoMKE} and of \eqref{PDErhoMK}, the last statement of the proposition follows.
\end{proof}
The proof of  Proposition \ref{prop_e_before_M} hinges on the preliminary estimates contained in Lemma \ref{stime} below, so we state this lemma first, and then prove it at the end of this section. In reading Lemma \ref{stime} note that equations \eqref{PDEzetaMKE} and \eqref{zetaPDEMK} have the same structure, which is the structure of equation \eqref{36 mu} below. That is, using the notation adopted in \eqref{36 mu}, we can write the solution $\zeta_t^{\ep, M,\ka}$ of \eqref{PDEzetaMKE} as $\zeta_t^{\ep, M,\ka} = \zeta_t^{(\mu^{\ep, M,\ka})}$, while for  $\zeta_t^{M,\ka}$ solution of \eqref{zetaPDEMK} we have $\zeta_t^{M,\ka} = \zeta_t^{(\mu^{M,\ka})}$.  The crucial estimate in this section is the lower bound \eqref{lower bound zeta mu} below which allows, in the proof of Proposition \ref{prop_e_before_M}, to obtain the lower bound \eqref{lower bound zeta eps}, which is uniform in $\e>0$.
\begin{lemma}\label{stime}
    Let $\mu$ be a fixed arbitrary path in $\cP_2(\T\times \R)$, i.e.  $\mu=\{\mu_t\}_{t \in [0,T]} \in C([0,T];\cP_2(\T\times \R))$ and let $\zeta_t^{(\mu)}$ be the solution to the following PDE 
    \begin{equation} \label{36 mu}
    \begin{dcases}
        & \pa_t\zeta_t^{(\mu)}=\pa_{xx}\zeta_t^{(\mu)}+\pa_x\left [ (V^{'}+\Gamma_M(x,\mu_t))\zeta_t^{(\mu)}\right ] \\
        &\zeta^{(\mu)}|_{t=0}=\zeta_0 \in H^2(\T;\R),
    \end{dcases}
    \end{equation}
    where $\Gamma_M$ has been defined in \eqref{def_GammaM}.
    Then  the following estimates hold: 
    \begin{itemize}\label{kuos}
        \item[(i)] For every $x \in \T$ and $t \in [0,T]$, 
        \begin{equation}\label{lower bound zeta mu}
            \zeta_t^{(\mu)}(x) \geq e^{-t \,\,\mathcal{A}(M)}\zeta_0(x) \, , 
        \end{equation} 
        where $\mathcal{A}(M)$ is defined as 
        \begin{equation}\label{esponente stima lower bound}
            \mathcal{A}(M):= |V^{'}  |_{L^{\infty}(\T;\R)}^2+M^2 | F^{'} |_{L^{\infty}(\T;\R)}^2+ |V^{''}|_{L^{\infty}(\T;\R)}+M|F^{''}|_{L^{\infty}(\T;\R)} \,. 
        \end{equation}
        \item [(ii)] For any $t \in [0,T]$, 
        \begin{equation}\label{bound H2-norm zeta mu}
            \left |\zeta_t^{(\mu)} \right |_{H^2(\T;\R)}^2\lesssim_{M,t} \cH(|\zeta_0|_{H^2(\T;\R)},M,t)\, ,
        \end{equation}
        where $\cH: \R_{+} \times \R_{+} \times [0,T] \to \R_+$ is a continuous (in the product norm) locally bounded function such that $\cH \xrightarrow[M \to \infty]{} +\infty$ (for each fixed value of the first and third argument of the function).
        \item[(iii)] For any $t \in [0,T]$ and any $\mu,\nu \in C([0,T];\cP_2(\T\times \R))$, 
        \begin{equation} \label{bound differenxe H1 norm zeta mu zeta nu}
            \left |\zeta_t^{(\mu)}-\zeta_t^{(\nu)} \right |_{H^1(\T;\R)}^2\leq \cG(M,t) \int_0^t \cW_2^2(\mu_s,\nu_s)\,ds \, ,
        \end{equation}
        where $\cG: \R_+ \times [0,T] \to \R_+$ is a continuous function (in the product norm) such that $\cG(M,t) \xrightarrow[M \to +\infty]{}+\infty$, for each fixed $t \in [0,T]$.
    \end{itemize}
\end{lemma}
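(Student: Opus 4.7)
My plan is to treat \eqref{36 mu} as a linear parabolic equation whose coefficients are controlled uniformly in $t$ and $\mu$ via Lemma \ref{gamma beta}. Rewriting it in non-divergence form
\[
\partial_t \zeta_t^{(\mu)} = \partial_{xx}\zeta_t^{(\mu)} + b_t\,\partial_x \zeta_t^{(\mu)} + c_t\, \zeta_t^{(\mu)}, \quad b_t(x) := V'(x) + \Gamma_M(x, \mu_t), \;\; c_t(x) := V''(x) + \partial_x\Gamma_M(x, \mu_t),
\]
Lemma \ref{gamma beta} (applied with $n = 0, 1, 2$) delivers $|b_t|_{L^\infty(\T;\R)} \leq |V'|_{L^\infty(\T;\R)} + M|F'|_{L^\infty(\T;\R)}$, $|c_t|_{L^\infty(\T;\R)} \leq |V''|_{L^\infty(\T;\R)} + M|F''|_{L^\infty(\T;\R)}$, and an analogous bound on $\partial_x c_t$, all independent of $\mu$. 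The three parts are then essentially independent.

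For part (i), I would use the Feynman-Kac representation
\[
\zeta_t^{(\mu)}(x) = \mathbb{E}^x\!\left[\zeta_0(X_t^{(b)})\,\exp\!\left(\int_0^t c_s(X_s^{(b)})\,ds\right)\right], \quad dX_s^{(b)} = b_s(X_s^{(b)})\,ds + \sqrt{2}\,dB_s,
\]
together with Girsanov's theorem to reduce $X^{(b)}$ to standard Brownian motion on $\T$. The Feynman-Kac exponential is bounded below by $\exp(-t(|V''|_{L^\infty(\T;\R)} + M|F''|_{L^\infty(\T;\R)}))$, producing the linear-in-$M$ portion of $\mathcal{A}(M)$, while the Girsanov density carries an exponent of order $t|b|_{L^\infty(\T;\R)}^2/4 \lesssim t(|V'|_{L^\infty(\T;\R)}^2 + M^2|F'|_{L^\infty(\T;\R)}^2)$, producing the quadratic-in-$M$ portion. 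Positivity of the heat kernel on $\T$ applied to the nonnegative factor $\zeta_0$ then yields \eqref{lower bound zeta mu}. An alternative route is the minimum principle applied to $\log \zeta_t^{(\mu)}$, which solves $\partial_t \log\zeta = \partial_{xx}\log\zeta + (\partial_x\log\zeta)^2 + b\,\partial_x\log\zeta + c$; the quadratic correction can then be extracted by completing the square in the two first-order terms.

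For part (ii), I would run a chain of standard energy estimates: testing successively against $\zeta_t^{(\mu)}$, $-\partial_{xx}\zeta_t^{(\mu)}$, and $\partial_{xxxx}\zeta_t^{(\mu)}$ (equivalently, differentiating in $x$ and testing against lower-order derivatives), using Young's inequality to absorb gradient terms into the diffusion, and invoking Lemma \ref{gamma beta} with $n = 0,1,2$ to control $\Gamma_M$ and its first two $x$-derivatives. This produces an inequality of the shape $\tfrac{d}{dt}|\zeta_t^{(\mu)}|_{H^2(\T;\R)}^2 \leq C(M)\,|\zeta_t^{(\mu)}|_{H^2(\T;\R)}^2$, after which Grönwall delivers \eqref{bound H2-norm zeta mu} with $\mathcal{H}$ exponential in the $K(n, M)$'s of Lemma \ref{gamma beta}. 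For part (iii), I would set $w_t := \zeta_t^{(\mu)} - \zeta_t^{(\nu)}$, so that $w_0 = 0$ and
\[
\partial_t w_t = \partial_{xx}w_t + \partial_x\!\left[(V' + \Gamma_M(\cdot, \mu_t))\, w_t\right] + \partial_x\!\left[(\Gamma_M(\cdot, \mu_t) - \Gamma_M(\cdot, \nu_t))\,\zeta_t^{(\nu)}\right].
\]
Energy estimates in $H^1$ (testing against $w_t$ and $-\partial_{xx}w_t$), together with Lemma \ref{gamma beta} to bound the inhomogeneity in $H^1$ by $\cW_2(\mu_t, \nu_t)\,|\zeta_t^{(\nu)}|_{H^2(\T;\R)}$, and with the $H^2$ estimate of part (ii) to control $\zeta_t^{(\nu)}$, would close \eqref{bound differenxe H1 norm zeta mu zeta nu} via Grönwall, with $\mathcal{G}(M, t)$ arising as the product of an $M$-polynomial factor from Lemma \ref{gamma beta} and $\mathcal{H}(|\zeta_0|_{H^2(\T;\R)}, M, t)$.

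The main technical obstacle will be part (i). A direct log-minimum-principle argument yields only the linear-in-$M$ portion of $\mathcal{A}(M)$, namely $\min_x \zeta_t^{(\mu)}(x) \geq e^{-t(|V''|_{L^\infty(\T;\R)} + M|F''|_{L^\infty(\T;\R)})}\,\min_x \zeta_0$, and recovering the quadratic piece $|V'|_{L^\infty(\T;\R)}^2 + M^2|F'|_{L^\infty(\T;\R)}^2$ while keeping the bound pointwise in $x$ is more delicate. This is precisely what the Girsanov step (or equivalently the substitution $\zeta_t = e^{-U_t(x)}\tilde\zeta_t$ with $U_t' = b_t/2$, which eliminates the first-order drift at the cost of a zero-order term of size $-b_t^2/4$) is designed to handle; well-definedness of $U_t$ on $\T$ uses that both $V$ and $\Gamma_M(\cdot, \mu_t)$ are derivatives of periodic functions and hence have zero spatial mean.
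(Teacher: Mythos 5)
Your energy-estimate plan for (ii) and (iii) is exactly what the paper does (see Appendix D): test successively against $\zeta^{(\mu)}_t$, $-\partial_{xx}\zeta^{(\mu)}_t$, etc., absorb gradients with Young's inequality, control $\Gamma_M$ and its spatial derivatives through the bound of Lemma~\ref{gamma beta}, and close with Gr\"onwall; for (iii) one writes the equation for $w_t=\zeta^{(\mu)}_t-\zeta^{(\nu)}_t$, estimates the inhomogeneity by $\cW_2(\mu_t,\nu_t)\,|\zeta^{(\nu)}_t|_{H^2}$ using (ii), and Gr\"onwalls again. No issues there.

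For (i), however, you have the situation backwards, and your preferred route has a genuine gap. Your ``alternative route'' --- apply the minimum principle to $v=\log(\zeta^{(\mu)}_t+\delta)$, which solves
\begin{equation*}
\partial_t v=\partial_{xx}v+|\partial_x v|^2+f\,\partial_x v+\partial_x f\cdot\frac{\zeta^{(\mu)}}{\zeta^{(\mu)}+\delta},\qquad f=V'+\Gamma_M(\cdot,\mu_t),
\end{equation*}
and complete the square via $|\partial_x v|^2+f\partial_x v\geq \tfrac12|\partial_x v|^2-\tfrac12|f|^2$ --- is precisely what the paper does; it isolates this as Theorem~\ref{lower bound w}, with $a(f)=\tfrac12\sup|f|^2+\sup|\partial_x f|$, and then feeds in the uniform-in-$\mu$ bound $|\partial_x^n\Gamma_M(\cdot,\mu_t)|\leq M|F^{(n+1)}|_{L^\infty}$. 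You assert in your closing paragraph that this yields only the linear-in-$M$ part of $\mathcal{A}(M)$; that is not so --- the Young step already produces the $\tfrac12|f|^2$ contribution, i.e.\ the $|V'|^2+M^2|F'|^2$ term, so the log/minimum-principle route delivers the full $\mathcal{A}(M)$ in one stroke. Crucially, this substitution is time-independent, so the time-regularity of $\mu_t$ never enters.

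By contrast, your Feynman--Kac/Girsanov route, and equivalently the substitution $\zeta^{(\mu)}_t=e^{-U_t}\tilde\zeta_t$ with $U_t'=b_t/2=(V'+\Gamma_M(\cdot,\mu_t))/2$, hits a wall: the drift $b_t$ is time-dependent through $\mu_t$, so $U_t$ is too, and $\partial_t\zeta^{(\mu)}_t=e^{-U_t}(\partial_t\tilde\zeta_t-(\partial_t U_t)\tilde\zeta_t)$ introduces the zero-order term $\partial_t U_t=\tfrac12\int^x\partial_t\Gamma_M(\cdot,\mu_t)$ into the equation for $\tilde\zeta$. This involves the time derivative of $\mu_t$, which is \emph{not} controlled by the hypothesis $\mu\in C([0,T];\cP_2(\T\times\R))$ (only $\cW_2$-continuity in time is assumed). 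The same obstruction appears in the Girsanov version after you It\^{o} the stochastic integral $\int b_s\,dB_s$ into boundary terms $U_t(X_t)-U_0(X_0)$ plus corrections: a $\partial_s U_s(X_s)\,ds$ integrand survives. So the quadratic piece cannot be extracted along this path without additional time-regularity on $\mu$. Drop the Girsanov step and promote your ``alternative route'' to the main argument.
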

\begin{note}
\label{note:unifinepsilon}
 Let us observe that the lower bound \eqref{lower bound zeta mu} does not depend on $\mu$. This is due to the fact that in the proof the only property of $\mu$ we use is the fact that it is a probability measure for each $t$ fixed, i.e. $\itr \mu_t(dxda) = 1$. In particular, when we use \eqref{lower bound zeta mu} to lower bound $\zeta_t^{\e,M, \ka}$, this will result in a lower bound which is uniform in $\ep>0$, as $\itr \mu_t^{\ep, M, \ka} (dx da) = 1$ for every $\ep, M, \ka$ and $t$, see \eqref{lower bound zeta eps} and the proof of Proposition \ref{prop_e_before_M}. 
\end{note}
\begin{proof}[Proof of Proposition \ref{prop_e_before_M}]
We proceed in the same fashion as in the proof of Proposition \ref{prop_limitN};  to avoid notation overload, and since we focus on the limit $\e \downarrow 0$, we omit the dependence on $M,\ka$ of the quantities used in the proof so that, in this proof only,  $\left ( X_t^{\e}, A_t^{\e},\mu_t^{\e},\zeta_t^{\e} \right )$ and $\left ( X_t,A_t,\mu_t,\zeta_t\right )$ denote the solution to system \eqref{sistemaMKE} and \eqref{sistemaMK}, respectively.  Moreover, throughout the proof $C$ will be a generic constant, changing from line to line,  depending continuously on the parameters in its argument, and increasing as a function of such parameters (e.g. $C(t, M)$ will be a constant depending continuously on $t, M$ and increasing in both arguments).  With this premise,  we start by  estimating the difference $\left | X_t^{\e} - X_t \right |^2$:   
\begin{align*}
            \mE \sup \limits_{s \in [0,t]}\left | X_{s}^{\e}-X_{s} \right |^2  & \leq 2 \int_0^t \mE \sup \limits_{r \in [0,s]}\left | X_{r}^{\e}-X_{r} \right |^2 \,ds + 2 \int_0^t \mE \sup \limits_{r \in [0,s]} \left | \Gamma_M(X_r^{\e},\mu_r^{\e})-\Gamma_M(X_r,\mu_r)\right |^2\,ds\\
            &\leq^{\eqref{ineq gammma}} C(M) \left [ \int_0^t \mE \sup \limits_{r \in [0,s]} \left | X_{r}^{\e}-X_{r} \right |^2 \,ds  +\int_0^t \sup \limits_{r \in [0,s]} \cW_2^2(\mu_{r}^{\e},\mu_{r}) \,ds \right ] \, , 
\end{align*}
where we emphasize that $C(M)$ is independent of $\e>0$.  
Hence, from Gronwall's lemma we obtain
\begin{align}\label{XKEM estimates}
    & \E\left[\sup \limits_{s \in [0,t]}|X_{s}^{\e}-X_{s}|^2 \right]\,\leq C(t,M) \,\,\int_0^t \sup_{r \in [0,s]}\cW^2_2(\mu^{\e}_{r},\mu_{r})\,ds.
\end{align}
Moreover, using \eqref{lower bound zeta mu} applied to $\zeta_t^{\e}$ and $\zeta_t$, respectively,  (see comments before the statement of Lemma \ref{stime})  we deduce the following lower bounds 
\begin{align}\label{lower bound zeta eps}
    \zeta_t^{\e}(x) \geq \Bar{C}(M,t,\eta),\qquad \mbox{for every } x \in \T,\,\,\,t \in [0,T],\\
    \zeta_t(x) \geq \hat{C}(M,t,\eta),\qquad \mbox{for every } x \in \T,\,\,\,t \in [0,T], \label{lower bound zeta senza eps}
\end{align}
where $\eta=\min \limits_{x \in \T} \zeta_0>0$,  $\bar C, \hat C >0$ for each fixed $M, t, \eta$ and we emphasize that $ \Bar{C}$ does not depend on $\e>0$ i.e. the lower bound for $\zeta_t^{\e}$ holds uniformly in $\e>0$ (see Note \ref{note:unifinepsilon}). 
From this and by recalling that $\itt \Phi_{\e}(x)\,dx=1$ we deduce 
\begin{equation}\label{lower bound phi star zeta}
 \left ( \Phi_{\e}*\zeta_t^{\e} \right )(x) \geq \Bar{C}(M,t,\eta),\qquad\text{for every $x \in \T$ and $t \in [0,T]$,}    
\end{equation}
where the above lower bound holds uniformly in $\e>0$. Hence, 
\begin{align}\notag
    & \left|\frac{q(X_t^{\e})}{\Phi_{\e} * \zeta_t^{\e}(X_t^{\e})}\,-\,\frac{q(X_t)}{\zeta_t(X_t)}\right|^2\,= \, \left | \frac{q(X_t^{\e})\,\zeta_t(X_t)-q(X_t)\,\Phi_{\e} * \zeta_t^{\e}(X_t^{\e})}{\left( \Phi_{\e} * \zeta_t^{\e}(X_t^{\e})\right)\,\zeta_t(X_t)} \right |^2 \\\notag
    & \leq^{\eqref{lower bound zeta senza eps},\eqref{lower bound phi star zeta}} C(t,M,\eta)\left | q(X_t^{\e})\,\zeta_t(X_t)-q(X_t)\,\Phi_{\e} * \zeta_t^{\e}(X_t^{\e}) \right |^2 \\ \notag
    & \leq C(t,M,\eta) \left [ \,\,\left | \left (q(X_t^{\e})-q(X_t) \right )\,\zeta_t(X_t) \right |^2 +\left | q(X_t)\,\left (\zeta_t(X_t)-\Phi_{\e} * \zeta_t^{\e}(X_t^{\e}) \right) \right |^2 \right ] \\ \notag
    & \leq C(t,M,\eta) \big [ \,\, |\zeta_t|_{L^{\infty}(\T;\R)}^2  \left |X_t^{\e}-X_t \right |^2+ \left | \zeta_t(X_t)-\zeta_t(X_t^{\e}) \right |^2\\ \notag
    &+ \left | \zeta_t(X_t^{\e})-\Phi_{\e} * \zeta_{t}(X_t^{\e})\right |^2 + \left | \Phi_{\e} * \zeta_{t}(X_t^{\e})-\Phi_{\e} * \zeta_t^{\e}(X_t^{\e})\right |^2 \big ]\\ \notag
    & \leq C(t,M,\eta)  \big [\,\, |\zeta_t|_{L^{\infty}(\T;\R)}^2  \left |X_t^{\e}-X_t \right |^2+ \left |\pa_x \zeta_t \right |_{L^{\infty}(\T;\R)}^2 \left |X_t-X_t^{\e} \right |^2 \\ \label{ultimate estimate}
    & + \left | \zeta_t-\Phi_{\e} * \zeta_t    \right |_{L^{\infty}(\T;\R)}^2+ \left | \zeta_{t}^{\e}-\zeta_t \right |_{L^{\infty}(\T;\R)}^2 \big ],
\end{align}
where the above chain of inequalities holds $\mP$-a.s. for $t \in [0,T]$ and $C(t,M,\eta)$ is a deterministic constant (depending on $\Bar{C}$ and $\hat{C}$).
Once this is in place, from the Sobolev embedding theorem we know that $H^1(\T;\R)$ compactly embeds into $C(\T;\R)$ i.e. there exists a constant $c>0$ such that 
\begin{equation*}
    |f|_{L^{\infty}(\T;\R)} \leq c |f|_{H^{1}(\T;\R)},\,\text{ for all $f \in H^{1}(\T;\R)$} \,.
\end{equation*}
From the above, by using $(ii)$ and $(iii)$ in Lemma \ref{stime} we obtain estimates for $|\zeta_t|_{L^{\infty}(\T;\R)}^2$, $|\pa_x\zeta_t|_{L^{\infty}(\T;\R)}^2$ and $|\zeta_t^{\e}-\zeta_t|_{L^{\infty}(\T;\R)}^2$, respectively. Indeed, from $(ii)$ in Lemma \ref{stime} applied to $\zeta_t$ we obtain that 
\begin{equation}\label{stima pax zeta senzaeps}
    \sup \limits_{t \in [0,T]} \left [ |\zeta_t|_{L^{\infty}(\T;\R)}^2+|\pa_x \zeta_t|_{L^{\infty}(\T;\R)}^2 \right ] \leq \cH\left (|\zeta_0|_{H^2(\T;\R)},M,t\right )
\end{equation}
while, from $(iii)$ in Lemma \ref{stime} applied to $\zeta_t^{\e}$ and $\zeta_t$ we deduce \begin{equation}\label{stima differenza zeta e zeta senza eps}
    |\zeta_t^{\e}-\zeta_t |_{L^{\infty}(\T;\R)}^2 \leq \cG(M,t)\int_0^t \cW_2^2(\mu_s^{\e},\mu_s)\,ds \,.
\end{equation}
From \eqref{ultimate estimate}, \eqref{stima pax zeta senzaeps} and \eqref{stima differenza zeta e zeta senza eps} we deduce that the following holds for every $t \in [0,T]$, $\mP$-a.s.
\begin{align}\label{KEM difference estimate}
    & \left|\frac{q(X_t^{\e})}{\Phi_{\e} * \zeta_t^{\e}(X_t^{\e})}\,-\,\frac{q(X_t)}{\zeta_t(X_t)}\right|^2 \leq C(t,M,\eta) \left [ \left |X_t^{\e}-X_t \right |^2+\int_0^t \cW_2^2(\mu_s^{\e},\mu_s)\,ds+\left | \zeta_t-\Phi_{\e} * \zeta_t  \right |_{L^{\infty}(\T;\R)}^2 \right ].
\end{align}
From Jensen's inequality we then have 
\begin{align}
     \E\left[ \sup \limits_{s \in [0,t]}|A^{\e}_{s}-A_{s}|^2 \right] & \leq C(t) \,\,\mE \left [ \int_0^t \left|\frac{q(X_s^{\e})}{\Phi_{\e} * \zeta_s^{\e}(X_s^{\e})}\,-\,\frac{q(X_s)}{\zeta_s(X_s)}\right|^2 \left |\pa_sY_{s}^{\ka} \right |^2\,ds \right ]\nonumber \\
    & \leq^{\eqref{KEM difference estimate}} C(t,M,\eta,\ka) \left [ \mE \int_0^t |X_s^{\e}-X_s|^2 \,ds +\int_0^t \cW_2^2(\mu_s^{\e},\mu_s)\,ds+\int_0^t \left | \zeta_t-\Phi_{\e} * \zeta_t  \right |_{L^{\infty}(\T;\R)}^2 \,ds \right ]\nonumber \\
   & \leq^{\eqref{XKEM estimates}} C(t,M,\eta,\ka) \left [ \int_0^t \sup \limits_{r \in [0,s]} \cW^2_2(\mu_{r}^{\e},\mu_{r})ds+ \int_0^t \sup \limits_{r \in [0,s]}\left|\zeta_r - \Phi_{\e} * \zeta_r \right |_{L^{\infty}(\T;\R)}^2\,ds \right ].\label{AKEM estimates}
\end{align}
Hence, from the definition of 2-Wasserstein metric on $C_t$, $\Wt$ (see Section \ref{Notation}),   we get  
\begin{align*}
     \Wt^2(\mu_{\cdot}^{\e},\mu_{\cdot}) & \leq \mE \left ( \sup \limits_{s \in [0,t]}|X^{\e}_{s}-X_{s}|^2+\sup \limits_{s \in [0,t]}|A^{\e}_{s}-A_{s}|^2\right ) \\
    & \leq^{\eqref{XKEM estimates}, \eqref{AKEM estimates}} C(t,M,\eta,\ka)\, \left [\,\,\,\int_0^t \sup \limits_{r \in [0,s]}\cW^2_2(\mu^{\e}_r,\mu_r)\,ds\, + \, \int_0^t \sup \limits_{r \in [0,s]} \left |\zeta_r - \Phi_{\e} * \zeta_r \right |_{L^{\infty} 
    (\T;\R)}^2 \,ds \right ]\,\\
    & \leq^{\eqref{W2 leq W2t}} C(t,M,\eta,\ka) \left [\,\,\,\int_0^t \Ws^2(\mu_{\cdot}^{\e},\mu_{\cdot})\,ds+\int_0^t \sup \limits_{r \in [0,s]} \left |\zeta_r - \Phi_{\e} * \zeta_r \right |_{L^{\infty}
    (\T;\R)}^2 \,ds \right ].
\end{align*}
Therefore, from Gronwall's inequality,
\begin{equation*}
    \Wt^2(\mu_{\cdot}^{\e},\mu_{\cdot}) \leq C(t,M,\eta,\ka) \,\int_0^t \sup \limits_{r \in [0,s]} \left |\zeta_r - \Phi_{\e} * \zeta_r \right |_{L^{\infty}
    (\T;\R)}^2 \,ds
\end{equation*}
By letting $\e \downarrow 0$ and by recalling 
$$\sup \limits_{r \in [0,T]} \left |\zeta_r - \Phi_{\e} * \zeta_r \right |_{L^{\infty}
    (\T;\R)} \xrightarrow[\e \to 0]{}0$$ 
    (see $(i)$ of \cite[Theorem 6 in Appendix C]{evans2022partial}) we can conclude and obtain the desired result thanks to the dominated convergence theorem (which can be applied in view of the bounds we obtained above on the $L^{\infty}$ norm of $\zeta_t$ and recalling that $\int_{\T} \Phi_{\ep} (x) dx = 1$ for every $\ep>0$). 
\end{proof}
\begin{proof}[Proof of Lemma \ref{stime}] We show here how to prove point $i)$, which is the crucial one,  the  proofs of $(ii)$ and $(iii)$ can be found in Appendix \ref{proofs}. For $i)$, 
 differentiating under the integral, we obtain 
\begin{align}
\left |\pa_x^n\Gamma_M(x,\mu_t) \right | \leq^{\eqref{def_GammaM}} M|F^{(n+1)}|_{L^{\infty}(\T;\R)}\itr \mu_t(dyda)=M|F^{(n+1)}|_{L^{\infty}(\T;\R)},\qquad n \in \N,  \label{upper bound bouno}
\end{align}
where we have used the fact that $\mu_t$ is a probability measure for each $t$ fixed. Using the above estimate, the lower bound \eqref{lower bound zeta mu} can be readily deduced from Theorem \ref{lower bound w}.
\end{proof}
\begin{note}\label{note uniform bound in eps and M}
Before moving on to the next section we want to clarify why we have opted for an  upper bound of the type \eqref{upper bound bouno}, which produces the lower bound \eqref{lower bound zeta mu}. First of all, note that at this stage in the proof we are only aiming at a lower bound on $\zeta_t^{\e,M,\ka}$ which is uniform in $\e$, we do not need a lower bound uniform in $M$, as $M$ is fixed. And indeed the lower bound \eqref{lower bound zeta mu} is uniform in $\e$ but not in $M$. One may wonder whether it is possible, with easy tricks, to obtain a lower abound which is uniform in both parameters in one go (so that both limits could be taken together). We have not been able to do so and to explain why this is the case it is useful to point out the following: one may try to upper bound the quantity on the LHS of \eqref{upper bound bouno} differently, for example as follows
\begin{equation*}
    \left | \pa_x^n\Gamma_M(x,\mu_t) \right |\leq \itr |\chi_M(a)F^{(n+1)}(x-y)|\mu_t(dyda)\leq |F^{(n+1)}|_{L^{\infty}(\T;\R)}\itr |a|\mu_t(dyda) \,.
\end{equation*}
By doing so $M$ does not appear explicitly,  but now the quantity on the RHS of the above would have to  be controlled in the proofs with  $\mu_t=\mu_t^{\e, M,\ka}$. That is, we would have to control a term of the form
\begin{equation}\label{momento primo - general}
    \itr |\chi_M(a)F^{(n+1)}(x-y)|\mu_t^{\e, M,\ka}(dyda)\leq |F^{(n+1)}|_{L^{\infty}(\T;\R)}\itr |a|\mu_t^{\e, M,\ka}(dyda) \,, 
\end{equation}
and the RHS of \eqref{momento primo - general} does depend on $M$. 
Because of the (complicated) equation for $\mu_t^{\e, M,\ka}$  we have not been able to control the above uniformly in $\e$ and $M$ simultaneously, so we take the approach of producing estimates which are uniform in ``one parameter at a time", the relevant one for the step at hand. The need to control a term of the form \eqref{momento primo - general} will arise again in the next step, when letting  $M\rightarrow \infty $, see \eqref{super}. 
\end{note}


\section{The Limit $M\to \infty$}\label{limit M to infty}
In this section we prove Proposition \ref{prop limit rho M}, i.e. the convergence of $\rho^{M,\ka}_t$ to $\rho_t^{\ka}$, as $M \to \infty$.  Analogously to previous sections, to do so,  we show that for any given $\ka \in \N$ the law $\mu_t^{M,\ka}$ of the pair $(X_t^{M,\ka},A_t^{M,\ka})$, solution to \eqref{sistemaMK},  converges to $\mu_t^{\ka}$, law of the pair $(X_t^{\ka},A_t^{\ka})$ in \eqref{sistemaK},  as $M \to \infty$. This is the result of Proposition \ref{thm limit M} which implies Proposition \ref{prop limit rho M}. Therefore, we focus on proving Proposition \ref{thm limit M}.

\begin{defn}\label{defK}
 Let $X_0,A_0,\mu_0$ and $\zeta_0$ be as in Definition \ref{def_MV} and Assumption \ref{assunzioni sui dati iniziali}. The 4-tuple $t \to (X_t^{\ka},A_t^{\ka},\mu_t^{\ka}, \zeta_t^{\ka}) \in C([0,T];\T\times \R \times \cP_2(\T \times \R) \times C^{\infty}(\T;\R))$ is a solution to \eqref{sistemaK} if the following conditions hold:
\begin{itemize}
 \item[(i)] $(X_t^{\ka},A_t^{\ka})$ is $\{\cF_t\}_{t \in [0,T]}$-adapted;
  \item[(ii)] The stochastic process $(X_t^{\ka},A_t^{\ka})$ satisfies \eqref{sistemaK}, for every $t\in[0,T]$, $\mP$-a.s., where $\mu_t^{\ka}=\cL(X_t^{\ka},A_t^{\ka})$  and $\zeta_t^{\ka}$ is the unique classic solution to \eqref{PDEzetaK}. 
\end{itemize}
\end{defn}
\begin{lemma}\label{well-posedness kappa}
    For any given $\ka \in \N$, system \eqref{sistemaK} admits a unique solution in the sense of Definition \ref{defK}. Furthermore, for any given $\ka \in \N$ the function $\zeta_t^{\ka}$ is strictly positive, i.e. $\zeta_t^{\ka}(x)>0$ for every $x \in \T$, $t \in [0,T]$ and the pair $(X_t^{\ka},A_t^{\ka})$ belongs to the space $L^2\left (\Omega; C([0,T];\T \times \R) \right )$.    
\end{lemma}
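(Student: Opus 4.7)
The plan is to exploit the equivalent formulation \eqref{sistemaK-rho}, in which the McKean--Vlasov fixed point in \eqref{sistemaK} is unwound through the auxiliary closed PDE \eqref{PDErhoK}. Indeed, by \eqref{gamma-rho} we have $\Gamma(x,\mu_t^{\ka}) = F' \ast \rho_t^{\ka}(x)$, so once $\rho_t^{\ka}$ is known, every remaining equation in \eqref{sistemaK}--\eqref{PDEzetaK} becomes a decoupled linear problem.

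First I would solve \eqref{PDErhoK}. This PDE has the structure of \eqref{rough PDE} but with the piecewise-$C^1$ path $Y^{\ka}$ in place of the $\alpha$-H\"older path $Y$, so it is a strictly easier case of the well-posedness result already recorded for \eqref{rough PDE} (Lemma \ref{lemma: well-posedness of rhoPDE}). This yields a unique mild solution $\rho^{\ka} \in C([0,T];L^2(\T;\R))$, which by parabolic regularity and the smoothness of $V,F,q$ can be upgraded to a classical solution, smooth in $x$ and continuous in $t$. Given $\rho^{\ka}$, the drift $b(t,x) := V'(x) + F'\ast \rho_t^{\ka}(x)$ is smooth in $x$, uniformly bounded, and continuous in $t$, so the linear parabolic Cauchy problem \eqref{PDEzetaK} admits a unique classical solution $\zeta^{\ka} \in C([0,T];C^\infty(\T;\R))$ by standard theory (cf.\ \cite[Chapter 7]{evans2022partial}). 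The quantitative lower bound of Lemma \ref{stime}(i) adapts verbatim, with $M|F'|_\infty$ replaced by $\sup_{t,x}|F'\ast\rho_t^{\ka}(x)|$, to give a strictly positive constant $c_{\ka}>0$ with $\zeta_t^{\ka}(x) \geq c_{\ka}$ for all $(t,x) \in [0,T]\times \T$.

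With $\rho^{\ka}$ and $\zeta^{\ka}$ in hand, the SDE
\begin{equation*}
dX_t^{\ka} = -\bigl(V'(X_t^{\ka}) + F'\ast\rho_t^{\ka}(X_t^{\ka})\bigr)\,dt + \sqrt{2}\, d\beta_t, \qquad X_0^{\ka}=X_0,
\end{equation*}
has a drift that is Lipschitz in $x$ uniformly in $t$ and continuous in $t$, so standard SDE theory yields a unique $\{\cF_t\}$-adapted strong solution $X^{\ka} \in L^2(\Omega; C([0,T];\T))$. One then defines
\begin{equation*}
A_t^{\ka} := A_0 + \int_0^t \frac{q(X_s^{\ka})}{\zeta_s^{\ka}(X_s^{\ka})}\, dY_s^{\ka}
\end{equation*}
as a Riemann--Stieltjes integral; the integrand is uniformly bounded by $|q|_\infty/c_{\ka}$ and $Y^{\ka}$ has bounded variation, so $A^{\ka} \in L^2(\Omega;C([0,T];\R))$.

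It remains to verify the consistency conditions inherent in Definition \ref{defK}: writing $\mu_t^{\ka}:=\cL(X_t^{\ka},A_t^{\ka})$, one must check (a) that the $\T$-marginal of $\mu_t^{\ka}$ coincides with the $\zeta^{\ka}$ constructed above (so that the denominator in the formula for $A^{\ka}$ really is the marginal of the joint law), and (b) that $\int_{\R} a\, \mu_t^{\ka}(\cdot,da) = \rho_t^{\ka}$ (closing the fixed point through \eqref{gamma-rho}). For (a), It\^o's formula for $f(X_t^{\ka})$, $f \in C^\infty(\T;\R)$, followed by taking expectations, shows that $t\mapsto \cL(X_t^{\ka})$ is a weak solution to the linear PDE \eqref{PDEzetaK} with the prescribed $\rho^{\ka}$ and initial datum $\zeta_0$; uniqueness of that PDE forces $\cL(X_t^{\ka})=\zeta_t^{\ka}$. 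For (b), one applies the product rule to $A_t^{\ka}f(X_t^{\ka})$: the cross variation between $\beta$ and $Y^{\ka}$ vanishes as $Y^{\ka}$ is deterministic, and the kinks of $Y^{\ka}$ are handled by summing over its finitely many intervals of $C^1$ smoothness. Taking expectations and using (a), the forcing term collapses, via the identity $\E[f(X_t^{\ka}) q(X_t^{\ka})/\zeta_t^{\ka}(X_t^{\ka})] = \langle f,q\rangle$, to $\langle f,q\rangle\,\partial_t Y_t^{\ka}$, and one sees that $\tilde\rho_t := \int a\, \mu_t^{\ka}(\cdot, da)$ satisfies the weak form of \eqref{PDErhoK} with initial datum $\rho_0$; uniqueness of \eqref{PDErhoK} gives $\tilde\rho_t = \rho_t^{\ka}$. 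Uniqueness of the whole 4-tuple then follows by the same chain: given two solutions in the sense of Definition \ref{defK}, (a)--(b) force the same $\rho$ and $\zeta$, strong uniqueness of the SDE for $X$ gives pathwise equality, and $A$ is then pathwise determined. The main obstacle is this consistency step, specifically the bookkeeping of the kinks of $Y^{\ka}$ in the product-rule computation, which is resolved by working interval-by-interval and summing the boundary contributions.
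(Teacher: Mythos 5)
Your proof is correct, but it takes a genuinely different route from the paper. The paper's proof reuses the Picard iteration set up in the proof of Lemma \ref{wellpossistemaMK}, with $\Gamma_M$ replaced by $\Gamma$; because $\Gamma$ is no longer bounded by a cutoff, the paper must also import the moment estimate of Lemma \ref{lemma first moment} (via the key identity \eqref{fondamentale}) to get a lower bound on $\zeta_t^n$ uniform in the iteration index $n$, and then close a Wasserstein contraction. You instead exploit the decoupling that the paper records only afterwards in Note \ref{equivalenza tra i due sistemi rhok}: because \eqref{PDErhoK} is a \emph{closed} nonlinear PDE in $\rho^{\ka}$ alone (via \eqref{gamma-rho}), you can unwind the McKean--Vlasov fixed point in a cascade --- $\rho^{\ka}$ first (by Lemma \ref{lemma: well-posedness of rhoPDE}), then $\zeta^{\ka}$ as a linear parabolic problem with the frozen drift, then $X^{\ka}$ as an SDE with bounded Lipschitz drift, then $A^{\ka}$ as a Riemann--Stieltjes integral, and finally a consistency check identifying $\cL(X^{\ka}_t)$ with $\zeta^{\ka}_t$ and $\int_\R a\,\cL(X^{\ka}_t,A^{\ka}_t)(\cdot\,da)$ with $\rho^{\ka}_t$. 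Your approach sidesteps the iteration and the moment-bound machinery at the price of being tailored to the special structure here, whereas the paper's method is uniform across Lemmas \ref{well-posedness MKE}, \ref{wellpossistemaMK}, \ref{well-posedness kappa}.

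One small imprecision in your consistency step (b): the equation satisfied by $\tilde\rho_t := \int_\R a\,\mu^{\ka}_t(\cdot\,da)$ is the \emph{linear} PDE obtained from \eqref{PDErhoK} by freezing the convolution coefficient at $F'\ast\rho^{\ka}_s$, not the nonlinear \eqref{PDErhoK} itself. The conclusion $\tilde\rho=\rho^{\ka}$ still follows, but uniqueness should be invoked for that linear equation (of which $\rho^{\ka}$ is also a solution, since it solves the nonlinear equation and hence its own linearization) rather than for \eqref{PDErhoK} directly.
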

We refer the reader to Appendix \ref{well-posedness} for the proof of this lemma. As mentioned  in Section \ref{section: main results},  for some of the proofs of this section it will be convenient  to work with the dynamics  \eqref{sistemaK}-\eqref{PDEzetaK}  in its equivalent form \eqref{sistemaK-rho}. Hence we clarify  in Note \ref{equivalenza tra i due sistemi rhok} why such equivalence holds. 
\begin{note}\label{equivalenza tra i due sistemi rhok}
Let us first consider a solution to \eqref{sistemaK} and then define $\rho_t^{\ka}$ as in \eqref{rhoK}; one can see that the measure $\rho_t^{\ka} \in \cM(\T)$ is a weak measure-valued solution to \eqref{PDErhoK}. A posteriori, from the smoothing properties of \eqref{PDErhoK} - see e.g. Lemma \ref{lemma: well-posedness of rhoPDE} - $\rho_t^{\ka}$ turns out to be a classical solution as well i.e. $\rho_t^{\ka} \in C^{\infty}(\T;\R)$.\footnote{Note that \eqref{PDErhoK} is non-linear, while \eqref{PDErhoMKE} and \eqref{PDErhoMK} are linear; so proving that a measure-valued solution to \eqref{PDErhoK} with smooth initial datum is a smooth function is a bit more delicate.} Moreover, \eqref{PDErhoK} is a closed equation in $\rho_t^{\ka}$ (this was not true for \eqref{PDErhoMKE} nor for \eqref{PDErhoMK}). Since, in view of \eqref{gamma-rho}, the PDE \eqref{PDEzetaK} solved by $ \zeta_t^{\ka} \in C^{\infty}(\T;\R)$ can be rewritten as 
\begin{equation}\label{PDEzetaK+rho}
    \begin{dcases}
        & \pa_t\zeta_t^{\ka}=\pa_{xx}\zeta_t^{\ka}+\pa_x \left [ (V^{'}+F^{'}*\rho_t^{\ka})\zeta_t^{\ka}\right ], \\
        & \zeta^{\ka}|_{t=0}=\zeta_0,
    \end{dcases}
\end{equation}
by putting together \eqref{sistemaK}, \eqref{gamma-rho}, \eqref{PDErhoK} and \eqref{PDEzetaK+rho} the formulation in terms of \eqref{sistemaK-rho} readily follows.

To prove the converse: let us then consider system \eqref{sistemaK-rho} and define $\mu_t^{\ka}:=\cL(X_t^{\ka},A_t^{\ka})$ where $(X_t^{\ka},A_t^{\ka})$ is the pair in \eqref{sistemaK-rho}. It is easy to see that $\mu_t^{\ka}\in \cP_2(\T\times\R)$.  If we now set $\tilde {\rho}_t^{\ka}(dx):=\int_{\R} a\mu_t^{\ka}(dxda)$ then $\tilde {\rho}_t^{\ka}(dx) \in \cM(\T)$ is a weak measure-valued solution to the PDE \eqref{PDErhoK} with initial datum $\tilde {\rho}_0(x)=\int_{\R}a\mu_0(x,a)da=\rho_0(x) \in C^{\infty}(\T;\R)$. From the smoothing properties of \eqref{PDErhoK} (see again Lemma \ref{lemma: well-posedness of rhoPDE}) we obtain that $\tilde{\rho}_t^{\ka}$ is a classical solution and, therefore, from the uniqueness it thus coincides with $\rho_t^{\ka}$. 
By now using \eqref{gamma-rho} in the opposite direction,  that is, from right to left, it is easy to go from system \eqref{sistemaK-rho} to system \eqref{sistemaK}.
\end{note}



\begin{prop}\label{thm limit M}
Let $\ka \in \N$ be fixed and let $X_0,A_0,\mu_0$ and $\zeta_0$ be as in Definition \ref{defK}. Let us consider the stochastic processes $ (X_t^{M,\ka},A_t^{M,\ka},\mu_t^{M,\ka},\zeta_t^{M,\ka})$ and $(X_t^{\ka},A_t^{\ka},\mu_t^{\ka},\zeta_t^{\ka})$ solutions to \eqref{sistemaMK} and \eqref{sistemaK}, respectively (with the same initial conditions). Then, for every $\ka \in \N$ fixed, the following limit holds:
\begin{equation*}
    \lim \limits_{M \to \infty}\WT(\mu_{\cdot}^{M,\ka},\mu_{\cdot}^{\ka})=0.
\end{equation*}
As a consequence, 
$$
\sup_{t \in [0,T]} \left[\left\vert X_t^{M, \ka} - X_t^{\ka}\right\vert^2+ 
\left\vert A_t^{M, \ka} - A_t^{\ka}\right\vert^2\right] \stackrel{M \rightarrow \infty}{\longrightarrow} 0, \quad \mathbb P - a.s. 
$$
\end{prop}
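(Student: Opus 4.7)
The plan is to mimic the coupling strategy used for Propositions \ref{prop_limitN} and \ref{prop_e_before_M}. I would couple the McKean--Vlasov systems \eqref{sistemaMK} and \eqref{sistemaK} through the same Brownian motion $\beta_t$ and the same initial datum $(X_0,A_0)$, then bound $\mE\sup_{s\leq t}|X_s^{M,\ka}-X_s^\ka|^2+\mE\sup_{s\leq t}|A_s^{M,\ka}-A_s^\ka|^2$, close by Gronwall, and conclude via the coupling inequality
\[
\WT^2(\mu_\cdot^{M,\ka},\mu_\cdot^\ka)\;\leq\;\mE\sup_{s\leq T}\bigl(|X_s^{M,\ka}-X_s^\ka|^2+|A_s^{M,\ka}-A_s^\ka|^2\bigr),
\]
which comes from the natural empirical coupling. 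Once $\WT$-convergence is established, the pointwise a.s.\ convergence in the second statement of the proposition follows by extracting a subsequence.

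The first (and crucial) preparatory step is to collect uniform-in-$M$ estimates not available from Lemma \ref{stime} alone: (i) a lower bound $\zeta_t^{M,\ka}(x)\geq c(\ka,\eta,t)>0$ independent of $M$ (the authors point to Lemma \ref{cor zeta}; this is the main obstacle, since \eqref{lower bound zeta mu} degrades as $e^{-t\mathcal{A}(M)}$ with $\mathcal{A}(M)\sim M^2$); (ii) as an immediate consequence, using the piecewise-$C^1$ character of $\mY^\ka$ and the evolution $dA_t^{M,\ka}=(q/\zeta^{M,\ka})(X_t^{M,\ka})\,d\mY_t^\ka$, the uniform second-moment estimate $\sup_{M\in\N,\,t\leq T}\mE|A_t^{M,\ka}|^2<\infty$; (iii) uniform-in-$M$ Sobolev estimates of the type \eqref{bound H2-norm zeta mu}--\eqref{bound differenxe H1 norm zeta mu zeta nu}, which again require trading the explicit $M$ in $\Gamma_M$ for a moment of the underlying measure, as foreshadowed in Note \ref{note uniform bound in eps and M}.

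With these uniform bounds in hand, the control of the $X$-difference rests on splitting
\[
\Gamma_M(X_s^{M,\ka},\mu_s^{M,\ka})-\Gamma(X_s^\ka,\mu_s^\ka)=\underbrace{[\Gamma_M-\Gamma](X_s^{M,\ka},\mu_s^{M,\ka})}_{=:\,\mathrm{I}_s^M}+\underbrace{\Gamma(X_s^{M,\ka},\mu_s^{M,\ka})-\Gamma(X_s^\ka,\mu_s^\ka)}_{=:\,\mathrm{II}_s^M}.
\]
The term $\mathrm{I}_s^M$ is $O(1/M)$ uniformly via a Markov-type bound
\[
|\mathrm{I}_s^M|\leq |F'|_{L^\infty}\itr |a|\,\mathbbm{1}_{\{|a|\geq M\}}\,\mu_s^{M,\ka}(dy\,da)\leq \frac{|F'|_{L^\infty}}{M}\mE|A_s^{M,\ka}|^2,
\]
which vanishes thanks to step (ii). The term $\mathrm{II}_s^M$ is handled by a Lipschitz-type estimate for $\Gamma$ obtained from a direct coupling argument: splitting $\Gamma(x,\mu)-\Gamma(y,\nu)$ via an optimal $\cW_2$-coupling of $\mu$ and $\nu$ and using the uniform second-moment bound on $\mu^\ka$ to absorb the linear-in-$a$ integrand $aF'(x-y)$ yields $|\mathrm{II}_s^M|^2\lesssim |X_s^{M,\ka}-X_s^\ka|^2+\cW_2^2(\mu_s^{M,\ka},\mu_s^\ka)$ with constant independent of $M$.

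The $A$-difference is then treated exactly as in the proof of Proposition \ref{prop_e_before_M}: the uniform lower bounds on $\zeta^{M,\ka}$ and $\zeta^\ka$ reduce the estimate of $|q/\zeta^{M,\ka}(X^{M,\ka})-q/\zeta^\ka(X^\ka)|^2$ to $|X^{M,\ka}-X^\ka|^2+|\zeta_s^{M,\ka}-\zeta_s^\ka|_{L^\infty}^2$, and the latter is bounded, via Sobolev embedding and the $M$-uniform analogue of \eqref{bound differenxe H1 norm zeta mu zeta nu}, by $\int_0^s\cW_2^2(\mu_r^{M,\ka},\mu_r^\ka)\,dr$ plus an $O(1/M)$ contribution coming from the difference between \eqref{zetaPDEMK} and \eqref{PDEzetaK} (the same $\Gamma_M-\Gamma$ mechanism). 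Summing the $X$- and $A$-estimates, using \eqref{W2 leq W2t} to pass from $\sup_{r\leq s}\cW_2^2$ to $\Ws^2$, and invoking Gronwall yields $\WT^2(\mu_\cdot^{M,\ka},\mu_\cdot^\ka)\leq C(\ka,T)/M\to 0$. The chief difficulty throughout, as emphasised in Note \ref{note uniform bound in eps and M}, is obtaining the $M$-uniform lower bound on $\zeta^{M,\ka}$ which underpins every other uniform estimate in the argument.
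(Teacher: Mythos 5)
Your proposal is correct and follows the same overall architecture as the paper's proof: couple the two McKean--Vlasov systems through the same Brownian motion and initial data, control $\mE\sup_s|X_s^{M,\ka}-X_s^\ka|^2$ and $\mE\sup_s|A_s^{M,\ka}-A_s^\ka|^2$ by Gronwall, and conclude via the coupling bound for $\WT$. The preparatory $M$-uniform estimates you identify (lower bound on $\zeta^{M,\ka}$, $H^2$-control, the difference estimate in $H^1$) are exactly those supplied by Lemma~\ref{cor zeta} and Lemma~\ref{stime 2}. The genuine deviation is in the decomposition of the drift error. You write $\Gamma_M(X^{M,\ka},\mu^{M,\ka})-\Gamma(X^\ka,\mu^\ka)=[\Gamma_M-\Gamma](X^{M,\ka},\mu^{M,\ka})+\bigl[\Gamma(X^{M,\ka},\mu^{M,\ka})-\Gamma(X^\ka,\mu^\ka)\bigr]$, i.e.\ you evaluate $\Gamma_M-\Gamma$ at the \emph{prelimit} objects and control it by a Markov inequality, which requires $\sup_M\sup_t\mE|A_t^{M,\ka}|^2<\infty$; you correctly supply this from the uniform lower bound of Lemma~\ref{cor zeta} combined with the piecewise-$C^1$ structure of the $A$-equation. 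The paper instead uses the three-term split and evaluates $\Gamma_M-\Gamma$ at the \emph{limit} $(X^\ka,\mu^\ka)$, where a Jensen-type argument bounds it by the tail $\sup_s\int_{\T\times\{|a|>M\}}|a|^2\,\mu_s^\ka(dxda)$, which vanishes by the moment estimate $(iii)$ in Lemma~\ref{stime 2}. Your variant buys an explicit $O(1/M)$ rate for $\WT^2(\mu_\cdot^{M,\ka},\mu_\cdot^\ka)$, at the cost of the extra (though short) argument for the $M$-uniform second moment of the prelimit law; the paper avoids that moment bound by always re-centring on the $M$-free object $\mu^\ka$. The companion term $\Gamma(X^{M,\ka},\mu^{M,\ka})-\Gamma(X^\ka,\mu^\ka)$ is handled by your $\cW_2$-coupling argument using the second moment of $\mu^\ka$ and the first moment of $\mu^{M,\ka}$ (Lemma~\ref{lemma first moment}), which is a direct analogue of the paper's Lemma~\ref{stime 2} $(iv)$ with $\Gamma$ in place of $\Gamma_M$.

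One minor inaccuracy: you invoke subsequence extraction to pass from $\WT$-convergence to the almost-sure pathwise statement. No subsequence is needed here. Because $\mu_t^{M,\ka}$ and $\mu_t^\ka$ are deterministic laws (the noise path $Y^\ka$ is fixed), the quantities $\cW_2^2(\mu_s^{M,\ka},\mu_s^\ka)$, $|\zeta_s^{M,\ka}-\zeta_s^\ka|_{L^\infty}^2$, and the tail/Markov remainders appearing on the right-hand side of the Gronwall chain are \emph{deterministic}. One can therefore run the Gronwall argument $\omega$-by-$\omega$ (without taking expectations first) and obtain a pathwise bound $\sup_{s\le T}\bigl[|X_s^{M,\ka}-X_s^\ka|^2+|A_s^{M,\ka}-A_s^\ka|^2\bigr]\le R_M$ with $R_M$ a deterministic quantity that one shows to vanish as $M\to\infty$. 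Hence the pathwise $\P$-a.s.\ convergence is direct.
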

Proposition \ref{prop limit rho M} follows as a straightforward corollary of the above result.
\begin{proof}[Proof of Proposition \ref{prop limit rho M}]
The proof can be done in the exact same way of the proof of Proposition \ref{prop limit rho eps} so we don't repeat it.
\end{proof}
In this section and the next we will make use of the heat semigroup $\{e^{t\pa_{xx}}\}_{t\geq 0}$ which we recall is the family of linear operators from $L^2(\T;\R)$ to $L^2(\T;\R)$ defined as 
\begin{equation}\label{heat semigroup}
e^{t\paxx}f=G_t^{\text{per}}*f,\qquad f \in L^2(\T;\R),t \geq 0,    
\end{equation}
where $G_t^{\text{per}}$ (see \cite{berglund2022introduction}) denotes the fundamental solution of the heat equation on $\T$.
Furthermore, by letting  $\{e_z\}_{z \in \Z}$ be the basis of $L^2(\T;\R)$ defined as 
\begin{equation}\label{fourier}
e_z(x)=
    \begin{dcases}
       \frac{1}{\sqrt{\pi}}\sin(zx),\qquad z>0,\\
       \frac{1}{\sqrt{2\pi}},\qquad z=0,\\
       \frac{1}{\sqrt{\pi}}\cos(zx),\qquad z<0,
    \end{dcases}
\end{equation}
one can readily see that \eqref{heat semigroup} acts on each element of \eqref{fourier} as follows
\begin{equation}\label{heat}
     e^{t\pa_{xx}}e_z(x)=e^{-t|z|^2}e_z(x),\qquad t \geq 0,\,x \in \T.
\end{equation}

Lastly, we use the notation $\qq^{(\cU)}_t$ to denote the following convolution
\begin{equation}\label{notationfor stochastic convolution}
  \qq_t^{(\cU)}(x) = \int_0^t e^{(t-s)\pa_{xx}} d\cU_s(x),\qquad t \geq 0,\,x \in \T \,, 
\end{equation}
where $\cU_t$ is the external forcing term appearing in \eqref{generic PDE+forcing}. Furthermore, if $\cU_t(x)=q(x)Z_t$ with $q$ as in Assumption \ref{assumption 2.1} and $Z_{\cdot}$ a time-dependent path (we emphasize, dependent on time only, not on space as well), either piece-wise $C^1$ or $\gamma$-H\"older continuous for some $\gamma \in (0,1)$; to denote the convolution \eqref{notationfor stochastic convolution} in this case we use the notation $\qq^{(Z)}_t$ instead (i.e. the spatial coefficient $q(x)$ in the superscript is omitted). For instance, in Lemma \ref{well-posedness PDErhoK} below the term $\qq_t^{(Y^{\ka})}$ is defined as $\qq_t^{(Y^{\ka})}(x)=\int_0^t e^{(t-s)\paxx}q(x)\,dY^{\ka}_s$.  

Let us also point out that in \eqref{notationfor stochastic convolution} the integral $\qq_t^{(\cU)}$ is well-defined as a stochastic It\^o integral when $\cU_t=W_t$ is a trace-class Wiener noise, while, when $\cU_t(x)=q(x)Z_t$ it is defined in the Riemann sense if $Z_t$ is piece-wise $C^1$ and in the Young sense if $Z_t$ is H\"older continuous.


We now state a series of lemmata, which will be needed to prove Proposition \ref{thm limit M}. We then prove Proposition \ref{thm limit M} and postpone the proof of the lemmata to the end of the section. 


\begin{lemma}\label{well-posedness PDErhoK}
Let $\rho_0 \in C^{\infty}(\T;\R)$;  then (for each  $\ka \in \N$) the PDE \eqref{PDErhoK}    admits a unique time-continuous and spatially-smooth solution,  i.e. $\rho_t^{\ka} \in C^{0,\infty}([0,T] \times \T;\R)$.\footnote{The space $C^{0,\infty}([0,T] \times \T;\R)$ denotes the class of real-valued functions continuous w.r.t the variable $t \in [0,T]$ and smooth w.r.t the variable $x \in \T$.}
Furthermore, the following estimate on the $L^2(\T;\R)$-norm of $\rho_t^{\ka}$ and $\pa_x\rho_t^{\ka}$ holds:
\begin{equation}\label{stime rhoka}
        \left |\rho_t^{\ka} \right |_{L^2(\T;\R)}^2+\int_0^t \left |\pa_x\rho_s^{\ka} \right |_{L^2(\T;\R)}^2\,ds \leq \mathcal{J}\left (\left |\rho_0 \right |_{L^2(\T;\R)}^2,\left |\qq_{\cdot}^{(Y^{\ka})} \right |_{C([0,T];H^1(\T;\R))} \right )
\end{equation}
where $\mathcal{J}:\R_+ \times \R_+ \to \R_+$ is a locally bounded continuous function. 

\end{lemma}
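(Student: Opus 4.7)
The plan is to remove the (temporally rough) forcing term by a substitution and then treat the resulting equation as a standard semilinear parabolic PDE on the torus. Set $\tilde\rho_t:=\rho_t^{\ka}-\qq_t^{(Y^{\ka})}$, where $\qq_t^{(Y^{\ka})}(x)=\int_0^t e^{(t-s)\pa_{xx}}q(x)\,dY_s^{\ka}$. Since $Y^{\ka}$ is piecewise $C^1$ and $q\in C^\infty(\T;\R)$, the heat-kernel smoothing implies $\qq_{\cdot}^{(Y^{\ka})}\in C([0,T];H^n(\T;\R))$ for every $n\geq 0$, with norm bounded in terms of $|\qq_{\cdot}^{(Y^{\ka})}|_{C([0,T];H^1)}$ after interpolation and Schauder-type bounds. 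A direct computation (subtracting $\pa_t\qq_t^{(Y^{\ka})}=\pa_{xx}\qq_t^{(Y^{\ka})}+q\,\pa_tY^{\ka}$ from \eqref{PDErhoK}) shows $\tilde\rho$ solves the \emph{forced-free} equation
\begin{equation*}
\pa_t\tilde\rho_t=\pa_{xx}\tilde\rho_t+\pa_x\big[(V'+F'*(\tilde\rho_t+\qq_t^{(Y^{\ka})}))(\tilde\rho_t+\qq_t^{(Y^{\ka})})\big],\qquad \tilde\rho_0=\rho_0,
\end{equation*}
whose coefficients now depend smoothly on time, because $\qq^{(Y^{\ka})}$ does. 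Establishing the lemma for $\tilde\rho$ and then setting $\rho^{\ka}=\tilde\rho+\qq^{(Y^{\ka})}$ will give the statement.

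\textbf{Local existence, uniqueness, and global extension.} I would set up a Picard fixed-point argument for the mild formulation
\begin{equation*}
\tilde\rho_t=e^{t\pa_{xx}}\rho_0+\int_0^t e^{(t-s)\pa_{xx}}\pa_x\big[(V'+F'*(\tilde\rho_s+\qq_s^{(Y^{\ka})}))(\tilde\rho_s+\qq_s^{(Y^{\ka})})\big]\,ds
\end{equation*}
in $C([0,T_0];L^2(\T;\R))$ for $T_0$ small, using the standard smoothing bound $|e^{t\pa_{xx}}\pa_x f|_{L^2}\lesssim t^{-1/2}|f|_{L^2}$ and the local Lipschitz estimate of the nonlinearity in $L^2$ (on the torus, $|F'*g|_{L^\infty}\lesssim |g|_{L^2}$ and $|V'|_{L^\infty}<\infty$). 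For global existence I would derive an a priori energy estimate: multiply the PDE for $\tilde\rho$ by $\tilde\rho$ and integrate by parts, so that
\begin{equation*}
\frac{1}{2}\frac{d}{dt}|\tilde\rho|_{L^2}^2+|\pa_x\tilde\rho|_{L^2}^2=-\int_{\T}\pa_x\tilde\rho\cdot(V'+F'*\rho^{\ka})\rho^{\ka}\,dx.
\end{equation*}
Splitting $\rho^{\ka}=\tilde\rho+\qq^{(Y^{\ka})}$ in the right-hand side, the self-interaction piece $\int\pa_x\tilde\rho\cdot V'\tilde\rho$ is $-\tfrac12\int V''\tilde\rho^2$, and analogously for the $F'*\tilde\rho$ term via an integration by parts on $F'$; the remaining cross terms involving $\qq^{(Y^{\ka})}$ are handled by Cauchy--Schwarz plus Young's inequality to absorb a fraction of $|\pa_x\tilde\rho|_{L^2}^2$ at the cost of a polynomial in $|\tilde\rho|_{L^2}$ and $Q:=|\qq_{\cdot}^{(Y^{\ka})}|_{C([0,T];H^1)}$. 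Gronwall's inequality then yields an estimate of the form $|\tilde\rho_t|_{L^2}^2+\int_0^t|\pa_x\tilde\rho_s|_{L^2}^2\,ds\leq \mathcal{J}_0(|\rho_0|_{L^2},Q)$, which both rules out finite-time blow-up and, combined with $|\qq^{(Y^{\ka})}|_{L^2}\lesssim Q$, delivers \eqref{stime rhoka} by triangle inequality. Uniqueness follows from a standard energy estimate on the difference of two solutions.

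\textbf{Smoothness and main obstacle.} For the $C^{0,\infty}$ regularity I would bootstrap in the mild formulation: because $\qq^{(Y^{\ka})}\in C([0,T];H^n)$ for all $n$, and because $e^{t\pa_{xx}}\pa_x$ is a smoothing operator that sends $H^k$ into $H^{k+1}$ with the usual $t^{-1/2}$ singularity, if $\tilde\rho\in C([0,T];H^k)$ then the nonlinear term is in $C([0,T];H^k)$ (using that $H^k$ on $\T$ is an algebra for $k\geq 1$), and Duhamel places $\tilde\rho\in C([0,T];H^{k+1})$; iterating and applying the Sobolev embedding $H^n\hookrightarrow C^{n-1}$ yields $\rho_t^{\ka}\in C^{0,\infty}([0,T]\times\T;\R)$. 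The main obstacle I anticipate is obtaining the energy estimate with the dependence \emph{only} on $|\rho_0|_{L^2}$ and $|\qq^{(Y^{\ka})}|_{C([0,T];H^1)}$ (and not, say, on higher norms of $\qq^{(Y^{\ka})}$): this requires carefully balancing the nonlinear cross terms so that all occurrences of $\qq^{(Y^{\ka})}$ enter only through its $H^1$ norm, which in turn relies on integrating by parts to transfer derivatives onto $F'$, $V'$, or $\qq^{(Y^{\ka})}$ rather than onto $\tilde\rho$, and using that $|F'*g|_{L^\infty}+|\pa_x(F'*g)|_{L^\infty}\lesssim |g|_{L^2}$ on the torus.
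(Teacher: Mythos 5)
Your route is genuinely different from the paper's. The paper gets mild-sense well-posedness and the $L^2$ estimate \eqref{stime rhoka} by citing prior work (Lemma \ref{lemma: well-posedness of rhoPDE} and \cite[Prop. B.1]{angeli2023well}), and gets spatial smoothness by adapting \cite[Thm 4.2]{chazelle2017well}; the new ingredient in the paper's argument is an $L^1(\T)$ bound on $\rho^{\ka}_t$ replacing the mass conservation that holds in \cite{chazelle2017well} but fails for the forced equation. You propose a self-contained argument via the substitution $\tilde\rho=\rho^{\ka}-\qq^{(Y^{\ka})}$, a Picard iteration in $C([0,T_0];L^2)$, an energy estimate, and a Sobolev bootstrap. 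Incidentally, the claim that the $H^n$ norms of $\qq^{(Y^{\ka})}$ are bounded "in terms of $|\qq^{(Y^{\ka})}|_{C([0,T];H^1)}$ after interpolation'' cannot be right (higher Sobolev norms are never controlled by lower ones), but that claim is not actually needed: for the bootstrap it suffices that $\qq^{(Y^{\ka})}\in C([0,T];H^n)$ for every $n$, which holds because $q\in C^\infty$ and $Y^{\ka}$ is piecewise $C^1$.

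There is, however, a genuine gap in the energy estimate. After splitting $\rho^{\ka}=\tilde\rho+\qq$, the self-interaction piece you pass over "analogously" is
\begin{equation*}
 -\int_{\T}\pa_x\tilde\rho\,(F'*\tilde\rho)\,\tilde\rho\,dx \;=\; \tfrac12\int_{\T}(F''*\tilde\rho)\,\tilde\rho^2\,dx ,
\end{equation*}
and the only bound available without further input is $|F''*\tilde\rho|_{L^\infty}\le |F''|_{L^\infty}|\tilde\rho|_{L^1}\lesssim|\tilde\rho|_{L^2}$, giving a contribution $\sim|\tilde\rho|_{L^2}^3$. The resulting differential inequality $\tfrac{d}{dt}|\tilde\rho|_{L^2}^2\lesssim |\tilde\rho|_{L^2}^2+|\tilde\rho|_{L^2}^3+\dots$ is of Riccati type, and plain Gronwall does \emph{not} give a global bound of the form $\mathcal{J}_0(|\rho_0|_{L^2},Q)$ as you assert. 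The missing idea is precisely the paper's a priori $L^1$ estimate on $\rho^{\ka}_t$: multiplying the equation for $\rho^{\ka}$ by $\mathrm{sign}(\rho^{\ka})$ and integrating, the diffusion contributes $\le0$ (Kato), the advection term $\pa_x[(V'+F'*\rho^{\ka})\rho^{\ka}]$ contributes nothing because of its gradient structure (the factor $\rho^{\ka}$ kills the boundary measure $\pa_x\mathrm{sign}(\rho^{\ka})$), and only the forcing contributes additively, yielding $|\rho^{\ka}_t|_{L^1}\le|\rho_0|_{L^1}+|q|_{L^1}\int_0^t|\pa_s Y^{\ka}_s|\,ds$. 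This bound must be obtained for $\rho^{\ka}$ directly, \emph{not} for $\tilde\rho$: your substitution destroys the gradient structure of the advection seen by $\tilde\rho$ (since $\rho^{\ka}\neq\tilde\rho$), so the $L^1$ balance for $\tilde\rho$ does not close. Once $|\rho^{\ka}_t|_{L^1}$ is bounded, $|F''*\rho^{\ka}|_{L^\infty}\le|F''|_{L^\infty}|\rho^{\ka}|_{L^1}$ is controlled, the cubic term becomes effectively quadratic, and Gronwall closes — which is exactly how the paper, via \cite{chazelle2017well} and \cite{angeli2023well}, proceeds.
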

\begin{lemma}\label{lemma first moment}
Let the stochastic process $(X_t^{M,\ka}, A_t^{M,\ka}, \mu_t^{M,\ka},\zeta_t^{M,\ka})$ be the  solution to \eqref{sistemaMK} with initial condition $X_0,A_0,\mu_0$ and $\zeta_0$ as in Definition \ref{defK}. Then for every $M \in \N$ and $t \in [0,T]$ the following estimate for the first moment (in the variable $a \in \R$) of $\mu_t^{M,\ka}$ holds:
   \begin{equation}\label{super}
       \itr |a|\mu_t^{M,\ka}(dxda) \leq\itr |a|\,\mu_0(dxda)+\int_0^t \, ds|\pa_s\mY_s^{\ka}| \int_{\T}|q(x)|\,dx.
   \end{equation}
\end{lemma}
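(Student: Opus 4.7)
My plan is to exploit the explicit SDE representation of $A_t^{M,\ka}$ and then use a single clean change-of-measure identity to obtain the cancellation of $\zeta_s^{M,\ka}$. This cancellation is precisely the ``additive-noise design'' already highlighted in the heuristics of Subsection \ref{subsec:heuristics}, so one expects the bound to be essentially a one-line computation after Fubini.

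First, since $\mu_t^{M,\ka}=\mathcal{L}(X_t^{M,\ka},A_t^{M,\ka})$, one has
\[
\itr |a|\,\mu_t^{M,\ka}(dxda)\;=\;\mE\bigl[|A_t^{M,\ka}|\bigr].
\]
From the second equation in \eqref{sistemaMK}, together with the fact that $Y^{\ka}$ is piece-wise $C^1$ (so the integral against $dY^{\ka}$ is a Riemann--Stieltjes integral that can be written as $\pa_sY_s^{\ka}\,ds$), I would apply the triangle inequality to obtain
\[
|A_t^{M,\ka}|\;\leq\;|A_0|\;+\;\int_0^t \biggl|\frac{q(X_s^{M,\ka})}{\zeta_s^{M,\ka}(X_s^{M,\ka})}\biggr|\,|\pa_s Y_s^{\ka}|\,ds.
\]
Taking expectations, the strict positivity of $\zeta_s^{M,\ka}$ established in Lemma \ref{wellpossistemaMK} together with the smoothness of $q$ and the (deterministic) boundedness of $|\pa_sY_s^{\ka}|$ make the integrand uniformly bounded, so Fubini's theorem applies and gives
\[
\mE|A_t^{M,\ka}|\;\leq\;\mE|A_0|\;+\;\int_0^t |\pa_sY_s^{\ka}|\,\mE\!\left[\,\biggl|\frac{q(X_s^{M,\ka})}{\zeta_s^{M,\ka}(X_s^{M,\ka})}\biggr|\,\right]ds.
\]

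The only remaining step—and the true content of the lemma—is the identification of $\zeta_s^{M,\ka}$ with the density of the law of $X_s^{M,\ka}$. This is exactly what is discussed in Section \ref{section: main results} after \eqref{zetaPDEMK}: the $\T$-marginal of $\mu_s^{M,\ka}$ satisfies the Fokker--Planck equation associated with the first SDE in \eqref{sistemaMK}, which coincides with \eqref{zetaPDEMK}, and by uniqueness for this linear parabolic problem (with the same smooth initial datum $\zeta_0$) it must equal $\zeta_s^{M,\ka}$. Hence
\[
\mE\!\left[\,\biggl|\frac{q(X_s^{M,\ka})}{\zeta_s^{M,\ka}(X_s^{M,\ka})}\biggr|\,\right]=\int_{\T}\frac{|q(x)|}{\zeta_s^{M,\ka}(x)}\,\zeta_s^{M,\ka}(x)\,dx\;=\;\int_{\T}|q(x)|\,dx,
\]
and combining the displayed inequalities with $\mE|A_0|=\itr|a|\,\mu_0(dxda)$ yields \eqref{super}.

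The main (and really only) conceptual point is this cancellation: the $1/\zeta_s^{M,\ka}$ appearing in the weight dynamics is precisely compensated by the density $\zeta_s^{M,\ka}$ when we integrate against the law of $X_s^{M,\ka}$, producing a bound that is uniform in $M$. Note that this is the first place in the analysis where one genuinely exploits the specific form of the denominator in \eqref{informalpartsys2}; in particular, the estimate does not require any cutoff $\chi_M$ on $a$, and the bound obtained is uniform in $M\in\N$, which is exactly what Note \ref{note uniform bound in eps and M} flags as necessary for the subsequent $M\to\infty$ limit.
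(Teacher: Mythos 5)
Your proof is correct, and it reaches the same cancellation identity that the paper relies on (the fact established in Note \ref{note importante} that $\zeta_t^{M,\ka}$ is the $\T$-marginal of $\mu_t^{M,\ka}$, equivalently the density of $\mathcal L(X_t^{M,\ka})$), but you get there by a somewhat more elementary route. The paper works on the PDE side: it writes the weak formulation of the Fokker--Planck equation \eqref{PDEmuMK} for $\mu_t^{M,\ka}$, extends it to the unbounded, non-compactly-supported test function $\Psi(x,a)=|a|$ (via the regularisation $\Psi_\delta=\sqrt{a^2+\delta}$, followed by a $\delta\downarrow 0$ limit), and then invokes identity \eqref{fondamentale} to kill the $1/\zeta^{M,\ka}$ factor. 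You instead work directly at the SDE level for $A_t^{M,\ka}$: since $Y^\ka$ is piece-wise $C^1$, the integral is Riemann--Stieltjes with integrand $\pa_sY_s^\ka\,ds$, so a pathwise triangle inequality plus Fubini (justified by the uniform positive lower bound on $\zeta_s^{M,\ka}$ over the compact $[0,T]\times\T$, from Lemma \ref{wellpossistemaMK} and continuity) reduces everything to $\mE\bigl[|q(X_s^{M,\ka})|/\zeta_s^{M,\ka}(X_s^{M,\ka})\bigr]=\int_{\T}|q|\,dx$. This avoids both the mollification of $|a|$ and the extension of the weak formulation to quadratically-growing test functions, so it is arguably the cleaner presentation of the same underlying fact; the paper's formulation is more directly parallel to the Fokker--Planck manipulations used elsewhere (e.g. Proposition \ref{prop limit rho N}), which may be why it was preferred there.
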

Lemma \ref{cor zeta} below (which hinges on Lemma \ref{lemma first moment}) provides lower and upper bounds on $\zeta_t^{M,\ka}$ which are, crucially, independent of $M \in \N$. Next, Lemma \ref{stime 2} states lower and upper bounds on $\zeta_t^{\ka}$. Moreover in $(iv)$ and $(v)$ of Lemma \ref{stime 2} we prove that \eqref{ineq gammma} and \eqref{bound differenxe H1 norm zeta mu zeta nu}, when applied with $\mu_{\cdot}=\mu_{\cdot}^{M,\ka}$ and $\nu_{\cdot}=\mu_{\cdot}^{\ka}$, hold with constants which are independent of $M$. In other words, \eqref{ineq gammma 2} and \eqref{zetamuk meno zetak} are specific instances of \eqref{ineq gammma} and \eqref{bound differenxe H1 norm zeta mu zeta nu}, the difference being that here we need to show that the constants on the RHS of \eqref{ineq gammma 2} and \eqref{zetamuk meno zetak} do not depend on $M$.

\begin{lemma}\label{cor zeta}
    Let $(X_t^{M,\ka}, A_t^{M,\ka}, \mu_t^{M,\ka},\zeta_t^{M,\ka})$ be the solution to \eqref{sistemaMK} with initial condition $(X_0,A_0,\mu_0,\zeta_0)$ as in Definition \ref{defK}. Then $\zeta_t^{M,\ka}$ is point-wise bounded from below, uniformly in $M \in \N$ and, moreover, the $H^2(\T;\R)$-norm of $\zeta_t^{M,\ka}$ is bounded from above, uniformly in $M \in \N$. That is, 
    \begin{align}\label{first lower}
            & \zeta_t^{M,\ka}(x) \geq C_1, \\ \label{second upper}
            & |\zeta_t^{M,\ka}|_{H^2(\T;\R)}^2 \leq C_2, 
    \end{align}
where the constants $C_1,C_2>0$ are independent of $M \in \N$ and $C_1$ is independent of $x \in \T$ (but both depend continuously on $t$).
\end{lemma}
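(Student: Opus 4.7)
\medskip

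\noindent\textbf{Proof proposal for Lemma \ref{cor zeta}.} The plan is to reduce both bounds to Lemma \ref{stime}, sharpening the only step where $M$ enters in a non-uniform way. Inspecting the proof of Lemma \ref{stime}(i) (and the analogous energy argument for (ii)), the $M$-dependence comes exclusively from the upper bound
\begin{equation*}
\left| \pa_x^n \Gamma_M(x,\mu_t) \right| \leq M\, |F^{(n+1)}|_{L^\infty(\T;\R)}\, \itr \mu_t(dyda) = M\, |F^{(n+1)}|_{L^\infty(\T;\R)},
\end{equation*}
which is then fed into $\mathcal{A}(M)$ in \eqref{esponente stima lower bound}. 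The key observation is that when we specialize to $\mu_t = \mu_t^{M,\ka}$, we can replace this by a sharper estimate: since $|\chi_M(a)| \leq |a|$ for all $a \in \R$ and $M \in \N$,
\begin{equation*}
\left| \pa_x^n \Gamma_M(x,\mu_t^{M,\ka}) \right| \leq |F^{(n+1)}|_{L^\infty(\T;\R)} \itr |a|\, \mu_t^{M,\ka}(dyda),
\end{equation*}
and by Lemma \ref{lemma first moment} the right-hand side is bounded uniformly in $M$, by a constant that depends only on $\int |a|\mu_0(dxda)$, $|q|_{L^1(\T;\R)}$ and $|\pa_s Y_s^{\ka}|$, the latter being integrable in $s$ since $Y^\ka$ is piece-wise $C^1$.

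\smallskip

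\noindent\emph{Proof of \eqref{first lower}.} I would apply Theorem \ref{lower bound w} (the same tool used in Lemma \ref{stime}(i)) to the PDE \eqref{zetaPDEMK}, now viewed as a linear parabolic equation with drift coefficient $V'(x)+\Gamma_M(x,\mu_t^{M,\ka})$. Tracking through the computation producing $\mathcal{A}(M)$, one sees that $\mathcal{A}(M)$ is built from $L^\infty$-bounds on the drift and its first spatial derivative, so replacing the bound on $|\Gamma_M|$ and $|\pa_x\Gamma_M|$ by the uniform bound above yields
\begin{equation*}
\zeta_t^{M,\ka}(x) \geq e^{-t\, \bar{\mathcal{A}}(t)} \zeta_0(x) \geq \eta\, e^{-t\, \bar{\mathcal{A}}(t)} =: C_1(t),
\end{equation*}
where $\bar{\mathcal{A}}(t)$ is a continuous function of $t$ independent of $M$, and $\eta = \min_{x\in\T}\zeta_0(x) > 0$ by Assumption \ref{assunzioni sui dati iniziali}\ref{def:zeta0}.

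\smallskip

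\noindent\emph{Proof of \eqref{second upper}.} The plan is to reproduce the argument for \eqref{bound H2-norm zeta mu} of Lemma \ref{stime}(ii), i.e.\ standard energy estimates for $|\zeta_t^{M,\ka}|_{L^2}^2$, $|\pa_x\zeta_t^{M,\ka}|_{L^2}^2$ and $|\pa_{xx}\zeta_t^{M,\ka}|_{L^2}^2$, testing \eqref{zetaPDEMK} (and its $x$-differentiated versions) against $\zeta_t^{M,\ka}$, $\pa_{xx}\zeta_t^{M,\ka}$ etc., using integration by parts to absorb the diffusion, applying Cauchy–Schwarz and Young's inequality to the drift terms, and closing with Gronwall. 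The only $M$-dependence in that computation comes through $L^\infty$-bounds on $\Gamma_M(\cdot,\mu_t^{M,\ka})$ and its spatial derivatives up to order $2$, all of which are controlled uniformly in $M$ by the observation above. This produces \eqref{second upper} with a constant $C_2$ depending continuously on $t$ but independent of $M$.

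\smallskip

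\noindent\emph{Main obstacle.} There is no serious obstruction: once Lemma \ref{lemma first moment} is in place, everything reduces to re-running Lemma \ref{stime} with a better input bound. The one point requiring care is to verify that $\int_0^T |\pa_s Y_s^\ka|\, ds$ (which enters the uniform-in-$M$ bound on the first moment) is finite; this is immediate since $Y^\ka$ is piece-wise $C^1$ on $[0,T]$, though the resulting constant may depend on $\ka$, which is admissible here as $\ka$ is held fixed throughout this step of the proof.
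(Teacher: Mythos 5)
Your proposal is correct and takes essentially the same route as the paper: both replace the crude bound $|\pa_x^n\Gamma_M|\leq M|F^{(n+1)}|_{L^\infty}$ used in Lemma \ref{stime} by $|\pa_x^n\Gamma_M(x,\mu_t^{M,\ka})|\leq |F^{(n+1)}|_{L^\infty}\int |a|\,\mu_t^{M,\ka}(dyda)$ (using $|\chi_M(a)|\leq|a|$), invoke Lemma \ref{lemma first moment} to bound the first moment uniformly in $M$, and then rerun the lower-bound argument via Theorem \ref{lower bound w} and the $H^2$ energy estimate exactly as in Lemma \ref{stime}(i)--(ii) with this sharper input.
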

\begin{lemma}\label{stime 2}
Let $T>0$ be fixed and let $(X_t^{M,\ka},A_t^{M,\ka},\mu_t^{M,\ka},\zeta_t^{M,\ka})$ and $(X_t^{\ka},A_t^{\ka},\mu_t^{\ka}, \zeta_t^{\ka})$ be the solutions (up to time $T$) to \eqref{sistemaMK} and \eqref{sistemaK}, respectively, with the same initial conditions $(X_0,A_0)$, $\mu_0$ and $\zeta_0$ as in Definition \ref{defK}. Let us also consider the weighted distribution $\rho_t^{\ka}$ of $\mu_t^{\ka}$ in \eqref{rhoK}, whose density $\rho_t^{\ka}$ solves the PDE \eqref{PDErhoK}.
Then the following estimates hold:
    \begin{itemize} \label{kuos kuos}
      \item[(i)]  For every $x \in \T$ and $t \in [0,T]$ the following point-wise lower bound for $\zeta_t^{\ka}$ holds
      \begin{equation*} \label{zetakappa}
          \zeta_t^{\ka}(x) \geq e^{-t\cV(\ka,\rho^{\ka})}\zeta_0(x),
      \end{equation*}
      where $\cV$ is defined as
      \begin{align*}
          \cV(\ka,\rho^{\ka}) & := |V^{'}|_{L^{\infty}}^2 +|F^{'}|_{L^{\infty}}^2\sup \limits_{r \in [0,T]}|\rho_r^{\ka}|_{L^2(\T;\R)}^2+|V^{''}|_{L^{\infty}} +|F^{''}|_{L^{\infty}}\sup \limits_{r \in [0,T]}|\rho_r^{\ka}|_{L^2(\T;\R)}.
      \end{align*}
        \item [(ii)] The following estimate for the $H^2(\T;\R)$-norm of $\zeta_t^{\ka}$ holds for every $t \in [0,T]$:
        \begin{equation*}
            |\zeta_t^{\ka}|_{H^2(\T;\R)}^2\leq \cN\left (|\zeta_0|_{H^2(\T;\R)},\sup \limits_{s \in [0,T] }|\rho_s^{\ka}|_{L^2(\T;\R)},t \right ),
        \end{equation*}
        where $\cN:\R_+ \times \R_+ \times [0,T] \to \R_+$ is a continuous (in the product norm) function.
        \item [(iii)] 
        There exists a constant $\bar{K}=\bar{K}(n)>0$ (dependent on $\ka \in \N$) such that for every $n \in \N$ 
        \begin{equation}\label{moments}
            \sup \limits_{t \in [0,T]} \,\,\,\itr |a|^n \mu_t^{\ka}(dxda) \leq K(n).
        \end{equation}
        \item [(iv)] For any $x,y \in \T$, $t \in [0,T]$ and $M \in \N$ we have
\begin{align}\label{ineq gammma 2}
        & \left|\pa_x^n\Gamma_M(x,\mu_t^{M,\ka})-\pa_x^n\Gamma_M(y,\mu_t^{\ka})\right|^2\,\leq C(n)(\,|x-y|^2\,+\, \cW^2_2(\mu_t^{M,\ka},\,\mu_t^{\ka})),
\end{align}
where $C(n):=\max \left \{ \bar K(1),1 \right \}|F^{(n+1)}|_{L^{\infty}(\T;\R)}$ with $\bar K(1)$ the constant in \eqref{moments} for $n=1$ so that $C(n)$ is independent of $M \in \N$.
        \item [(v)] There exists a constant $\hat{K}=\hat{K}(t,\ka)>0$ independent of $M \in \N$ such that the inequality below holds for every $t \in [0,T]$:
         \begin{equation}\label{zetamuk meno zetak}
             |\zeta_t^{\ka}-\zeta_t^{M,\ka}|_{H^1(\T;\R)}^2 \leq \hat{K}(t,\ka) \left [ \int_0^t \cW_2^2(\mu_s^{M,\ka},\mu_s^{\ka})\,ds + \sup \limits_{s \in [0,T]}\,\int_{\T \times \{|a|>M\}} |a|^2\mu_s^{\ka}(dxda) \right ].
         \end{equation}
    \end{itemize}
\end{lemma}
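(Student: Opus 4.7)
For (i), I would apply Theorem \ref{lower bound w} to the PDE \eqref{PDEzetaK+rho} for $\zeta_t^{\ka}$, whose drift $V' + F' * \rho_t^{\ka}$ (by \eqref{gamma-rho}) satisfies $|F^{(j)} * \rho_t^{\ka}|_{L^\infty(\T;\R)} \leq \sqrt{2\pi}\,|F^{(j)}|_{L^\infty(\T;\R)} |\rho_t^{\ka}|_{L^2(\T;\R)}$ for $j = 1, 2$ by Young's convolution inequality; this immediately recovers the structure of $\cV(\ka, \rho^{\ka})$. Part (ii) is a classical $H^2$ energy estimate on the same linear parabolic equation, testing successively against $\zeta_t^{\ka}$, $-\pa_{xx}\zeta_t^{\ka}$ and $\pa_x^4 \zeta_t^{\ka}$, integrating by parts, and closing with Young's inequality and Gronwall; the constant produced depends continuously on $|\zeta_0|_{H^2(\T;\R)}$, $\sup_{s \in [0,T]} |\rho_s^{\ka}|_{L^2(\T;\R)}$ (itself controlled by Lemma \ref{well-posedness PDErhoK}), and $t$. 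For (iii), the second line of \eqref{sistemaK} gives the pathwise representation $A_t^{\ka} = A_0 + \int_0^t \frac{q(X_s^{\ka})}{\zeta_s^{\ka}(X_s^{\ka})}\,dY_s^{\ka}$; since $Y^{\ka}$ is piecewise $C^1$ on $[0,T]$ (hence of bounded total variation), $|q|_{L^\infty(\T;\R)} < \infty$, and $\zeta^{\ka}$ is bounded below uniformly on $[0,T] \times \T$ by (i) combined with Lemma \ref{well-posedness PDErhoK}, we obtain the almost sure deterministic bound $|A_t^{\ka}| \leq |A_0| + C(\ka, T)$ uniformly in $t \in [0,T]$. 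Taking $n$-th powers and combining with the moment properties of $A_0$ from Assumption \ref{assunzioni sui dati iniziali} yields \eqref{moments}.

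\textbf{Part (iv).} The strategy is to repeat the proof of Lemma \ref{gamma beta}, but to replace at step \eqref{eqn:cutoff} the crude cutoff bound $\int_{\T \times \R} \chi_M(a)\,\mu_t^{\ka}(dz\,da) \leq M$ by the sharper $\int_{\T \times \R} |\chi_M(a)|\,\mu_t^{\ka}(dz\,da) \leq \int_{\T \times \R} |a|\,\mu_t^{\ka}(dz\,da) \leq \bar K(1)$ from (iii). Explicitly, splitting
\begin{equation*}
\left|\pa_x^n \Gamma_M(x, \mu_t^{M,\ka}) - \pa_x^n \Gamma_M(y, \mu_t^{\ka})\right| \leq \left|\pa_x^n \Gamma_M(x, \mu_t^{M,\ka}) - \pa_x^n \Gamma_M(x, \mu_t^{\ka})\right| + \left|\pa_x^n \Gamma_M(x, \mu_t^{\ka}) - \pa_x^n \Gamma_M(y, \mu_t^{\ka})\right|,
\end{equation*}
the first term is controlled by $|F^{(n+1)}|_{L^\infty(\T;\R)} \cW_1(\mu_t^{M,\ka}, \mu_t^{\ka})$ exactly as in the proof of \eqref{ineq gammma}, while the second, treated with the sharper moment bound, gives $\bar K(1)\,|F^{(n+1)}|_{L^\infty(\T;\R)} |x-y|$; both constants are $M$-free, and together with \eqref{W1 leq W2} they deliver \eqref{ineq gammma 2}.

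\textbf{Part (v): the main obstacle.} The plan is to write down the linear parabolic equation satisfied by $\xi_t := \zeta_t^{\ka} - \zeta_t^{M,\ka}$, namely
\begin{equation*}
\pa_t \xi_t = \pa_{xx} \xi_t + \pa_x\!\bigl[(V' + \Gamma(x, \mu_t^{\ka}))\,\xi_t\bigr] + \pa_x\!\bigl[(\Gamma(x, \mu_t^{\ka}) - \Gamma_M(x, \mu_t^{M,\ka}))\,\zeta_t^{M,\ka}\bigr], \qquad \xi_0 \equiv 0,
\end{equation*}
and to run coupled $L^2$ and $H^1$ energy estimates by testing against $\xi_t$ and $-\pa_{xx}\xi_t$, respectively. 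The genuine challenge is to keep every constant $M$-uniform. The advective coefficients $V' + \Gamma(\cdot, \mu_t^{\ka})$ and their $x$-derivatives are $M$-free and bounded using (iii) since $\int_{\T \times \R} |a|\,\mu_t^{\ka}(dy\,da) \leq \bar K(1)$; the factors $\zeta_t^{M,\ka}$ and $\pa_x \zeta_t^{M,\ka}$ appearing in the inhomogeneity are controlled in $L^\infty(\T;\R)$ uniformly in $M$ by Lemma \ref{cor zeta} combined with the Sobolev embedding $H^1(\T;\R) \hookrightarrow L^\infty(\T;\R)$. The delicate term is the difference $\Gamma(x, \mu_t^{\ka}) - \Gamma_M(x, \mu_t^{M,\ka})$, which I would decompose as
\begin{equation*}
\bigl[\Gamma(x, \mu_t^{\ka}) - \Gamma_M(x, \mu_t^{\ka})\bigr] + \bigl[\Gamma_M(x, \mu_t^{\ka}) - \Gamma_M(x, \mu_t^{M,\ka})\bigr].
\end{equation*}
The second bracket is bounded pointwise in $x$ by a constant multiple of $\cW_2(\mu_t^{M,\ka}, \mu_t^{\ka})$ thanks to (iv), producing the $\cW_2^2$ contribution in \eqref{zetamuk meno zetak}. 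The first bracket equals $\int_{\T \times \R} (a - \chi_M(a))\,F'(x-y)\,\mu_t^{\ka}(dy\,da)$, which vanishes on $\{|a| \leq M\}$, so that $|\cdot| \leq 2\,|F'|_{L^\infty(\T;\R)} \int_{\T \times \{|a|>M\}} |a|\,\mu_t^{\ka}(dy\,da)$; squaring and applying Cauchy--Schwarz against the probability measure $\mu_t^{\ka}$ yields the tail $\int_{\T \times \{|a|>M\}} |a|^2 \mu_t^{\ka}(dy\,da)$ appearing in \eqref{zetamuk meno zetak}. The identical $M$-uniform decomposition applies after one $x$-differentiation (with $F'$ replaced by $F''$), which handles the $H^1$ estimate when the $\pa_x$ falls on $(\Gamma - \Gamma_M)\,\zeta_t^{M,\ka}$. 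Gronwall in $H^1$ then closes the argument and produces the desired constant $\hat K(t, \ka)$ depending only on $t$ and $\ka$.
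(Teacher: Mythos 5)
Your proposal is correct and follows essentially the same route as the paper: parts (i)--(ii) transplant the energy/comparison estimates of Lemma \ref{stime} to the equivalent formulation \eqref{sistemaK-rho}; part (iii) exploits the pathwise bound on $A_t^\ka$ coming from the uniform lower bound on $\zeta^\ka$; part (iv) reruns Lemma \ref{gamma beta} with the cutoff bound $\leq M$ replaced by the $M$-free first moment $\bar K(1)$; and part (v) uses exactly the paper's three-way decomposition of $\Gamma_M(\cdot,\mu^{M,\ka})\zeta^{M,\ka}-\Gamma(\cdot,\mu^\ka)\zeta^\ka$ into a Wasserstein piece handled by (iv), an absorbable piece, and a tail piece handled by Cauchy--Schwarz against $\mu^\ka$, followed by Gronwall. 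One small caveat shared with the paper itself: your pathwise bound $|A_t^\ka|\leq |A_0|+C(\ka,T)$ transfers the $n$-th moment to $\mathbb E|A_0|^n$, but Assumption \ref{assunzioni sui dati iniziali}(a) only guarantees $\mu_0\in\mathcal P_2$; this suffices for the uses of (iii) in (iv)--(v) (which need $n\leq 2$), but the "for every $n\in\mathbb N$" claim requires additional moment assumptions on $A_0$ that neither your proof nor the paper's makes explicit.
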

\begin{proof}[Proof of Proposition \ref{thm limit M}]
We proceed in the same fashion of the proof of Proposition \ref{prop_e_before_M}. So some steps are explained more quickly here and we give more details on the part of computation that differs from the proof of Proposition \ref{prop_e_before_M}. In what follows we denote by $C(t)$ a constant depending continuously on $t$ which may change from line to line. Estimating $\mE \,|X_t^{M,\ka}-X_t^{\ka}|^2$ gives us 
\begin{align*}
\mE \sup \limits_{s \in [0,t]} \left | X_{s}^{M,\ka}-X_{s}^{\ka} \right |^2  &\leq C(t) \left [\int_0^t \mE \sup \limits_{r \in [0,s]} \left | X_{r}^{M,\ka}-X_{r}^{\ka} \right |^2 \,ds \right. \\
& \left. +\int_0^t \mE \sup \limits_{r \in [0,s]}\left |\Gamma_M(X_r^{M,\ka},\mu_r^{M,\ka})-\Gamma(X_r^{\ka},\mu_r^{\ka}) \right |^2 \,ds \right ],
\end{align*}
so from Gronwall's lemma we deduce
\begin{align*}
\mE \sup \limits_{s \in [0,t]}\left | X_{s}^{M,\ka}-X_{s}^{\ka} \right |^2  \leq C(t) \int_0^t \mE \sup \limits_{r \in [0,s]}\left |\Gamma_M(X_r^{M,\ka},\mu_r^{M,\ka})-\Gamma(X_r^{\ka},\mu_r^{\ka}) \right |^2 \,ds.
\end{align*}
Let us turn our attention to the RHS of the above inequality:
\begin{align}\notag
|\Gamma_M(X_r^{M,\ka},\mu_r^{M,\ka})-\Gamma(X_r^{\ka},\mu_r^{\ka})|^2  & \leq C \left [ |\Gamma_M(X_r^{M,\ka},\mu_r^{M,\ka})-\Gamma_M(X_r^{M,\ka},\mu_r^{\ka})|^2 \right. \\ \notag 
& \left. +|\Gamma_M(X_r^{M,\ka},\mu_r^{\ka})-\Gamma_M(X_r^{\ka},\mu_r^{\ka})|^2  \right. \\ \notag
& \left. +|\Gamma_M(X_r^{\ka},\mu_r^{\ka})-\Gamma(X_r^{\ka},\mu_r^{\ka})|^2 \right ] \\ \notag 
& \leq^{\eqref{ineq gammma 2}} C \left [ \cW_2^2(\mu_r^{M,\ka},\mu_r^{\ka})+|X_r^{M,\ka}-X_r^{\ka}|^2 \right. \\ \notag
& \left. +|\Gamma_M(X_r^{\ka},\mu_r^{\ka})-\Gamma(X_r^{\ka},\mu_r^{\ka})|^2 \right ]\\ \label{Jensen's inequality}
& \leq C \left [\cW_2^2(\mu_r^{M,\ka},\mu_r^{\ka})+|X_r^{M,\ka}-X_r^{\ka}|^2 \right. \\ \notag
&\left. +\int_{\T \times \{|a|>M\}}|a|^2\mu_r^{\ka}(dxda) \right ],\qquad \mP-a.s.
\end{align}
To obtain the third addend in \eqref{Jensen's inequality} we have applied Jensen's inequality to the term  $|\Gamma_M(X_r^{\ka},\mu_r^{\ka})-\Gamma(X_r^{\ka},\mu_r^{\ka})|^2$ combined with the fact that the normalization constant $\cZ_t^{M,\ka}$ of the measure $\1_{\{\T \times \{|a|>M\}}\mu_t^{\ka}$ is bounded from above by 1 uniformly in $M,\ka \in \N$ and $t>0$. Namely, if we let $\cZ_t^{M,\ka}:=\int_{\T \times \{|a|>M\}} \mu_t^{\ka}(dxda)$ and notice that $\cZ_t^{M,\ka}\leq 1$, uniformly in $M,\ka \in \N$ and $t>0$ then we obtain 
\begin{align*}
    \left |\Gamma_M(X_r^{\ka},\mu_r^{\ka})-\Gamma(X_r^{\ka},\mu_r^{\ka}) \right |^2 & =\left(\cZ_t^{M,\ka} \right )^2 \left ( \,\,\, \int_{\T \times \{|a|>M\}} aF^{'}(X_t^{\ka}-y)\frac{\mu_t^{\ka}(dyda)}{\cZ_t^{M,\ka}}\right )^2 \\
    &\leq \left(\cZ_t^{M,\ka} \right )^2 \int_{\T \times \{|a|>M\}} |a|^2 \left |F^{'}(X_t^{\ka}-y) \right |^2 \frac{\mu_t^{\ka}(dyda)}{\cZ_t^{M,\ka}}\\
    & \leq |F^{'}|_{L^{\infty}(\T;\R)}^2\int_{\T \times \{|a|>M\}} |a|^2 \mu_t^{\ka}(dyda).
\end{align*}
This gives the third addend in \eqref{Jensen's inequality}.
Hence, we get
\begin{align*}
    \mE \sup \limits_{s \in [0,t]}\left | X_{s}^{M,\ka}-X_{s}^{\ka} \right |^2  & \leq C(t) \Bigg [ \int_0^t \mE \sup \limits_{r \in [0,s]} \left | X_{r}^{M,\ka}-X_{r}^{\ka} \right |^2 \,ds  +\int_0^t \sup \limits_{r \in [0,s]}W_2^2(\mu_s^{M,\ka},\mu_s^{\ka}) \,ds \\
    & + \sup \limits_{s \in [0,t]}\,\,\,\int_{\T \times \{|a|>M\}}|a|^2\mu_s^{\ka}(dxda) \Bigg ],
\end{align*}
and again from Gronwall's lemma we conclude 
\begin{equation}\label{XMkappa - Xkappa}
    \mE \sup \limits_{s \in [0,t]}\left | X_{s}^{M,\ka}-X_{s}^{\ka} \right |^2  \leq C(t) \left [ \int_0^t \sup \limits_{r \in [0,s]}W_2^2(\mu_s^{M,\ka},\mu_s^{\ka}) \,ds + \sup \limits_{s \in [0,t]}\,\,\,\int_{\T \times \{|a|>M\}}|a|^2\mu_s^{\ka}(dxda) \right ].
\end{equation}
As for the difference $\mE \left | A_t^{M,\ka}-A_t^{\ka} \right |^2$ we can proceed similarly to what we did in \eqref{KEM difference estimate} (we recall that $\eta=\min \limits_{x \in \T}\zeta_0$ and $C(t,\ka,\eta)$ is a constant, depending continuously on $t,\ka$ and $\eta$, which may change from line to line):
\begin{align*}
    & \left|\frac{q(X_t^{M,\ka})}{\zeta_t^{M,\ka}(X_t^{M,\ka})}\,-\,\frac{q(X_t^{\ka})}{\zeta_t^{\ka}
    (X_t)}\right|^2\,=\,\left | \frac{q(X_t^{M,\ka})\,\zeta_t^{\ka}(X_t^{\ka})-q(X_t^{\ka}) \,\zeta_t^{M,\ka}(X_t^{M,\ka})}{\zeta_t^{M,\ka}(X_t^{M,\ka})\,\zeta_t^{\ka}(X_t^{\ka})} \right |^2 \\
    & \leq C(t,\ka,\eta) \left [ |\zeta_t^{\ka}|_{L^{\infty}(\T;\R)}^2  \left | X_t^{M,\ka}-X_t^{\ka} \right |^2+ \left |\pa_x \zeta_t^{\ka} \right |_{L^{\infty}(\T;\R)}^2 \left |X_t^{\ka}-X_t^{M,\ka} \right |^2 + \left |\zeta_{t}^{M,\ka}-\zeta_t^{\ka} \right |_{L^{\infty}(\T;\R)}^2 \right ],
\end{align*}
By means of $(ii)$ combined with estimate \eqref{stime rhoka} for $\rho_t^{\ka}$ and $(v)$ in Lemma \ref{stime 2} we then deduce
\begin{align*}
    \left|\frac{q(X_t^{M,\ka})}{\zeta_t^{M,\ka}(X_t^{M,\ka})}\,-\,\frac{q(X_t^{\ka})}{\zeta_t^{\ka}(X_t^{\ka})}\right|^2 & \leq  C(t,\ka,\eta) \left [ \left |X_t^{M,\ka}-X_t^{\ka} \right |^2+ \int_0^t \cW_2^2(\mu_s^{M,\ka},\mu_s^{\ka})\,ds \right. \\
    & \left. +\sup \limits_{s \in [0,t]} \,\,\, \int_{\T \times \{|a|>M\}}|a|^2\mu_s^{\ka}(dxda)\,ds \right ].
\end{align*}
Hence, similarly to \eqref{XNAN}, by using the above along with \eqref{XMkappa - Xkappa} we obtain
\begin{align}\label{AMkappa-Akappa}
    \E\left[ \sup \limits_{s \in [0,t]}|A^{M,\ka}_{s}-A_{s}^{\ka}|^2 \right]\,& \leq C(t,\ka,\eta) \left [\,\int_0^t \sup \limits_{r \in [0,s]} \cW^2_2(\mu_{r}^{M,\ka},\mu_{r}^{\ka})|\pa_s\mY^{\ka}|^2\,ds \right. \\
    & \left. + \sup \limits_{s \in [0,t]}\,\,\,\int_{\T \times \{|a|>M\}}|a|^2\mu_s^{\ka}(dxda)\sup \limits_{t \in [0,T]}|\pa_t\mY^{\ka}|^2 \right ].\notag \,\,
\end{align}
From \eqref{XMkappa - Xkappa}, \eqref{AMkappa-Akappa}, the definition \eqref{2-Wasserstein} of 2-Wasserstein distance $\Wt$ with $\cX=C_t$ and \eqref{W2 leq W2t} we deduce
\begin{align*}
    \Wt^2(\mu^{M,\ka},\mu^{\ka}) \leq C(t,\ka,\eta) \left [ \int_0^t \Ws^2(\mu^{M,\ka},\mu^{\ka})\,ds+ \sup \limits_{s \in [0,t]}\,\,\,\int_{\T \times \{|a|>M\}}|a|^2\mu_s^{\ka}(dxda) \right ],
\end{align*}
and, from Gronwall's inequality, 
\begin{equation*}
    \Wt^2(\mu^{M,\ka},\mu^{\ka}) \leq C(t,\ka,\eta) \sup \limits_{s \in [0,t]}\,\,\,\int_{\T \times \{|a|>M\}}|a|^2\mu_s^{\ka}(dxda).
\end{equation*}
By letting $M \to \infty$ and using $(iii)$ in Lemma \ref{stime 2} we obtain desired result.    
\end{proof}

\begin{proof}[Proof of Lemma \ref{well-posedness PDErhoK}]
The well-posedness in mild sense of \eqref{PDErhoK} follows from Lemma \ref{lemma: well-posedness of rhoPDE}. It remains to show that $\rho_t^{\ka}$ is actually a smooth function w.r.t. the spatial variable $x \in \T$. To achieve this, we note that the coefficients in the PDE \eqref{PDErhoK} are smooth, so that the improved regularity in the spatial variable $x \in \T$ can be done by following the steps of \cite[Theorem 4.2]{chazelle2017well} so we don't repeat them. We just highlight that the proof of \cite[Theorem 4.2]{chazelle2017well}  goes through provided one can control the  $L^1(\T;\R)$-norm of the solution itself. In the setting of \cite{chazelle2017well}, due to the gradient structure of PDE at hand, the $L^1(\T;\R)$-norm estimate on the solution was obtained by using a non-negative initial datum and by the positivity-preserving property of the evolution; this allowed the authors of \cite{chazelle2017well} to conclude that $|\rho_t|_{L^1(\T;\R)}=|\rho_0|_{L^1(\T;\R)}$ for $t \geq 0$ (see \cite[Corollary 2.2]{chazelle2017well}). In our case, since our PDE is perturbed by the additive term $q(x)\pa_tY_t^{\ka}$, there is no such thing as mass preservation and preservation of positivity. So, in order to use the scheme of proof of \cite[Theorem 4.2]{chazelle2017well},    we need an estimate on the $L^1(\T;\R)$-norm of $\rho_t^{\ka}$. This can be obtained following the steps of \cite[Proposition B.1]{angeli2023well}, so we don't repeat the whole argument and just conclude that  $|\rho_t^{\ka}|_{L^1(\T;\R)}$ satisfies,  for $t \geq 0$, the following estimate:  
\begin{equation*}
 |\rho_t^{\ka}|_{L^1(\T;\R)} \leq |\rho_0|_{L^1(\T;\R)}+|q|_{L^1(\T;\R)}\int_0^t |\pa_sY_s^{\ka}|\,ds   \,.
\end{equation*}

The estimate \eqref{stime rhoka} can be obtained by following the steps of the proof of \cite[Proposition B.1]{angeli2023well}. 
\end{proof}

\begin{note}\label{note importante}
We recall that 
from Proposition \ref{prop:law satisfies PDE} we know that the pair $(\mu_t^{M,\ka},\zeta_t^{M,\ka})$ is the unique solution to the system of PDEs \eqref{PDEmuMK}-\eqref{zetaPDEMK}. In particular, we know that $\mu_t^{M,\ka}$ is a weak measure-valued solution to \eqref{PDEmuMK} while $\zeta_t^{M,\ka}$, due to the smoothing properties of \eqref{zetaPDEMK} (see e.g. \cite{friedman2008partial} and \cite[Chapter 7]{evans2022partial}), is the unique classical solution to \eqref{zetaPDEMK}. 

If we define the measure $\tilde{\zeta}_t^{M,\ka}(dx):=\int_{\R} \mu_t^{M,\ka}(dxda)$, a straightforward calculation shows that such a measure is a weak measure-valued solution to \eqref{zetaPDEMK} with initial datum $\zeta_0=\int_{\R} \mu_0(x,a)da$. From \ref{def:zeta0} in Assumption \ref{assunzioni sui dati iniziali} we know that $\int_{\R}\mu_0(x,a)da=\zeta_0(x) \in C^{\infty}(\T;\R)$. Hence, again from the smoothing properties of \eqref{zetaPDEMK} one can see that $\tilde{\zeta}_t^{M,\ka}$ is actually a classical solution to \eqref{zetaPDEMK} and it thus coincides with $\zeta_t^{M,\ka}$. 

As a result of the above, we have obtained that $\zeta_t^{M,\ka}$ can be viewed as a the $\T$-marginal of $\mu_t^{M,\ka}$. Hence, $\zeta_t^{M,\ka}=\cL(X_t^{M,\ka})$ and, therefore,
\begin{align}\label{booooooh}
    \left \langle f, \zeta_t^{M,\ka} \right \rangle=\mE\left (f\left (X_t^{M,\ka}    \right )\right),\qquad\text{for any $f \in C^{\infty}(\T;\R)$}.
\end{align}
The expression \eqref{booooooh} can of course be written in terms of the measure $\mu_t^{M,\ka}$, 
that is, 
\begin{equation*}
    \int_{\T}f(x)\zeta_t^{M,\ka}(x)\,dx=\itr f(x)\mu_t^{M,\ka}(dxda),\qquad \text{for any $f \in C^{\infty}(\T;\R)$}.
\end{equation*}
In particular, since $\zeta_t^{M,\ka} \in C^{\infty}(\T;\R)$ is a deterministic trajectory and $\zeta_t^{M,\ka}>0$, for every $x \in \T$ and $t \in [0,T]$ (from Lemma \ref{wellpossistemaMK}), this implies that $\frac{1}{\zeta_t^{M,\ka}}$ can be taken as a test function in the above equality. Hence, for any given $f \in C^{\infty}(\T;\R)$ we have  
\begin{equation}\label{fondamentale}
    \int_{\T}f(x)\,dx=\itr f(x)\frac{1}{\zeta_t^{M,\ka}(x)}\,\mu_t^{M,\ka}(dxda).
\end{equation}
The equality \eqref{fondamentale} is crucial to the proof of Lemma \ref{lemma first moment} and to our overall strategy.

Had we not fixed the noise but considered the particle system \eqref{particelle} with a Brownian motion $w_t$ in place of $Y_t^{\ka}$, the joint distribution, say $\mu_t^M$, which would appear at this point in the proof, would be stochastic, hence its marginal $\zeta_t^M$ would be stochastic as well, and adapted to the filtration $\Bar{\cF}_t$. Therefore, in this context, equality \eqref{fondamentale} would hold with $\mu_t^M$ and $\zeta_t^M$ in place of $\mu_t^{M,\ka}$ and $\zeta_t^{M,\ka}$ for every realisation $\bar \omega$ belonging to the set defined as $\bar{\Omega}_{\zeta^M}:=\left \{ \bar{\omega} \in \Omega\,|\, \zeta_t^{M}(\bar{\omega})>0,\,\text{for all $t \in [0,T]$ and $x \in \T$}\right \} \in \Bar{\cF}_T$. This produces measurability issues at various stages of the proof. This is another step where working deterministically helps.
\end{note}

\begin{proof}[Proof of Lemma \ref{lemma first moment}]
    By recalling that $\mu_t^{M,\ka}$ is the weak solution to the PDE \eqref{PDEmuMK} we obtain that for any $\Psi \in C_{c}^{\infty}(\T \times \R;\R)$ and $t \in [0,T]$ the following holds:
    \begin{align}\label{weak sol mu}
    \llangle \Psi,\mu_t^{M,\ka} \rrangle & = \llangle \Psi,\mu_0 \rrangle + \int_0^t \llangle \pa_{xx}\Psi+(V^{'}+\Gamma_M(x,\mu_s^{M,\ka})\pa_x\Psi,\mu_s^{M,\ka} \rrangle\,ds\\ \label{weak solu mu 2}
        &+\int_0^t \llangle \frac{q}{\zeta_s^{M,\ka}}\pa_a\Psi,\mu_s^{M,\ka} \rrangle d\mY_s^{\ka}.
    \end{align}
    Moreover, since we know from Lemma \ref{wellpossistemaMK} that $t \to \mu_t^{M,\ka}\in C([0,T];\cP_2(\T \times \R))$ then it is easy to deduce that the above equality can be extended 
    to any $\Psi \in C^{\infty}(\T \times \R;\R)$ which grows at most quadratically in $a \in \R$ along with its derivatives. To obtain \eqref{super}, it suffices to take as test function $\Psi(x,a)=|a|$. Unfortunately, such test function is not differentiable in $a \in \R$. This minor issue can be overcome by considering $\Psi_{\delta}(x,a)=\sqrt{a^2+\delta}$,\,\,\, $x \in \T,\,\,a \in \R$ and $\delta>0$, doing all the calculations with $\Psi_{\delta}$ in place of $\Psi$ and then set $\delta \downarrow 0$. For the sake of clarity, in what follows, we just work with $\Psi(x,a)=|a|$ (knowing that all the calculations should be done for $\Psi_{\delta}$ and we don't show explicitly the step $\delta \downarrow 0$). From \eqref{weak sol mu} and \eqref{weak solu mu 2} with $\Psi(x,a)=|a|$ we get  
    \begin{align*}
     & \int_{\T \times \R} |a|\mu_t^{M,\ka}(dxda)= \itr |a|\,\mu_0(dxda)+\int_0^t \,d\mY_s^{\ka} \itr \frac{a}{|a|}\frac{q(x)}{\zeta_s^{M,\ka}(x)}\mu_s^{M,\ka}(dxda).
    \end{align*}
    However, since $dY_s^{\ka}=\pa_sY_s^{\ka}ds$ ($Y_t^{\ka}$ is a piece-wise $C^1$ in time $t$) and by using the fact that $\mu_t^{M,\ka}$ is a probability measure we obtain \begin{align*}
        \itr |a|\mu_t^{M,\ka}(dxda) & \leq \itr |a|\,\mu_0(dxda)+\int_0^t \, ds|\pa_s\mY_s^{\ka}| \itr \frac{a}{|a|}\frac{q(x)}{\zeta_s^{M,\ka}(x)}\mu_s^{M,\ka}(dxda) \\
        & \leq^{\eqref{fondamentale}}\itr |a|\,\mu_0(dxda)+\int_0^t \, ds|\pa_s\mY_s^{\ka}| \int_{\T}|q(x)|\,dx,
    \end{align*}
   which concludes the proof.
\end{proof}

\begin{proof}[Proof of Lemma \ref{cor zeta}]
Recalling that $\zeta_t^{M,\ka}$ solves the PDE \eqref{PDEzetaMKE}, from \eqref{super} and Theorem \ref{lower bound w} we deduce 
\begin{equation*}
    \zeta_t^{M,\ka}(x) \geq e^{-t\cE(M)}\zeta_0(x),
\end{equation*}
where $\cE$ is defined as 
\begin{align*}
    \cE(M) & :=|V^{'}|_{L^{\infty}(\T;\R)}^2+\left | \Gamma_M(\cdot ,\mu_t^{M,\ka})\right |_{L^{\infty}(\T;\R)}^2+|V^{''}|_{L^{\infty}(\T;\R)}^2+\left | \Gamma_M(\cdot ,\mu_t^{M,\ka})\right |_{L^{\infty}(\T;\R)}^2.
\end{align*}
From the above, an upper bound for $\cE$ (and consequently a lower bound for $\zeta_t^{M,\ka}$) independent of $M \in \N$ can be easily derived. Indeed, since
\begin{align*}
\cE(M) & \leq |V^{'}|_{L^{\infty}(\T;\R)}^2+ |F^{'}|_{L^{\infty}(\T;\R)}^2 \left ( \,\,\, \itr |a|\mu_t^{M,\ka} \right )^2+ |V^{''}|_{L^{\infty}(\T;\R)} + |F^{''}|_{L^{\infty}(\T;\R)} \itr |a|\mu_t^{M,\ka},
\end{align*}
and recalling \eqref{super}, the lower bound \eqref{first lower} easily follows.
As for \eqref{second upper} we proceed with same method adopted to obtain $(ii)$ in Lemma \ref{stime} so we don't redo the whole proof, we only point out the main difference. In that proof we used upper bound \eqref{upper bound bouno} on $\pa_x^n\Gamma_M$. That bound is not sufficient here, as it would depend on $M$.
 To overcome this issue, we use upper bound \eqref{super} instead, to estimate $\pa_x^n\Gamma_M(x,\mu_t^{M,\ka})$:
    \begin{equation*}
        \left |\pa_x^n\Gamma_M(x,\mu_t^{M,\ka}) \right |\leq |F^{(n+1)}|_{L^{\infty}(\T;\R)} \itr |a|\,\mu_t^{M,\ka}(dxda)\leq^{\eqref{super}} C, \label{uniform in M}
    \end{equation*}
    with $C$ independent of $M$ (but dependent on $T$ and $\ka \in \N$).
\end{proof}
\begin{proof} [Proof of Lemma \ref{stime 2}]
The lower bound $(i)$ and the estimate $(ii)$ can be proven similarly to $(i)$ and $(ii)$ in Lemma \ref{stime}, using the formulation \eqref{sistemaK-rho} of \eqref{sistemaK}, so we don't repeat the proof. 
The upper bound $(iii)$ follows from the equation of $A_t^{\ka}$ in \eqref{sistemaK-rho} and from $(i)$. Indeed, from Jensen's inequality we have 
\begin{align*}
    \itr |a|^n \mu_t^{\ka}(dxda)=\mE(|A_t^{\ka}|^n) \leq t^{n-1} \int_0^t \frac{|q(X_s^{\ka})|^n}{|\zeta_s^{\ka}(X_s^{\ka})|^n}|\pa_s\mY^{\ka}|^n\,ds,
\end{align*}
which gives $(iii)$. As for $(iv)$, we proceed in the same way we have proven \eqref{ineq gammma} in Lemma \ref{gamma beta}.
Hence, from the triangle inequality and the Lipschitz continuity of $F^{(n+1)}$ we obtain  
 \begin{align*}
     & \left | \pa_x^n\Gamma_M(x,\mu_t^{M,\ka}) - \pa_x^n\Gamma_M(y,\mu_t^{\ka}) \right | \\
     &\leq \left | \pa_x^n\Gamma_M(x,\mu_t^{M,\ka}) -\pa_x^n \Gamma_M(x,\mu_t^{\ka}) \right |+\left | 
     \pa_x^n\Gamma_M(x,\mu_t^{\ka}) - \pa_x^n\Gamma_M(y,\mu_t^{\ka}) \right | \\
     & \leq |F^{(n+1)}|_{L^{\infty}(\T;\R)}\cW_1(\mu_t^{M,\ka},\mu_t^{\ka})+|F^{(n+1)}|_{L^{\infty}
     (\T;\R)}|x-y|\itr \chi_M(a)\,\mu_t^{\ka}(dxda)\\
     & \leq |F^{(n+1)}|_{L^{\infty}(\T;\R)}\cW_1(\mu_t^{M,\ka},\mu_t^{\ka})+|F^{(n+1)}|_{L^{\infty}
     (\T;\R)}|x-y|\itr |a|\,\mu_t^{\ka}(dxda) \\
     & \leq^{\eqref{W1 leq W2}} |F^{(n+1)}|_{L^{\infty}(\T;\R)}\max \left \{ \,\,\itr |a|\,\mu_t^{\ka}(dxda),1 \right \} \left (|x-
     y|+\cW_2(\mu_t^{M,\ka},\mu_t^{\ka}) \right ).
 \end{align*} 
From \eqref{moments} and the above estimate, $(iv)$ follows. 
As for $(v)$ we refer the reader to Appendix \ref{proofs} for its proof. 
\end{proof}

\section{The limit $\ka \to \infty$}\label{limit kappa}
The purpose of this section is to prove Proposition \ref{prop limit rho kappa}. 
We start with some preliminary observations.
\begin{lemma}\label{6.2}
Let $f \in C^{\infty}([0,T]\times\T;\R)$ and $Z \in C^{\gamma}([0,T];\R)$ with $\gamma \in (0,1)$; let us define the Young integral $\cI_f(t,x)$ as
\begin{equation*}
    \cI_f(t,x)=\int_0^tf_s(x)\,dZ_s,\qquad t \in [0,T],\,x \in \T.
\end{equation*}
Then $\cI_f(t,x) \in C([0,T];H^1(\T;\R))$ and its weak derivative is given by $\cI_{\pa_xf}$. That is, 
\begin{equation}\label{weak derivative}
    \frac{\pa}{\pa x}\cI_f(t,x)=\int_0^t \pa_x f_s(x)\,dZ_s,\,\,\,\,x \in \T,\,\,\,\,t \in [0,T].
\end{equation}
and, moreover, the following estimate holds
\begin{align}\label{kuos 20}
    \sup \limits_{t \in [0,T]}|\cI_f|_{H^1(\T;\R)} & \lesssim_{\gamma}\max \{T,1\}\big( |f|_{L^{\infty}([0,T] \times \T;\R)}+|\pa_tf|_{L^{\infty}([0,T] \times \T;\R)}\\ \notag
    & +|\pa_xf|_{L^{\infty}([0,T] \times \T;\R)}+|\pa_x\pa_tf|_{L^{\infty}([0,T] \times \T;\R)}\big ) |Z_{\cdot}|_{\frac{1}{\gamma},T,\R},
\end{align}
where $|\cdot|_{\frac{1}{\gamma},T,\R}$ is the semi-norm introduced in \eqref{definizione p-variation} with $E=\R$.
\end{lemma}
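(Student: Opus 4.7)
The key idea is that, because $f$ is smooth in $t$, the Young integral $\cI_f$ can be put in closed form via the Young integration-by-parts formula. Since $s \mapsto f_s(x)$ is $C^1$ and hence has finite $1$-variation, and $Z$ has finite $1/\gamma$-variation with $1 + \gamma > 1$, the Young integral $\cI_f(t,x)$ is well defined for each $x \in \T$, and integration by parts (see Appendix \ref{young}) yields the identity
\begin{equation*}
\cI_f(t,x) \,=\, f_t(x)\,Z_t \,-\, f_0(x)\,Z_0 \,-\, \int_0^t Z_s\,\pa_s f_s(x)\,ds,
\end{equation*}
in which the remaining integral is an ordinary Lebesgue integral. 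This identity is the engine of the proof: it reduces every statement in the lemma to elementary facts about continuous functions and Lebesgue integrals.

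\textbf{Continuity and weak derivative.} From the representation above, continuity $t \mapsto \cI_f(t,\cdot) \in L^2(\T;\R)$ is immediate, since $f$ and $\pa_t f$ are bounded uniformly on $[0,T]\times\T$ and $Z$ is continuous in $t$. To identify the weak spatial derivative, differentiate the representation under the integral sign, which is legitimate since $\pa_x f$ and $\pa_x\pa_t f$ are uniformly bounded, to obtain
\begin{equation*}
\pa_x \cI_f(t,x) \,=\, \pa_x f_t(x)\,Z_t \,-\, \pa_x f_0(x)\,Z_0 \,-\, \int_0^t Z_s\,\pa_s \pa_x f_s(x)\,ds.
\end{equation*}
Applying integration by parts in the opposite direction to the smooth-in-time function $\pa_x f$, the right-hand side equals $\int_0^t \pa_x f_s(x)\, dZ_s = \cI_{\pa_x f}(t,x)$, which is exactly \eqref{weak derivative}. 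The same argument applied to $\cI_{\pa_x f}$ shows that $t \mapsto \pa_x \cI_f(t,\cdot)$ is continuous in $L^2(\T;\R)$, hence $\cI_f \in C([0,T]; H^1(\T;\R))$.

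\textbf{Bound.} The estimate \eqref{kuos 20} is then obtained by taking suprema over $x \in \T$ term by term in the representation formulas for $\cI_f$ and $\pa_x \cI_f$. Each boundary contribution is bounded by $|f|_{L^\infty([0,T]\times\T;\R)}\,|Z|_{L^\infty([0,T])}$ (or the analogous term with $\pa_x f$), while each time integral contributes at most $T\,|\pa_t f|_{L^\infty([0,T]\times\T;\R)}\,|Z|_{L^\infty([0,T])}$ (resp.\ with $\pa_x \pa_t f$). One then controls $|Z|_{L^\infty([0,T])}$ by the $1/\gamma$-variation semi-norm $|Z_\cdot|_{1/\gamma,T,\R}$ via the elementary inequality $|Z_t - Z_0| \le |Z_\cdot|_{1/\gamma,T,\R}$, valid for every $t\in[0,T]$; the prefactor $\max\{T,1\}$ absorbs both the constants $1$ on the boundary terms and $T$ on the integral terms, yielding \eqref{kuos 20}.

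\textbf{Main obstacle.} The only non-routine input is the Young integration-by-parts formula, but this is standard in the present regime (one integrand has finite $1$-variation, the other has finite $1/\gamma$-variation, with $1 + \gamma > 1$) and is collected in the authors' Appendix \ref{young}; the rest is careful bookkeeping of constants.
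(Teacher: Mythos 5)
Your proof is correct, but it takes a genuinely different route from the paper. The paper's proof uses the abstract sup-norm estimate for Young integrals (Theorem~\ref{young integral}) to bound $\sup_t|\cI_f(t,x)|$ by $|f_\cdot(x)|_{V_{1,T}^{\R}}|Z|_{\frac{1}{\gamma},T,\R}$, controls the $1$-variation norm of $f_\cdot(x)$ via Lipschitz-in-time, and repeats the same argument for $\cI_{\pa_x f}$; the weak derivative identity is then obtained by integrating the partial Riemann--Stieltjes sums by parts in space against a test function and passing to the limit in mesh size using dominated convergence. You instead invoke Young integration by parts to rewrite $\cI_f$ as boundary terms plus an ordinary Lebesgue integral of $Z_s\,\pa_s f_s(x)$, which converts everything to classical calculus; continuity in $t$, differentiation under the integral in $x$, and the bound all follow at once, and you get a genuine pointwise $C^1$ spatial derivative rather than only a weak one. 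This is arguably cleaner, and it sidesteps the partial-sum/DCT argument altogether. Two small points to tidy up. First, the Young integration-by-parts formula is \emph{not} collected in the paper's Appendix~\ref{young} (which records only existence, inclusions, and the sup-norm estimate), so you should either cite it from a standard reference or note that since $s\mapsto f_s(x)$ is $C^1$ the Riemann--Stieltjes integral already provides the representation directly. Second, your bound on the boundary terms controls $|Z|_{L^\infty([0,T])}$ by $|Z|_{\frac{1}{\gamma},T,\R}$ via $|Z_t-Z_0|\le |Z|_{\frac{1}{\gamma},T,\R}$, which implicitly requires $Z_0=0$; this is harmless because the Young integral only depends on the increments of $Z$, so one may replace $Z$ by $Z-Z_0$ without changing $\cI_f$ (equivalently, use the alternative form $\cI_f(t,x)=f_t(x)(Z_t-Z_0)-\int_0^t(Z_s-Z_0)\pa_sf_s(x)\,ds$), but it should be said.
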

The proof of Lemma \ref{6.2} can be found in Appendix \ref{young}.
If we now use Lemma \ref{6.2} with $Z_{t}=Y_{t}$ (we recall that $ Y_t \in C^{\alpha}$ for some $\alpha \in (0,1)$) and $f_t(x)=e^{t\pa_{xx}}q(x)$, then from \eqref{kuos 20} we obtain that $\qq_t^{(Y)} \in H^1(\T;\R)$ for any given $t \geq 0$. Furthermore, from the semi-group property of the heat semi-group \eqref{heat} we get $t \to \qq_t^{(Y)} \in C([0,T];H^1(\T;\R))$.\\
Similarly, applying Lemma \ref{6.2} to the family of piece-wise $C^1$ functions $\{Y_{\cdot}^{\ka}\}_{\ka \in \N}$ approximating $Y_t$ in $C^{\gamma}$ (with $\gamma< \alpha$) we obtain that $\left \{ \qq_{\cdot}^{(Y^{\ka})} \right \}_{\ka \in \N}$ is bounded in $C([0,T];H^1(\T;\R))$, uniformly with respect to $\ka \in \N$. Indeed, from \eqref{kuos 20} applied with $Z_t=Y_t^{\ka}$ and $\gamma < \alpha$ we obtain 
\begin{align}\label{uniform bound on difference}
    \sup \limits_{t \in [0,T]}\left |\qq_t^{(Y^{\ka})} \right |_{H^1(\T;\R)} & \lesssim_{\gamma}\max\{T,1\}  |q|_{H^3(\T;\R)}|\mY_{\cdot}^{\ka}|_{\frac{1}{\gamma},T,\R},\qquad\text{for every $\ka \in \N$},
\end{align}
and from \eqref{kuos 20} applied to $Z_t=Y_t-Y_t^{\ka}$ and $\gamma < \alpha$ we obtain 
  \begin{equation}\label{uniform }
      \sup \limits_{t \in [0,T]}\left |\qq_t^{(Y)}-\qq_t^{(Y^{\ka})} \right |_{H^1(\T;\R)}\lesssim_{\gamma}\max\{T,1\}  |q|_{H^3(\T;\R)}|\mY_{\cdot}-\mY_{\cdot}^{\ka}|_{\frac{1}{\gamma},T,\R},\qquad\text{for every $\ka \in \N$}.
  \end{equation}
  Since we have assumed that $Y_{\cdot}^{\ka} \xrightarrow[]{\kappa \to \infty} Y_{\cdot}$ in $C^{\gamma}([0,T];\R)$, $\gamma <\alpha$, from \eqref{uniform bound on difference} it follows that there exists $\bar{\cC}>0$ independent of $\ka \in \N$ such that
$\sup \limits_{\ka \in \N} \left |Y_{\cdot}^{\ka}\right |_{\frac{1}{\gamma},T,\R} \leq \bar{\cC}$ and, consequently, 
\begin{equation}\label{uniform bound}
            \sup \limits_{\ka \in \N}\, \sup \limits_{t \in [0,T]}\left |\qq_{t}^{(Y^{\ka})} \right |_{H^1(\T;\R)}\lesssim_{\gamma}\bar{\cC} \max\{T,1\} |q|_{H^3(\T;\R)}.
\end{equation}
With this observation in place, since the function $\mathcal J$ in \eqref{stime rhoka} is locally bounded, from \eqref{uniform bound} we have 
\begin{equation}\label{uniform rho}
   \sup \limits_{t \in [0,T]} \left [|\rho_t^{\ka}|_{L^2(\T;\R)}^2+\int_0^T |\pa_x \rho_s^{\ka}|_{L^2(\T;\R)}^2 \,ds \right ] \leq \sup \limits_{\ka \in \N} \mathcal{J}\left (|\rho_0|_{L^2(\T;\R)}^2,\left |\qq_{\cdot}^{(Y^{\ka})} \right |_{C([0,T];H^1(\T;\R))} \right ) \leq \hat{\cC},
\end{equation}
for some $\hat{\cC}>0$ independent of $\ka \in \N$.
We can now prove Proposition \ref{prop limit rho kappa}.
\begin{proof}[Proof of Proposition \ref{prop limit rho kappa}]
Let $f \in C^{\infty}(\T;\R)$ then from H\"older's inequality we have
\begin{align*}
    \sup \limits_{t \in [0,T]} \left | \langle f,\rho_t^{\ka} \rangle_{L^2(\T;\R)}-\langle f,\rho_t \rangle_{L^2(\T;\R)} \right | \leq |f|_{L^2(\T;\R)} \sup \limits_{t \in [0,T]} \left| \rho_t^{\ka} - \rho_t \right |_{L^2(\T;\R)}.
\end{align*}
Hence, if we prove that the following limit holds
\begin{equation}\label{lim rho_K }
    \lim \limits_{\ka \to \infty}\sup \limits_{t \in [0,T]}|\rho_t^{\ka}-\rho_t|_{L^2(\T;\R)}=0,
\end{equation}
then the proof of Proposition \ref{prop limit rho kappa} is concluded.

        To prove \eqref{lim rho_K }, let us first recall the following standard heat kernel estimate (the proof of which can be found in \cite[Lemma 4.1]{angeli2023well}): for every $f \in C([0,T];L^2(\T;\R))$ we have 
        \begin{equation}\label{heat kernel estimate section 7}
      \left | \int_0^t e^{(t-s)\pa_{xx}} \pa_x f_sds \right |_{L^2(\T;\R)} \leq \hat C \int_0^t (t-s)^{-\frac{1}{2}}|f_s|_{L^2(\T;\R)}\,ds,\qquad t \in [0,T],
        \end{equation}
         where $\hat C>0$ is a constant independent of $f$.
		We now recall (see Lemma \ref{lemma: well-posedness of rhoPDE}) that the mild and weak solution of \eqref{PDErhoK} coincide, and the same is true for equation \eqref{rough PDE}. In this proof, we use the mild formulations of both equations (but we could have done it using the weak formulation as well). From the mild formulation of \eqref{rough PDE} and \eqref{PDErhoK} and the estimate \eqref{heat kernel estimate section 7} we have  (in what follows $C$ denotes a generic constant independent of $\ka \in \N$ but possibly dependent on $T$, which may change from line to line)
		\begin{align*}			
			| \rho_t-\rho_t^{\ka}|_{L^2(\T;\R)} & \leq  \sup \limits_{t \in [0,T]} \left |\qq_t^{(Y)}-\qq_t^{(Y^{\ka})} \right |_{L^2(\T;\R)}+C\int_0^t (t-s)^{-\frac{1}{2}} |V^{'}(\rho_s-\rho_s^{\ka} )|_{L^2(\T;\R)}\,ds\\
			& + C\int \limits _0^t (t-s)^{-\frac{1}{2}} | (F^{'} \ast \rho_s) \rho_s-(F^{'} \ast \rho_s^{\ka}) \rho_s^{\ka}|_{L^2(\T;\R)}\,ds \\
			& \leq  \sup \limits_{t \in [0,T]} \left |\qq_t^{(Y)}-\qq_t^{(Y^{\ka})} \right |_{L^2(\T;\R)}+C\int  \limits_0^t (t-s)^{-\frac{1}{2}} |\rho_s-\rho_s^{\ka} |_{L^2(\T;\R)}\,ds\\
        & + C\int_0^t (t-s)^{-\frac{1}{2}} | (F^{'} \ast (\rho_s-\rho_s^{\ka})) \rho_s|_{L^2(\T;\R)}\,ds\\
        & +C \int_0^t (t-s)^{-\frac{1}{2}}|(F^{'} \ast \rho_s^{\ka}) (\rho_s-\rho_s^{\ka})|_{L^2(\T;\R)}\,ds, \\			
		\end{align*}
		where we recall that $\qq_t^{(Y)}$ and $\qq_t^{\left ( Y^{\ka}\right )}$ are defined by \eqref{notationfor stochastic convolution} with $Z_t=Y_t$, $t\geq 0$, and $Z_t=Y_t^{\ka}$, $t \geq 0$, respectively.
		Since $\rho_t$ is the solution to \eqref{rough PDE} (which has the same structure of the PDE \eqref{PDErhoK} solved by $\rho_t^{\ka}$) with the same strategy adopted for obtaining \eqref{stime rhoka} one can see that the exact estimate \eqref{stime rhoka} holds for $\rho_t$ as well so that there exists $\tilde C=\tilde C(T)>0$ for which $\sup \limits_{t \in [0,T]} |\rho_t|_{L^2(\T;\R)}\leq \tilde C$. 
        Hence, we obtain  
        \begin{align*}
			\left | \left ( F^{'} \ast \rho_s \right )\left(\rho_s- \rho_s^{\ka}\right)\right |_{L^2(\T;\R)}
			\leq C &|\rho_s|_{L^2(\T;\R)}|\rho_s-\rho_s^{\ka}|_{L^2(\T;\R)} \leq C\cdot \tilde C|\rho_s-\rho_s^{\ka}|_{L^2(\T;\R)},
		\end{align*}
		and, similarly, using \eqref{uniform rho}, we have 
		\begin{align*}
			\left | \left ( F^{'} \ast (\rho_s-\rho_s^{\ka}) \right )\rho_s^{\ka} \right |_{L^2(\T;\R)}\,\leq\,C \cdot \hat C|\rho_s-\rho_s^{\ka} |_{L^2(\T;\R)},
		\end{align*}
		for all $s \in [0,T]$.
		From the above, we then have 
		\begin{equation}
			\begin{split}
				| \rho_t - \rho_t^{\ka} |_{L^2(\T;\R)}  & \leq\, C\left [ \sup \limits_{t \in [0,T]} \left |\qq_t^{(Y)}-\qq_t^{(Y^{\ka})} \right |_{L^2(\T;\R)} + (\hat C+ \tilde C)\int_0^t (t-s)^{-\frac{1}{2}}|\rho_s-\rho_s^{\ka}|_{L^2(\T;\R)}\,ds \right ], \notag
			\end{split}
		\end{equation}
		so that using Generalised Gronwall's lemma inequality (see e.g \cite[Lemma 4.3]{angeli2023well}) finally gives
\begin{equation*}
    \sup \limits_{t \in [0,T]}|\rho_t-\rho_t^{\ka}|_{L^2(\T;\R)} \leq \bar{C} \sup \limits_{t \in [0,T]} \left |\qq_t^{(Y)}-\qq_t^{(Y^{\ka})} \right |_{L^2(\T;\R)}
\end{equation*} 
  where $\bar{C}>0$ depends on $|V^{'}|_{L^{\infty}(\T;\R)}$, $|F^{'}|_{L^{\infty}(\T;\R)}$, $|q|_{H^3(\T;\R)}$, $\hat C$ and $\tilde C$. 
  Now using \eqref{uniform } and by recalling that $|\mY-\mY^{\ka}|_{\frac{1}{\gamma},T,\R} \xrightarrow[]{\kappa \to \infty}0$ the assertion readily follows.
\end{proof}
\section{Proof of Theorem \ref{theorem infinite dimensional approximation}}\label{infinite noise bm}
The purpose of this section is to prove Theorem \ref{theorem infinite dimensional approximation}. This is obtained by first proving Proposition \ref{prop limit m to infty} and then Corollary \ref{corollary propositions combined}. We start by considering the following class of PDEs:
\begin{equation}\label{rough PDE infinite}
    \begin{dcases}
        & \pa_t \rho^{\infty}_t=\pa_{xx}\rho_t^{\infty}+\pa_x[(V^{'}+F*\rho^{\infty}_t)\rho^{\infty}_t]+\sum \limits_{z \in \Z} \lambda_z e_z\pa_t Y_t^z,\qquad t \in (0,T],\\
        & \rho^{\infty}|_{t=0}=\rho_0,
    \end{dcases}
\end{equation}
where $\{e_z\}_{z \in \Z}$ denotes the basis on $L^2(\T;\R)$ defined in \eqref{fourier}, $\{\lambda_z\}_{z \in \Z}$ is as in \eqref{infinite dimensional noise}, lastly, $\{Y_{\cdot}^z\}_{z \in \Z}$ is a family of $\alpha$-H\"older time-continuous paths, $\alpha \in (0,1)$.
Analogously to what we have done in previous sections, this will allow to treat deterministically equation \eqref{SPDEintro}, which, by using \eqref{infinite dimensional noise}, can be rewritten as
\begin{equation}\label{SPDE space-time white noise}
    \begin{dcases}
        & \pa_t v_t=\pa_{xx}v_t+\pa_x[(V^{'}+F*v_t)v_t]+ \sum \limits_{z \in \Z} \lambda_z e_z\pa_tw_t^z,\,\,\,\, t \in (0,T],\\
        & v|_{t=0}=v_0
    \end{dcases}
\end{equation}
where $\{w_{\cdot}^z\}_{z \in \Z}$ is as in \eqref{infinite dimensional noise}. The well-posedness of the SPDE \eqref{SPDEintro} is stated  in Lemma \ref{lemma: well-posedness of rhoPDE}  and it is a consequence of the lemma below, which is a straightforward re-adaptation to our context, of \cite[Theorem 2.13]{dapra04}.
\begin{lemma}\cite[Theorem 2.13]{dapra04}\label{lemma differentiability stochastic convolution}
Let the eigenvalues $\{\lambda_z\}_{z \in \Z}$  in \eqref{infinite dimensional noise} satisfy the following condition: 
        \begin{equation}\label{eigenvalues constraint}
                    \text{$\exists \,\delta \in \left (0,\frac{1}{2} \right )$ such that } \sum \limits_{z \in \Z} \lambda_z^2|z|^{4\delta} <+\infty;
        \end{equation}
then the (stochastic) convolution $t \to \qq_t^{(W)}$ belongs to $C \left ([0,T];H^1(\T;\R) \right )$,  $\hat \mP$-a.s. and 
\begin{equation*}
    \hat \mE \left ( \sup \limits_{t \in [0,T]} \left |\qq_t^{(W)} \right |_{H^1(\T;\R)}^2\right ) < +\infty.
\end{equation*}
\end{lemma}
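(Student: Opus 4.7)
The plan is to exploit the explicit spectral representation of $\qq_t^{(W)}$. Using \eqref{heat} together with the fact that $\{w_t^z\}_{z \in \Z}$ are independent scalar Brownian motions, one writes
\begin{equation*}
\qq_t^{(W)}(x) \;=\; \sum_{z \in \Z} \lambda_z\, e_z(x)\, I_z(t), \qquad I_z(t)\,:=\, \int_0^t e^{-(t-s)|z|^2}\,dw^z_s,
\end{equation*}
so that $\qq_t^{(W)}$ becomes an orthogonal series (in $L^2(\T;\R)$) of independent Gaussian Ornstein--Uhlenbeck scalar processes. Since the basis $\{e_z\}_{z \in \Z}$ diagonalises $\pa_{xx}$ with eigenvalues $-|z|^2$, the $H^1(\T;\R)$-norm of any $f = \sum_z c_z e_z$ is equivalent to $\big(\sum_z (1+|z|^2) c_z^2\big)^{1/2}$. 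This representation is the common engine for both assertions.

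Next, I would establish the Kolmogorov-type estimate
\begin{equation*}
\hat\mE\,\big|\qq_t^{(W)} - \qq_s^{(W)}\big|_{H^1(\T;\R)}^2 \;\leq\; C\,|t-s|^{2\delta}, \qquad 0\leq s\leq t\leq T,
\end{equation*}
which will imply, by the Kolmogorov--Chentsov continuity theorem applied in the Hilbert space $H^1(\T;\R)$, the existence of a time-continuous modification. For $t>s$ one splits
$I_z(t)-I_z(s) = (e^{-(t-s)|z|^2}-1)\,I_z(s) + \int_s^t e^{-(t-r)|z|^2}\,dw^z_r$
and uses It\^o isometry together with the independence of the $w^z$'s to obtain
\begin{equation*}
\hat\mE\,\big|\qq_t^{(W)} - \qq_s^{(W)}\big|_{H^1(\T;\R)}^2 \;\lesssim\; \sum_{z\in\Z} \lambda_z^2 (1+|z|^2)\left[\frac{(1-e^{-(t-s)|z|^2})^2}{|z|^2}+\frac{1-e^{-2(t-s)|z|^2}}{|z|^2}\right].
\end{equation*}
The elementary inequality $1-e^{-u}\leq u^{\delta}$ for $u\geq 0$ and $\delta\in(0,1]$ turns each bracketed term into a multiple of $(t-s)^{2\delta}|z|^{4\delta-2}$, and the factor $1+|z|^2$ produces $|z|^{4\delta}$ in the summand (for $|z|\geq 1$; the $z=0$ mode contributes a trivial $(t-s)$-term). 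The summability assumption \eqref{eigenvalues constraint} is precisely what makes the resulting series finite, yielding the desired Hölder exponent $2\delta$.

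For the second claim, $\hat\mE(\sup_{t\in[0,T]}|\qq_t^{(W)}|_{H^1}^2) < +\infty$, the cleanest route is the factorization method of Da Prato--Kwapie\'n--Zabczyk: fix $\alpha \in (0, \delta)$ and write
\begin{equation*}
\qq_t^{(W)} \;=\; \frac{\sin(\pi\alpha)}{\pi}\int_0^t (t-s)^{\alpha-1}\,e^{(t-s)\pa_{xx}}\,Y_s\,ds,\qquad Y_s := \int_0^s (s-r)^{-\alpha}\,e^{(s-r)\pa_{xx}}\,dW_r.
\end{equation*}
A computation analogous to the one above shows that $\sup_{s\in[0,T]}\hat\mE\,|Y_s|_{H^1}^p < \infty$ for any $p$ with $\alpha > 1/p$ (again under \eqref{eigenvalues constraint}), and Hölder's inequality applied to the factorization formula delivers the pathwise estimate
$
\sup_{t\in[0,T]}|\qq_t^{(W)}|_{H^1} \leq C_{\alpha,T}\big(\int_0^T|Y_s|_{H^1}^p\,ds\big)^{1/p},
$
from which the desired second moment bound follows by Fubini. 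The main technical obstacle is the bookkeeping on the range of $\delta$ in \eqref{eigenvalues constraint}: the parameter $\delta<1/2$ must simultaneously control the Kolmogorov continuity exponent and, via $\alpha$, make the factorization integrable; but the inequality $1-e^{-u}\leq u^\delta$ is flexible enough that all constants can be absorbed, and the restriction $\delta<1/2$ in the statement is essentially sharp for the factorisation step.
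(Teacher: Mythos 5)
The factorisation part of your argument lines up with the route the paper itself takes: the paper does not give a self-contained proof of this lemma but cites \cite[Theorem 2.13]{dapra04}, whose proof is precisely the Da Prato--Kwapie\'n--Zabczyk factorisation method you invoke, and the paper's own proof of the accompanying remainder estimate \eqref{remainder goes to zero BM} in the appendix uses exactly the same factorisation lemma with the exponent $m>1/(2\delta)$. Your spectral computation of the second moment of the increment, including the use of $1-e^{-u}\leq u^\gamma$ to extract $(t-s)^{2\delta}|z|^{4\delta}$, is the right ``unpacking'' of that citation and is correct.

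There is, however, a genuine gap in your first part. You establish
\begin{equation*}
\hat\mE\,\big|\qq_t^{(W)}-\qq_s^{(W)}\big|^2_{H^1(\T;\R)} \lesssim |t-s|^{2\delta},
\end{equation*}
and then assert that the Kolmogorov--Chentsov theorem in $H^1(\T;\R)$ yields a continuous modification. But Kolmogorov--Chentsov requires a moment bound with time exponent strictly greater than one, and here $2\delta<1$ because $\delta\in(0,\tfrac12)$. A second-moment bound with exponent $2\delta<1$ cannot, on its own, deliver continuity. The standard fix is to exploit Gaussianity of the increments: since $\qq_t^{(W)}-\qq_s^{(W)}$ is a centered Gaussian $H^1$-valued random variable, moment equivalence gives $\hat\mE\,|\qq_t^{(W)}-\qq_s^{(W)}|^{2m}_{H^1}\lesssim_m\big(\hat\mE\,|\qq_t^{(W)}-\qq_s^{(W)}|^2_{H^1}\big)^m\lesssim|t-s|^{2m\delta}$, and choosing $m>1/(2\delta)$ makes the exponent exceed one. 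This is precisely the same $m>1/(2\delta)$ used in the paper's proof of \eqref{remainder goes to zero BM}, so the role of the hypothesis $\delta<\tfrac12$ is not to make the factorisation sharp but simply to state the weakest summability condition under which this moment-boosting works. Alternatively, you could drop the Kolmogorov step entirely: once the factorisation representation and the estimate $\sup_{s}\hat\mE\,|Y_s|_{H^1}^p<\infty$ for $p>1/\alpha$ (with $\alpha\in(1/p,\delta)$) are in place, the factorisation formula produces both a continuous modification and the supremum bound in one stroke, which is exactly what \cite[Theorem 2.13]{dapra04} does; your separate continuity argument is then redundant.
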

Let us also point out that from \eqref{eigenvalues constraint} and by following the steps of the proof of \cite[Theorem 2.13]{dapra04} one also obtains  
\begin{align}\label{remainder goes to zero BM}
  & \lim \limits_{L \to \infty} \,\, \sup \limits_{t \in [0,T]} \,\, \sum \limits_{|z| \geq L} z^2\lambda_z^2 \left |\int_0^t e^{-(t-s)z^2}\,dw_s^z \right |^2=0,\qquad \hat \mP-a.s. 
\end{align}
We refer the reader to Appendix \ref{proofs} for the proof of \eqref{remainder goes to zero BM}. 
Since our aim is to study \eqref{SPDE space-time white noise} through deterministic systems of type \eqref{rough PDE infinite}, from Lemma \ref{lemma differentiability stochastic convolution} and \eqref{remainder goes to zero BM} we impose the following conditions on the H\"older time-continuous trajectories $Y_t^{z}$ in \eqref{rough PDE infinite}:
\begin{itemize}
    \item The convolution $\mathcal{Y}_{\cdot}$ defined as 
    \begin{equation}\label{deterministic convolution}
        \mathcal{Y}_t(x):= \sum \limits_{z \in \Z} \lambda_z e_z(x) \int_0^t e^{-(t-s)z^2}\,d\mY_s^z,\qquad t \in [0,T],\,\,\,\, x \in \T,
    \end{equation}
    is a time-continuous $H^1(\T;\R)$-valued process. That is, $\mathcal {\cY}_{\cdot} \in C([0,T];H^1(\T;\R))$.
    \item The following condition holds 
    \begin{equation}\label{vincolo remainder}
        \lim \limits_{L \to \infty} \,\, \sup \limits_{t \in [0,T]} \,\, \sum \limits_{|z| \geq L} z^2\lambda_z^2 \left |\int_0^t e^{-(t-s)z^2}\,dY_s^z \right |^2=0.
    \end{equation}
\end{itemize}
Condition \eqref{vincolo remainder} is a technical condition needed to prove the limit $m \to \infty$ and it becomes clear in the proof of Proposition \ref{prop limit m to infty} why we have imposed it on $\{Y_{\cdot}^z\}_{z \in \Z}$. \footnote{Let us point out that Lemma \ref{lemma differentiability stochastic convolution} and \eqref{remainder goes to zero BM} show that set given by the family of trajectories $\{Y_{\cdot}^z\}_{z \in \Z}$ for which the convolution  $\cY_t$ defined as in \eqref{deterministic convolution} belongs to $C([0,T];H^1(\T;\R))$ and that satisfies condition \eqref{vincolo remainder} is non-empty. Note moreover that since $\cY_{\cdot}$ is well-defined in $C([0,T];H^1(\T;\R))$, condition \eqref{vincolo remainder}, without the supremum on $[0,T]$, is automatically true. Hence, \eqref{vincolo remainder} is an extra condition because of the presence of the supremum.} 
The well-posedness of the PDE \eqref{rough PDE infinite} can be found in Lemma \ref{lemma: well-posedness of rhoPDE} and it follows from the lemma below.
\begin{lemma}\label{7.4}
The convolution $\cY_{\cdot}$ defined in \eqref{deterministic convolution} belongs to $C([0,T];H^1(\T;\R))$ and its weak derivative is given by
  \begin{equation}\label{weak derivative infinite}
      \pa_x\cY=\sum \limits_{z \in \Z} |z|\lambda_z \int_0^t e^{-(t-s)z^2}e_{-z}(x)\,d\mY_s^z,\,\,\,\, t \in [0,T],\,\,\,\, dx-a.e.
  \end{equation}
\end{lemma}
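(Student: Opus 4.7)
The plan is to identify the weak derivative of $\cY_t$ by approximating the infinite sum in \eqref{deterministic convolution} with finite truncations and passing to the limit, using the standing hypothesis \eqref{vincolo remainder} to obtain uniform-in-$t$ control of the tails. Concretely, for each $L\in\N$ set
\begin{equation*}
\cY_t^L(x) := \sum_{|z|\leq L} \lambda_z\, e_z(x)\, I_z(t), \qquad I_z(t):=\int_0^t e^{-(t-s)z^2}\,dY_s^z.
\end{equation*}
For every fixed $z$, the integral $I_z(t)$ is a well-defined Young integral since $s\mapsto e^{-(t-s)z^2}$ is $C^\infty$ on $[0,t]$ and $s\mapsto Y_s^z$ is $\alpha$-H\"older; the continuity of $t\mapsto I_z(t)$ then follows from Lemma \ref{6.2} applied with $f_s(x)=\lambda_z e_z(x) e^{-(t-s)z^2}$ (for each fixed $t$) together with standard semigroup arguments. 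Since $\cY_t^L$ is a finite linear combination of smooth basis elements, it is classically differentiable in $x$ and
\begin{equation*}
\partial_x \cY_t^L(x) = \sum_{|z|\leq L} \lambda_z\, (\partial_x e_z)(x)\, I_z(t),
\end{equation*}
where an explicit computation from \eqref{fourier} gives $\partial_x e_z = \pm |z|\, e_{-z}$ (the sign being immaterial for the $L^2$ norm).

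The next step is to pass to the limit $L\to\infty$. By the orthonormality of $\{e_z\}_{z\in\Z}$ in $L^2(\T;\R)$ (and likewise of the relabelled family $\{e_{-z}\}_{z\in\Z}$), Parseval's identity yields, for $L<L'$,
\begin{equation*}
|\cY_t^{L'}-\cY_t^L|_{L^2(\T;\R)}^2 = \!\!\sum_{L<|z|\leq L'}\!\! \lambda_z^2 |I_z(t)|^2,
\qquad
|\partial_x\cY_t^{L'}-\partial_x\cY_t^L|_{L^2(\T;\R)}^2 = \!\!\sum_{L<|z|\leq L'}\!\! z^2 \lambda_z^2 |I_z(t)|^2.
\end{equation*}
Hypothesis \eqref{vincolo remainder} asserts precisely that $\sup_{t\in[0,T]}\sum_{|z|> L} z^2 \lambda_z^2 |I_z(t)|^2 \to 0$ as $L\to\infty$; since $z^2\geq 1$ for $|z|\geq 1$, this simultaneously yields the Cauchy property of $\{\cY^L\}_L$ and $\{\partial_x\cY^L\}_L$ in $C([0,T];L^2(\T;\R))$ (the $z=0$ term, if present, is a single continuous curve that does not affect Cauchy-ness). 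Denote by $g\in C([0,T];L^2(\T;\R))$ the limit of $\partial_x\cY^L$.

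Finally, since $\cY_t^L\to\cY_t$ and $\partial_x \cY_t^L\to g_t$ in $L^2(\T;\R)$ (pointwise in $t$, uniformly in $t$), the closedness of the weak derivative on $L^2(\T;\R)$ identifies $g_t$ with the weak derivative $\partial_x\cY_t$ for every $t\in[0,T]$. The explicit form \eqref{weak derivative infinite} is read off directly from the expression for $\partial_x\cY_t^L$ before passing to the limit, and the uniform-in-$t$ convergence in both $L^2$ and in the $L^2$-norm of the derivative gives $\cY\in C([0,T];H^1(\T;\R))$. The main obstacle — and the sole reason for imposing \eqref{vincolo remainder} on the driving paths $\{Y^z\}_{z\in\Z}$ — is precisely obtaining the \emph{uniform-in-$t$} control of the tails $\sum_{|z|>L} z^2\lambda_z^2 |I_z(t)|^2$: without this, only convergence of $\partial_x\cY^L$ in $L^2_t L^2_x$ would be available, which is insufficient to conclude time-continuity of $\partial_x\cY$ as an $L^2(\T;\R)$-valued function.
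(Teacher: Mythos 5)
Your proof is correct in substance but takes a genuinely different route from the paper. The paper proves \eqref{weak derivative infinite} directly by testing $\cY_t$ against $\pa_x\varphi$ for $\varphi\in C^\infty(\T;\R)$, using the (pre-imposed) standing condition $\cY_{\cdot}\in C([0,T];H^1(\T;\R))$ to justify swapping the infinite sum with the $L^2$-pairing, and then integrating by parts term by term; the membership in $C([0,T];H^1(\T;\R))$ is not proved there but assumed as part of the setup preceding the lemma. Your approach instead proves it: you truncate the series, observe that Parseval plus the uniform-in-$t$ tail condition \eqref{vincolo remainder} makes both $\{\cY^L\}_L$ and $\{\pa_x\cY^L\}_L$ Cauchy in $C([0,T];L^2(\T;\R))$, and then invoke closedness of the weak derivative. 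This buys you something the paper does not claim: given continuity of each mode $t\mapsto I_z(t)$, condition \eqref{vincolo remainder} alone implies $\cY_{\cdot}\in C([0,T];H^1(\T;\R))$, so the first imposed condition is a consequence of the second (indeed, via Dini's theorem, the two conditions are in fact equivalent under this mild per-mode continuity assumption, something the paper's footnote suggests they did not notice).

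Two minor points. First, the invocation of Lemma \ref{6.2} with $f_s(x)=\lambda_z e_z(x)e^{-(t-s)z^2}$ is not quite licit as written, since the $f$ in Lemma \ref{6.2} may depend on $s$ and $x$ but not on the upper endpoint $t$; the clean fix (the ``semigroup argument'' the paper itself alludes to after Lemma \ref{6.2}) is to factor $I_z(t)=e^{-tz^2}\int_0^t e^{sz^2}\,dY_s^z$ and apply the Young-integral continuity to $\int_0^{\cdot} e^{sz^2}\,dY_s^z$ alone. Second, as you correctly flag, $\pa_x e_z=\pm|z|\,e_{-z}$ with the sign depending on $\operatorname{sgn}(z)$, so the identity \eqref{weak derivative infinite} as printed in the paper is off by a sign on the $z<0$ terms; this is an error in the paper's statement (and its own proof), not in your argument, and it affects nothing else since only $L^2$-norms of the tails enter.
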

\begin{proof}
    We just have to show \eqref{weak derivative infinite}. Let $\varphi \in C^{\infty}(\T;\R)$ be a test function then from Fubini-Tonelli's theorem (which can be applied since $\cY \in C([0,T];H^1(\T;\R))$) we have 
    \begin{align*}
        &\langle \cY, \pa_x\varphi\rangle_{L^2(\T;\R)}=\sum \limits_{z \in \Z} \lambda_z\int_0^t e^{-(t-s)z^2}\,d\mY_s^z\langle e_z,\pa_x\varphi \rangle_{L^2(\T;\R)}\\
        &=-\sum \limits_{z \in \Z} |z|\lambda_z\int_0^t e^{-(t-s)z^2}\,d\mY_s^z\langle e_{-z},\varphi \rangle_{L^2(\T;\R)}=-\left \langle \,\,\,\,\sum \limits_{z \in \Z} |z|\lambda_ze_{-z} \int_0^t e^{-(t-s)z^2}\,d\mY_s^z , \varphi \right \rangle_{L^2(\T;\R)},
    \end{align*}
    which implies the statement.
\end{proof}

\noindent
 
To prove Theorem \ref{theorem infinite dimensional approximation} we begin with observing the following fact: we already know that Theorem \ref{main_thm2} holds; therefore, to conclude it suffices to extend Theorem \ref{main_thm2} to a sum of infinitely many Brownian motions. To start, we know that Proposition \ref{prop linear combination Intro} allows us to extend Theorem \ref{main_thm2} to a sum of finitely many Brownian motions. Hence, to conclude only the limit $m\to \infty$ remains to be shown. With this direction in mind, we first prove a preliminary result, Lemma \ref{8.6}, which provides us a uniform upper bound in $m \in \N$ on the $L^2(\T;\R)$-norm of $\rho_t^m$ so that the limit $m\to \infty$ in the deterministic setting can be taken care of, this is done in Proposition \ref{prop limit m to infty}. After that, we state and prove Corollary \ref{corollary propositions combined} and at last demonstrate Theorem \ref{theorem infinite dimensional approximation}.
\begin{lemma}\label{8.6}
Let $\rho_t^{m}$ be the mild solution to \eqref{rough approx}; then the following holds:
\begin{equation*}
    \sup \limits_{m \in \N} \sup \limits_{t \in [0,T]} |\rho_t^{m}|_{L^2(\T;\R)}<+\infty.
\end{equation*}
\end{lemma}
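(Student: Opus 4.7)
The plan is to reduce Lemma 8.6 to the energy estimate \eqref{stime rhoka} of Lemma \ref{well-posedness PDErhoK} by noting that the proof of that estimate only uses the forcing term through its $C([0,T];H^1(\T;\R))$-norm, and then to bound this norm uniformly in $m$ by exploiting the orthogonality of the basis $\{e_z\}_{z\in\Z}$. More precisely, writing the mild formulation of \eqref{rough approx} as
\begin{equation*}
    \rho_t^{m}=e^{t\pa_{xx}}\rho_0+\int_0^t e^{(t-s)\pa_{xx}}\pa_x\left[(V^{'}+F^{'}\ast \rho_s^{m})\rho_s^{m}\right]ds+\cY_t^{m},
\end{equation*}
where the truncated convolution
\begin{equation*}
    \cY_t^{m}(x):=\sum_{|z|\leq m}\lambda_z e_z(x)\int_0^t e^{-(t-s)z^2}\,dY_s^z
\end{equation*}
plays the role that $\qq_{\cdot}^{(Y^\ka)}$ played in Lemma \ref{well-posedness PDErhoK}, the argument used there goes through essentially verbatim to yield
\begin{equation*}
    |\rho_t^{m}|_{L^2(\T;\R)}^2+\int_0^t|\pa_x\rho_s^{m}|_{L^2(\T;\R)}^2\,ds\leq \mathcal{J}\!\left(|\rho_0|_{L^2(\T;\R)}^2,\,|\cY_{\cdot}^{m}|_{C([0,T];H^1(\T;\R))}\right),
\end{equation*}
with $\mathcal{J}$ the same locally bounded, continuous function appearing in \eqref{stime rhoka}.

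Given this, the whole task reduces to establishing $\sup_{m\in\N}|\cY_{\cdot}^m|_{C([0,T];H^1(\T;\R))}<\infty$. Using the orthonormality of $\{e_z\}_{z\in\Z}$ in $L^2(\T;\R)$ together with $\pa_x e_z=\pm|z|e_{-z}$ (which is how \eqref{weak derivative infinite} was derived in Lemma \ref{7.4}), the Parseval identity gives
\begin{equation*}
    |\cY_t^m|_{H^1(\T;\R)}^2=\sum_{|z|\leq m}(1+z^2)\lambda_z^2\left|\int_0^t e^{-(t-s)z^2}\,dY_s^z\right|^2.
\end{equation*}
This quantity is monotone non-decreasing in $m$, and by Lemma \ref{7.4} its limit as $m\to\infty$ is precisely $|\cY_t|_{H^1(\T;\R)}^2$, which is finite uniformly in $t\in[0,T]$ by the standing hypothesis $\cY_{\cdot}\in C([0,T];H^1(\T;\R))$. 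Therefore
\begin{equation*}
    \sup_{m\in\N}\sup_{t\in[0,T]}|\cY_t^m|_{H^1(\T;\R)}^2\leq \sup_{t\in[0,T]}|\cY_t|_{H^1(\T;\R)}^2<+\infty,
\end{equation*}
and plugging this into the preceding energy estimate, together with the local boundedness of $\mathcal{J}$, yields the conclusion.

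I expect no serious obstacle here: the only subtle point is to make sure that the proof of \eqref{stime rhoka}, which in Lemma \ref{well-posedness PDErhoK} is stated for forcings of the specific form $q(x)\pa_tY_t^\ka$, is actually robust enough to accept a general forcing whose convolution lies in $C([0,T];H^1(\T;\R))$. This is the case because, as spelled out in the proof of Lemma \ref{well-posedness PDErhoK} (which in turn follows \cite[Prop.~B.1]{angeli2023well}), the structure of the forcing is used only through the semigroup convolution estimate and the resulting $H^1$ bound on $\qq$; the nonlinear contribution $\pa_x[(V'+F'*\rho^m)\rho^m]$ is handled exactly as before. Consequently the argument depends on $m$ only through the $H^1$-norm of $\cY_{\cdot}^m$, which we have just shown to be bounded uniformly in $m$.
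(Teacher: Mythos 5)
Your proof is correct and takes essentially the same route as the paper: both invoke the energy estimate of \cite[Section~4]{angeli2023well} (equivalently, Lemma~\ref{well-posedness PDErhoK}) with $\cY_{\cdot}^m$ in place of $\qq_{\cdot}^{(Y^\ka)}$, then observe that $\sup_{m}\sup_{t}|\cY_t^m|_{H^1}$ is finite because $\cY_{\cdot}\in C([0,T];H^1(\T;\R))$. You simply make explicit, via Parseval and monotonicity of the partial sums, what the paper leaves as a one-line remark.
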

\begin{proof}
   From \cite[Section 4]{angeli2023well} we know that 
    \begin{equation*}
    \sup \limits_{t \in [0,T]}|\rho_t^{m}|_{L^2(\T;\R)}^2+\int_0^t |\pa_x\rho_s^{m}|_{L^2(\T;\R)}^2 \,ds \leq \mathcal{J} \left (|\rho_0|_{L^2(\T;\R)}^2,\sup \limits_{t \in [0,T]}|\cY^{m}|_{H^1(\T;\R)} \right ),
\end{equation*}
where $\mathcal{J}$ is the same function of estimate \eqref{stime rhoka} and $\cY_t^{m}$ is defined as
\begin{equation}\label{deterministic convolution truncated}
\cY_t^m(x):=\sum \limits_{|z| \leq m} \lambda_z e_z(x) \int_0^t e^{-(t-s)|z|^2}\,dY_s^z,\qquad t \in [0,T],\,x \in \T,\,m \in \N.    
\end{equation} 
Furthermore, since $\cY_{\cdot} \in C([0,T];H^1(\T;\R))$ we obtain 
$$ \sup \limits_{m \in \N} \sup \limits_{t \in [0,T]} |\cY_t^{m}|_{H^1(\T;\R)}^2 < +\infty $$ 
from which the statement readily follows.
\end{proof}
\begin{prop}\label{prop limit m to infty}
    Let $ \rho_t^{m} $ be the solution to the PDE \eqref{rough approx} and let $\rho_t^{\infty}$ be the solution to the PDE \eqref{rough PDE infinite}; then the following limit holds:
    \begin{equation*}
        \lim \limits_{m \to \infty} \sup \limits_{t \in [0,T]} |\rho_t^{m}-\rho_t^{\infty}|_{L^2(\T;\R)}=0.
    \end{equation*}
\end{prop}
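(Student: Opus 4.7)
The plan is to mirror the scheme used to prove Proposition \ref{prop limit rho kappa} in Section \ref{limit kappa}, since equations \eqref{rough approx} and \eqref{rough PDE infinite} share exactly the same structure and only differ in the forcing, which in mild form is $\cY_t^m$ (defined in \eqref{deterministic convolution truncated}) versus $\cY_t$ (defined in \eqref{deterministic convolution}). First I would write $\rho_t^m$ and $\rho_t^{\infty}$ in their mild formulations and subtract them, obtaining
\begin{equation*}
\rho_t^{\infty} - \rho_t^m = \bigl(\cY_t - \cY_t^m\bigr) + \int_0^t e^{(t-s)\pa_{xx}} \pa_x\!\left[ V'(\rho_s^{\infty}-\rho_s^m) + (F'\ast\rho_s^{\infty})\rho_s^{\infty} - (F'\ast\rho_s^m)\rho_s^m\right] ds.
\end{equation*}
Applying the heat kernel estimate \eqref{heat kernel estimate section 7}, splitting the nonlinear term as
\begin{equation*}
(F'\ast\rho_s^{\infty})\rho_s^{\infty} - (F'\ast\rho_s^m)\rho_s^m = (F'\ast(\rho_s^{\infty}-\rho_s^m))\rho_s^{\infty} + (F'\ast\rho_s^m)(\rho_s^{\infty}-\rho_s^m),
\end{equation*}
and using the uniform-in-$m$ bound $\sup_{m,t}|\rho_t^m|_{L^2}<\infty$ from Lemma \ref{8.6} (applied both to $\rho_t^m$ and, exactly analogously via Lemma \ref{7.4}, to $\rho_t^{\infty}$), I would obtain
\begin{equation*}
|\rho_t^{\infty}-\rho_t^m|_{L^2} \leq \sup_{t\in[0,T]}|\cY_t-\cY_t^m|_{L^2} + C\int_0^t (t-s)^{-1/2} |\rho_s^{\infty}-\rho_s^m|_{L^2}\, ds,
\end{equation*}
with $C$ independent of $m$. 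A standard generalized Gronwall argument (as in the proof of Proposition \ref{prop limit rho kappa}) then yields
\begin{equation*}
\sup_{t\in[0,T]}|\rho_t^{\infty}-\rho_t^m|_{L^2} \leq \bar C \sup_{t\in[0,T]}|\cY_t-\cY_t^m|_{L^2}.
\end{equation*}

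It therefore remains to show that $\sup_{t\in[0,T]}|\cY_t-\cY_t^m|_{L^2}\to 0$ as $m\to\infty$. Since $\{e_z\}_{z\in\Z}$ is an orthonormal basis of $L^2(\T;\R)$, by Parseval's identity
\begin{equation*}
|\cY_t-\cY_t^m|_{L^2}^2 = \sum_{|z|>m} \lambda_z^2 \left|\int_0^t e^{-(t-s)z^2}\, dY_s^z\right|^2.
\end{equation*}
For $|z|\geq 1$ we have $z^2\geq 1$, so for any $m\geq 1$ this tail is dominated by $\sum_{|z|>m} z^2 \lambda_z^2 |\int_0^t e^{-(t-s)z^2}dY_s^z|^2$, which converges to $0$ uniformly in $t\in[0,T]$ by the standing hypothesis \eqref{vincolo remainder}. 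This closes the argument.

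The only genuinely delicate point is the uniform-in-$t$ convergence of the $L^2$-tail of $\cY_t-\cY_t^m$; this is precisely the reason why condition \eqref{vincolo remainder} was imposed from the outset on the family $\{Y^z_{\cdot}\}_{z\in\Z}$ (and in the stochastic setting it is supplied by \eqref{remainder goes to zero BM}, which in turn uses the eigenvalue decay assumption $\sum_z |z|^{4\delta}\lambda_z^2<\infty$). Everything else is a straightforward transposition of the Gronwall-type comparison used in Section \ref{limit kappa}; in particular the nonlinear drift terms are handled in exactly the same way, and the initial datum is the same for both equations so no contribution arises from $t=0$.
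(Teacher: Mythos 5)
Your proposal is correct and takes essentially the same approach as the paper: both proofs work with the mild formulations of \eqref{rough approx} and \eqref{rough PDE infinite}, invoke the argument of Proposition \ref{prop limit rho kappa} (heat-kernel estimate, splitting of the nonlinear term, uniform-in-$m$ $L^2$ bound from Lemma \ref{8.6}, generalised Gronwall) to reduce the problem to $\sup_t |\cY_t - \cY_t^m|_{L^2} \to 0$, and then conclude via Parseval's identity together with condition \eqref{vincolo remainder}. Your version simply spells out the Gronwall step and the tail estimate that the paper's proof leaves implicit.
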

\begin{proof}
    Recall that $\rho_t^{m}$ and the $\rho_t^{\infty}$ satisfy 
		\begin{align*}
			\rho^{m}_t\,&=e^{t\pa_{xx}}\rho_0+\int_0^t e^{(t-s)\pa_{xx}}\pa_x[(V^{'}+F^{'}*\rho_s^{m})\rho_s^{m}]\,ds+\cY_t^{m},\\
			\rho_t^{\infty}\,&=e^{t\pa_{xx}}\rho_0+\int_0^t e^{(t-s)\pa_{xx}}\pa_x[(V^{'}+F^{'}*\rho_s^{\infty})\rho_s^{\infty}]\,ds+\cY_t,
		\end{align*}
		where $\cY_t^{m}$ and $\cY_t$ are defined in \eqref{deterministic convolution truncated} and \eqref{deterministic convolution}, respectively.
        By following the steps that allowed us to prove the limit \eqref{lim rho_K } and by using Lemma \ref{8.6} we obtain 
	\begin{equation*}
	   \sup \limits_{t \in [0,T]} |\rho_t^{\infty}-\rho_t^{m}|_{L^2(\T;\R)} \leq \tilde{c}_{T} \sup \limits_{t \in [0,T]} |\cY_t-\cY_t^{m} |_{L^2(\T;\R)},
	\end{equation*}	
 where $\tilde{c}_T$ a suitable constant depending on $T$.
   Hence, to conclude it suffices to prove \linebreak $\sup \limits_{t \in [0,T]}|\cY_t-\cY_t^{m} |_{L^2(\T;\R)} \xrightarrow[]{m \to \infty} 0$. This follows from Parseval's identity and \eqref{vincolo remainder}. The proof is thus concluded.
\end{proof}
\begin{cor}\label{corollary propositions combined}
Let $\rho_t^{N,\e,M,\ka,m}$ be the weighted empirical measure defined in \eqref{weighted_empirical_infinite_dimensional} and let $\rho_t$ be the solution to \eqref{rough PDE infinite}.
Then for any given $f \in C^{\infty}(\T;\R)$ the following limit holds 
\begin{equation*}
    \lim \limits_{m \to \infty} \lim_{\ka \to \infty} \lim_{M\to\infty} \lim_{\e\downarrow 0} \lim_{N\to\infty} \,\,\sup \limits_{t \in [0,T]}\,\,\left | \langle f,\rho_t^{N, \e, M, \kappa, m} \rangle \,-\, \langle f, \rho_t^{\infty} \rangle \right |\,=\,0,\,\,\,\,\mP-a.s.
\end{equation*}
\end{cor}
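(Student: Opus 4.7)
The plan is to combine Proposition \ref{prop linear combination Intro} with Proposition \ref{prop limit m to infty} via a triangle inequality. The key observation is that the five limits are taken in a fixed order, and crucially $m \to \infty$ is taken last (after all the other four limits have been taken). This means that for each fixed $m \in \N$ we may first treat the interior four limits using the machinery already developed for the finite-dimensional forcing case, and only afterwards pass $m$ to infinity using the purely PDE-level comparison between $\rho_t^m$ (solution to \eqref{rough approx}) and $\rho_t^\infty$ (solution to \eqref{rough PDE infinite}).

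Concretely, given $f \in C^{\infty}(\T;\R)$, I would write
\begin{align*}
\sup_{t \in [0,T]} \left| \langle f, \rho_t^{N,\e,M,\ka,m}\rangle - \langle f, \rho_t^{\infty}\rangle \right|
& \leq \sup_{t \in [0,T]}\left| \langle f, \rho_t^{N,\e,M,\ka,m}\rangle - \langle f, \rho_t^m\rangle \right| \\
& \quad + \sup_{t \in [0,T]}\left| \langle f, \rho_t^m\rangle - \langle f, \rho_t^{\infty}\rangle \right|.
\end{align*}
For the second term, Cauchy--Schwarz gives $|\langle f, \rho_t^m - \rho_t^\infty\rangle| \leq |f|_{L^2(\T;\R)} |\rho_t^m - \rho_t^\infty|_{L^2(\T;\R)}$, and Proposition \ref{prop limit m to infty} ensures this bound tends to $0$ as $m \to \infty$, uniformly in $t \in [0,T]$; importantly, this term is already independent of $N,\e,M,\ka$, so it remains untouched by the first four limits. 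For the first term, Proposition \ref{prop linear combination Intro} (applied for the fixed value of $m$) yields
\begin{equation*}
\lim_{\ka \to \infty}\lim_{M\to\infty}\lim_{\e\downarrow 0}\lim_{N\to\infty} \sup_{t \in [0,T]}\left| \langle f, \rho_t^{N,\e,M,\ka,m}\rangle - \langle f, \rho_t^m\rangle \right| = 0, \quad \mP\text{-a.s.}
\end{equation*}
Performing the four inner limits first therefore kills the first term for each fixed $m$, while the second term is unaffected; finally, letting $m \to \infty$ kills the second term.

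In terms of difficulty: since both ingredients are already on the table, there is essentially no serious obstacle here --- the argument is a one-line triangle inequality together with the careful bookkeeping that the second summand does not depend on $(N,\e,M,\ka)$ (so it survives each of the four inner limits without change) and that the first summand depends on $m$ only through the fixed parameter at which we apply Proposition \ref{prop linear combination Intro}. The only subtle point worth emphasising in the write-up is that $\mathbb{P}$-a.s.~convergence is preserved under this decomposition because the exceptional null set produced by Proposition \ref{prop linear combination Intro} can, for each $m$, be absorbed into a single null set via a countable union over $m \in \N$; this is what makes the iterated limit statement meaningful almost surely and not only in probability.
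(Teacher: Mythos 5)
Your proposal is correct and follows exactly the same route as the paper, which also proves the corollary by combining Proposition \ref{prop linear combination Intro} with Proposition \ref{prop limit m to infty} via a triangle-inequality decomposition; the paper simply states this in one line without writing out the split. Your extra remark about absorbing the per-$m$ null sets into a single countable union is a valid and worthwhile bookkeeping detail that the paper leaves implicit.
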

\begin{proof}
The proof follows by combining Proposition \ref{prop linear combination Intro} and Proposition \ref{prop limit m to infty}. 
\end{proof}
\begin{proof}[Proof of Theorem \ref{theorem infinite dimensional approximation}]
Let $\hat{\omega} \in \left (\Omega, \hat{\cF}, \{ \hat{\cF}_t \}_{t \geq 0}, \hat{\mP}\right)$ and let be $\solv_t(\hat{\omega})$ the corresponding realization of the solution to the SPDE \eqref{SPDEintro} and let us set $\hat{Y}_t^z$ appearing in \eqref{rough PDE infinite} to be equal to the realization of the Brownian motion $w_t^z(\hat{\omega})$ appearing in \eqref{SPDE space-time white noise} for every $z \in \Z$. 
Then, the claim follows by applying Corollary \ref{corollary propositions combined} to \eqref{rough PDE infinite} with the above choice $\{\hat{Y}_{\cdot}^z\}_{z \in \Z}$ of H\"older paths.     
\end{proof}

\appendix
\section{Well-Posedeness results}\label{well-posedness}


With the notation set in \eqref{notationfor stochastic convolution}, and recalling the definition of weak and mild solution given in Section \ref{section: main results}, see e.g. \eqref{mild solution} and \eqref{def: weak solution}, the following holds. 
\begin{lemma}\label{lemma: well-posedness of rhoPDE}
    Let $\mathcal U_{\cdot}$ (as in \eqref{generic PDE+forcing}) be such that $t \mapsto \qq^{(\mathcal U)}_t$  belongs to $C([0,T]; H^1(\T; \R))$ and let $\nu_0 \in C^{\infty}(\T;\R)$. Then equation \eqref{generic PDE+forcing} admits a unique mild solution. In particular this implies that equations \eqref{rough PDE}, \eqref{pde+bm},  \eqref{SPDEintro}, \eqref{PDErhoK} and \eqref{rough PDE infinite} are all well-posed in $C([0,T];L^2(\T;\R))$ in mild sense, and, moreover, such mild solutions coincide with weak solutions.

    In addition, the (S)PDE \eqref{generic PDE+forcing} enjoys regularising properties. That is, if $\nu_t$ is a measure-valued weak solution to \eqref{generic PDE+forcing} with initial datum $\nu_0 \in C^{\infty}(\T;\R)$ then it 
 admits a density belonging to $L^2(\T;\R)$. To be precise, $\nu_t$ has the same spatial regularity of the convolution $t \to \qq_t^{(\cU)}$ i.e. if $\qq_t^{(\cU)} \in H^{k}(\T;\R)$ ($k \in \N$) then so does the solution $\nu_t$. In particular, weak measure-valued solution of \eqref{PDErhoK} are also classical solutions (therefore belonging to $L^2(\T;\R)$) of such equation.
\end{lemma}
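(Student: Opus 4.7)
The plan is to reduce everything to a fixed-point argument for the mild formulation of the generic equation \eqref{generic PDE+forcing} in $C([0,T];L^2(\T;\R))$, and then read off the specific cases \eqref{rough PDE}, \eqref{pde+bm}, \eqref{SPDEintro}, \eqref{PDErhoK}, \eqref{rough PDE infinite} as instances corresponding to different choices of $\mathcal U$ (each of which produces a $\qq^{(\mathcal U)}_{\cdot}\in C([0,T];H^1(\T;\R))$ by the results already established, namely Lemma \ref{6.2}, Lemma \ref{lemma differentiability stochastic convolution} and Lemma \ref{7.4}). Concretely, I would set
\begin{equation*}
\Phi(\nu)_t := e^{t\paxx}\nu_0+\int_0^t e^{(t-s)\paxx}\pa_x\bigl[(V'+F'\ast \nu_s)\nu_s\bigr]\,ds+\qq^{(\mathcal U)}_t,
\end{equation*}
and invoke the standard heat-semigroup estimate $|e^{s\paxx}\pa_x f|_{L^2}\lesssim s^{-1/2}|f|_{L^2}$ from \eqref{heat kernel estimate section 7}. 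Since $V',F'\in C^\infty(\T;\R)$ and the torus is compact, the nonlinear map $\nu\mapsto (V'+F'\ast\nu)\nu$ is locally Lipschitz from $L^2$ to $L^2$; together with the $(t-s)^{-1/2}$ singular integral (which is integrable) this gives contraction on a small interval $[0,T_0]$ in the ball $\{\sup_{t\le T_0}|\nu_t|_{L^2}\le R\}$, hence local well-posedness in mild sense.

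For global existence up to an arbitrary $T>0$, I would use the splitting $\nu_t=\xi_t+\qq^{(\mathcal U)}_t$, which reduces the problem to a deterministic parabolic equation for $\xi_t$ with forcing terms depending on $\qq^{(\mathcal U)}_{\cdot}\in C([0,T];H^1(\T;\R))$; by the Sobolev embedding $H^1(\T)\hookrightarrow L^\infty(\T)$ in dimension one, these forcing terms are bounded. An energy estimate (multiply by $\xi_t$, integrate, use Young's inequality to absorb $|\pa_x\xi_t|_{L^2}^2$ into the dissipation produced by $\paxx$, and control the nonlinear term via $|\xi|_{L^4}^2\le|\xi|_{L^\infty}|\xi|_{L^2}$ interpolation together with $L^\infty$-bounds on $V'+F'\ast\xi$) gives the a priori bound \eqref{stime rhoka} of the form $\sup_{t\le T}|\xi_t|_{L^2}^2+\int_0^T|\pa_x\xi_s|_{L^2}^2\,ds\le \mathcal J(|\nu_0|_{L^2},|\qq^{(\mathcal U)}|_{C([0,T];H^1)})$. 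This precludes blow-up and extends the local solution to $[0,T]$. The same a priori estimate gives the bound \eqref{stime rhoka} stated in Lemma \ref{well-posedness PDErhoK}.

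The equivalence of mild and weak formulations is then the classical duality argument: starting from the mild formula and pairing with a test function $f\in C^\infty(\T;\R)$, use self-adjointness of $e^{t\paxx}$, integration by parts against $\pa_x$ inside the Duhamel term, and standard approximations for the Young/It\^o integral against $\mathcal U$ (justified respectively by Appendix \ref{young} and by stochastic Fubini in the trace-class case) to recover \eqref{def: weak solution}; the converse is obtained by testing the weak formulation against $s\mapsto e^{(t-s)\paxx}f$ and recognising Duhamel's formula.

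The main subtlety is the regularising claim: starting from a measure-valued weak solution $\nu_t$ with $\nu_0\in C^\infty(\T;\R)$, I would first argue $\nu_t$ coincides with the mild solution (Duhamel against a time-dependent test function works at the level of distributions because $\nu_0$ and the coefficients are smooth), and then run a spatial bootstrap on the mild identity. Given $\nu\in C([0,T];H^k)$, the term $(V'+F'\ast\nu)\nu\in C([0,T];H^k)$ (since $F'\ast\nu\in C^\infty$ and $V'\in C^\infty$), so $\int_0^t e^{(t-s)\paxx}\pa_x[\,\cdot\,]\,ds\in C([0,T];H^{k+1})$ by the heat estimate, while $e^{t\paxx}\nu_0\in C^\infty$; iterating as long as $\qq^{(\mathcal U)}_{\cdot}$ allows, one obtains that $\nu_t$ inherits precisely the spatial regularity of $\qq^{(\mathcal U)}_t$. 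In particular for \eqref{PDErhoK}, where $\qq^{(Y^\ka)}_t\in C^\infty(\T;\R)$ because $q\in C^\infty$ and $Y^\ka$ is piecewise $C^1$ in time only, this shows that every measure-valued weak solution is a smooth classical one, which is exactly what is used in Note \ref{equivalenza tra i due sistemi rhok} and in the proof of Lemma \ref{well-posedness PDErhoK}. The hardest point to handle cleanly is closing the bootstrap for the quadratic nonlinearity $(F'\ast\nu)\nu$ in the step from $L^2$ to $H^1$ — this requires first upgrading $\nu$ to $L^\infty_tL^\infty_x$ via the $(t-s)^{-1/2}$ estimate and the $H^1$-forcing $\qq^{(\mathcal U)}$, and only then running the Sobolev ladder.
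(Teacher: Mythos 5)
Your fixed-point and global-energy scheme for existence, uniqueness and the weak--mild equivalence tracks the paper's approach (which outsources these steps to \cite{angeli2023well} and \cite{gyongy1998existence}); spelling them out as you do is fine, and the key structural ingredients (heat kernel estimate \eqref{heat kernel estimate section 7}, the splitting $\nu=\xi+\qq^{(\mathcal U)}$, the $H^1\hookrightarrow L^\infty$ embedding in dimension one) are the right ones. The gap is in the regularising claim.

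The part you flag as hardest — closing the bootstrap for $(F'\ast\nu)\nu$ in the step $L^2\to H^1$, and the need to first upgrade to $L^\infty_tL^\infty_x$ — is in fact where this equation is \emph{easy}. Since $F'$ is $C^\infty$ on a compact manifold and $\nu_t$ is a finite (signed) measure with bounded total variation, the convolution $F'\ast\nu_t$ is automatically smooth with all $C^k$-norms controlled by $|\nu_t|_{TV}$ and $|F'|_{C^k}$. So $(V'+F'\ast\nu_t)\nu_t$ is just multiplication of $\nu_t$ by a smooth bounded function, which is continuous on $H^s(\T;\R)$ for \emph{every} $s\in\R$; no $L^\infty$ bound on $\nu$ itself is required at any stage, and there is no quadratic obstruction to the Sobolev ladder. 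The step you write ``Given $\nu\in C([0,T];H^k)$, \ldots gives $C([0,T];H^{k+1})$'' is also slightly too optimistic — the heat estimate \eqref{heat kernel regualrising estimate} gains $1-p$ derivatives with an integrable $(t-s)^{-(1-p)/2}$ singularity, not a full derivative — but this is a minor quibble because one iterates.

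What your proposal does \emph{not} address, and what is the actual content of the paper's proof, is the very first rung of the ladder: passing from a measure-valued solution $\nu_{\cdot}\in C([0,T];\cM(\T))$ to a function-valued one. Your bootstrap is phrased as ``given $\nu\in C([0,T];H^k)$'', but at the start you only have a measure, and you must identify in which negative Sobolev space the measure and the nonlinear term actually live before the ladder has a base case. The paper's key observation (equation \eqref{sitma via total variation}) is that on the one-dimensional torus a finite signed measure of bounded total variation belongs to $H^s(\T;\R)$ for every $s<-1/2$, with norm controlled by $|\cdot|_{TV}$; combined with the fact that $\pa_x[(V'+F'\ast\nu)\nu]$ is then in $H^{s-1}$ with $s-1<-3/2<-2$, one can start the iteration in $H^{-2}$, gain $1-p$ per step via \eqref{heat kernel regualrising estimate}, and arrive at $L^2$ and then $H^1$ after finitely many rungs (at which point the forcing $\qq^{(\mathcal U)}\in H^1$ caps the regularity). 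You also need the intermediate step — realising the measure-valued weak solution as an $H^{-2}$-valued \emph{mild} solution (the paper cites \cite{capasso2019mean,flandoli2017mean} for this) — stated quantitatively rather than just ``Duhamel at the level of distributions''. As written, your argument never shows that a measure-valued solution has a density, which is exactly the conclusion the lemma is asserting.
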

           \begin{proof} The existence and uniqueness of a mild solution for \eqref{SPDEintro} has been proved in \cite[Theorem 2.4]{angeli2023well}. Since \eqref{generic PDE+forcing} is a  more general version of \eqref{SPDEintro}, and \eqref{rough PDE}, \eqref{pde+bm}, \eqref{PDErhoK} and \eqref{rough PDE infinite} are specific instances of \eqref{generic PDE+forcing}, one just needs to prove well-posedness of \eqref{generic PDE+forcing}.  This can be done following the steps of \cite[proof of Theorem 2.4]{angeli2023well}, which goes through as long as the stochastic convolution $\qq_t^{(\mathcal U)}$ in \eqref{notationfor stochastic convolution}  belongs to $C\left ([0,T];H^1(\T;\R)\right)$. 
           This is true when $\mathcal U_t$ is equal to $W(t,x)$ or $\sum \limits_{z \in \Z}\lambda_ze_z\pa_tY_t^{z}$ by Lemma \ref{lemma differentiability stochastic convolution} and Lemma \ref{7.4}, respectively (giving equations \eqref{SPDEintro} and \eqref{rough PDE infinite}, respectively). 
           It is clearly the case when $\mathcal U_t$ is equal to $q(x)Y_t$, $q(x)w_t$ or $q(x)Y_t^{\ka}$, by Lemma \ref{6.2} (giving equations \eqref{rough PDE}, \eqref{pde+bm}, \eqref{PDErhoK}, respectively), since $q$ is smooth and  $Y_t$,  $w_t$ and $Y_t^{\ka}$ are all at least H\"older time-continuous (and space-independent). The fact that the weak and mild solutions coincide has been proven in \cite[Proposition 3.5]{gyongy1998existence}. We just point out that in the context of \cite{gyongy1998existence} the equivalence between weak and mild solution has been shown for SPDEs satisfying Dirichlet boundary conditions (DBC). In our instance, we are dealing with SPDEs having periodic boundary conditions (PBC). This doesn't create any additional issue since the proof of \cite[Proposition 3.5]{gyongy1998existence} can be applied to our context, without making any modifications, once we have replaced $G(t)$ (the heat kernel of the heat equations with DBC) in \cite[Proposition 3.5]{gyongy1998existence} with $e^{t\pa_{xx}}$ (the heat semi-group of the heat equation with PBC).

           To prove the smoothing properties of \eqref{generic PDE+forcing} we first need to recall what a measure-valued weak solution is. A process $\nu_t$ is a weak-measure valued solution to \eqref{generic PDE+forcing} if $t \to \nu_t \in C([0,T];\cM(\T))$ (we endow $\cM(\T)$ with the total variation norm) and for any given $f \in C^{\infty}(\T;\R)$ the following equality is satisfied: ($\bar \mP$ or $\hat \mP$ a.s. if $\cU_t$ is equal to $q(x)w_t$ or $W_t$, respectively)
           \begin{equation}\label{weak sol for measures}
                \langle f,\nu_t \rangle=\langle f,\nu_0 \rangle + \int_0^t \langle \pa_{xx}f,\nu_s \rangle\,ds-\int_0^t \langle \pa_xf,(V^{}+F^{'}*\nu_s)\nu_s \rangle\,ds+ \langle f,\cU_t \rangle .
           \end{equation}
           Following \cite[Proof of Theorem 2]{capasso2019mean} or \cite[Theorem 3.3]{flandoli2017mean} one can see that if $\nu_t$ is a weak-measure valued solution in the sense of \eqref{weak sol for measures} then it is a $H^{-2}$-valued mild solution (we point out that the converse holds as well i.e. a mild solution is a weak solution. However, we don't need this here). Namely, the following equality holds in $H^{-2}(\T;\R)$\footnote{In this proof we are using a broader definition of Sobolev space than the one given in \eqref{sobolev usual}. To be precise, for any $s \in \R$ we define $H^{s}(\T;\R)$ as the space of distributions $\cD$ such that $\sum \limits_{z \in \Z}(1+|z|^2)^s \langle e_z,\cD \rangle^2 < \infty,$ where $\{e_z\}_{z \in \Z}$ is the basis defined in \eqref{fourier}. Let us note that this extension is consistent as $H^s(\T;\R)$ with $s \in\N$ coincides with the Sobolev space given in \eqref{sobolev usual}.}:
           \begin{equation}\label{mild formulation for measures}
               \nu_t=e^{t\pa_{xx}}\nu_0+\int_0^t e^{(t-s)\pa_{xx}}\pa_x \left [(V^{'}+F^{'}*\nu_s) \nu_s \right ]\,ds+\qq_t^{(\cU)},\qquad t \in [0,T].
           \end{equation}
           We also the following regularising property of the heat semi-group (cfr. \cite[Lemma 2.3.3]{berglund2022introduction}) which will be used several times. Namely, let $\cD \in H^{s_1}(\T;\R)$ and let $s_2 >s_1$ then
           \begin{equation}\label{heat kernel regualrising estimate}
               \left | e^{t\pa_{xx}}\cD\right |_{H^{s_2}(\T;\R)} \lesssim_{s_1,s_2}t^{-\frac{s_2-s_1}{2}}\left | \cD\right |_{H^{s_1}(\T;\R)}.\
           \end{equation}
           This can be proven by using the Fourier representation of $e^{t\pa_{xx}}$ combined with the fact that the function $(1+x)^{s_2-s_1}e^{-tx} \lesssim_{s_1,s_2}t^{-(s_2-s_1)}$, for $x \geq 0$. 
           Furthermore, we will also need the following estimate of the $H^{s}(\T;\R)$-norm of $\nu_t$, with $s < -\frac{1}{2}$. Since $\nu_{\cdot} \in C([0,T];\cM(\T))$ then for any given $f \in C^{\infty}(\T;\R)$ we have that 
           \begin{align}\notag
             \left |f\nu_t \right |_{H^{s}(\T;\R)}^2 = \sum \limits_{z \in \Z} (1+|z|^2)^s\langle e_z,f\nu_t \rangle^2 & \leq \sum \limits_{z \in \Z} (1+|z|^2)^s \left [ \itt e_z(x)f(x)\nu_t(dx) \right ]^2 \\ \label{sitma via total variation}
              & \leq \sum \limits_{z \in \Z} (1+|z|^2)^s \frac{1}{\pi}|f|_{L^{\infty}(\T;\R)}^2|\nu_t|_{TV}^2 
           \end{align}
           which is finite as long as $s < -\frac{1}{2}$. 
           Throughout the next calculation we assume that the (stochastic) convolution $\qq_t^{(\cU)} \in H^1(\T; \R)$ (the extension to the instance $\qq_t^{(\cU)} \in H^k(\T;\R)$, $k \in \N$ can be done in the exact same way so we don't repeat it).
           
           Let us now set $s_1=-2$ and $s_2=-1+p$ for a small enough $p>0$; then from the mild formulation of $\nu_t$ we obtain
           \begin{align}\label{sample}
               |\nu_t|_{H^{-1+\e}(\T;\R)} & \lesssim^{\eqref{heat kernel regualrising estimate},\eqref{sitma via total variation}} |\nu_0|_{H^{-1+p}(\T;\R)} +\int_0^t (t-s)^{-1+p} \left ( \left |\nu_s \right |_{TV} +\left |\nu_s\right |_{TV}^2 \right ) \,ds+\left |\qq_t^{(\cU)}\right |_{H^{-2}(\T;\R)}.
           \end{align}
           By combining the above estimate with the fact that $\sup \limits_{t \in [0,T]}|\nu_t|_{TV} < \infty$ (this follows since $\nu_{\cdot} \in C([0,T];\cM(\T))$) and $\qq_{\cdot}^{(\cU)} \in C\left ([0,T];H^1(\T;\R) \right )$ we obtain $\nu_{.} \in L^{\infty}\left ([0,T];H^{-1+p}(\T;\R) \right )$. From equality \eqref{mild formulation for measures} we obtain the continuity in time, i.e. $\nu_{\cdot} \in C([0,T];H^{-1+p}(\T;\R))$.\footnote{This property holds in general in any separable Banach space $E$. Namely, if $E$ is a separable Banach space and $f \in L^{\infty}([0,T];E)$ then its (Bochner) integral $\int_0^t f_s\,ds$, $t \in [0,T]$, (see \cite[Appendix A]{prevot2007concise}) clearly belongs to $C([0,T];E)$.} 

           From this moment onward, one proceeds in a similar fashion. Namely, one now sets $s_2=-1+p$ and $s_2=-2p$ for a small enough $p>0$. Then by applying the same procedure that led to \eqref{sample} we obtain $\nu_{\cdot} \in L^{\infty}([0,T];H^{-2p}(\T;\R))$ which a fortiori implies $\nu_{\cdot} \in C([0,T];H^{-2p}(\T;\R))$. If one repeats the argument with e.g. $s_2=0$ and $s_1=-\frac{1}{2}+p$ for a small enough $p>0$ then obtains $\nu_{\cdot} \in C([0,T];L^2(\T;\R))$ which shows that $\nu_t$ admits a density in $L^2(\T;\R)$. The $H^1(\T;\R)$-regularity of $\nu_t$ can be now obtained easily. This concludes the proof. 
\end{proof}


\begin{proof}[Proof of Proposition \ref{prop:law satisfies PDE}]
Let us begin with proving the well-posedness of the  PDE \eqref{PDEmuMKE}. The existence of a weak solution to the PDE \eqref{PDEmuMKE} follows from the well-posedness of the SDE \eqref{sistemaMKE}. Indeed, by a straightforward application of It\^o's formula to the process $t \to \Psi(X_t^{\e,M,\ka},A_t^{\e,M,\ka})$ where $\Psi$ is a smooth and compactly supported function on $\T \times \R$ (i.e. $\Psi \in C_c^{\infty}(\T \times \R; \R)$) one can readily see that the joint distribution  $\mu_t^{\e,M,\ka}:=\cL(X_t^{\e,M,\ka},A_t^{\e,M,\ka})$ is a weak measure-valued solution to \eqref{PDEmuMKE}. Therefore, the uniqueness of the solution to \eqref{PDEmuMKE} (in the space of probability measures $\cP_2(\T \times \R)$) remains to be shown. This can be done by tracing the steps of \cite[Theorem 5.4]{coghi2019stochastic} (let us note that the result in the scenario of \cite[Theorem 5.4]{coghi2019stochastic} is proven for SPDEs and, therefore, it can be a fortiori applied to PDEs). 

The coefficients of \eqref{PDEmuMKE} do not satisfy \cite[Assumptions 5.1]{coghi2019stochastic}, which are required to prove \cite[Theorem 5.4]{coghi2019stochastic}.

However, upon inspection of \cite[Theorem 5.4]{coghi2019stochastic} one can see that the proof does not hinge on the coefficients $a,b,\sigma$ in \cite[Theorem 5.4]{coghi2019stochastic} to satisfy \cite[Assumptions 5.1]{coghi2019stochastic} but rather on 
their linearised versions $\bar{a},\bar b$ and $\bar \sigma$ to satisfy \cite[Assumptions 4.1]{coghi2019stochastic}.

That is, the proof \cite[Theorem 5.4]{coghi2019stochastic} (which takes care of the uniqueness of the solution) works as soon as the linearised coefficients $\bar a, \bar b$ and $\bar \sigma$ in \cite[Theorem 5.4]{coghi2019stochastic} (of $a,b$ and $\sigma$, respectively) satisfy \cite[Assumptions 4.1]{coghi2019stochastic}.

Hence, to obtain the uniqueness of the solution to \eqref{PDEmuMKE} it suffices to identify the linearised coefficients of the PDE \eqref{PDEmuMKE} and verify that \cite[Assumptions 4.1]{coghi2019stochastic} are satisfied.

To avoid notation conflict we denote by $\bar D, \bar B$ and $ \bar \Sigma $ what in \cite[Theorem 5.4]{coghi2019stochastic} is denoted by $\bar a, \bar b$ and $\bar \sigma$. In our case, the linearised versions of the coefficients of \eqref{PDEmuMKE} are given by 
\begin{align*}
    \Bar{D}^{\e,M,\ka}(t,x,a) & :=\begin{bmatrix}
1 & 0 \\
0 & 0 
\end{bmatrix},\qquad \text{$t \in [0,T]$, $(x,a) \in \T \times \R$,}\\
    \bar{B}^{\e,M,\ka}(t,x,a)& :=  \begin{bmatrix}
V^{'}(x)+\Gamma_M(x,\mu_t^{\e,M,\ka}) \\
- \frac{q(x)}{\Xi_{\e}(x,\mu_t^{\e,M,\ka})}\pa_tY_t^{\ka}  
\end{bmatrix},\qquad \text{$t \in [0,T]$, $(x,a) \in \T \times \R$,}\\
\bar \Sigma^{\e,M,\ka} & \equiv 0.
\end{align*}
From \eqref{ineq gammma} and \eqref{ineq beta} applied to $\mu=\nu=\mu_t^{\e,M,\ka}$ one can now readily see that the above coefficients satisfy \cite[Assumptions 4.1]{coghi2019stochastic}, and the uniqueness of the solution to the PDE \eqref{PDEmuMKE} follows. 

In the case of the PDE \eqref{PDEmuMK} and the PDE \eqref{PDEmuK} we still make use of the results obtained in \cite[Section 4 and Section 5]{coghi2019stochastic} but in a different way.

Let us begin with pointing out that contrary to the case of the PDE \eqref{PDEmuMKE} where we are dealing with a closed equation for $\mu_t^{\e,M,\ka}$, in the instance of the PDE \eqref{PDEmuMK} one works with the system of two PDEs \eqref{PDEmuMK}-\eqref{zetaPDEMK} and proceed as follows. In the first place, we consider the iterative system\footnote{In system \eqref{iterated system muMK},  $n \in \N$ denotes the iteration parameter, there is no $\varepsilon$ dependence in such a system; that is, $\mu_t^{n, M, \ka}$  is not to be confused with $\mu_t^{\e,M,\ka}$ as it is not $\mu_t^{\e,M,\ka}$ when $\ep=n$. The sequence $\mu_t^{n, M, \ka}$ is constructed to study \eqref{PDEmuMK}-\eqref{zetaPDEMK}, not \eqref{PDEmuMKE}.}
\begin{equation}\label{iterated system muMK}
    \begin{dcases}
        & \pa_t\mu_t^{n,M,\ka}=\pa_{xx}\mu_t^{n,M,\ka}+\pa_{x}\left [ \left(V^{'}+\Gamma_M(x,\mu_t^{n-1,M,\ka})\right)\mu_t^{n,M,\ka} \right ]-\frac{q}{\zeta_t^{n,M,\ka}}\pa_a\mu_t^{n,M,\ka}\pa_tY_t^{\ka} \\
        & \pa_t\zeta_t^{n,M,\ka}=\pa_{xx}\zeta_t^{n,M,\ka}+\pa_{x}\left [ \left (V^{'}+\Gamma_M(x,\mu_t^{n-1,M,\ka})\right)\zeta_t^{n,M,\ka} \right ]\\
        & \mu^{n,M,\ka}|_{t=0}=\mu_0,\quad \zeta^{n,M,\ka}|_{t=0}=\zeta_0
    \end{dcases}
\end{equation}
with starting condition $\mu_t^{0,M,\ka}=\mu_0$, $t \in [0,T]$. The linear iterative system \eqref{iterated system muMK} is well-posed once the term $\mu_t^{n-1,M,\ka}$ from the previous iteration is given: one takes $\mu_t^{n-1,M,\ka}$ and following the steps in \cite[Theorem 3, Theorem 4, Theorem 5  and Theorem 6 in Chapter 7]{evans2022partial} solves the second PDE in \eqref{iterated system muMK}. This ensures the existence and uniqueness of a classical solution $\zeta_t^{n,M,\ka}$ to the second PDE in \eqref{iterated system muMK}. After that, one takes $\zeta_t^{n,M,\ka}$ and solves the system of SDEs \eqref{sistema iterativo sdes}. The law of the solution (i.e. the pair $(X_t^{n,M,\ka},A_t^{n,M,\ka})$) to \eqref{sistema iterativo sdes} provides a weak measure-valued solution for the first PDE in \eqref{iterated system muMK}. Indeed, if one applies It\^o's formula to the stochastic process $t \to \Psi(X_t^{n,M,\ka},A_t^{n,M,\ka})$ with $\Psi \in C_{c}^{\infty}(\T\times\R;\R)$ and then takes the expectation; the existence of a solution $\mu_t^{n,M,\ka}$ follows. 
As for the uniqueness of such $\mu_t^{n,M,\ka}$, this is a consequence of \cite[Theorem 4.8]{coghi2019stochastic}. To apply this result, from \cite[Theorem 4.8]{coghi2019stochastic} we notice that the coefficients of the PDE solved by $\mu_t^{n,M,\ka}$ in \eqref{iterated system muMK} need to satisfy \cite[Assumptions 4.1]{coghi2019stochastic}. In this instance, the coefficients are given by
\begin{align*}
    \Bar{D}_n^{M,\ka}(t,x,a) & :=\begin{bmatrix}
1 & 0 \\
0 & 0 
\end{bmatrix},\qquad \text{$t \in [0,T]$, $(x,a) \in \T \times \R$, $n \in \N$}\\
    \bar{B}_n^{M,\ka}(t,x,a)& :=  \begin{bmatrix}
V^{'}(x)+\Gamma_M(x,\mu_t^{n-1,M,\ka}) \\
- \frac{q(x)}{\zeta_t^{n,M,\ka}(x)}  \pa_tY_t^{\ka}
\end{bmatrix},\qquad \text{$t \in [0,T]$, $(x,a) \in \T \times \R$, $n \in \N$}\\
\bar \Sigma_n^{M,\ka} & \equiv 0,
\end{align*}
At this stage, if one now takes into account $(i)$ and $(ii)$ in Lemma \ref{stime} applied to $\zeta_t^{n,M,\ka}$ and \eqref{ineq gammma} applied to $\mu=\nu=\mu_t^{n-1,M,\ka}$ it follows that for any given $n \in \N$ the coefficients $\bar D_n^{M,\ka}, \bar B_n^{M,\ka}$ and $\bar \Sigma_n^{M,\ka}$ satisfy \cite[Assumptions 4.1]{coghi2019stochastic} and, therefore, the uniqueness of $\mu_t^{n,M,\ka}$ - weak measure valued solution to the first PDE in \eqref{iterated system muMK}- is obtained from \cite[Theorem 4.8]{coghi2019stochastic}. 

As a byproduct of this we have also obtained that $\mu_t^{n,M,\ka}=\cL(X_t^n,A_t^n)$ where $(X_t^n,A_t^n)$ is the solution to \eqref{sistema iterativo sdes}.

Once this is in place, if we follow the calculations that led to \eqref{contraction W2 mu^n}-\eqref{contraction zeta^n} in the proof of Lemma \ref{wellpossistemaMK} one can see that the same inequalities holds for $\Wt^2(\mu^{n,M,\ka},\mu^{n-1,M,\ka})$ and $\sup \limits_{t \in [0,T]}|\zeta_t^{n,M,\ka}-\zeta_t^{n-1,M,\ka}|_{H^{1}(\T;\R)}^2$. Hence, the solution $(\mu_t^{n,M,\ka},\zeta_t^{n,M,\ka})$ induces a contraction in the product space $\cP_2(C([0,T];\T\times \R)) \times C([0,T];H^1(\T;\R))$; this a fortiori implies that the solution $(\mu_t^{M,\ka},\zeta_t^{M,\ka})$ to \eqref{PDEmuMK}-\eqref{zetaPDEMK} obtained beforehand is unique.


The existence and uniqueness of a weak-solution $\mu_t^{\ka}$ to the PDE \eqref{PDEmuK} can be carried out in the same way of the PDE \eqref{PDEmuMK} with $\Gamma_M$ being replaced by $\Gamma$ (defined in \eqref{gamma senza M}) so we don't repeat it. The proof is thus concluded. 

\end{proof}

\begin{proof}[Proof of Lemma \ref{well-posedness MKE}]
    In what follows we omit dependence of the quantities at hand on the parameters $\e,M$ and $\ka$. Hence, we write $X_t^n$ and $\mu_t^n$ in place of $X_t^{\e,M,\ka,n}$ and $\mu_t^{\e,M,\ka,n}$. However, we do keep track of such parameters and in what follows $C(t,\e,M,\ka)$ denotes a generic constant, depending continuously on $t,\e,M$ and $\ka$, which may change from line to line.\\
    To prove the well-posedness of system \eqref{sistemaMKE} we apply a standard fixed point type of argument. Let $X_0,A_0,\mu_0$ be as in Definition \ref{def_MV} and let us set the following iterative scheme: 
    \begin{equation*}
        \begin{dcases}
            & X_t^n = X_0 - \int_0^t \left( V^{'}(X_s^n)+\Gamma_M(X_s^n,\mu_s^{n-1}) \right ) \,ds+\sqrt{2}\beta_t\\
            & A_t^n = A_0 + \int_0^t  \Xi_{\e}(X_s^n,\mu_s^{n-1})\,d\mY_s^{\kappa}\\
            & \mu_t^n=\cL(X_t^n,A_t^n),
        \end{dcases}
    \end{equation*}
    where $\mu^n|_{t=0}=\mu_0$ with starting condition $\mu_t^0=\mu_0$, $t \in [0,T]$.
    Using now \eqref{ineq gammma} we get  
    \begin{align*}
        \mE \left( \sup \limits_{s \in [0,t]} |X_s^n-X_s^{n-1}|^2 \right ) & \leq C(t,M) \left [ \int_0^t \mE \left( \sup \limits_{r \in [0,s]} |X_r^n-X_r^{n-1}|^2 \right )ds \right. \\
        & +  \left. \int_0^t  \sup \limits_{r \in [0,s]}\cW_2^2(\mu_r^{n-1},\mu_r^{n-2})\,ds \right  ],
    \end{align*}
    so that Gronwall's lemma implies 
    \begin{align}\label{1}
        & \mE \left( \sup \limits_{s \in [0,t]} |X_s^n-X_s^{n-1}|^2 \right ) \leq C(t,M) \int_0^t  \sup \limits_{r \in [0,s]}\cW_2^2(\mu_r^{n-1},\mu_r^{n-2})\,ds.
    \end{align}
    Similarly, from Jensen's inequality and \eqref{ineq beta} we have  
    \begin{align*}\notag
        \mE \left( \sup \limits_{s \in [0,t]} |A_s^n-A_s^{n-1}|^2 \right )  & \leq C(t,\e) \left [\int_0^t \,  \mE \left( \sup \limits_{r \in [0,s]} \left | X_r^n-X_r^{n-1} \right |^2 \right)|\pa_s\mY_s^{\ka}|^2\,ds  \right. \\ \notag
       & + \left.\int_0^t\sup \limits_{r \in [0,s]}\cW_2^2(\mu_r^{n-1},\mu_r^{n-2})\,|\pa_s\mY_s^{\ka}|^2ds \right ]\\ 
       & \leq C(t,\ka,\e) \left[ \int_0^t \,  \mE \left( \sup \limits_{r \in [0,s]} \left | X_r^n-X_r^{n-1} \right |^2 \right)\,ds + \int_0^t\sup \limits_{r \in [0,s]}\cW_2^2(\mu_r^{n-1},\mu_r^{n-2})ds \right ] 
    \end{align*}
    for $t \in [0,T]$ and $n \in \N$.
    By combining \eqref{1} and the above inequality we obtain 
    \begin{align*}
        & \mE \left( \sup \limits_{s \in [0,t]} |X_s^n-X_s^{n-1}|^2 + \sup \limits_{s \in [0,t]} |A_s^n-A_s^{n-1}|^2 \right ) \leq  C(t,M,\ka,\e) \int_0^t\sup \limits_{r \in [0,s]}\cW_2^2(\mu_r^{n-1},\mu_r^{n-2})ds.
    \end{align*}
    Hence, from the definition 2-Wasserstein with $\cX=C_t$ and inequality \eqref{W2 leq W2t} we deduce 
    \begin{align*}
        \Wt(\mu_{\cdot}^{n},\mu_{\cdot}^{n-1}) \leq C(t,M,\ka,\e) \int_0^t \Ws(\mu_{\cdot}^{n-1},\mu_{\cdot}^{n-2})\,ds 
    \end{align*}
    From this, the statement readily follows.
\end{proof}
\begin{proof}[Proof of Lemma \ref{wellpossistemaMK}]
In what follows we omit dependence of the quantities on the parameters $M$ and $\ka$. Hence, we write $X_t^n$, $\mu_t^n$ and $\zeta_t^{n}$ in place of $X_t^{M,\ka,n}$, $\mu_t^{M,\ka,n}$ and $\zeta_t^{M,\ka,n}$. However, we do keep track of such parameters and in what follows $C(t,M,\ka)$ denotes a generic constant, depending continuously on $t,M$ and $\ka$, which may change from line to line.\\
To prove the well-posedness of system \eqref{sistemaMK} we apply again a standard fixed point type of argument. Let $X_0,A_0,\mu_0$ and $\zeta_0$ be as in Definition \ref{def_MV} and
let us consider the following iterative scheme, 
\begin{equation}\label{sistema iterativo sdes}
    \begin{dcases}
        & X_t^n=X_0-\int_0^t V^{'}(X_s^n)+\Gamma_M(X_s^n,\mu_s^{n-1})\,ds+\sqrt{2}\beta_t,\\
        & A_t^n=A_0+\int_0^t \frac{q(X_s^n)}{\zeta_s^n(X_s^n)}\,d\mY_s^{\ka},\\
        & \pa_t\zeta^n=\pa_{xx}\zeta^n+\pa_x \left [(V^{'}+\Gamma_M(x,\mu_t^{n-1})\zeta^n\right],\\
        & \zeta^n|_{t=0}=\zeta_0,
    \end{dcases}
\end{equation}
where $\mu^n|_{t=0}=\mu_0$ for every $n \in \N$ with starting condition $\mu_t^0=\mu_0$, $t \in [0,T]$.
From \eqref{ineq gammma} we obtain 
\begin{align*}
    \mE \left( \sup \limits_{s \in [0,t]}|X_s^n-X_s^{n-1}|^2 \!\! \right )\! \leq \! C(t,M) \! \left [ \!\!\!\,\,\int_0^t \mE \left( \sup \limits_{r \in [0,s]}|X_r^n-X_r^{n-1} |^2\right)\!ds+\!\!\!\int_0^t \sup \limits_{r \in [0,s]} \cW_2^2(\mu_r^{n-1},\mu_r^{n-2})ds \right ],
\end{align*}
so that from Gronwall's lemma we then have
\begin{equation*}
    \mE \left( \sup \limits_{s \in [0,t]}|X_s^n-X_s^{n-1}|^2 \right ) \leq C(t,M) \int_0^t \sup \limits_{r \in [0,s]}\cW_2^2(\mu_r^{n-1},\mu_r^{n-2})\,ds ,\qquad t \in [0,T].
\end{equation*}
As for the difference $|A_t^{n}-A_t^{n-1}|$ we follow the steps of \eqref{ultimate estimate} in Proposition \ref{prop_e_before_M}. Indeed, from $(i)$ in Lemma \ref{stime} applied to $\zeta_t^n$ with $\mu_t^{n-1}$ we obtain (recall $\eta:=\min \limits_{\T} \zeta_0(x)>0$)
\begin{equation}\label{zeta^n well-posedness}
    \zeta_t^n(x) \geq \eta \,e^{-t\mathcal{A}(M)},\qquad \text{for every $x \in \T$, $t \in [0,T]$ and $n \in \N$}
\end{equation}
where $\mathcal{A}(M)$ is defined by \eqref{esponente stima lower bound}. Let us point out that from \eqref{zeta^n well-posedness} it follows immediately that for any given $M,\ka \in \N$ the function $\zeta_t^n$ is bounded from below uniformly in $n \in \N$ and, therefore, also the limit of the sequence will enjoy the same property due to the continuity w.r.t. $x \in \T$ of the $\zeta_t^n$'s. 

Once this observation has been made, the rest of the proof is completely analogous to the proof of Lemma \ref{well-posedness MKE} and makes use of calculations similar to those in \eqref{ultimate estimate}, this time using the bounds \eqref{lower bound zeta mu}, \eqref{bound H2-norm zeta mu} and \eqref{bound differenxe H1 norm zeta mu zeta nu}, so we omit it. From this we obtain

\begin{align}\label{contraction W2 mu^n}
     \Wt^2(\mu_{\cdot}^{n},\mu_{\cdot}^{n-1}) & \leq C(t,M,\ka) \int_0^t \Ws^2(\mu_{\cdot}^{n-1},\mu_{\cdot}^{n-2})\,ds,  \\ 
        \sup \limits_{s \in [0,t]}|\zeta_s^{n}-\zeta_s^{n-1}|_{H^1(\T;\R)}^2 & \leq C(t,M,\ka) \int_0^t \Ws^2(\mu_{\cdot}^{n-1},\mu_{\cdot}^{n-2})\,ds \label{contraction zeta^n},
\end{align}
and this concludes the proof.
\end{proof}
\begin{proof}[Proof of Lemma \ref{well-posedness kappa}]
We use exactly the same iterative scheme implemented in the proof of Lemma \ref{wellpossistemaMK} but the function $\Gamma_M$ in \eqref{def_GammaM} is replaced by $\Gamma$ defined in \eqref{gamma senza M} and, moreover, also in this proof a lower bound for $\zeta_t^n$ uniform in $n \in \N$ is needed (we recall that $n \in \N$ is the iteration parameter). This is obtained by following the steps and calculations of Lemma \ref{lemma first moment} and $(i)$ of Lemma \ref{stime}.
\end{proof}

\section{Lower bound for the solution of linear parabolic PDEs}
In this section we provide a lower bound for parabolic PDEs of the following form, 
\begin{equation}\label{linear w}
    \begin{dcases}
        & \pa_t w=\pa_{xx}w+\pa_x [f w],\,\,  \quad x \in \T,\,\, t \in (0,T]\\
    & w|_{t=0}=w_0,\,\, \quad  x \in \T \,, 
    \end{dcases}
\end{equation}
for the unknown $w = w_t(x):\R_+ \times \T \rightarrow \R$,  where $f \in C\left([0,T];C^{\infty}(\T;\R)\right)$. We are pretty sure that the bounds we produce here must exists in the literature,  but we were not able to find them in the exact form in which we need them, so we present a proof, following  the strategy of \cite[Theorem 10 in Chapter 7]{evans2022partial}. The main result we need is Theorem \ref{lower bound w}, the other lemmata below are preliminary to that result. 
\begin{lemma}\label{zeta non negativa}
Let $w_t$ be the solution to \eqref{linear w} and $w_0 \geq 0$ then $w_t$ is non-negative.
\end{lemma}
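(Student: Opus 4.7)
The strategy is a standard energy/comparison argument adapted to the torus, testing the equation against the negative part of $w$. By Lemma \ref{lemma: well-posedness of rhoPDE} (regularizing effect of the heat semigroup combined with the smoothness of $f$), the solution $w_t$ is smooth in $x$ for $t>0$, so we may manipulate classical derivatives; we also note $C := |\partial_x f|_{L^\infty([0,T]\times\T)}<\infty$ because $f\in C([0,T];C^\infty(\T;\R))$ and $\T$ is compact.

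First, for each $\delta>0$ I would introduce a smooth convex approximation $\phi_\delta:\R\to[0,\infty)$ of $s\mapsto \tfrac{1}{2}(s^-)^2$, chosen so that $\phi_\delta\equiv 0$ on $[0,\infty)$, $\phi_\delta'' \ge 0$ everywhere, and $\phi_\delta(s)\to \tfrac{1}{2}(s^-)^2$, $\phi_\delta'(s)\to -s^-$, $s\phi_\delta'(s)\to (s^-)^2$ pointwise as $\delta\downarrow 0$, uniformly on compact sets. Multiplying \eqref{linear w} by $\phi_\delta'(w_t)$, integrating over $\T$, and integrating by parts (no boundary terms on the torus) gives
\begin{align*}
\frac{d}{dt}\int_\T \phi_\delta(w_t)\,dx
&= -\int_\T \phi_\delta''(w_t)\,(\partial_x w_t)^2\,dx
 + \int_\T \phi_\delta'(w_t)\,\bigl[(\partial_x f_t)\,w_t + f_t\,\partial_x w_t\bigr]\,dx.
\end{align*}
The first term is $\le 0$ by convexity of $\phi_\delta$. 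For the second, I would use $f_t\,\phi_\delta'(w_t)\,\partial_x w_t = f_t\,\partial_x[\phi_\delta(w_t)]$ and integrate by parts once more, so that
\begin{align*}
\frac{d}{dt}\int_\T \phi_\delta(w_t)\,dx
\;\le\; \int_\T (\partial_x f_t)\bigl[w_t\,\phi_\delta'(w_t) - \phi_\delta(w_t)\bigr]\,dx.
\end{align*}

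Next, I would pass to the limit $\delta\downarrow 0$ using dominated convergence (the smoothness of $w_t$ gives a uniform envelope for the integrands) to obtain, for $\psi(s):=\tfrac{1}{2}(s^-)^2$,
\begin{align*}
\frac{d}{dt}\int_\T \psi(w_t)\,dx
\;\le\; \int_\T (\partial_x f_t)\,\psi(w_t)\,dx
\;\le\; C\int_\T \psi(w_t)\,dx,
\end{align*}
since $s\,\psi'(s)-\psi(s) = (s^-)^2 - \tfrac{1}{2}(s^-)^2 = \psi(s)$ and $\psi\ge 0$. Gronwall's inequality together with $\int_\T \psi(w_0)\,dx=0$ (from $w_0\ge 0$) then yields $\int_\T \psi(w_t)\,dx = 0$ for every $t\in[0,T]$, hence $w_t^- \equiv 0$ and $w_t\ge 0$.

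\textbf{Main obstacle.} The only genuinely delicate point is the regularization step: choosing $\phi_\delta$ so that the identity $s\phi_\delta'(s)-\phi_\delta(s)\to \psi(s)$ holds in the limit without spoiling the sign of the convex term, and justifying the exchange of limit and $d/dt$. This is standard, and the smoothness of $w_t$ for $t>0$ gives more than enough regularity. Everything else (the integrations by parts on $\T$ and the Gronwall step) is routine.
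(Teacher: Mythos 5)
Your argument is correct. Note, however, that the paper does not actually supply a proof of this lemma: it only cites \cite[Lemma 2.1 and Corollary 2.2]{chazelle2017well} and asserts that the proof can be adapted from there, so there is no in-paper argument to compare against. What you have written is the standard Stampacchia-type truncation/energy argument for positivity preservation: test \eqref{linear w} against $\phi_\delta'(w_t)$ where $\phi_\delta$ is a smooth convex regularization of $\psi(s)=\tfrac12(s^-)^2$, integrate by parts on $\T$ to make the diffusion term sign-definite and to move the transport part onto $f$, use the algebraic identity $s\psi'(s)-\psi(s)=\psi(s)$, and close with Gronwall. The computation is right; in particular the integration by parts $\int_\T f_t\,\partial_x[\phi_\delta(w_t)]\,dx=-\int_\T (\partial_x f_t)\phi_\delta(w_t)\,dx$ correctly absorbs the $f_t\,\partial_x w_t$ piece, and the bound $\int_\T(\partial_x f_t)\psi(w_t)\,dx\le C\int_\T\psi(w_t)\,dx$ is legitimate because $\psi\ge 0$. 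The one place you should be slightly more careful is the exchange of the $\delta$-limit with $d/dt$: the cleanest route is to integrate the differential inequality in time first, obtaining
\begin{equation*}
\int_\T \phi_\delta(w_t)\,dx - \int_\T\phi_\delta(w_0)\,dx \le \int_0^t\!\!\int_\T (\partial_x f_s)\bigl[w_s\phi_\delta'(w_s)-\phi_\delta(w_s)\bigr]\,dx\,ds,
\end{equation*}
and then let $\delta\downarrow 0$ under the integral sign (dominated convergence on $[0,t]\times\T$ is immediate since $w$ is bounded there by Note \ref{continuity}), before applying the integral form of Gronwall. You anticipate this issue; with that small reorganization the proof is complete.
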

\begin{proof}
The proof can be readily adapted from \cite[Lemma 2.1 and  Corollary 2.2]{chazelle2017well}, so we don't repeat it. 
\end{proof}
\begin{lemma}\label{lemma B2}
Let $w_t$ be the solution to \eqref{linear w} and let $w_0 \in C^{\infty}(\T;\R)$;  
then, for any given $k \in \N$ the solution $w_t$ converges to $w_0$ in $H^k(\T;\R)$ as $t \downarrow 0$, i.e.
\begin{equation*}
    \lim \limits_{t \downarrow 0} |w_t-w_0|_{H^k(\T;\R)}=0.
\end{equation*}
\end{lemma}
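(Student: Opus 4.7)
The plan is to work with the mild formulation of \eqref{linear w}, namely
\begin{equation*}
w_t \,=\, e^{t\pa_{xx}} w_0 \,+\, \int_0^t e^{(t-s)\pa_{xx}} \pa_x[f_s\, w_s]\,ds,
\end{equation*}
and bound $|w_t - w_0|_{H^k(\T;\R)}$ by treating the two contributions to $w_t - w_0$ separately. For the homogeneous part, $e^{t\pa_{xx}} w_0 \to w_0$ in $H^k(\T;\R)$ for every $k \in \N$: using the Fourier representation \eqref{heat},
\begin{equation*}
\left|e^{t\pa_{xx}} w_0 - w_0\right|_{H^k(\T;\R)}^2 \,=\, \sum_{z \in \Z}\,(1+|z|^2)^k \left|e^{-t|z|^2}-1\right|^2 \left|\langle e_z, w_0\rangle\right|^2,
\end{equation*}
and since $w_0 \in C^{\infty}(\T;\R)$ its Fourier coefficients decay faster than any polynomial. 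The summand is dominated by the integrable $(1+|z|^2)^k |\langle e_z,w_0\rangle|^2$ for every $t$, and tends pointwise to $0$ as $t \downarrow 0$, so the dominated convergence theorem gives the claim.

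For the Duhamel term, I would use the smoothing estimate \eqref{heat kernel regualrising estimate} in the form
\begin{equation*}
\left|e^{(t-s)\pa_{xx}}\pa_x g\right|_{H^k(\T;\R)} \,\lesssim_k\, (t-s)^{-1/2}\,|g|_{H^k(\T;\R)},
\end{equation*}
which yields
\begin{equation*}
\left|\int_0^t e^{(t-s)\pa_{xx}} \pa_x[f_s\, w_s]\,ds\right|_{H^k(\T;\R)} \,\lesssim_k\, \sqrt{t}\,\sup_{s\in[0,t]}\left|f_s w_s\right|_{H^k(\T;\R)}.
\end{equation*}
Since $f \in C([0,T];C^{\infty}(\T;\R))$ and $H^k(\T;\R)$ is an algebra for $k\geq 1$, the right-hand side is controlled by $\sqrt{t}\cdot C_k\, \sup_{s\in[0,T]}|w_s|_{H^k(\T;\R)}$. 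This clearly tends to $0$ as $t \downarrow 0$ provided one has the uniform bound $\sup_{s \in [0,T]} |w_s|_{H^k(\T;\R)} < \infty$.

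The main (and essentially the only) obstacle is therefore establishing that $w \in L^{\infty}([0,T];H^k(\T;\R))$ for every $k \in \N$. This follows from standard linear parabolic regularity: the equation \eqref{linear w} is linear with smooth, time-continuous coefficients $V',f$, and a smooth initial datum, so one may differentiate $k$ times in $x$, multiply by $\pa_x^k w$, integrate by parts on the torus and invoke Cauchy-Schwarz together with the smoothness of $f$ to obtain a differential inequality of the form
\begin{equation*}
\frac{d}{dt}\,|\pa_x^k w_t|_{L^2(\T;\R)}^2 \,+\, |\pa_x^{k+1} w_t|_{L^2(\T;\R)}^2 \,\lesssim_{k,f}\, |w_t|_{H^k(\T;\R)}^2,
\end{equation*}
whence Gronwall's lemma, iterated in $k$ and started from the previous regularity level (with the base case being the $L^2$ energy estimate), gives $\sup_{t\in[0,T]}|w_t|_{H^k(\T;\R)} \leq C(k,T,f,|w_0|_{H^k(\T;\R)})$. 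Alternatively, since the assumptions here are a special case of the setting of Lemma \ref{lemma: well-posedness of rhoPDE} (with $\cU \equiv 0$ and a slightly different drift), one can invoke the regularising properties recorded there, which immediately place $w_t$ in $H^k(\T;\R)$ for every $k$. Combining the two bounds above concludes the proof.
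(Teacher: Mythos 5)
Your argument is sound and reaches the same conclusion by a genuinely different route. The paper runs a single induction in $k$ that produces both the uniform $H^k$ bound on $[0,T]$ and the convergence at $t=0$ simultaneously, by differentiating in time the two quantities $|\pa_x^k w_t|_{L^2(\T;\R)}^2$ and $|\pa_x^k(w_t - w_0)|_{L^2(\T;\R)}^2$, integrating by parts, and closing each with Gronwall. You instead split the two tasks: the uniform bound $\sup_{t\in[0,T]}|w_t|_{H^k(\T;\R)}<\infty$ is obtained via essentially the same energy estimate and Gronwall iteration (so you do not escape that piece of work), but the convergence to $w_0$ is handled through the mild formulation, reducing it to strong $H^k$-continuity of the heat semigroup (your Fourier plus dominated-convergence computation) and a $\sqrt{t}$-decay of the Duhamel term coming from the smoothing estimate \eqref{heat kernel regualrising estimate}. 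Once the uniform bound is in hand, your continuity argument is more modular and slightly more quantitative — it records an explicit $O(\sqrt{t})$ rate for the inhomogeneous contribution — whereas the paper's argument is more self-contained, since it never invokes the mild representation or the semigroup's strong continuity, only the divergence-form structure of the PDE. One small caveat: the drift in \eqref{linear w} is just the single coefficient $f$ (there is no $V'$ there, unlike in the $\zeta$-equations of the main text), and Lemma \ref{lemma: well-posedness of rhoPDE} concerns the nonlinear convolution-type equation rather than this linear one, so the suggested shortcut of ``invoking Lemma \ref{lemma: well-posedness of rhoPDE}'' is not a literal special case; the regularising bootstrap argument would need to be rerun for the linear drift. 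Your primary route via the direct energy estimate is the one that actually works, and it matches the paper.
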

\begin{note}\label{continuity}
It is known that for any given $w_0 \in L^2(\T;\R)$ the solution $w_t$ to \eqref{linear w} is smooth for positive times (cfr. \cite[Theorem 5 and Theorem 6 in Chapter 7]{evans2022partial}). With the above lemma and Sobolev's embedding theorem we extended the smoothness up to the initial time $t=0$, in other words, $w \in C^{\infty}([0,T] \times \T)$.
\end{note}
\begin{proof}[Proof of Lemma \ref{lemma B2}]
We proceed via an induction argument and we show that for any given $k \in \N$ the following two properties hold:
\begin{equation}\label{k in N}
    \begin{dcases}
        & |w_t|_{H^k(\T;\R)}^2 \lesssim_{t,f} |w_0|_{H^k(\T;\R)}^2,\,\, t \in [0,T]\\
        & \lim \limits_{t \downarrow 0} |w_t-w_0|_{H^k(\T;\R)}=0 \,. 
    \end{dcases}
\end{equation}
If we set $k=0$ we have to prove that \eqref{k in N} holds with $k=0$.
A straightforward calculation shows that for $t\in (0,T]$
\begin{align*}
    \frac{d}{dt}|w_t|_{L^2(\T;\R)}^2+\frac{1}{2}|\pa_xw_t|_{L^2(\T;\R)}^2 \lesssim \int_{\T} f^2w_t^2 \,dx \lesssim_f |w_t|_{L^2(\T;\R)}^2
\end{align*}
which implies from Gronwall's lemma
\begin{equation*}
   |w_t|_{L^2(\T;\R)}^2 \lesssim_{t,f} |w_0|_{L^2(\T;\R)}^2,\,\, t \in (0,T].
\end{equation*}
To prove the limit as $t \downarrow 0$ a similar calculation is performed. Indeed, by differentiating the function $t  \to |w_t-w_0|_{L^2(\T;\R)}^2$ and after some calculations we obtain, for every $t \in (0,T]$, 
\begin{align*}
 \frac{d}{dt}|w_t-w_0|_{L^2(\T;\R)}^2 & \lesssim \int_{\T} w_t\pa_{xx}w_0\,dx + \int_{\T}fw_t\pa_xw_0\,dx +\int_{\T} f^2w_t^2 \,dx \\
& \lesssim_f |\pa_xw_0|_{H^1(\T;\R)}|w_t|_{L^2(\T;\R)}+|w_t|_{L^2(\T;\R)}^2 \, . 
\end{align*}
Integrating with respect to time yields
\begin{align*}
|w_t-w_0|_{L^2(\T;\R)}^2 \lesssim_f |\pa_xw_0|_{H^1(\T;\R)} \int_0^t |w_s|_{L^2(\T;\R)} \,ds +\int_0^t|w_s|_{L^2(\T;\R)}^2\,ds,\,\,t \in (0,T],
\end{align*}
which implies 
\begin{equation*}
    \limsup \limits_{t \downarrow 0} |w_t-w_0|_{L^2(\T;\R)}^2 \leq 0.
\end{equation*}
Hence, \eqref{k in N} for $k=0$ is thus proven. With the base of the mathematical induction argument proven, we are left to show the induction step. That is, if we assume that \eqref{k in N} holds for $k-1$ then we have to show that \eqref{k in N} holds for $k$ as well. 

The upper bound for $|\pa_x^k w_t|_{L^2(\T;\R)}$ readily follows from Gronwall's lemma once one calculates $\frac{d}{dt}|\pa_x^k w_t|_{L^2(\T;\R)}^2$ and uses the bound \eqref{k in N} for $k-1$.

As for the time-continuity in $L^2(\T;\R)$ of $\pa_x^k w_t$ at $t=0$ this follows once one calculates $\frac{d}{dt}|\pa_x^k w- \pa_x^k w_0|_{L^2(\T;\R)}^2$, integrates from $0$ to $t$ as sets $t \downarrow 0$.
\end{proof}
\begin{thm}[Lower bound for $w$]\label{lower bound w}
Let $w$ be the solution to \eqref{linear w} with initial condition $w_0 > 0$ on $\T$ then the following lower bound holds 
\begin{equation}\label{lower bound w2}
    w(t,x) \geq w_0(x)e^{-t\, a(f)} \, , \qquad \mbox{for every } t \in [0,T],\,\, x \in \T,
\end{equation}
where $a(f)$ is defined as 
\begin{align}\label{a(f)}
    a(f)= \frac{1}{2}\sup \limits_{(t,x) \in [0,T] \times \T} |f|^2+ \sup \limits_{(t,x) \in [0,T] \times \T} |\pa_xf|.
\end{align}  
\end{thm}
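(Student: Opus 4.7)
The plan is a weak maximum principle argument on the rescaled function $v(t,x) := w(t,x)\,e^{t\,a(f)}$, which absorbs the exponential factor and reduces the claim to the purely pointwise comparison $v(t,x) \ge w_0(x)$ on $[0,T]\times\T$. A direct calculation starting from \eqref{linear w} and using $\pa_x[fw] = f\,\pa_x w + (\pa_x f)\,w$ gives
\begin{equation*}
\pa_t v \,=\, \paxx v \,+\, f\,\pa_x v \,+\, \bigl(\pa_x f + a(f)\bigr)\,v, \qquad v|_{t=0} = w_0,
\end{equation*}
a non-divergence linear parabolic PDE whose zero-order coefficient $\pa_x f + a(f)$ is non-negative, since the definition of $a(f)$ contains the summand $\sup_{[0,T]\times\T}|\pa_x f|$. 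Smoothness of $v$ on the closed cylinder $[0,T]\times\T$ is supplied by Lemma \ref{lemma B2} together with Note \ref{continuity}, and positivity of $w$ (hence of $v$) by Lemma \ref{zeta non negativa} combined with a standard strong-maximum-principle step using $w_0>0$.

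The core step is then to introduce the defect $g(t,x) := v(t,x) - w_0(x)$, which has vanishing initial data, and to derive
\begin{equation*}
\pa_t g \,-\, \paxx g \,-\, f\,\pa_x g \,-\, \bigl(\pa_x f + a(f)\bigr)\,g \,=\, R(t,x),
\end{equation*}
with source
\begin{equation*}
R(t,x) \,=\, \paxx w_0(x) \,+\, f(t,x)\,\pa_x w_0(x) \,+\, \bigl(\pa_x f(t,x) + a(f)\bigr)\,w_0(x).
\end{equation*}
If one shows $R \ge 0$ on $[0,T]\times \T$, then $g$ is a supersolution of a parabolic Cauchy problem with zero initial data and non-negative zero-order coefficient, and the weak maximum principle (proved by a contradiction argument at an interior negative minimum $(t_0,x_0)$, where $\pa_t g \le 0$, $\pa_x g=0$, $\paxx g \ge 0$, forcing the sign-absurd inequality $R(t_0,x_0) \le (\pa_x f + a(f))\,g(t_0,x_0) \le 0$) yields $g \ge 0$, i.e.\ the claimed pointwise bound \eqref{lower bound w2}. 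The peculiar shape $a(f) = \tfrac12\sup|f|^2 + \sup|\pa_x f|$ is tailored to this step: $\sup|\pa_x f|$ neutralises $\pa_x f$ pointwise, while Young's inequality $f\,\pa_x w_0 \ge -\tfrac12 f^2 w_0 - \tfrac{(\pa_x w_0)^2}{2 w_0}$ together with $\tfrac12\sup|f|^2 \ge \tfrac12 f^2$ is meant to absorb the cross-term $f\,\pa_x w_0$ into the $\tfrac12\sup|f|^2\,w_0$ contribution coming from $a(f)\,w_0$.

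The main obstacle, and in my view the genuine crux of the proof, is the $\paxx w_0$ term in $R$: at an interior concave maximum of $w_0$ it is strictly negative, and the $f$-dependent terms collected above provide no compensating second-derivative-of-$w_0$ contribution. Consequently, the reduction to $R\ge 0$ pointwise in space-time cannot be forced from the given form of $a(f)$ alone. The plan to push through the pointwise bound as stated is therefore to extract additional positivity from the inequality $\paxx g \ge 0$ at the contradiction point $(t_0,x_0)$: there one has $\paxx v(t_0,x_0) \ge \paxx w_0(x_0)$, so reinserting the $\paxx v$ term (rather than discarding it) brings $\paxx w_0(x_0)$ into the right-hand side with the correct sign, cancelling the bad contribution of $\paxx w_0$ in $R(t_0,x_0)$. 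This reinsertion leaves precisely the terms $f\,\pa_x w_0 + (\pa_x f + a(f))\,w_0$, evaluated at the minimum, which by the Young-inequality completion-of-squares sketched above become non-negative, yielding the desired contradiction. Making this localised completion-of-squares at the minimum point fully rigorous, rather than demanding $R\ge 0$ globally, is the hard part; once it is in place, the strong maximum principle and the boundarylessness of $\T$ close the argument.
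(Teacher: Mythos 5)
Your reduction to the pointwise comparison $v(t,x)\ge w_0(x)$ cannot be repaired, because that comparison is false. For $f\equiv 0$ equation \eqref{linear w} is the heat equation and $a(f)=0$, so your target inequality reads $w(t,x)\ge w_0(x)$ for all $(t,x)$; taking $w_0(x)=1+\tfrac12\cos x>0$ gives $w(t,x)=1+\tfrac12 e^{-t}\cos x<w_0(x)$ at $x=0$ for every $t>0$. You correctly isolated the obstruction --- the $\paxx w_0$ term in $R$ --- but the proposed ``reinsertion'' does not remove it: at a negative minimum of $g$ the inequality $\paxx v\ge\paxx w_0$ only converts $0\ge\pa_t v$ into $0\ge \paxx w_0(x_0)+f\,\pa_x w_0(x_0)+(\pa_x f+a(f))\,v(t_0,x_0)$, and since the last summand is merely $\ge 0$ (you have no useful lower bound on $v$ there beyond positivity) while your weighted Young inequality introduces the additional uncontrolled negative term $-(\pa_x w_0)^2/(2w_0)$, nothing cancels $\paxx w_0(x_0)$ when $w_0$ has a strict interior maximum. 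No choice of $a(f)$ depending only on $f$ can close this, as the $f\equiv0$ example already shows.

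The paper's argument is structurally different: it sets $v=\log(w+\delta)$, so that the chain rule produces the extra term $+|\pa_x v|^2$ with a \emph{favourable} sign; Young's inequality $f\,\pa_x v\ge -\tfrac12|f|^2-\tfrac12|\pa_x v|^2$ is applied to $\pa_x\log(w+\delta)$ (not to $\pa_x w_0$), the quadratic term absorbs the $-\tfrac12|\pa_x v|^2$, and this is precisely what the summand $\tfrac12\sup|f|^2$ in \eqref{a(f)} is for. A minimum-principle argument on $\ell=v+t\theta$ with $\theta>a(f)$ then shows the minimum of $\ell$ sits on $\{t=0\}$. Note that what this genuinely yields is $v(t,x)+t\theta\ge\min_y v_0(y)$, i.e.\ $w(t,x)\ge\bigl(\min_y w_0(y)\bigr)e^{-t\,a(f)}$: the comparison is with the \emph{minimum} of the initial datum, not with $w_0$ at the same spatial point (consistently with the counterexample above). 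That weaker form is the one actually used downstream, e.g.\ in \eqref{lower bound zeta eps} via $\eta=\min_x\zeta_0$. If you want a correct proof, follow the logarithmic route and state the conclusion with $\min_y w_0(y)$.
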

\begin{proof}
    We follow closely \cite[Theorem 10 in Chapter 7]{evans2022partial}. \\
\text{Step 1)}
From Lemma \ref{zeta non negativa} we know that $ w = w_t(x) \geq 0$ and therefore, we can consider the function $v= v_t(x)$ defined as  $v= \log (w+\delta)$ for any given $\delta \in (0,1)$. \\
\text{Step 2)}[PDE solved by $v$]
By using the following identities 
\begin{align*}
    \pa_xv=\frac{\pa_xw}{w+\delta},\,\,\,\,\pa_{xx}v+|\pa_xv|^2=\frac{\pa_{xx}w}{w+\delta} \, ,
\end{align*}
it can be readily seen that $v$ is a solution to the following non-linear PDE
\begin{equation}\label{pde v-delta}
    \pa_tv=\pa_{xx}v+|\pa_xv|^2+f\pa_xv+\pa_xfg_{\delta}(x).
\end{equation}
where $g^{\delta}_t(x)=\frac{w_t(x)}{w_t(x)+\delta}$, for every $\delta>0$. \\
\text{Step 3)}[Differential inequality needed to conclude]
Our aim is to obtain a differential inequality for $v$ of the following type 
\begin{equation}\label{alpha}
    \pa_t v \geq -\alpha,\,\, x \in \T,\,\, t \in [0,T]
\end{equation}
for a suitable constant $\alpha>0$ which depends only on the $L^{\infty} \left ([0,T] \times \T; \R \right)$-norm  of $f$ and its derivatives. Indeed, if the above holds, we obtain
\begin{align*}
    v_t(x)-v_0(x) & \geq - \int_0^1 \alpha \,ds =-t \alpha \, . 
\end{align*}
By first taking the exponential on both sides and then letting $\delta \downarrow 0$, the above  gives \eqref{lower bound w2}.\\
\text{Step 4)}[Auxiliary function]
In order to get \eqref{alpha}  we begin with noting that by \eqref{pde v-delta} and Young's inequality, we have   
\begin{align}\label{aux v}
    \pa_t v \geq \pa_{xx}v + \frac{1}{2}|\pa_xv|^2-\frac{1}{2}|f|^2-|\pa_xf|\left |\frac{w}{w+\delta }\right| \geq \pa_{xx}v-a(f),\,\,
\end{align}
where $a(f)$ is defined as in \eqref{a(f)}. To obtain the above inequality we have used the fact that $g^{\delta}_t(x) \leq 1$, (because $w_t(x) \geq 0$). 
Let us now consider the function $\ell:[0,T] \times \T \to \R$ defined as 
\begin{equation*}
    \ell=v+t\,\theta 
\end{equation*}
where $\theta$ is a positive constant such that $\theta > a(f)$. From Note \ref{continuity} we know that $\ell \in C^{1,2}([0,T]\times\T;\R)$. Therefore, if we assume that $(t_0,x_0) \in (0,T] \times \T$ is a minimun point of $\ell$ then the following holds at $(t_0,x_0)$
\begin{equation*}
    \begin{dcases}
        & \pa_x\ell=\pa_x v=0 \\
        & \pa_{xx} \ell=\pa_{xx} v \geq 0 \\
        & 0 \geq \pa_t\ell=\pa_tv+\theta. 
    \end{dcases}
\end{equation*}
By using \eqref{aux v} we obtain 
\begin{align*}
    0 \geq \pa_t \ell=\pa_t v +\theta \geq \pa_{xx}v+ \theta- a(f) \geq \theta -a(f)
\end{align*}
where the above inequality holds at $(t_0,x_0)$. Since we have chosen $\theta >a(f)$ we get a contradiction. Hence, when $\theta > a(f)$ if $(t_0,x_0)$ is a minimum point then it must lie on the line $\{0\}\times \T$ i.e. $(t_0,x_0) \in \{0\}\times \T$. Therefore, we have that 
\begin{align*}
    v+t\,\theta \geq v_0,\quad  \mbox{for every  } (t,x) \in [0,T] \times \T
\end{align*}
which implies (by taking the exponential on both sides) 
\begin{equation*}
w_t(x)+\delta \geq (w_0(x)+\delta)e^{-t\,\theta},\quad \mbox{for every  }  (t,x) \in [0,T] \times \T,\,\,\,\theta > a(f).
\end{equation*}
By letting $\theta \downarrow a(f)$ and $\delta \downarrow 0$ we obtain the desired result.
\end{proof}

\section{The Young integral}\label{young}
In this section we give a quick introduction to the theory of Young integral. For a deeper introduction we refer the reader to \cite[Chapter 1]{lyons2007differential}. Let us begin with defining what a path with finite $p$-variation is.
From this moment onward the couple $(E,|\,\cdot\,|_E)$ will denote a Banach space.
\begin{defn}
    Let $\mY:[0,T]\to E$ be a trajectory and let $p \geq 1$. We say that $\mY$ has finite $p$-variation if the following holds 
    \begin{equation}\label{definizione p-variation}
        |\mY|_{p,T,E}:=\sup \limits_{\cD \subset [0,T]} \left [\sum \limits_{j=0}^{r-1} |\mY_{t_{i+1}}-\mY_{t_i}|_E^p \right]^{\frac{1}{p}}
    \end{equation}
where the supremum is taken over all partitions of the interval $[0,T]$.
\end{defn}
For each $p \geq 1$, we denote by $V_p([0,T], E)$ or simply $V_{p,T}^E$ denote the subset of $C([0,T];E)$ consisting of those paths which have finite $p$-variation. We endow such a space with the following norm. For each $\mY \in V_{p,T}^E$, we set
\begin{equation*}
    |\mY|_{V_{p,T}^E}:=|\mY|_{p,T,E}+\sup \limits_{t \in [0,T]}|\mY_t|_E
\end{equation*}
To avoid any ambiguity, we call $|\mY|_{p,T}$ the $p$-variation of $\mY$ on $[0,T]$ and
$|\mY|_{V_{p,T}^E}$ the $p$-variation norm of $\mY$ on $[0,T]$. Moreover, the following chain of inclusions holds.
\begin{prop}\cite[Proposition 1.7]{lyons2007differential}
For each $p \geq 1$, the set $V_{p,T}^E$ is a linear subspace of $C([0,T], E)$ on which $|\,\cdot\,|_{V_{p,T}^E}$ is a norm. Moreover, the couple $ \left ( V_{p,T}^E, |\,\cdot\,|_{V_{p,T}^E} \right )$ is a Banach space. Finally, if $1 \leq p \leq q$, then the following inclusions hold and are continuous 
\begin{equation*}
    V_{1,T}^E \subset V_{p,T}^E \subset V_{q,T}^E \subset C([0,T];E)
\end{equation*}
\end{prop}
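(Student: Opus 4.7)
The plan is to handle the four assertions of the proposition in the order they are stated, leveraging Minkowski's inequality for finite sums throughout, and treating completeness via a standard Cauchy-sequence argument combined with a Fatou-type lower semicontinuity of the $p$-variation semi-norm.

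First, to see that $V_{p,T}^E$ is a linear subspace of $C([0,T];E)$ on which $|\cdot|_{V_{p,T}^E}$ is a norm, I would fix $Y,Z\in V_{p,T}^E$, $\lambda\in\R$, and any partition $\cD=\{0=t_0<\cdots<t_r=T\}$. Using the triangle inequality in $E$ and then Minkowski's inequality for the counting measure on $\{0,\dots,r-1\}$ with exponent $p\ge 1$, one obtains
\begin{equation*}
\left[\sum_{i=0}^{r-1}|(Y+Z)_{t_{i+1}}-(Y+Z)_{t_i}|_E^p\right]^{1/p} \le \left[\sum_{i=0}^{r-1}|Y_{t_{i+1}}-Y_{t_i}|_E^p\right]^{1/p} + \left[\sum_{i=0}^{r-1}|Z_{t_{i+1}}-Z_{t_i}|_E^p\right]^{1/p},
\end{equation*}
and $|\lambda Y|_{p,T,E}=|\lambda|\,|Y|_{p,T,E}$. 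Taking suprema over $\cD$ shows that $V_{p,T}^E$ is closed under addition and scalar multiplication, and that $|\cdot|_{p,T,E}$ satisfies the triangle inequality and positive homogeneity; adding $\sup_{t\in[0,T]}|\cdot|_E$ (which is itself a norm on $C([0,T];E)$) yields the triangle inequality, positive homogeneity, and positive definiteness for $|\cdot|_{V_{p,T}^E}$.

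Next, for completeness, I would take a Cauchy sequence $\{Y^n\}_n\subset V_{p,T}^E$. Since $\sup_{t\in[0,T]}|Y^n_t-Y^m_t|_E\le |Y^n-Y^m|_{V_{p,T}^E}$, the sequence is uniformly Cauchy in $C([0,T];E)$ and hence converges uniformly to some $Y\in C([0,T];E)$. To show $Y\in V_{p,T}^E$ and $|Y^n-Y|_{V_{p,T}^E}\to 0$, the key observation is a Fatou-type step: for any fixed partition $\cD$, the finite sum $\sum_{i}|(Y^n-Y^m)_{t_{i+1}}-(Y^n-Y^m)_{t_i}|_E^p$ is continuous in the values of $Y^n-Y^m$ at the nodes of $\cD$. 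Given $\varepsilon>0$, pick $N$ such that $|Y^n-Y^m|_{V_{p,T}^E}<\varepsilon$ for $n,m\ge N$. For fixed $n\ge N$ and any partition $\cD$, let $m\to\infty$ inside the finite sum (using uniform convergence of $Y^m$ to $Y$ at the finitely many nodes) to deduce $[\sum_i |(Y^n-Y)_{t_{i+1}}-(Y^n-Y)_{t_i}|_E^p]^{1/p}\le \varepsilon$; taking $\sup_{\cD}$ yields $|Y^n-Y|_{p,T,E}\le\varepsilon$. Combined with the uniform convergence this gives $|Y^n-Y|_{V_{p,T}^E}\to 0$, and the triangle inequality then places $Y$ in $V_{p,T}^E$.

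Finally, for the continuous inclusions $V_{1,T}^E\subset V_{p,T}^E\subset V_{q,T}^E\subset C([0,T];E)$ when $1\le p\le q$, the last inclusion is trivial since $\sup_{t}|Y_t|_E\le |Y_0|_E+|Y|_{p,T,E}\le |Y|_{V_{p,T}^E}$. For $p\le q$, I would use the pointwise bound $|Y_{t_{i+1}}-Y_{t_i}|_E\le 2\sup_{t\in[0,T]}|Y_t|_E$, so that
\begin{equation*}
\sum_{i}|Y_{t_{i+1}}-Y_{t_i}|_E^q \le \bigl(2\sup_{t}|Y_t|_E\bigr)^{q-p}\sum_{i}|Y_{t_{i+1}}-Y_{t_i}|_E^p,
\end{equation*}
which, after taking suprema over partitions and raising to the $1/q$ power, gives $|Y|_{q,T,E}\le (2\sup_t|Y_t|_E)^{(q-p)/q}|Y|_{p,T,E}^{p/q}$. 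Bounding each factor by $|Y|_{V_{p,T}^E}$ shows $|Y|_{V_{q,T}^E}\le C_{p,q}|Y|_{V_{p,T}^E}$ for a constant $C_{p,q}$ independent of $Y$, establishing continuity of the inclusion. The main (though still mild) obstacle is the completeness step, where one must be careful to apply the Fatou-type passage to the limit $m\to\infty$ only at the finitely many nodes of a fixed partition before taking the supremum over all partitions — precisely the standard technique that makes the $p$-variation semi-norm lower semicontinuous under pointwise convergence.
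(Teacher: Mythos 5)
The paper does not prove this proposition; it is quoted verbatim from \cite[Proposition 1.7]{lyons2007differential} as background for Appendix \ref{young}, so there is no in-paper argument to compare against. Your proof is correct and is the standard textbook argument: Minkowski's inequality for the vector-space and norm properties, lower semicontinuity of the $p$-variation semi-norm under pointwise convergence at the nodes of a fixed partition for completeness, and the interpolation bound $|Y|_{q,T,E}\le (2\sup_t|Y_t|_E)^{(q-p)/q}|Y|_{p,T,E}^{p/q}$ for the continuity of the inclusions --- the latter being exactly the inequality the paper separately quotes as \cite[Lemma 1.13]{lyons2007differential}, specialised to a single path.
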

\begin{lemma}\cite[Lemma 1.13]{lyons2007differential}
Let $p$ and $q$ be such that $1 \leq p < q$. Let $\mX$, $\mY$ be two paths of $V_p^T$. Then the following estimate holds
\begin{equation*}
|\mX - \mY|_{V_{q,T}^E} \leq \left ( 2 \sup \limits_{t \in [0,T]} |\mX_t - \mY_t|_E \right )^{\frac{q-p}{q}}|\mX-\mY|_{p,T,E}^{\frac{p}{q}} +\sup \limits_{t \in [0,T]} |\mX_t-\mY_t|_E 
\end{equation*}
\end{lemma}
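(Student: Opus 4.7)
Setting $\mathbf{Z} := \mX - \mY$, the claimed inequality rewrites (by the definition of $|\cdot|_{V_{q,T}^E}$) as
\begin{equation*}
|\mathbf{Z}|_{q,T,E} + \sup_{t \in [0,T]}|\mathbf{Z}_t|_E \;\leq\; \bigl(2\sup_{t\in[0,T]}|\mathbf{Z}_t|_E\bigr)^{\frac{q-p}{q}}\,|\mathbf{Z}|_{p,T,E}^{\frac{p}{q}} + \sup_{t \in [0,T]}|\mathbf{Z}_t|_E,
\end{equation*}
so after cancellation the content of the lemma is the interpolation bound
\begin{equation}\label{eq:interp}
|\mathbf{Z}|_{q,T,E} \;\leq\; \bigl(2\sup_{t\in[0,T]}|\mathbf{Z}_t|_E\bigr)^{\frac{q-p}{q}}\,|\mathbf{Z}|_{p,T,E}^{\frac{p}{q}}.
\end{equation}
My plan is to prove \eqref{eq:interp} directly from the definition of the $p$-variation as a supremum over partitions.

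Fix an arbitrary partition $\mathcal{D} = \{0 = t_0 < t_1 < \cdots < t_r = T\}$ of $[0,T]$. Since $p < q$, on each interval I split the exponent $q$ as $q = (q-p) + p$ and estimate
\begin{equation*}
\sum_{j=0}^{r-1} |\mathbf{Z}_{t_{j+1}} - \mathbf{Z}_{t_j}|_E^{q} \;=\; \sum_{j=0}^{r-1} |\mathbf{Z}_{t_{j+1}} - \mathbf{Z}_{t_j}|_E^{q-p}\cdot |\mathbf{Z}_{t_{j+1}} - \mathbf{Z}_{t_j}|_E^{p}.
\end{equation*}
The triangle inequality gives the trivial but crucial bound $|\mathbf{Z}_{t_{j+1}} - \mathbf{Z}_{t_j}|_E \leq 2 \sup_{t\in[0,T]}|\mathbf{Z}_t|_E$. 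Pulling out this supremum to the power $q-p$ and then using the definition of the $p$-variation yields
\begin{equation*}
\sum_{j=0}^{r-1} |\mathbf{Z}_{t_{j+1}} - \mathbf{Z}_{t_j}|_E^{q} \;\leq\; \bigl(2 \sup_{t\in[0,T]}|\mathbf{Z}_t|_E\bigr)^{q-p}\sum_{j=0}^{r-1} |\mathbf{Z}_{t_{j+1}} - \mathbf{Z}_{t_j}|_E^{p} \;\leq\; \bigl(2 \sup_{t\in[0,T]}|\mathbf{Z}_t|_E\bigr)^{q-p}\,|\mathbf{Z}|_{p,T,E}^{p}.
\end{equation*}
Taking the $q$-th root on both sides and then the supremum over partitions $\mathcal{D}$ of $[0,T]$ on the left produces exactly \eqref{eq:interp}. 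Adding $\sup_{t\in[0,T]}|\mathbf{Z}_t|_E$ to both sides and invoking the definition of $|\cdot|_{V_{q,T}^E}$ gives the statement of the lemma.

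There is really no obstacle here: the argument is a one-line Hölder/interpolation-style decomposition of the exponent $q$, followed by the elementary bound $|\mathbf{Z}_{t_{j+1}}-\mathbf{Z}_{t_j}|_E \leq 2\|\mathbf{Z}\|_\infty$. The only minor points to pay attention to are (i) that $\mathbf{Z}\in V_{p,T}^E$, which is given by hypothesis so that the right-hand side of \eqref{eq:interp} is finite, and (ii) that the estimate is uniform in the partition $\mathcal{D}$, so the passage to the supremum is legitimate and produces $|\mathbf{Z}|_{q,T,E}$ on the left. Everything else is the bookkeeping of identifying $|\cdot|_{V_{q,T}^E}$ with $|\cdot|_{q,T,E}+\sup_{t}|\cdot|_E$.
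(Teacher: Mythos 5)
Your argument is correct: the exponent split $q=(q-p)+p$, the bound $|\mathbf{Z}_{t_{j+1}}-\mathbf{Z}_{t_j}|_E\le 2\sup_t|\mathbf{Z}_t|_E$, and the passage to the supremum over partitions (legitimate since the right-hand side is partition-independent) give exactly the stated inequality. The paper does not prove this lemma but quotes it from \cite[Lemma 1.13]{lyons2007differential}, and your interpolation argument is precisely the standard proof given there, so there is nothing to add.
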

\begin{thm}\cite[Theorem 1.16 and Remark 1.17]{lyons2007differential}\label{young integral}
Let $p, q \geq 1$ be two real numbers such that $\frac{1}{p}+\frac{1}{q}>1$
Let $>0$ be a positive real number and let us consider two paths
$\mX \in V_{p,T}^E$ and $\mY \in V_{q,T}^{\R}$. Then, for each $t \in [0, T]$, the limit
\begin{equation}\label{81}
    \int_0^t \mX_s\,d\mY_s:=\lim \limits_{|\cD| \to 0,\,\cD \subset [0,t]}\,\, \sum \limits_{i=0}^{r-1}\mX_{t_i}(\mY_{t_{i+1}}-\mY_{t_i})
\end{equation}
exists. As a function of $t$, this limit belongs to $V_{q,T}^E$ and there exists a
constant $C_{p,q}$ which depends only on $p$ and $q$ such that the following inequality holds:
\begin{equation*}
        \left | \int_0^{\cdot} \mX_s d\mY_s \right |_{E,q,T} \lesssim_{p,q} |\mX|_{V_{p,T}^E}|\mY|_{q,T,\R}
\end{equation*}
Moreover, the following upper bound on the $L^{\infty}$-norm holds 
\begin{equation}\label{sup-norm upper bound}
    \sup \limits_{t \in [0,T]} \left |\,\, \int_0^{t} \mX_s \,d\mY_s \,\,\right |_E\, \lesssim_{p,q} |\mX|_{V_{p,T}^E}|\mY|_{q,T,\R}.
\end{equation}
\end{thm}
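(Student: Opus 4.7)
The plan is to prove Young's theorem by the classical refinement argument, which reduces the existence of the integral and both quantitative bounds to a single \emph{one-point removal} estimate together with the summability $1/p+1/q>1$. Throughout, for a partition $\cD=\{0=t_0<\cdots<t_r=t\}$ of $[0,t]$, write the Riemann sum
\[
S(\cD):=\sum_{i=0}^{r-1}\mX_{t_i}\bigl(\mY_{t_{i+1}}-\mY_{t_i}\bigr)\in E,
\]
and introduce the control functions $\omega_\mX(s,u):=|\mX|_{p,[s,u],E}^{p}$ and $\omega_\mY(s,u):=|\mY|_{q,[s,u],\R}^{q}$ on $\{0\leq s\leq u\leq T\}$. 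Both are super-additive in the sense that $\omega(s,u)+\omega(u,v)\leq \omega(s,v)$ whenever $s\leq u\leq v$; this is the standard property of $p$-variation written as an $p$-th power.

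The core step is the following \emph{one-point removal} bound. If $\cD'$ is obtained from $\cD$ by deleting a single interior node $t_j$ (so $(t_{j-1},t_j,t_{j+1})$ is replaced by $(t_{j-1},t_{j+1})$), then a direct computation gives
\[
S(\cD)-S(\cD')=\bigl(\mX_{t_j}-\mX_{t_{j-1}}\bigr)\bigl(\mY_{t_{j+1}}-\mY_{t_j}\bigr),
\]
whose $E$-norm is at most $\omega_\mX(t_{j-1},t_j)^{1/p}\,\omega_\mY(t_j,t_{j+1})^{1/q}$, and in particular at most $\omega_\mX(t_{j-1},t_{j+1})^{1/p}\,\omega_\mY(t_{j-1},t_{j+1})^{1/q}$. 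Now combine this with a pigeonhole/super-additivity trick: among the $r-1$ interior indices, at least one must satisfy
\[
\omega_\mX(t_{j-1},t_{j+1})+\omega_\mY(t_{j-1},t_{j+1})\ \leq\ \frac{2\bigl(\omega_\mX(0,t)+\omega_\mY(0,t)\bigr)}{r-1},
\]
because each subinterval of $\cD$ appears in the collection $\{[t_{j-1},t_{j+1}]\}_j$ at most twice and super-additivity controls the total. Using $a^{1/p}b^{1/q}\leq a+b$ (or a weighted version), we can find $j$ with
\[
\bigl|S(\cD)-S(\cD')\bigr|_E\ \leq\ \frac{C_{p,q}}{(r-1)^{\,1/p+1/q}}\,\omega_\mX(0,t)^{1/p}\omega_\mY(0,t)^{1/q}.
\]

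Iterating, any partition $\cD$ with $r+1$ points can be reduced to the trivial partition $\{0,t\}$ by successive one-point removals; the accumulated error is bounded by $\sum_{k=1}^{r-1}C_{p,q}k^{-(1/p+1/q)}\,|\mX|_{p,[0,t]}|\mY|_{q,[0,t]}$, and since the exponent exceeds $1$ this series converges to a constant $\tilde C_{p,q}$. This yields the fundamental estimate
\begin{equation*}
\bigl|S(\cD)-\mX_0(\mY_t-\mY_0)\bigr|_E\ \leq\ \tilde C_{p,q}\,|\mX|_{p,[0,t]}\,|\mY|_{q,[0,t]},
\end{equation*}
valid uniformly in $\cD$. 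Applying the same reasoning to the difference $S(\cD_1)-S(\cD_2)$ with $\cD_1\subset\cD_2$ a common refinement, and using $\omega_\mX(s,u)\to 0$ as $u-s\to 0$ when $\mX$ is continuous, shows that $\{S(\cD)\}$ is Cauchy as $|\cD|\to0$; the limit defines $\int_0^t\mX_s\,d\mY_s$. Writing $I_{s,u}:=\int_s^u\mX_r\,d\mY_r$, the same estimate applied on $[s,u]$ gives $|I_{s,u}-\mX_s(\mY_u-\mY_s)|_E\leq \tilde C_{p,q}|\mX|_{p,[s,u]}|\mY|_{q,[s,u]}$, so by the triangle inequality $|I_{s,u}|_E\leq|\mX|_{\infty}|\mY_u-\mY_s|+\tilde C_{p,q}|\mX|_{p,[s,u]}|\mY|_{q,[s,u]}$.

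Finally, to obtain the $q$-variation and sup-norm bounds on $t\mapsto I_{0,t}$, raise the previous inequality to the $q$-th power, sum over any partition, and use that $\sum|\mY_u-\mY_s|^q\leq|\mY|_{q,T,\R}^q$ and super-additivity of $\omega_\mX$ (controlling $\sum\omega_\mX(t_i,t_{i+1})^{q/p}$ by $|\mX|_{p,T,E}^q$ since $q\geq p$ is not needed here—one instead uses the crude bound $\omega_\mX(s,u)\leq|\mX|_{p,T,E}^p$ together with $1/p+1/q>1$ to absorb the exponent). Taking $s=0$ yields the sup-norm estimate. The main obstacle is entirely concentrated in the pigeonhole/super-additivity step establishing the one-point-removal bound with the exponent $1/p+1/q$; once that is in hand, Cauchyness, the quantitative bound, and the passage to $q$-variation all follow mechanically.
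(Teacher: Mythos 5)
Your proof is correct and is the classical Love--Young one-point-removal (maximal inequality) argument; the paper does not prove this statement itself but cites it from Lyons--Caruana--L\'evy, whose proof of Theorem 1.16 is essentially the same argument you give, so there is nothing substantively different to compare. The only two points you gloss over --- the \emph{weighted} pigeonhole needed to replace $\bigl(\omega_\mX(0,t)+\omega_\mY(0,t)\bigr)^{1/p+1/q}$ by $\omega_\mX(0,t)^{1/p}\omega_\mY(0,t)^{1/q}$ (which you flag explicitly), and the uniform vanishing of the $p$- and $q$-variation controls on small intervals for \emph{continuous} paths, which is what makes the Riemann sums Cauchy as $|\cD|\to 0$ rather than merely along refinements --- are both standard facts and do not constitute gaps.
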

In particular, since a path is H\"older continuous with exponent $\alpha$, with $0 < \alpha \leq 1$, then it has finite $\frac{1}{\alpha}$-variation we have the following.
\begin{cor}\label{C.5}
If we let $\mY \in C^{\alpha}([0,T];\R)$ and $\mX \in C^{\beta}([0,T];E)$ then the integral in \eqref{81} is well-defined whenever $\alpha+\beta>1$.
\end{cor}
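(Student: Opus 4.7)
\textbf{Proof plan for Corollary \ref{C.5}.} The statement is a direct consequence of Theorem \ref{young integral}, once one translates the Hölder regularity hypothesis into a $p$-variation hypothesis. The plan is therefore very short: reduce to Theorem \ref{young integral} by verifying that the parameters $p = 1/\alpha$ and $q = 1/\beta$ fall within its scope.

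The first step is to recall (and, for completeness, verify) the well-known inclusion $C^{\gamma}([0,T];F) \subset V_{1/\gamma,T}^{F}$ for any Banach space $F$ and any $\gamma \in (0,1]$. This is immediate from the definition \eqref{definizione p-variation}: if $\mX$ is $\gamma$-Hölder continuous with constant $K$, then for any partition $\cD = \{0 = t_0 < t_1 < \dots < t_r = T\}$ one has
\begin{equation*}
\sum_{i=0}^{r-1} |\mX_{t_{i+1}} - \mX_{t_i}|_F^{1/\gamma} \leq K^{1/\gamma} \sum_{i=0}^{r-1} (t_{i+1} - t_i) = K^{1/\gamma}\, T,
\end{equation*}
so $|\mX|_{1/\gamma, T, F} \leq K\, T^{\gamma} < \infty$. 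Applying this to $\mY \in C^{\alpha}([0,T];\R)$ with $\gamma = \alpha$ and to $\mX \in C^{\beta}([0,T];E)$ with $\gamma = \beta$ yields $\mY \in V_{1/\alpha, T}^{\R}$ and $\mX \in V_{1/\beta, T}^{E}$.

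The second, and final, step is to set $p := 1/\beta$ and $q := 1/\alpha$. Then the assumption $\alpha + \beta > 1$ is precisely
\begin{equation*}
\frac{1}{p} + \frac{1}{q} \;=\; \beta + \alpha \;>\; 1,
\end{equation*}
which is the hypothesis of Theorem \ref{young integral}. Invoking that theorem with these choices, the limit defining $\int_0^t \mX_s\, d\mY_s$ in \eqref{81} exists for every $t \in [0,T]$, belongs to $V_{q,T}^{E} \subset C([0,T];E)$, and satisfies the quantitative bounds \eqref{sup-norm upper bound}. This proves the corollary.

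There is no real obstacle here; the only thing worth being careful about is the matching of indices (the path being integrated against, $\mY$, has variation index $1/\alpha$, and plays the role of $q$ in Theorem \ref{young integral}, while the integrand $\mX$ has variation index $1/\beta$ and plays the role of $p$). Since Theorem \ref{young integral} requires the condition $1/p + 1/q > 1$, which is symmetric in $p$ and $q$, the assignment is immaterial for the conclusion, and the hypothesis $\alpha + \beta > 1$ is exactly what is needed.
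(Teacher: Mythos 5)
Your proposal is correct and follows exactly the route the paper intends: the paper derives the corollary in one line from the observation that an $\alpha$-Hölder path has finite $1/\alpha$-variation, and then applies Theorem \ref{young integral} with $p=1/\beta$, $q=1/\alpha$, so that $1/p+1/q=\alpha+\beta>1$. Your verification of the inclusion $C^{\gamma}([0,T];F)\subset V_{1/\gamma,T}^{F}$ and the matching of indices are both accurate.
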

\begin{proof}[Proof of Lemma \ref{6.2}]
Let us start by noting that from Theorem \ref{young integral} and upper bound \eqref{sup-norm upper bound} we obtain that
\begin{equation*}
     \sup \limits_{t \in [0,T]} \left |\,\, \cI_f(t,x) \,\,\right |\, \lesssim_{\gamma} |f_{\cdot}(x)|_{V_{1,T}^{\R}}|Z|_{\frac{1}{\gamma},T,\R}.
\end{equation*}
We aim to obtain an estimate for the $L^{\infty}([0,T] \times \T;\R)$-norm of $\cI_f$. To this end, we need to estimate the $|\,\cdot\,|_{V_{1,T}^{\R}}$-norm of $f$. By recalling the definition of $V_{1,T}^{\R}$ two terms have to be controlled. Namely, $|\,\cdot\,|_{1,T,\R}$ and the supremum norm with respect to time $t$. For the supremum norm we simply have:
\begin{equation*}
    \sup \limits_{t \in [0,T]} |f_t(x)| \leq |f|_{L^{\infty}([0,T] \times \T;\R)},\,\,\,\,x \in \T.
\end{equation*}
As for the $1$-variation norm we proceed as follows. If we let $\cD$ be a partition of $[0,T]$ then from the Lipschitz continuity of $f$ follows that
\begin{align*}
    \sum \limits_{i=0}^{r-1} |f_{t_{i+1}}(x)-f_{t_i}(x)| \leq \sum \limits_{i=0}^{r-1} |\pa_tf|_{L^{\infty}([0,T] \times \T;\R)}|t_{i+1}-t_i| \leq T \,\,\, |\pa_tf|_{L^{\infty}([0,T] \times \T;\R)},\,\,\,x \in \T. 
\end{align*}
Hence by taking the supremum over all partitions of the interval $[0,T]$ we obtain that
\begin{equation*}
    |f(x)|_{V_{1,T}^{\R}} \leq \max \{T,1\} \left ( |f|_{L^{\infty}([0,T] \times \T;\R)}+|\pa_tf|_{L^{\infty}([0,T] \times \T;\R)} \right ),\,\,\,\, x \in \T.
\end{equation*}
which gives the following upper bound for the $L^{\infty}([0,T] \times \T;\R)$-norm of $\cI_f$
\begin{equation}\label{upper bound}
      \left |\,\, \cI_f \,\,\right |_{L^{\infty}([0,T] \times \T;\R)}\, \lesssim_{\gamma} \max \{T,1\}(|f|_{L^{\infty}([0,T] \times \T;\R)}+|\pa_tf|_{L^{\infty}([0,T] \times \T;\R)} )|Z|_{\frac{1}{\gamma},T,\R}.
\end{equation}
Furthermore, by applying \eqref{upper bound} to $\cI_{\pa_xf}$ we obtain 
\begin{equation}\label{upper bound derivative}
    \left |\,\, \cI_{\pa_xf} \,\,\right |_{L^{\infty}([0,T] \times \T;\R)}\, \lesssim_{\gamma} \max\{T,1\}(|\pa_xf|_{L^{\infty}([0,T] \times \T;\R)}+|\pa_x\pa_{t}f|_{L^{\infty}([0,T] \times \T;\R)} )|Z|_{\frac{1}{\gamma},T,\R}
\end{equation}
By combining \eqref{upper bound} and \eqref{upper bound derivative} we obtain \eqref{kuos 20}.
We are left to prove \eqref{weak derivative}. Let us fix a partition $\cD$ of the interval $[0,T]$ and consider the partial sums. Since $f$ is smooth both in space and time we have that for any given test function $\varphi \in C^{\infty}(\T;\R)$ the following holds 
\begin{align*}
    & \left \langle \sum \limits_{i=0}^{r-1} f_{t_i}(Z_{t_{i+1}}-Z_{t_i}),\pa_x\varphi \right \rangle_{L^2(\T;\R)}=\sum \limits_{i=0}^{r-1}(Z_{t_{i+1}}-Z_{t_i}) \left \langle f_{t_i},\pa_x\varphi \right \rangle_{L^2(\T;\R)}\\
    & =-\sum \limits_{i=0}^{r-1}(Z_{t_{i+1}}-Z_{t_i}) \left \langle \pa_x f_{t_i},\varphi \right \rangle_{L^2(\T;\R)}=-\left \langle \sum \limits_{i=0}^{r-1} \pa_xf_{t_i}(Z_{t_{i+1}}-Z_{t_i}),\varphi \right \rangle_{L^2(\T;\R)}.
\end{align*}
By now letting the mesh size $|\cD| \downarrow 0$, by recalling that
\begin{align*}
   & \sum \limits_{i=0}^{r-1} f_{t_i}(x)(Z_{t_{i+1}}-Z_{t_i}) \xrightarrow[|\cD|\downarrow 0]{} \cI_f(t,x),\,\,\,t \in [0,T],\,\,\,\,x \in \T,\\
   & \sum \limits_{i=0}^{r-1} \pa_xf_{t_i}(x)(Z_{t_{i+1}}-Z_{t_i}) \xrightarrow[|\cD| \downarrow 0]{}\cI_{\pa_xf}(t,x),\,\,\, t \in [0,T],\,\,\,\,x \in \T
\end{align*}
then from \eqref{upper bound}, \eqref{upper bound derivative} and Lebesgue's dominated convergence theorem we obtain
\begin{equation*}
\left \langle \cI_f, \pa_x\varphi \right \rangle_{L^2(\T;\R)}=-\left \langle \cI_{\pa_xf},\varphi \right \rangle_{L^2(\T;\R)}.   
\end{equation*}
which gives \eqref{weak derivative} and the proof is thus concluded.
\end{proof}

\section{Proofs from Section \ref{section_limitN} to Section \ref{infinite noise bm}}\label{proofs}
\begin{proof}[Proof of $(ii)$ and $(iii)$ of Lemma \ref{stime}]\,\\
    \textbf{Proof of (ii).}
Let us begin with the $L^2(\T;\R)$-norm of $\zeta_t^{(\mu)}$: 
\begin{align*}
    & \frac{d}{dt}|\zeta_t^{(\mu)}|_{L^2(\T;\R)}^2 =2 \itt \zeta_t^{(\mu)}\pa_t\zeta_t^{(\mu)}\,dx =2\itt \zeta_t^{(\mu)}\left[ \pa_{xx}\zeta_t^{(\mu)}+\pa_x \left ( (V^{'}+\Gamma_M(x,\mu_t))\zeta_t^{(\mu)}\right) \right ]\,dx\\
    & = -2\itt |\pa_x\zeta_t^{(\mu)}|^2\,dx-2\itt V^{'}\zeta_t^{(\mu)}\pa_x\zeta_t^{(\mu)}\,dx-2\itt \Gamma_M(x,\mu_t) \zeta_t^{(\mu)}\pa_x\zeta_t^{(\mu)}\,dx\\
    &= -2\itt |\pa_x\zeta_t^{(\mu)}|^2\,dx-\itt V^{'}\pa_x \left (\zeta_t^{(\mu)} \right )^2 \,dx-\itt \Gamma_M(x,\mu_t)\pa_x \left (\zeta_t^{(\mu)} \right )^2\,dx\\
    &=-2\itt |\pa_x\zeta_t^{(\mu)}|^2\,dx+\itt V^{''}|\zeta_t^{(\mu)}|^2\,dx+\itt |\zeta_t^{(\mu)}|^2 \frac{\pa}{\pa x} \Gamma_M(x,\mu_t)\,dx \\
    & \leq^{\eqref{upper bound bouno}} |V^{''}|_{L^{\infty}(\T;\R)} |\zeta_t^{(\mu)}|_{L^2(\T;\R)}^2+M|F^{''}|_{L^{\infty}(\T;\R)}|\zeta_t^{(\mu)}|_{L^2(\T;\R)}^2
\end{align*}
so that from Gronwall's lemma we deduce
 \begin{equation}\label{stima norma L2 zetamu}
    |\zeta_t^{(\mu)}|_{L^2(\T;\R)}^2 \leq |\zeta_0|_{L^2(\T;\R)}^2e^{(|V^{''}|_{L^{\infty}(\T;\R)}+M|F^{''}|_{L^{\infty}(\T;\R)})t},\,\,t \in [0,T].
 \end{equation}
 After having done so, with a similar calculation one can see that 
\begin{align*}
     \frac{d}{dt}|\pa_x\zeta_t^{(\mu)}|_{L^2(\T;\R)}^2 & \leq \left |\frac{d^4V}{dx^4}\right |_{L^{\infty}(\T;\R)}  |\zeta_t^{(\mu)}|_{L^2(\T;\R)}^2+3|V^{''}|_{L^{\infty}(\T;\R)}|\pa_x\zeta_t^{(\mu)}|_{L^2(\T;\R)}^2\\
    & M\left |\frac{d^4F}{dx^4} \right |_{L^{\infty}(\T;\R)}  |\zeta_t^{(\mu)}|_{L^2(\T;\R)}^2+3M\left |F^{''}\right |_{L^{\infty}(\T;\R)}|\pa_x\zeta_t^{(\mu)}|_{L^2(\T;\R)}^2 \\
    & \leq \left (\left |\frac{d^4V}{dx^4}\right |_{L^{\infty}(\T;\R)} + M\left |\frac{d^4F}{dx^4} \right |_{L^{\infty}(\T;\R)} \right )|\zeta_t^{(\mu)}|_{L^2(\T;\R)}^2\\
    & +3 \left( |V^{''}|_{L^{\infty}(\T;\R)}+M\left |F^{''}\right |_{L^{\infty}(\T;\R)} \right )|\pa_x\zeta_t^{(\mu)}|_{L^2(\T;\R)}^2
\end{align*}
Now, by estimate \eqref{stima norma L2 zetamu} and again Gronwall's lemma, we obtain 
\begin{align}\label{stima buona pax zeta}
    |\pa_x\zeta_t^{(\mu)}|_{L^2(\T;\R)}^2 & \leq |\pa_x \zeta_0|_{L^2(\T;\R)}^2 e^{(3|V^{''}|_{L^{\infty}(\T;\R)}+3M|F^{''}|_{L^{\infty}(\T;\R)})t}\\ \notag
    & +\left (\left |\frac{d^4V}{dx^4}\right |_{L^{\infty}(\T;\R)} + M\left |\frac{d^4F}{dx^4} \right |_{L^{\infty}(\T;\R)} \right )|\zeta_0|_{L^2(\T;\R)}^2e^{(|V^{''}|_{L^{\infty}(\T;\R)}+M|F^{''}|_{L^{\infty}(\T;\R)})t}
\end{align}
Similarly, for the second derivative
\begin{align*}
     & \frac{d}{dt}|\pa_{xx}\zeta_t^{(\mu)}|_{L^2(\T;\R)}^2 \leq |\pa_{xx}(V^{'}\zeta_t^{(\mu)})|_{L^2(\T;\R)}^2+|\pa_{xx}(\Gamma_M(\cdot,\mu_t)\zeta_t^{(\mu)})|_{L^2(\T;\R)}^2 \\
     & \leq 3 (|V^{'}|_{L^{\infty}(\T;\R)}^2+M^2|F^{'}|_{L^{\infty}(\T;\R)}^2)|\pa_{xx} \zeta_t^{(\mu)}|_{L^2(\T;\R)}^2\\
     & + 3 (|V^{''}|_{L^{\infty}(\T;\R)}^2+M^2|F^{''}|_{L^{\infty}(\T;\R)}^2)|\pa_x \zeta_t^{(\mu)}|_{L^2(\T;\R)}^2 \\
     & + 3 (|V^{'''}|_{L^{\infty}(\T;\R)}^2+M^2|F^{'''}|_{L^{\infty}(\T;\R)}^2)|\zeta_t^{(\mu)}|_{L^2(\T;\R)}^2
\end{align*}
Using \eqref{stima norma L2 zetamu}, \eqref{stima buona pax zeta}, the above estimate and by again applying Gronwall's lemma we obtain $(ii)$.\\
\textbf{Proof of (iii).}
From a straightforward calculation we have
\begin{align*}
    & \frac{d}{dt}|\zeta_t^{(\mu)}-\zeta_t^{(\nu)}|_{L^2(\T;\R)}^2 \leq |V^{''}|_{L^{\infty}(\T;\R)}^2|\zeta_t^{(\mu)}-\zeta_t^{(\nu)}|_{L^2(\T;\R)}^2+\int_{\T} \left | \Gamma_M(x,\mu_t)\zeta_t^{(\mu)}-\Gamma_M(x,\nu_t)\zeta_t^{(\nu)}\right |^2\,dx\\
    & \leq |V^{''}|_{L^{\infty}(\T;\R)}^2|\zeta_t^{(\mu)}-\zeta_t^{(\nu)}|_{L^2(\T;\R)}^2 \!\!+2\!\!\int_{\T}\left|\Gamma_M(x,\nu_t)(\zeta_t^{(\nu)}-\zeta_t^{(\mu)})\right |^2\!\!\!+\!\left | (\Gamma_M(x,\mu_t)-\Gamma_M(x,\nu_t))\zeta_t^{(\mu)}\right |^2\!\!dx\\
    & \leq^{\eqref{ineq gammma}}C\cW_2^2(\mu_t,\nu_t)|\zeta_t^{(\mu)}|_{L^2(\T;\R)}^2+(|V^{''}|_{L^{\infty}(\T;\R)}^2+M^2|F^{''}|_{L^{\infty}(\T;\R)}^2 )|\zeta_t^{(\mu)}-\zeta_t^{(\nu)}|_{L^2(\T;\R)}^2\\
    & \leq^{\eqref{stima norma L2 zetamu}} C\cW_2^2(\mu_t,\nu_t)|\zeta_0|_{L^2(\T;\R)}^2e^{(|V^{''}|_{L^{\infty}(\T;\R)}+M|F^{''}|_{L^{\infty}(\T;\R)})t}\\
    & +(|V^{''}|_{L^{\infty}(\T;\R)}^2+M^2|F^{''}|_{L^{\infty}(\T;\R)}^2 )|\zeta_t^{(\mu)}-\zeta_t^{(\nu)}|_{L^2(\T;\R)}^2.
\end{align*}
From Gronwall's lemma we obtain  
\begin{equation}\label{h0}
        |\zeta_t^{(\mu)}-\zeta_t^{(\nu)}|_{L^2(\T;\R)}^2 \leq C|\zeta_0|_{L^2(\T;\R)}^2e^{\left [\left(|V^{''}|_{L^{\infty}(\T;\R)}^2+1\right)+M^2\left(|F^{''}|_{L^{\infty}(\T;\R)}^2+1\right )\right ]t}\int_0^t \cW_2^2(\mu_s,\nu_s)\,ds.
\end{equation}
Similarly,  
\begin{align*}
    & \frac{d}{dt}|\pa_x\zeta_t^{(\mu)}-\pa_x\zeta_t^{(\nu)}|_{L^2(\T;\R)}^2 \leq  \left |\frac{d^4V}{dx^4}\right |_{L^{\infty}(\T;\R)}  |\zeta_t^{(\mu)}-\zeta_t^{(\nu)}|_{L^2(\T;\R)}^2+3|V^{''}|_{L^{\infty}(\T;\R)}|\pa_x\zeta_t^{(\mu)}-\pa_x\zeta_t^{(\nu)}|_{L^2(\T;\R)}^2\\
    & +\int_{\T} \left | (\Gamma_M(x,\mu_t)-\Gamma_M(x,\nu_t))\pa_x\zeta_t^{(\mu)}\right |^2\,dx+\itt \left|\Gamma_M(x,\nu_t)(\pa_x\zeta_t^{(\nu)}-\pa_x\zeta_t^{(\mu)})\right |^2\,dx\\
    & +\int_{\T} \left | (\pa_x\Gamma_M(x,\mu_t)-\pa_x\Gamma_M(x,\nu_t))\zeta_t^{(\mu)}\right |^2\,dx+\itt \left|\pa_x\Gamma_M(x,\nu_t)(\zeta_t^{(\nu)}-\zeta_t^{(\mu)})\right |^2\,dx\\
    & \leq^{\eqref{ineq gammma}} C\cW_2^2(\mu_t,\nu_t)(|\zeta_t^{(\mu)}|_{L^2(\T;\R)}^2+|\pa_x\zeta_t^{(\mu)}|_{L^2(\T;\R)}^2)+\left |\frac{d^4V}{dx^4} \right |_{L^{\infty}(\T;\R)}|\zeta_t^{(\mu)}-\zeta_t^{(\nu)}|_{L^2(\T;\R)}^2\\
    & + M^2|F^{''}|_{L^{\infty}(\T;\R)}^2 |\zeta_t^{(\mu)}-\zeta_t^{(\nu)}|_{L^2(\T;\R)}^2+(3|V^{''}|_{L^{\infty}(\T;\R)}+M^2|F^{'}|_{L^{\infty}(T;\R)}^2)|\pa_x\zeta_t^{(\mu)}-\pa_x\zeta_t^{(\nu)}|_{L^2(\T;\R)}^2 \\
\end{align*}
Once this is place, from \eqref{stima norma L2 zetamu}, \eqref{stima buona pax zeta}, \eqref{h0} and again Gronwall's lemma we obtain $(iii)$.
\end{proof}

\begin{proof}[Proof of $v)$ in Lemma \ref{stime 2}]
A calculation similar to the one in the proof of $(iii)$ in Lemma \ref{stime} gives 
\begin{align*}
    & \frac{d}{dt}|\zeta_t^{M,\ka}-\zeta_t^{\ka}|_{L^2(\T;\R)}^2 \leq |V^{''}|_{L^{\infty}(\T;\R)}|\zeta_t^{M,\ka}-\zeta_t^{\ka}|_{L^2(\T;\R)}^2+\int_{\T} |\Gamma_M(x,\mu_t^{M,\ka})\zeta_t^{M,\ka}-\Gamma(x,\mu_t^{\ka})\zeta_t^{\ka}|^2\,dx \\
    & \leq |V^{''}|_{L^{\infty}(\T;\R)}\,|\zeta_t^{M,\ka} - \zeta_t^{\ka}|_{L^2(\T;\R)}^2 + \sup \limits_{x \in \T} \,\,|\Gamma_M(x,\mu_t^{M,\ka})-\Gamma_M(x,\mu_t^{\ka})|^2|\zeta_t^{M,\ka}|_{L^2(\T;\R)}^2\\
    & +\sup \limits_{x\in \T}\,\,|\Gamma_M(x,\mu_t^{\ka})|^2|\zeta_t^{M,\ka}-\zeta_t^{\ka}|_{L^2(\T;\R)}^2+\sup \limits_{x\in \T}\,\,|\Gamma_M(x,\mu_t^{\ka})-\Gamma(x,\mu_t^{\ka})|^2|\zeta_t^{\ka}|_{L^2(\T;\R)}^2
\end{align*}
By now recalling \eqref{ineq gammma 2}, \eqref{second upper}, moment estimate \eqref{moments} (with $n=2$) and lastly $(ii)$ we obtain
\begin{align*}
    |\zeta_t^{M,\ka}-\zeta_t^{\ka}|_{L^2(\T;\R)}^2 & \leq C(t,\eta) \bigg( |\zeta_t^{M,\ka}-\zeta_t^{\ka}|_{L^2(\T;\R)}^2+\cW_2^2(\mu_t^{M,\ka},\mu_t^{\ka})\\
    & + \sup \limits_{s \in [0,T]}\,\int_{\T \times \{|a|>M\}} |a|^2\mu_s^{\ka}(dxda) \bigg ) \, ,
\end{align*}
where we recall $C(t,\eta)$ is a constant depending on $t \in [0,T]$ and $\eta=\min \zeta_0(x)$ (there is no dependence on $M \in \N$ since the estimates used are uniform in $M \in \N$). From Gronwall's inequality we have 
\begin{equation}\label{kuos4}
        |\zeta_t^{M,\ka}-\zeta_t^{\ka}|_{L^2(\T;\R)}^2 \leq C(t,\eta) \left ( \int_0^t \cW_2^2(\mu_s^{M,\ka},\mu_s^{\ka})\,ds+ \sup \limits_{s \in [0,T]}\,\int_{\T \times \{|a|>M\}} |a|^2\mu_s^{\ka}(dxda) \right ).
\end{equation}    
In a similar way, one can see that 
\begin{equation}\label{kuos5}
    |\pa_x \zeta_t^{M,\ka}-\pa_x \zeta_t^{\ka}|_{L^2(\T;\R)}^2 \leq \Bar{C}(t,\eta) \left (\int_0^t \cW_2^2(\mu_s^{M,\ka},\mu_s^{\ka})\,ds+ \sup \limits_{s \in [0,T]}\,\int_{\T \times \{|a|>M\}} |a|^2\mu_s^{\ka}(dxda) \right ),
\end{equation}
for $t \in [0,T]$. Combining \eqref{kuos4} and \eqref{kuos5} gives $(v)$ and this concludes the proof.     
\end{proof}

\begin{proof}[Proof of limit \eqref{remainder goes to zero BM}]
We are aiming at proving \eqref{remainder goes to zero BM} under the condition \eqref{eigenvalues constraint}.
Let us begin with denoting by $S_N$ the following quantity:
        \begin{equation*}
            S_N:= \sup \limits_{t \in [0,T]} \,\, \sum \limits_{|z| \geq N} |z|^2 \lambda_z^2 \left |\int_0^t e^{-(t-s)|z|^2}dw_s^z \right |^2,\qquad N \in \N.
        \end{equation*}
    Clearly, $\{ S_N \}_{N \in \N}$ is a non-increasing sequence and, therefore, it admits a limit $\mP$-a.s. Let be such a limit be denoted by $\bar S$. Clearly, we have $\bar S \geq 0$, $\mP$-a.s. so that if we showed $\bar S=0$, $\mP$-a.s. then \eqref{remainder goes to zero BM} would easily follow. Hence, we turn our attention to proving that $\bar S=0$, $\mP$-a.s. 
    
    By following the steps of the proof of \cite[Theorem 2.9]{dapra04} (cfr. \cite[Theorem 2.13]{dapra04} as well) along with the Factorization lemma \cite[Lemma 2.7, pp.20-21]{dapra04} if we set $m$ in \cite[Theorem 2.13]{dapra04} such that $m > \frac{1}{2\delta}$ ($\delta$ is the parameter appearing in \eqref{eigenvalues constraint}) we obtain
    \begin{align*}
        \mE |S_N|^m & \leq C_{T,m} \left [ \,\,\, \sum \limits_{|z| \geq N} |z|^2\lambda_z^2 \int_0^T s^{-2\delta} e^{-2s|z|^2}\,ds \right ]^m \\
        & \leq C_{T,m} \left [ \,\,\, \sum \limits_{|z| \geq N} \lambda_z^2 |z|^{4\delta} \int_0^{\infty} s^{-2\delta} e^{-s}\,ds \right ]^m\\
        & \leq C_{T,m,\delta} \left [ \,\,\, \sum \limits_{|z| \geq N} \lambda_z^2 |z|^{4\delta} \right ]^m .
    \end{align*}
    where $C_{T,m,\delta}$ is constant positive depending on $T$, $m$ and $\delta$.

On the one hand, one can see that from the above computation $|S_N|^m \leq |S_0|^m \in L^1(\Omega;\R)$ so that from Lebesgue's dominated convergence theorem we obtain $\mE |S_N|^m \xrightarrow[]{N \to \infty} \mE \,\,\bar {S}^m$. On the other hand, from again the above computation but this time combined with \eqref{eigenvalues constraint} gives $\mE |S_N|^m \xrightarrow[]{N \to \infty} 0$. The uniqueness of the limit yields $\mE \,\, \bar {S}^m =0$ which implies $\bar S=0$, $\mP$-a.s. This concludes the proof.
\end{proof}

\section*{Acknowledgments.} L.A. and M.O. have been supported by the Leverhulme grant RPG–2020–09. M.K. acknowledges the support of the EPSRC grant EP/V520044/1.

\bibliographystyle{abbrvnat}
\bibliography{bib}
\end{document}